\documentclass[reqno, 12pt]{amsart}

\title{The complete picture for clique factors in randomly perturbed graphs}
\author{ Sylwia Antoniuk} 
\address{Department of Discrete Mathematics, Faculty of Mathematics and CS, Adam Mickiewicz University,
	Poznań, Poland}
\email{sylwia.antoniuk@amu.edu.pl}
\author {Nina Kam\v{c}ev}
\address{Department of Mathematics, Faculty of Science, University of Zagreb, Zagreb, Croatia}
\email{nina.kamcev@math.hr}
\author{
	Christian Reiher}
\address{Fachbereich Mathematik, Universität Hamburg, Hamburg, Germany}
\email{christian.reiher@uni-hamburg.de}
\author{Tadej Petar Tukara
}\address{Department of Mathematics, Faculty of Science, University of Zagreb, Zagreb, Croatia}
\email{tadej.tukara@math.hr}

\thanks{S.~Antoniuk was supported by Narodowe Centrum Nauki, grant 2024/53/B/ST1/00164.}
\thanks{N.~Kam\v cev and T.P.~Tukara are supported by the Croatian Science Foundation, project number HRZZ-IP-2022-10-5116 (FANAP)}

\makeatletter
\let\origsection=\section \def\section{\@ifstar{\origsection*}{\mysection}} 
\def\mysection{\@startsection{section}{1}\z@{.7\linespacing\@plus\linespacing}{.5\linespacing}{\normalfont\scshape\centering\S\hspace{1pt}}}

\usepackage{amsmath,amssymb,amsthm}
\usepackage{mathrsfs}
\usepackage{mathabx}\changenotsign
\usepackage{dsfont}
\usepackage{url}
\usepackage{colortbl}

\usepackage{xcolor}
\usepackage[backref]{hyperref}
\hypersetup{
	colorlinks,
	linkcolor={red!60!black},
	citecolor={green!60!black},
	urlcolor={blue!60!black}
}

\usepackage[open,openlevel=2,atend]{bookmark}

\usepackage{doi}

\usepackage{natbib}
\bibpunct[, ]{[}{]}{,}{n}{}{,}%

\usepackage{hyperref}
\usepackage[T1]{fontenc}
\usepackage{lmodern}
\usepackage[babel]{microtype}
\usepackage[english]{babel}

\usepackage{tikz}
\usetikzlibrary{calc,decorations.pathmorphing,decorations.pathreplacing}
\pgfdeclarelayer{background}
\pgfdeclarelayer{foreground}
\pgfdeclarelayer{front}
\pgfsetlayers{background,main,foreground,front}

\usepackage{geometry}
\numberwithin{equation}{section}
\numberwithin{figure}{section}

\usepackage{enumitem}

\theoremstyle{plain}
\newtheorem{thm}{Theorem}[section]
\newtheorem{fact}[thm]{Fact}

\newtheorem{claim}[thm]{Claim}

\newtheorem{cor}[thm]{Corollary}
\newtheorem{lemma}[thm]{Lemma}

\newtheorem{remark}[thm]{Remark}

\theoremstyle{definition}

\newtheorem{definition}[thm]{Definition}

\let\eps=\varepsilon
\let\theta=\vartheta
\let\rho=\varrho
\let\phi=\varphi

\def\NN{\mathds N}

\def\QQ{\mathds Q}

\def\cA{{\mathcal A}}

\def\cC{{\mathcal C}}

\def\cF{{\mathcal F}}

\def\cK{{\mathcal K}}

\def\cP{{\mathcal P}}

\def\cS{{\mathcal S}}

\def\cU{{\mathcal U}}

\def\fA{\mathfrak{A}}
\def \fD{\mathfrak{D}}
\def\fB{\mathfrak{B}}

\def\fS{\mathfrak{S}}
\def\fU{\mathfrak{U}}

\usepackage{enumitem}

\renewcommand\labelenumi{(\roman{enumi})}
\renewcommand\theenumi\labelenumi

\def\Ub{\mathbf{U}}

\newcommand{\Bin}{\textrm{Bin}}

\newcommand{\pr}[1]{\mathbb{P} \left[ #1 \right]}

\newcommand{\E}{\mathbb{E} }
\def \Gnp{G_{n,p}}

\let\polishlcross=\l
\def\l{\ifmmode\ell\else\polishlcross\fi}

\makeatletter
\def\moverlay{\mathpalette\mov@rlay}
\def\mov@rlay#1#2{\leavevmode\vtop{   \baselineskip\z@skip \lineskiplimit-\maxdimen
		\ialign{\hfil$\m@th#1##$\hfil\cr#2\crcr}}}
\newcommand{\charfusion}[3][\mathord]{
	#1{\ifx#1\mathop\vphantom{#2}\fi
		\mathpalette\mov@rlay{#2\cr#3}
	}
	\ifx#1\mathop\expandafter\displaylimits\fi}
\makeatother

\newcommand{\dcup}{\charfusion[\mathbin]{\cup}{\cdot}}

\newcommand{\vrhup}[1]{\scaleobj{0.6}{\scalerel*{\rightharpoonup}{#1}}}
\newcommand{\nrhup}{\mathord{\scaleobj{0.6}{\scalerel*{\rightharpoonup}{x}}}}
\newcommand{\wrhup}{\scaleobj{0.6}{\scalerel*{\rightharpoonup}{W}}}
\def\vseq#1{\ThisStyle{  \mathord{\vbox{\offinterlineskip\ialign{    \hfil##\hfil\cr
					$\SavedStyle{}_{\smash{\vrhup#1}}$\cr
					\noalign{\kern-0.7\scriptspace}
					$\SavedStyle#1$\cr}}}}}
\def\seq#1{\ThisStyle{  \mathord{\vbox{\offinterlineskip\ialign{    \hfil##\hfil\cr
					$\SavedStyle{}_{\smash{\nrhup}}$\cr
					\noalign{\kern-0.5\scriptspace}
					$\SavedStyle#1$\cr}}}}}
\def\wseq#1{\ThisStyle{  \mathord{\vbox{\offinterlineskip\ialign{    \hfil##\hfil\cr
					$\SavedStyle{}_{\smash{\wrhup#1}}$\cr
					\noalign{\kern-0.7\scriptspace}
					$\SavedStyle#1$\cr}}}}}

\def \absorber{\cA}

\let\setminus=\smallsetminus
\let\emptyset=\varnothing

\let\to=\lra

\makeatletter
\newcommand{\pushright}[1]{\ifmeasuring@#1\else\omit\hfill$\displaystyle#1$\fi\ignorespaces}
\newcommand{\pushleft}[1]{\ifmeasuring@#1\else\omit$\displaystyle#1$\hfill\fi\ignorespaces}
\makeatother

\let\N=\NN
\let\R=\RR

\let\Pc=\cP
\let\Jn=J

\begin{document}

	\keywords{Random graphs, randomly perturbed graphs, thresholds, clique factors}
	\subjclass[2020]{05C80 (primary), 05C35, 05D40 (secondary)}
	
		
		
    

    \begin{abstract}
        A \emph{randomly perturbed graph} $G^p = G_\alpha \cup \Gnp$ is obtained by taking a~deterministic $n$-vertex graph $G_\alpha = (V, E)$ with minimum degree $\delta(G)\geq \alpha n$ and adding the edges of the binomial random graph $\Gnp$ defined on the same vertex set $V$. For which value $p$ (depending on $\alpha$) does the graph $G^p$ contain a $K_r$-factor -- a spanning collection of vertex-disjoint copies of $K_r$ -- with high probability?
        The  order of magnitude of the \textit{minimum} such $p$ is known whenever $\alpha \neq 1- \frac{s}{r}$ for an integer $s$ (see Han, Morris, and Treglown [RSA, 2021] and Balogh, Treglown, and Wagner [CPC, 2019]).
        In earlier work, the first three authors determined this threshold probability $p_s$ up to a constant factor for all values of $\alpha = 1-\frac{s}{r}\leq \frac 12$. Here, we complete the picture by establishing $p_s$ in the remaining case $\alpha > \frac12$. 

        A key ingredient in our approach is an extremal result of independent interest: we prove a fractional stability version of a \emph{tiling} theorem due to Shokoufandeh and Zhao.
        
    \end{abstract}

	\maketitle

	
	
	\section{Introduction}


    Randomly perturbed graphs have been introduced by Bohman, Frieze and Martin~\cite{bfm03,bfm04}. Since their introduction, they have been studied extensively because they interpolate between purely random graphs and highly structured deterministic graphs, providing a~useful framework for understanding fundamental graph properties such as Hamiltonicity and the existence of other spanning structures (see, for example~\cite{adrrs21,balog16,bhkm19,bmpp20,jk20,kks17,kst06}). In this paper, we further advance the study of clique factors in randomly perturbed graphs.
    
    Formally, a \emph{randomly perturbed graph} $G^p = G_\alpha \cup \Gnp$ is obtained by taking a deterministic $n$-vertex graph $G_\alpha = (V, E)$ satisfying minimum degree condition $\delta(G)\geq \alpha n$ and adding to it the edges of the binomial random graph $\Gnp$ defined on the same vertex set $V$. Given an increasing graph property $\mathcal{P}$, for any $\alpha\in(0,1)$ one would like to establish the \emph{threshold} density $p=p(n)$ for this property, that is the lowest density\footnote{As usual, this threshold is not uniquely defined and we are only interested in determining the threshold up to a constant factor for our specific property.} $p$ which, \emph{with high probability} (w.h.p.), ensures that $G_\alpha \cup \Gnp$ satisfies $\mathcal{P}$. Here, as usual, w.h.p.~means with probability tending to 1 when $n\rightarrow\infty$.

    A \emph{$K_r$-factor} in a graph $G$ is a collection of vertex disjoint copies of the complete graph $K_r$ covering all the vertices of $G$. Throughout the paper, we will assume that the number of vertices of the \emph{host graph} $G_\alpha$, denoted by $n$, is divisible by $r$. Depending on $\alpha >0$, we are interested in  determining the threshold density for the existence of a $K_r$-factor in $G_\alpha \cup \Gnp$. Both `extreme cases' ($p=0$ and $\alpha=0$) constitute classical and difficult results. Firstly, resolving a~conjecture of Erd\H os, Hajnal and Szemer\'edi \cite{Hajnal1970} have shown that for sufficiently large $n$, any $n$-vertex graph $G$ with $\delta(G) \geq \left(1-\frac{1}{r} \right)n$ contains a $K_r$-factor. Hence, for $\alpha \geq 1- \frac 1r$, no random edges are needed. On the other hand, a celebrated result of Johansson, Kahn and Vu~\cite{jkv08} states that $p = Cn^{-2/r}(\log n)^{2/(r(r-1))}$ is the threshold for the containment of a $K_r$-factor in $\Gnp$. Hence, this is the desired threshold for the case $\alpha=0$.

    Considering the $K_r$-factor problem in the model of randomly perturbed graphs reveals surprising phenomena, and requires an interesting synergy between combinatorial and probabilistic techniques. The threshold for the appearance of a $K_r$-factor in the randomly perturbed graph $G_\alpha \cup \Gnp$ has been determined for almost all values of $\alpha$: first for $\alpha \in \left(0,\frac{1}{r}\right)$ by Balogh, Treglown, and Wagner~\cite{btw18}, and subsequently for the remaining intervals of the form $\alpha \in \left(1-\frac{s}{r}, 1-\frac{s-1}{r}\right)$ by Han, Morris, and Treglown~\cite{hmt21}.
	
	\begin{thm}
		[Clique-factors in randomly perturbed graphs~\cite{btw18}, \cite{hmt21}]
		\label{thm:hmt}
		For all integers $1 < s \leq r$ and any $\alpha \in \left(1-\frac{s}{r}, 1-\frac{s-1}{r} \right)$, there exists $C$ such that the following holds. For any $n$-vertex graph $G_\alpha$ with minimum degree at least $\alpha n$, w.h.p.\ $G_\alpha \cup \Gnp$ with $p = Cn^{-2/s}$ contains a $K_r$-factor.
	\end{thm}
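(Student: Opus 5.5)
The natural route is the absorbing method combined with a deterministic tiling result and the threshold for $K_s$-factors in random graphs. Write $\alpha = 1-\frac{s}{r}+\gamma$ with $\gamma>0$ small. The heuristic is that each copy of $K_r$ in the factor should use a $K_{r-s}$ supplied by $G_\alpha$, completed by a $K_s$ supplied by $\Gnp$. The density $n^{-2/s}$ is exactly the point at which $\Gnp$ contains, with positive density, copies of $K_s$ inside linear-sized vertex sets, even without the logarithmic factor needed for a full $K_s$-factor. The slack $\gamma$ is what allows us to avoid that logarithmic factor.

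\textbf{Absorbing structure.} First we build an absorbing set $A\subseteq V$ of size $\eps n$ such that, for every set $U$ of at most $\eps^2 n$ leftover vertices with $r \mid |A|+|U|$, the graph $G^p[A\cup U]$ has a $K_r$-factor. For each $r$-set $S$ we define local absorbers: small structures $F$ such that both $F$ and $F\cup S$ have $K_r$-factors in $G^p$. Using $\delta(G_\alpha)\geq(1-\frac sr+\gamma)n$, every vertex $v$ has many $(r-s)$-cliques (or suitable partial cliques) in its neighbourhood. The random edges must supply the missing $K_s$'s, and $p=Cn^{-2/s}$ gives $\Theta(n^{s}p^{\binom s2})\cdot$(polynomial) many completions.

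We then use Janson's inequality together with a reservoir/random-selection argument (in the spirit of Montgomery/Han--Morris--Treglown) to show that w.h.p.\ each $S$ has $\Omega(n^{c})$ disjoint absorbers. Since the expected counts here are only polynomially large, it is cleaner to build a template-based absorber: a bipartite "robust matching" template of Montgomery over a flexible set, where each flexibility gadget uses constantly many random $K_s$'s.

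\textbf{Almost-tiling.} Next we cover all but $\eps^2 n$ vertices of $V\setminus A$ with vertex-disjoint $K_r$'s. Apply the regularity lemma to $G_\alpha$. The reduced graph $R$ has minimum degree about $(1-\frac sr+\gamma)|R|$, so by a Hajnal--Szemerédi/Komlós-type fractional tiling it admits an almost-perfect fractional tiling by $K_{r-s+1}$, or more precisely by the "bottle" structures that combine a $K_{r-s}$ in $G_\alpha$ with an $s$-set.

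Concretely, inside each regular tuple we embed $K_{r-s}$'s from $G_\alpha$ and need $K_s$'s from $\Gnp$ inside the common neighbourhoods, which are linear sets. At $p=Cn^{-2/s}$, with $C$ large, $\Gnp$ w.h.p.\ contains a $K_s$-tiling covering all but $\eps^3 n$ vertices of every linear-sized set simultaneously. This holds because the expected number of $K_s$'s is $\Theta(C^{\binom s2} n)$, and a deletion/greedy Janson argument, using that the property is robust over all subsets, gives almost-spanning $K_s$-tilings. We alternate greedily, assigning each $K_{r-s}$ to a random $K_s$ in its common neighbourhood.

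\textbf{Finishing, and the main obstacle.} Finally, we absorb the leftover set $U$ into $A$ to complete the factor. I expect the main obstacle to be the absorbing step at exactly $p=\Theta(n^{-2/s})$ with no logarithmic factor. Here a union bound over all $r$-sets $S$ is tight, since each absorber relies on random $K_s$'s whose counts are only mildly concentrated. I would try first to make absorbers that use the random graph only through "typical" $K_s$'s inside large common neighbourhoods, so that Janson gives failure probability $\exp(-\Omega(n))$ for each needed family. For the vertex-level flexibility I would instead rely on $G_\alpha$-edges and the $\gamma$-slack rather than on $\Gnp$.
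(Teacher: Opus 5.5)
The paper gives no proof of this theorem; it is quoted from Balogh--Treglown--Wagner and Han--Morris--Treglown. So your proposal has to stand on its own, and it does not, because its structural premise is false. In general a copy of $K_r$ in the factor is \emph{not} ``a $K_{r-s}$ of $G_\alpha$ completed by one random $K_s$''. For example, take $r=7$, $s=2$, $\alpha=0.72\in(\frac57,\frac67)$, and let $G_\alpha$ be the balanced complete $4$-partite graph, so $\delta(G_\alpha)=\frac34 n\ge\alpha n$. This graph is $K_5$-free, so there are no $(r-s)$-cliques at all. They are missing both from vertex neighbourhoods (your absorbers) and from the reduced graph as $K_{r-s+1}$'s (your almost-tiling). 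Moreover, $\Gnp$ with $p=C/n$ w.h.p.\ has fewer than $\log n$ triangles. Hence all but fewer than $\log n$ copies in any $K_7$-factor of $G_\alpha\cup\Gnp$ consist of three random edges in three different parts plus one vertex of the fourth part.

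Even when $K_{r-s}$'s exist, your tile $K_{r-s}\vee\overline{K_s}$ has critical chromatic number $\frac{r(r-s)}{r-1}$. Its Koml\'os threshold $1-\frac{r-1}{r(r-s)}$ is therefore strictly larger than $1-\frac sr$ whenever $s\le r-2$. Concretely, take $r=4$, $s=2$ and $G_\alpha=K_{n/5,2n/5,2n/5}$, which has minimum degree $\frac35 n$. The common neighbourhood of a $G_\alpha$-edge is the third part, so each of your tiles meets the part of size $n/5$. Any family of disjoint tiles therefore misses at least $n/5$ vertices, far more than an absorber can take. A genuine $K_4$-factor instead needs $\Omega(n)$ copies built from one random edge in each of the two large parts.

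The missing idea is the correct tile. Write $r=ms+t$ with $0\le t<s$. Each $K_r$ should consist of $m$ random copies of $K_s$ and one random $K_t$, pairwise completely joined by $G_\alpha$-edges. In the reduced graph this is a tiling by the bottle graph $K_{s,\dots,s,t}$, equivalently a fractional tiling by the weighted clique $T$ used in Corollary~\ref{cor:T_packing}. Its critical chromatic number is exactly $r/s$, so Koml\'os's tiling theorem applies once $\delta(R)\ge(1-\frac sr+\frac\gamma2)|R|$. This gives cluster tuples in ratio $s:\dots:s:t$, the same shape as in Lemma~\ref{l:st-reg-pairs-factors}. Inside them, $K_r$'s can be embedded greedily, because every linear-sized vertex set of $\Gnp$ spans a $K_s$.

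Your absorbers must be rebuilt around the same multipartite structure; as written, that step names no concrete gadget. Finally, the obstacle you single out is not the real one. Take a family of $\Omega(n^{v})$ templates whose random part is a union of $K_s$'s. Janson's inequality gives failure probability $e^{-c(C)n}$ with $c(C)\to\infty$ as $C\to\infty$, which easily survives union bounds over $2^{O(n)}$ events.
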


    The density $p = Cn^{-2/s}$ is necessary, and understanding the extremal example $G_\alpha$ sheds light on the problem. 
    In particular, the extremal example demonstrates that for $\alpha \in \left(1-\frac{s}{r}, 1-\frac{s-1}{r} \right)$, the bottleneck for the threshold density $p$ is the existence of $\Omega(n)$ copies of $K_{s}$ in the random graph $\Gnp$. Namely, let $G_{\alpha}$ consist of an independent set $A$ of size $(1-\alpha) n > \frac {(s-1)n}{r}$, and all the remaining edges (see Figure~\ref{fig:extremal_1}). If $G_{\alpha} \cup \Gnp$ is to contain a $K_r$-factor $\cF$, then $\cF$ cannot contain only cliques $K$ with $|K \cap A| \leq s-1$; in fact, at least $\Omega(n)$ cliques from $\cF$ have to intersect $A$ in at least $s$ vertices. Thus, $\Gnp[A]$ has to contain $\Omega(n)$ copies of $K_{s}$, which in turn requires edge density $p = \Omega (n^{-2/s})$.

    \begin{figure}
		\begin{tikzpicture}[scale=0.8]
			\centering
			\filldraw[color=lightgray] (0,1.3) -- (4,1.6) -- (4,-1.6) -- (0,-1.3) -- (0,1.3);
			\draw[fill=white] (0,0) ellipse (1.1cm and 1.9cm);
			\draw[fill=lightgray] (4,0) ellipse (1.3cm and 2.1cm);
			\node at (0,-2.2) [below] {$A$};
			\node at (4,-2.2) [below] {$V\setminus A$};
		\end{tikzpicture}
		\caption{\label{fig:extremal_1} The extremal construction $G_{\alpha}$ which, crucially, contains an independent set of size $(1-\alpha)n$.}
	\end{figure}
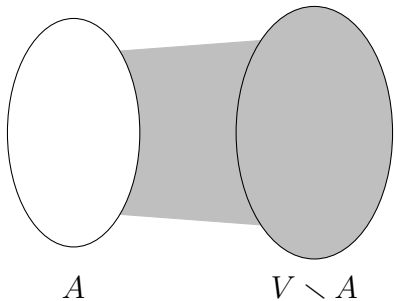

    What about the \textit{transition points}, that is $\alpha$ of the form $\alpha = 1-\frac sr$\,?
    B{\"o}ttcher, Parczyk, Sgueglia, and Skokan~\cite{bpss23} considered the only interesting transition point for $r=3$ (the triangle factor), $\alpha = \frac 13$, showing that the threshold for a $K_3$-factor in $G_{\frac 13} \cup \Gnp$ is $p = \frac{C \log n}{n}$. Instructed by this result, the example from Figure~\ref{fig:extremal_1} and the Johansson--Kahn--Vu Theorem, one might guess the following: as $\alpha$ decreases from $\alpha > 1-\frac{s}{r}$ to precisely $\alpha = 1- \frac sr$, the threshold density `gains' a~polylogarithmic factor.


    
    However, Böttcher et al.~\cite{bpss23} have already pointed out a rather surprising polynomial jump in the threshold probability when $r=4$ and $s=3$ ($\alpha=1/4$), and asked about the correct threshold. In~\cite{akr24}, the first three authors have pinpointed this threshold, and generalised it to an infinite family of transition points. The table below summarises the thresholds for $r=4$.

    \smallskip
    
    \begin{center}
           \begin{tabular}{| c | c |c | >{\columncolor{gray!15}}c | c|c| c | c |}
    \hline
    & & & & & & & \\[-3ex]
    $\alpha$ & 0 & $\left(0, \frac{1}{4}\right)$ & $\frac{1}{4}$ & $\left(\frac{1}{4}, \frac{1}{2}\right)$ & $\frac{1}{2}$ & $\left(\frac{1}{2}, \frac{3}{4}\right)$ & $\left[\frac{3}{4}, 1\right]$ \\ [0.5ex]
    \hline
    & & & & & & & \\[-3ex]
    $K_4\text{-factor}$ threshold & $n^{-\frac{2}{4}} \log^{\frac{1}{6}} n$ & $n^{-\frac{2}{4}}$ & $n^{-\frac{3}{5}}$ & $n^{-\frac{2}{3}}$ & $n^{-1} \log n$ & $n^{-1}$ & $0$ \\ [0.4ex]
    \hline
  \end{tabular}
      \end{center}

    \smallskip
    

    Specifically, in~\cite{akr24}, the transition points $\alpha = 1-\frac sr$ were considered for all $r \geq 3$ and $1<s<r$. The threshold was determined whenever $s \geq r/2$ (equivalently, $\alpha \leq 1/2$), and a~lower bound was found for all values of $s$. 

    The extremal construction establishing the lower bound from~\cite{akr24} is a partly pseudorandom host graph $G_\alpha$. This is a completely new approach, as in most results on the randomly perturbed graphs the 0-statements are proved using some highly structured graphs such as complete multipartite graphs. Rather than describing the construction here, we refer the reader to~\cite{akr24}.
    
    Before stating the results, we introduce the crucial functions. Let 
	\(\varphi(s) = \frac{2s}{(s-1)(s+2)}\)
	and 
	\[p_s = p_s(n) = \begin{cases} 
		\frac{\log n}{n} & \text{ if } s=2 \\
		n^{-\varphi(s)} & \text{ if } s\geq 3.
	\end{cases} \]

    The following theorem was proved in~\cite{akr24}. For $r=3$ and $s=2$, both statements were earlier established by B{\"o}ttcher, Parczyk, Sgueglia, and Skokan~\cite{bpss23}.
    \begin{thm}[Clique-factors at the transition points \cite{akr24}]
    \label{thm:main_in_AKR}
		Let $r$ and $s$ be integers with $1 < s <r$, and let $n$ be divisible by $r$.
        \begin{enumerate}
            \item For any $\eps>0$, there is a~constant $c>0$ and a graph $G$ with minimum degree at least $\left(1 - \frac{s}{r} \right)n$ such that taking $p = cp_s(n)$ the probability that the graph $G^p = G \cup \Gnp$ has a~$K_r$-factor is less than $\eps$.
            \item If $s \geq r/2$, there exists $C>0$ such that for $p = Cp_s(n)$ the following holds. If $G$ is an $n$-vertex graph with minimum degree at least $\left(1 - \frac{s}{r} \right)n$, then  w.h.p.\,the graph $G^p = G \cup \Gnp$ has a~$K_r$-factor.
        \end{enumerate}
	\end{thm}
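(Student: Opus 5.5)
Theorem~\ref{thm:main_in_AKR} is quoted from~\cite{akr24}. For completeness we outline how we would prove it; both parts rest on the same bottleneck, namely many copies of $K_{s+1}$ that must meet a large sparse set in $s$ vertices.

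\textbf{Part (i).} We would take $G$ to be a partly pseudorandom variant of Figure~\ref{fig:extremal_1}. Let $A$ have size exactly $sn/r$ and $B=V\setminus A$. Put all edges between $A$ and $B$ and inside $B$. Inside $A$, do not leave $A$ independent; instead place a sparse pseudorandom graph, for instance a random graph $H$ of density $q=n^{-\beta}$ for a suitable $\beta$. Then $\delta(G)\ge (1-\frac sr)n$.

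Since $|A|=sn/r$ exactly, a $K_r$-factor with every clique meeting $A$ in at most $s$ vertices must meet $A$ in exactly $s$ vertices in every clique. So we need a $K_s$-factor of $A$ in $H\cup\Gnp[A]$. Alternatively, some clique meets $A$ in $s+1$ or more vertices, and this must be compensated by cliques meeting $A$ in fewer than $s$ vertices; we would have to check that each such deviation forces an extra configuration in $A$.

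For $s\ge 3$, the exponent $\varphi(s)=\frac{2s}{(s-1)(s+2)}$ should come from a first-moment count. We count copies of $K_s$ in $A$ that use at least one random edge and whose remaining edges come from $H$. We then tune $\beta$ so that the cheapest such structure has expected number $o(n)$ when $p=cp_s$. At that point a $K_s$-factor of $A$ cannot even cover all vertices, because $H$ alone has few $K_s$. The count gives an expected number of order $n^{s}q^{\binom s2-j}p^{j}$ over the number $j$ of random edges. Optimising over $j$ and $\beta$ gives the balance $\frac{(s-1)(s+2)}{2}\cdot\varphi(s)=s$, which is consistent with the stated formula.

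For $s=2$, the $\log n/n$ term arises because a perfect matching of $A$ needs no isolated vertex. We would take $H$ empty, which reduces to an isolated-vertex argument.

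\textbf{Part (ii).} We would use absorption. First, using $s\ge r/2$, apply a stability dichotomy. If $G$ is far from extremal, we are essentially in the regime of Theorem~\ref{thm:hmt}, which needs only $p=Cn^{-2/(s-1)}\ll p_s$. If $G$ is close to the extremal structure ($A$ nearly independent or sparse, of size about $sn/r$), then $\alpha\le\frac12$ makes $B$ dense enough to complete cliques greedily. The task then reduces to finding a $K_s$-factor-like structure in $A$ using $G[A]\cup\Gnp[A]$, together with absorbers. Throughout, $\Gnp$ would be split into several independent rounds.

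\textbf{Main obstacle.} The hardest step is the near-extremal case. There we must show that at density $Cn^{-\varphi(s)}$, $G[A]\cup\Gnp[A]$ contains enough ``mixed'' copies of $K_s$ that are well distributed, so as to build absorbers and fix divisibility. We would try a Janson-type second-moment estimate over copies of $K_s$ that use the deterministic edges of $G[A]$, whose minimum degree forces many edges. This would be combined with a fractional tiling argument to cover almost all of $A$, before absorbing the leftover.
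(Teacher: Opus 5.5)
The paper does not reprove this theorem (it is quoted from \cite{akr24}). Your part (i) construction is wrong for every $s\ge 3$. With $|A|=\frac{sn}{r}$ exactly, $G[B]$ complete and all $A$--$B$ edges present, any $K_s$-factor of $\Gnp[A]$ extends to a $K_r$-factor of $G\cup\Gnp$ by attaching disjoint $(r-s)$-subsets of $B$. Since $\varphi(s)<\frac2s$ for $s\ge 3$, we have $cp_s\gg n^{-2/s}(\log n)^{2/(s(s-1))}$. So by Johansson--Kahn--Vu such a factor exists w.h.p., whatever $H$ is. Your first-moment claim already fails for the all-random term $j=\binom s2$, whose expectation is $n^sp_s^{\binom s2}=n^{2s/(s+2)}\gg n$. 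Your graph is just the naive example of Figure~\ref{fig:extremal_1}, whereas the content of the theorem is precisely the polynomial jump above the Johansson--Kahn--Vu threshold.

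The missing idea is to make $A$ slightly \emph{too large}. Take $|A|=\frac{sn}{r}+h$ and restore the minimum degree by a sparse pseudorandom graph of minimum degree $h$ inside $A$ (e.g.\ a typical random graph of density $\Theta(cp_s)$, so $h=\Theta(c\,np_s)$). Then $\sum_K(|K\cap A|-s)=h$ forces at least $h/(r-s)$ vertex-disjoint copies of $K_{s+1}$ in $(G\cup\Gnp)[A]$. That graph behaves like a random graph of density $O(cp_s)$, so it contains only $O\bigl(c^{\binom{s+1}2}n^{s+1}p_s^{\binom{s+1}2}\bigr)=O\bigl(c^{\binom{s+1}2}np_s\bigr)\ll h$ such copies in expectation, and Markov's inequality finishes. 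Your ``balance'' $\frac{(s-1)(s+2)}2\varphi(s)=s$ is exactly the identity $n^sp_s^{\binom{s+1}2-1}=1$. Here $\frac{(s-1)(s+2)}2=\binom{s+1}2-1$ counts the random edges of a $K_{s+1}$ with one deterministic edge; it has nothing to do with copies of $K_s$.

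Part (ii) misses the same point: the bottleneck is copies of $K_{s+1}$ lying \emph{inside} $A$, not well-distributed ``mixed'' copies of $K_s$, which $\Gnp$ supplies for free at $p=Cp_s$. If the near-independent part has $|A|=\frac{sn}{r}+h$, then $\delta(G[A])\gtrsim h$. One must show this forces $h$ vertex-disjoint copies of $K_{s+1}$ in $(G\cup\Gnp)[A]$, which is Lemma~\ref{l:5.5chr} (Lemma~4.3 of \cite{akr24}) and which the paper calls the bottleneck for $p_s$. A Janson estimate over copies of $K_s$ does not address this.

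Two further claims are false.
\begin{enumerate}
\item $G[B]$ need not be dense at all. The role of $s\ge r/2$ is that $r-s\le s$, so $B$ can be tiled by random copies of $K_{r-s}$.
\item Theorem~\ref{thm:hmt} gives $n^{-2/s}$, not $n^{-2/(s-1)}$, for $\alpha$ just above $1-\frac sr$, and it does not apply at $\alpha=1-\frac sr$. Nor does $n^{-2/(s-1)}$ suffice in a non-extremal case. Take an independent set of size $(\frac sr-\gamma)n$, with $\gamma<\frac1r$, completely joined to a clique on the remaining vertices. Every $\frac{sn}{r}$-set spans $\Omega(\gamma n^2)$ edges, yet a $K_r$-factor needs $\Omega(n)$ disjoint copies of $K_s$ from $\Gnp$. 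At $p=Cn^{-2/(s-1)}$ the expected number of copies of $K_s$ in $\Gnp$ is $O(1)$, so these do not exist.
\end{enumerate}
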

	 
	In~\cite{akr24} it was conjectured that statement (ii) extends to all values of $s$. In the present paper, we confirm this conjecture, also answering the questions raised in~\cite{bpss23,hmt21}. 

    \begin{thm}\label{t:main}
		For any integers $r$ and $s$ with $1 < s < r$, there exists $C>0$ such that the following holds. Let $G$ be an $n$-vertex graph with minimum degree at least $\left(1 - \frac{s}{r} \right)n$. For $n$ divisible by $r$ and $p = Cp_s(n)$, w.h.p.\,the graph $G^p = G \cup \Gnp$ has a~$K_r$-factor.
	\end{thm}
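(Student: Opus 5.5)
We plan to prove Theorem~\ref{t:main} by the absorbing method, combined with a stability analysis of the deterministic graph $G$, reducing the new case $s<r/2$ (i.e.\ $\alpha=1-\frac sr>\frac12$) to the structure used in~\cite{akr24}. The threshold $p_s$ is governed by the ability of $\Gnp$ to supply many copies of a specific small gadget: in~\cite{akr24} these are configurations in which a copy of $K_{s+1}$ is ``almost'' formed, namely the density $n^{-\varphi(s)}$ is exactly where one finds linearly many copies of a graph with $s+2$ vertices and $\binom{s+1}{2}+1$ random edges. So the main idea is: whenever $G$ is far from the extremal configuration, the random edges at density $Cp_s$ (which is $\gg n^{-2/(s+1)}$ only in the sense needed for robust $K_{s}$-type structures) together with $G$ already give almost-perfect tilings and absorbers; near extremality, we exploit the gadget count.

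\textbf{Step 1 (almost-tiling and fractional stability).} We apply a regularity lemma to $G$ and study the reduced graph $R$, which has minimum degree about $(1-\frac sr)|R|$. The key extremal input will be a fractional stability version of the Shokoufandeh--Zhao tiling theorem: either $R$ has a perfect fractional tiling by suitable ``bottle''-type structures (complete $(r-s+1)$-partite pieces with one class of size proportional to $s$, which, once $\Gnp$ supplies the $K_s$'s inside a class, become $K_r$-tilings), with some slack, or $R$ is close to the extremal structure of Figure~\ref{fig:extremal_1}, i.e.\ contains an almost independent set of size about $\frac{s}{r}n$ and is nearly complete multipartite otherwise. In the non-extremal case the slack lets us cover all but $\eps n$ vertices by $K_r$'s using only $K_s$-copies from $\Gnp$ inside regular pairs, which exist abundantly at $p\gg n^{-2/s}$.

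\textbf{Step 2 (absorbers).} We build an absorbing set $\cA$ of size $\gamma n$ such that for every small $U$ with $|U\cup\cA|\in r\NN$ the graph $G^p[\cA\cup U]$ has a $K_r$-factor. Following~\cite{hmt21,akr24}, we show each $r$-set (or each vertex pair in a reachability sense) has $\Omega(n^{t})$ absorbing $t$-sets in $G^p$, using the min degree to find common neighbourhoods of size $\ge (1-\frac{2s}{r})n$, which for $s<r/2$ is linear, and $\Gnp$ to close the $K_s$'s; a lattice/reachability argument handles divisibility obstacles. Then Janson's inequality and a random selection give $\cA$.

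\textbf{Step 3 (extremal case), the main obstacle.} When $G$ is close to the configuration with an almost independent $A$ of size about $\frac{s}{r}n$, exact counting is needed: each $K_r$ must use at most $s$ vertices of $A$ on average, with precise balance, and the deviations of $|A|$ and of the exceptional vertices must be corrected by a linear number of cliques meeting $A$ in $s+1$ vertices or by using edges of $G[A]$. Here we plan to use the gadgets at density $n^{-\varphi(s)}$ exactly as in the proof of Theorem~\ref{thm:main_in_AKR}(ii), but the difficulty new to $\alpha>\frac12$ is that $V\setminus A$ is itself only $(r-s)$-partite-like with several almost independent parts, so the correction must be distributed among several parts simultaneously; we will first try to reduce this to the $s\ge r/2$ argument by greedily removing cliques to balance the part sizes, then apply the fractional stability theorem again to the remainder.
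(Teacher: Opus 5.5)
\textbf{Where the plan falls short.} Your ingredients for the fully non-extremal case (regularity plus a fractional Shokoufandeh--Zhao stability result) and for the bottleneck ($K_{s+1}$'s at density $p_s$) are the right ones. But the part you defer as ``the main obstacle'' is where the proof actually lives, and the plan there does not work.

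\textbf{The intermediate cases need induction on $r$.} For $\alpha>\frac12$ there is no dichotomy ``non-extremal versus one almost independent set with a multipartite rest''. Lemma~\ref{l:4.3chr} produces, for some $h\in\{0,\dots,m\}$, almost independent sets $A_1,\dots,A_h$ of size $(\frac sr\pm\gamma_h)n$. It also produces a remainder $A_{h+1}$ about which one only knows that it is non-extremal for $r'=r-hs$. The hierarchy $\gamma_1\ll\dots\ll\gamma_m$ makes the errors coming from $A_1,\dots,A_h$ negligible against this non-extremality. Nothing makes $G[A_{h+1}]$ multipartite-like. The $s\ge r/2$ argument of~\cite{akr24}, in which the complement of $A$ only has to supply $K_t$'s from the random graph, gives no handle on it.

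The missing idea is induction on $r$:
\begin{enumerate}
\item Balance the part sizes by $Z$-conforming $K_{s+1}$'s inside the $A_i$ (Lemma~\ref{l:5.5chr-stronger}) and $K_{r'+1}$'s inside $A_{h+1}$ (Lemma~\ref{l:annoying-non-extremal}). Extend each to a $K_{r+1}$ and trim it to a $K_r$.
\item Take a $K_{r-hs}$-factor of what is left of $A_{h+1}$ from the non-extremal theorem with $r'$ in place of $r$.
\item Collapse each of its cliques to a single vertex, and apply Lemma~\ref{l:st-reg-pairs-factors} to the resulting super-regular $(h+1)$-tuple.
\end{enumerate}
This works only because Theorem~\ref{t:non-extremal-conforming} has two features. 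First, it tolerates a minimum-degree deficit $\delta$, since the remainder of $A_{h+1}$ inherits only about $(1-\frac{s}{r'})|A_{h+1}|$ up to an error of order $\gamma_h/\beta$. Second, it returns a $Z'$-conforming factor, so no collapsed clique contains two vertices with many non-neighbours in some $A_j$. Your ``balance greedily, then apply fractional stability to the remainder'' has neither feature and no mechanism for gluing the two factors together. Incidentally, for $s\ge 3$ the value $p_s$ is exactly where a $G$-edge has about $n^{-1}$ expected completions to $K_{s+1}$ via $s-1$ new vertices and $\binom{s+1}2-1$ random edges. The gadget you describe does not give $\varphi(s)$.

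\textbf{Steps~1 and~2 are also off as written.} A complete $(r-s+1)$-partite piece with one class of size $\sim s$ requires copies of $K_{r-s+1}$ in the reduced graph. Minimum degree $(1-\frac sr)$ does not guarantee this: for $r=5$, $s=2$ take a balanced complete tripartite graph. The correct template is the weighted clique $T$ on $m+1$ vertices with weights $\frac sr,\dots,\frac sr,\frac tr$, with $\Gnp$ supplying a $K_s$ (resp.\ $K_t$) inside every class. The paper obtains a packing of it from the $K_{m+1},Q_1^{(m)},\dots,Q_m^{(m)}$ tiling of Lemma~\ref{l:6.2chr}.

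Moreover, the residue must be a \emph{constant} number of clusters (Theorem~\ref{t:sz-stability}), not an $\eps$- or $O(\delta)$-fraction as in~\cite{hhp19}. The reduced graph already has a degree deficit of order $d$ (and $\delta$ in the intermediate cases). The leftover vertices must be redistributed into clusters and then removed by absorbing moves that change each cluster by at most an $\eps$-fraction. This fails for $\Omega(dn)$ leftovers, since $\eps\ll d$.

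Finally, in Step~2 there cannot be $\Omega(n^{t})$ absorbing $t$-sets in $G^p$. The graph $G$ may be $K_{m+2}$-free, since a balanced complete $(m+1)$-partite graph meets the degree condition. Then every absorber spans an edge of $\Gnp$, and since $\Gnp$ has only $O(n^2p)=o(n^2)$ edges, there are only $o(n^{t})$ such $t$-sets. The paper instead places the absorber in the reduced graph: cliques $K_{m+1+g}$ good for every pair of clusters (Lemma~\ref{l:absorber}). It realises each absorbing move by a fixed gadget found via Lemma~\ref{l:many_ks_and_one_ks-ks} with failure probability $e^{-\omega(n)}$, which survives a union bound over all $2^{O(n)}$ choices of subsets (property (P1)).
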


    In the remainder of the introduction, we highlight an extremal result that is of independent interest, and other proof ideas that require treating the case $s \geq r/2$ in a separate article.

    \subsection*{A Dirac--type problem}
       
    How does the Hajnal--Szemere\'edi Theorem generalise to $H$-factors, for a given graph $H$\,? Shokoufandeh and Zhao~\cite{sz03} proved that if $\delta(G) \geq \left(1-\frac{1}{\chi_{cr}(H)} \right)n$, then $G$ contains a collection of vertex-disjoint copies of $H$ covering all but a constant number of vertices; here, $\chi_{cr}(H)$ is the \textit{critical chromatic number} of $H$. This result was previously conjectured by Koml\'os~\cite{komlos00}. The minimum degree condition is optimal for every graph $H$, and the presence of a constant number of uncovered vertices is unavoidable (see K\"uhn, Osthus~\cite{ko09}).

    We show the \textit{fractional stability version} of~\cite{sz03}. To be precise, a \emph{weighted graph} is a pair $(F, w_F)$, where $F$ is a~graph and $w_F: V(F) \to \R_{\geq 0}$ is a~weight function.
    For a~family of weighted graphs $\cF$, an \emph{$\cF$-packing} in $G$ is a~collection of graphs $(F_i)_{i \in I}$, where $F_i \in \cF$, with corresponding embeddings $(f_i)_{i \in I}$ into $G$, such that for any $x \in V(G)$, 
    $$\sum_{i \in I: Im (f_i) \ni x} w_{F_i}(f_i^{-1}(x)) \leq 1.$$
    In other words, for each $x \in V(G)$ the total weight in $(F_i)_{i\in I}$ of all vertices mapped to $x$ is at most 1. For now, we only consider the case $\cF=\{H\}$, and thus refer to $H$-packings\footnote{The terms fractional packing, fractional tiling and partial fractional factor are also often used in the literature.}.
    For a weighted graph $H$ and $\phi > 0 $, denote by $\phi\ltimes H$ the weighted graph obtained by multiplying all the vertex weights of $H$ by $\phi$.

    The following theorem states that if $G$ is `sufficiently far' from the extremal example in the Shokoufandeh--Zhao theorem, then an $H$-packing with `constant residue' can be found, even under a relaxed minimum-degree assumption.

        \begin{thm}\label{t:sz-stability}
        Let $H$ be a graph with critical chromatic number $\chi_{cr}(H)$.  There are integers $b,C>0$ such that for all $\gamma>0$, there exists a constant $\delta >0$ such that the following holds for sufficiently large $n$. If $G$ is an $n$-vertex graph with $\delta(G)\geq \left(1-\frac{1}{\chi_{cr(H)}}-\delta\right)n$ in which every vertex set of size $\frac{n}{\chi_{cr}(H)}$ spans at least $\gamma n^2$ edges, then $G$ contains a $(b^{-1} \ltimes H)$-packing $(f_i)_{i \in I}$ with 
         $$n- |I||V(H)|b^{-1}\leq C.$$\end{thm}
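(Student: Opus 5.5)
We prove Theorem~\ref{t:sz-stability} by the usual regularity-and-fractional-tiling scheme. First apply Szemer\'edi's regularity lemma to $G$ with a parameter $\eps\ll\delta\ll\gamma$. This gives a reduced graph $R$ on $k$ clusters. Then $\delta(R)\geq(1-\frac1{\chi_{cr}(H)}-2\delta)k$ after the standard cleaning, and $R$ inherits the non-extremality: every set of $k/\chi_{cr}(H)$ clusters spans at least $\frac{\gamma}{2}k^2$ edges of $R$. Write $\chi=\chi(H)$ and let $\sigma$ be the smallest colour class over proper $\chi$-colourings, so that $\chi_{cr}(H)=(\chi-1)h/(h-\sigma)$ with $h=|V(H)|$. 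Let $B$ be the complete $\chi$-partite graph with $\chi-1$ parts of size $h-\sigma$ and one part of size $(\chi-1)\sigma$. Then $B$ has critical chromatic number $\chi_{cr}(H)$ and admits a perfect $H$-tiling, since blow-ups of $H$ with a rotating small colour class tile it. The goal on the reduced graph is a \emph{perfect fractional} $B$-tiling (or $K_\chi$-tiling with prescribed weights) of $R$.

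The key step, and the main obstacle, is to show that a graph $R$ with this minimum degree and the $\gamma$-sparseness-free property has a perfect fractional tiling by weighted copies of $K_\chi$. Here the weights on each copy are proportional to the part sizes of $B$, or more flexibly by a small family $\cF$ of weighted cliques interpolating between $B$-proportions and balanced $K_\chi$. I would argue via LP duality (Farkas): if no perfect fractional $\cF$-tiling exists, there is a weight function $y:V(R)\to\R$ with $\sum_v y(v)>0$ but $\sum_{v} w_F(v)y(f(v))\leq 0$ for every embedded $F\in\cF$. Ordering vertices by $y$ and using the minimum degree to greedily build cliques $K_\chi$ whose vertices are "high" in $y$, one derives a contradiction unless there is a set of about $k/\chi_{cr}(H)$ vertices carrying nearly no edges, i.e.\ an almost independent set. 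This is excluded by hypothesis. The $\delta$-slack in the minimum degree is absorbed precisely because non-extremality gives $\Omega(\gamma)$ extra room. Alternatively, one could follow Shokoufandeh--Zhao's proof structure (Koml\'os' tiling theorem gives an almost perfect $B$-tiling at degree $1-1/\chi_{cr}$). In that route, the non-extremality is used to redistribute leftover weight via local "switchings" between a $B$-copy and adjacent vertices, making the fractional tiling perfect. I expect the LP/switching analysis handling the $-\delta$ deficit to be the hardest part.

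Given a perfect fractional tiling of $R$, transfer it to $G$ as follows. Each weighted copy in $R$ corresponds to $\eps$-regular pairs of positive density between clusters. Splitting each cluster proportionally to the weights, the blow-up lemma (or simply greedy embedding in regular pairs) yields disjoint copies of $H$ covering all but $O(\eps n)$ vertices of $G$. Finally, reduce the $O(\eps n)$ leftover to a constant $C$ by exploiting fractionality with $b^{-1}$-weighted copies. Each leftover vertex $v$ has degree about $(1-\frac1{\chi_{cr}})n$ into the tiled part. Using this, we reroute: remove a little weight from many existing copies and add $b^{-1}\ltimes H$ copies through $v$ and their vertices, with $b$ chosen depending only on $H$ so that weights balance exactly. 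Divisibility obstructions only leave a bounded number of vertices uncovered, giving $n-|I|hb^{-1}\le C$.
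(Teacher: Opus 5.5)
\textbf{There is a genuine gap.} Your route (regularity, a fractional tiling of the reduced graph, blow-up, then a clean-up) is quite different from the paper's, and the gap sits exactly where the theorem has content.

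The decisive step is the last one: going from a small linear residue to one bounded by a constant. This is precisely what separates the theorem from the asymptotic version of Hladk\'y, Hu and Piguet. Regularity leaves up to $\eps n$ vertices outside the structure you tile: the exceptional set, vertices deleted for super-regularity, and rounding of cluster pieces. Your rerouting gives no mechanism for covering them. Putting a leftover vertex $v$ into copies of $b^{-1}\ltimes H$ overloads their other vertices. Lowering the weight of an existing copy to compensate uncovers all $|V(H)|-1$ of its other vertices, so the deficit merely moves. Making this terminate requires an absorbing structure, which you never construct.

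You must also keep $C$ independent of $\gamma$. Any clean-up that leaves $O(1)$ vertices per cluster or per tuple already fails, because the number of clusters depends on $\eps$ and hence on $\gamma$.

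Finally, the step you yourself call the main obstacle is not established. This is the fractional tiling of $R$ when $\delta(R)$ is only $(1-\frac1{\chi_{cr}(H)}-2\delta)k$. It rests on a heuristic (``greedily build high-$y$ cliques'') that does not show how non-extremality beats the degree deficit. In the paper, the corresponding statement is the entire extremal Lemma~\ref{l:6.2chr}.

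\textbf{How the paper proceeds.} The paper avoids both problems by never passing to a reduced graph; Lemma~\ref{l:6.2chr} is applied to $G$ itself.
\begin{itemize}
\item The edge condition rules out an independent set of size $(\frac sr-\gamma')n$ for $\gamma'<\gamma$. Hence $G$ contains vertex-disjoint copies of $K_{m+1},Q_1^{(m)},\dots,Q_m^{(m)}$ covering all but $C=C(r)$ vertices.
\item Each of these bounded-size gadgets has an exact $\frac1b\ltimes T$-factor (Lemma~\ref{l:factoring-Qh}, Corollary~\ref{cor:T_packing}).
\item $T$ is converted into $H$ formally via the bottle graph $B(m,r,t)$. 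So the residue is just the $C$ uncovered vertices, and nothing needs cleaning up.
\end{itemize}

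The work is in Lemma~\ref{l:6.2chr}. Take a $\{K_1,\dots,K_{m+1},Q_1,\dots,Q_m\}$-factor of lexicographically maximal index. Suppose copies of some $K_j$ with $j\le m$ cover at least $C/m$ vertices, and take $j$ minimal. Switchings, together with K\H{o}v\'ari--S\'os--Tur\'an (used to assemble new copies of $Q_j$), show that almost all vertices lie in copies of $Q_h$ with $h\ge j$. Now consider the $L$- and $N$-vertices that can be replaced by at least $j+1$ vertices of the $K_j$'s. They form a $K_{j+1}$-free set of size about $j\frac sr n$, with minimum degree about $\frac{j-1}{j}$ of its size. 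By Andr\'asfai--Erd\H{o}s--S\'os this set contains an independent set of size about $\frac sr n$.

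The gadgets $Q_h$ are what make an integral cover possible at degree below the Hajnal--Szemer\'edi threshold for $K_{m+1}$, while still carrying exact $T$-weights.

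Note also that the paper's passage from $T$ to $H$ is formal: a weighted $K_{m+1}$ is tiled by $B(m,r,t)$, which has an $H$-factor. So no copy of $H$ is ever embedded injectively into $G$. Insisting on genuine copies, as you do, is what pushes you into regularity and the blow-up lemma, and thus into the clean-up problem.
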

        An asymptotic version of this statement was shown by Hladk\'y, Hu and Piguet~\cite[Theorem 1.3]{hhp19}. However, in the result of~\cite{hhp19}, the $H$-copies can cover all but $O(\delta n)$ vertices (where, crucially, $\delta$ also appears in the minimum-degree assumption on $G$).

        Theorem~\ref{t:sz-stability} is the key to our proof, and it might have further applications in extremal combinatorics. It is proved in Section~\ref{sec:sz-stability}.
        







    \subsection*{Proof ideas} 
    Let us mention a few proof ingredients, which also justifies treating the cases $s \geq r/2$ and $s< r/2$ separately. In short, the present proof requires substantial new results and ideas, including Theorem~\ref{t:sz-stability}. On the other hand, the present proof does not imply the result of~\cite{akr24}. Finally, part of the article~\cite{akr24} was devoted to developing tools for constructing `spread embeddings' in regular partitions and applying the recently proved (fractional) Kahn--Kalai conjecture~\cite{fknp21,pp24}. We now use these tools (Lemma~\ref{l:Z-conf.-Kt-factor}) as a~black box.

    Recall that $r = ms+t$ with $t<s$, and that we have to prove the statement for every deterministic graph $G$ with $\delta(G) \geq 1 - \frac sr$. We distinguish a few cases depending on the structure of $G$. Note that the extremal example is \textit{close to} the graph in Figure~\ref{fig:extremal_1}, where $|A| = \frac{sn}{r}-o(n)$, and $G[A]$ induces a sparse random-like graph~\cite{akr24}. 

    Most of the work and innovation lies in the \textit{non-extremal case}, where $G$ does not contain an almost independent set of size roughly $\frac{sn}{r}$. In this case, we apply Szemer\'edi's Regularity Lemma~\cite{szemeredi78}, and wish to find a useful substructure in the \textit{reduced} graph. The useful structure is essentially given by Theorem~\ref{t:sz-stability}.

    Even upon finding this structure, we are not done using standard techniques. Namely, a~lot of work is devoted to covering the leftover vertices and ensuring exactly the desired part sizes in the regular partition. This is also a major challenge in~\cite{bpss23,akr24}, and whenever exact minimum degree conditions are needed. The absorption strategy for resolving this issue is different from~\cite{akr24}, and it relies heavily on the fact that, since $\delta(G) > \frac 12$, the graph is connected, and any pair of vertices has many common neighbours (see, e.g.~Definition~\ref{def:absorber}).

    In the extremal case, the host graph $G$ contains an almost independent set $A$ of size roughly $\frac{sn}{r}$, and, due to the minimum degree condition, most edges between $A$ and $V\setminus A$ are present. Therefore, in constructing a $K_r$-factor, most of the copies of $K_r$ use exactly $s$ vertices from $A$, and the respective sets should span cliques in the random graph $\Gnp$. However, a number of copies of $K_{s+1}$ in $G[A]$ may also be required, which is the bottleneck for our construction and which in turn determines the threshold density $p_s$. 

     An additional difficulty in the present paper are \textit{intermediate cases}. For instance, suppose that $G$ consists of an independent set $A$ of size roughly $\frac{sn}{r}$, but the remaining graph $G[V \setminus A]$ does not have a particular structure. The main idea for resolving this case is an inductive approach: recalling that $G^p = G \cup G_{n,p}$, we can find a $K_{r-s}$-factor in $G^p[V \setminus A]$ (which roughly satisfies the \textit{correct} minimum-degree condition and is \textit{non-extremal}), and a $K_s$-factor in $G^p[A]$. The aim and the difficulty is then to construct those factors in a way that they can be combined into a $K_r$-factor. An additional hurdle is that $|A|$ can differ from $\frac{sn}{r}$. Most of these issues are addressed in Section~\ref{sec:pf-of-main}. In particular, a~stronger conclusion is needed in the \textit{non-extremal} case (Theorem~\ref{t:non-extremal-conforming}).
    
    Finally, a careful reader may notice that the number of possible cases is roughly $\lfloor r/s \rfloor$, and the case distinction (Lemma~\ref{l:4.3chr}) is rather refined.

    \subsection*{Paper structure} 
    The article is organised as follows. Preliminary definitions and results are stated in Section~\ref{sec:prelim}. Section~\ref{sec:pf-of-main} contains the main lemma statements which roughly correspond to the outlined case distinction, as well as a deduction of the main result, Theorem~\ref{t:main}. The afore-mentioned lemmas are proved in Sections~\ref{sec:non-extremal} and~\ref{sec:partition-lemma}. Section~\ref{sec:sz-stability} contains the proof of Theorem~\ref{t:sz-stability}, and further extremal consequences of this result.	
	
	
    \section{Preliminaries}
	   \label{sec:prelim}
    Let $m, s, t$ be integers such that $r = ms + t$ with $1\leq t \leq s$. Throughout the paper, we refer to the case $s = t = r/(m+1)$ as the \textit{singular} case, and in this case some steps are treated separately. We will often use an auxiliary parameter $g$ defined as
    \[ g = \begin{cases}
        0 & \text{if} \quad t<s; \\
        1 & \text{if} \quad t=s.
    \end{cases}\]

    We use the same notation as in \cite{akr24}. In particular, for any set $X$ and an integer $\ell$, by ${X \choose \ell}$ we denote the family of all $\ell$-element subsets of $X$.
    For a collection of graphs $\cC$, by $V(\cC)$ we denote the set of vertices of the graphs in $\cC$. Let $e_G(X,Y)$ be the number of edges in $G$ whose one endpoint lies in $X$ and the other in $Y$. If $X\cap Y\neq\emptyset$, then any edge $uv$ with $u,v\in X\cap Y$ is counted twice in $e_G(X,Y)$.

    \begin{definition}
        For a~vertex set $Z$, we call a collection $\cK$ (usually a $K_r$-factor) \textit{$Z$-conforming} if for all $K \in \cK$, $K$ contains at most one element of $Z$.
    \end{definition}

	
	
	
    \subsection{Probabilistic bounds}

    The following lemma is proved in \cite{akr24} for $s\geq 3$, and for $s=2$ follows from classic results on the independence number of $\Gnp$ (found, for instance, in~\cite[Theorem 7.4]{jlr00}).

    \begin{lemma}\label{l:ks-in-lin-sized}
        For $s \geq 2$ and $\eps >0$, let  $p = n^{-2/s+\eps}$. Then, w.h.p.\,any subset $S \subset V(\Gnp)$ with $|S| \geq \frac{n}{\log^2 n}$ contains a copy of $K_s$.
    \end{lemma}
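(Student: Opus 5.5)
We will prove Lemma~\ref{l:ks-in-lin-sized} by a first-moment argument over sets $S$ that carry no copy of $K_s$. It suffices to treat sets of size exactly $k = \lceil n/\log^2 n\rceil$, since a superset of a set containing $K_s$ also contains it. For $s=2$ the statement says that $\alpha(\Gnp) < n/\log^2 n$ when $p = n^{-1+\eps}$. This follows from the classical bound $\alpha(\Gnp) \le (2+o(1))\log(np)/p = O(n^{1-\eps}\log n)$, as in \cite[Theorem 7.4]{jlr00}. The direct union bound also works: the failure probability is at most $\binom nk(1-p)^{\binom k2}\le \exp\bigl(k\log n - pk^2/3\bigr)\to 0$, because $pk = n^{\eps}/\log^2 n \gg \log n$.

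For $s\ge 3$ the plain union bound over independent sets must be replaced by an estimate on the probability that a fixed $k$-set $S$ is $K_s$-free, and we will use Janson's inequality. Let $X_S$ count the copies of $K_s$ in $\Gnp[S]$. Then
\[\mu=\E X_S=\Theta\bigl(k^s p^{\binom s2}\bigr)=\Theta\bigl(k^s n^{-(s-1)+\eps\binom s2}\bigr).\]
Janson gives $\pr{X_S=0}\le \exp\bigl(-\mu^2/(2\Delta')\bigr)$ with $\Delta'=\sum_{j=2}^{s} \Theta\bigl(k^{2s-j}p^{2\binom s2-\binom j2}\bigr)$, where the terms are grouped by the size $j$ of the overlap of two copies. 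Hence
\[\mu^2/\Delta' = \Omega\Bigl(\min_{2\le j\le s} k^{j}p^{\binom j2}\Bigr) = \Omega\Bigl(\min_j \Phi_j\Bigr),\]
where $\Phi_j$ is the expected number of copies of $K_j$ inside $S$.

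The main step is to check that each $\Phi_j$ is much larger than $k\log n$, which is what the union bound $\binom nk\le e^{k\log n}$ requires. We have $\Phi_j/k = k^{j-1}p^{\binom j2} = (kp^{j/2})^{j-1}$. With $k=n/\log^2 n$ and $p^{j/2}= n^{-j/s+\eps j/2}$, this gives $kp^{j/2}= n^{1-j/s+\eps j/2}/\log^2 n$. For $j<s$ this is polynomially large. For $j=s$ it equals $n^{\eps s/2}/\log^2 n$, which is also polynomially large. Therefore $\Phi_j/k\ge n^{c\eps}$ for all $2\le j\le s$ and some $c>0$, and the total failure probability is at most $\exp\bigl(k\log n - \Omega(k n^{c\eps})\bigr)=o(1)$.

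The only delicate point is the Janson computation, namely controlling the worst overlap term uniformly in $j$. The calculation above shows that the exponent $-2/s+\eps$ leaves polynomial room at every $j$, including $j=s$. The $\log^2 n$ loss in $k$ is harmless because it only costs polylogarithmic factors against the polynomial gain $n^{c\eps}$.
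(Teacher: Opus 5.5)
Your proof is correct. After reducing to sets of size exactly $k=\lceil n/\log^2 n\rceil$, Janson's inequality gives $\pr{X_S=0}\le \exp\bigl(-\Omega(\min_{2\le j\le s}k^jp^{\binom j2})\bigr)$; since $1-j/s+\eps j/2\ge \eps$ for all $2\le j\le s$, you get $k^{j-1}p^{\binom j2}=(kp^{j/2})^{j-1}\ge n^{\eps}/\log^2 n\gg \log n$, which beats the union bound $\binom nk\le e^{k\log n}$ (for $s=2$ this reduces to your direct estimate $\binom nk(1-p)^{\binom k2}=o(1)$). The paper gives no argument of its own here, deferring to \cite[Theorem 7.4]{jlr00} for $s=2$ and to \cite{akr24} for $s\ge 3$; your computation is the standard first-moment/Janson argument for such statements, written out self-contained and uniformly in $s$.
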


    The next lemma is a well-known consequence of Janson's Inequality.
    
    \begin{lemma}
	\label{l:akr:ks-s-everywhere}
        Let  $s$ be an integer, and $\nu >0$. Let $V$ be an $n$-vertex set, and let $\cC$ be a~family of $s$-element subsets of $V$ with $|\cC|\geq \nu n^{s}$. For $p = Cn^{-2/s}$ and $C$ sufficiently large, the probability that no member of $\cC$ induces a copy of $K_s$ in $G(V,p)$ is at most $e^{-c n^2 p}$, where $c$ is a constant depending on $s$.
    \end{lemma}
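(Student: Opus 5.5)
We apply Janson's inequality to the random variable $X$ counting the members of $\cC$ that span a copy of $K_s$ in $G(V,p)$. For each $S\in\cC$ let $B_S$ be the event that all $\binom s2$ pairs inside $S$ are edges. The events $B_S$ are increasing events determined by independent edge indicators, so Janson's inequality gives
\[
\pr{X=0}\le \exp\Bigl(-\mu+\tfrac{\Delta}{2}\Bigr)
\qquad\text{and}\qquad
\pr{X=0}\le \exp\Bigl(-\tfrac{\mu^2}{2\Delta}\Bigr)\ \text{ when }\Delta\ge\mu .
\]
Here $\mu=\E X$, and $\Delta=\sum \pr{B_S\cap B_{S'}}$ is taken over ordered pairs $S\neq S'$ with $|S\cap S'|\ge 2$.

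First we bound $\mu$ from below. We have $\mu=|\cC|p^{\binom s2}\ge \nu n^s C^{\binom s2} n^{-(s-1)}=\nu C^{\binom s2} n$. Since $n^2p=Cn^{2-2/s}$, the target bound $e^{-cn^2p}$ is much stronger than $e^{-\Theta(\mu)}$ when $s\ge 3$. So the first form of Janson's inequality is not enough, and the second form must be used. We therefore need $\mu^2/\Delta=\Omega(n^2p)$.

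Next we bound $\Delta$ from above. Split according to $j=|S\cap S'|\in\{2,\dots,s-1\}$. Bounding the number of pairs crudely by all pairs of $s$-sets in $V$ sharing $j$ vertices, and noting that $\cC$ only reduces this count, we get
\[
\Delta_j\le n^{2s-j}\,p^{2\binom s2-\binom j2}.
\]
Then
\[
\frac{\mu^2}{\Delta_j}\ge \nu^2\, n^{j}\,p^{\binom j2}=\nu^2\, n^2p\cdot n^{j-2}p^{\binom j2-1}.
\]
The factor $n^{j-2}p^{\binom j2-1}$ equals $1$ at $j=2$. For $2\le j\le s$ it is at least a constant, because $\log_n$ of $n^{j}p^{\binom j2}$ equals $j-\frac{j(j-1)}{s}+O(1/\log n)=\frac{j(s-j+1)}{s}$ up to the $C$-factor. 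This quantity is concave in $j$, takes the value $2(s-1)/s$ at $j=2$ and $s$ at $j=s-1$, and so is at least the $j=2$ value throughout $2\le j\le s-1$.

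For $s\ge 3$ and $n$ large, the minimum of $n^jp^{\binom j2}$ over $2\le j\le s-1$ is therefore attained at $j=2$ up to constants depending on $C$, where it equals $n^2p$. Hence $\Delta\le \mu^2/(c'\nu^2 n^2p)$, and in particular $\Delta\ge\mu$ whenever $n^2p\lesssim\mu$. The second form of Janson then yields $\pr{X=0}\le e^{-c n^2p}$. In the other case, $\Delta<\mu$, the first form gives $e^{-\mu/2}$. We have $\mu=\Theta(n)$, which is at least $n^2p$ only when $s=2$.

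For $s=2$ there is no overlapping pair with $|S\cap S'|\ge 2$, so $\Delta=0$. We then get directly $\pr{X=0}=(1-p)^{|\cC|}\le e^{-\nu n^2p}$.

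The main obstacle is to check that the $j=2$ term dominates $\Delta$, that is, that the exponent $j(s-j+1)/s$ is minimised at $j=2$ in the relevant range. This is exactly the concavity computation above. The constant $c$ then comes out as roughly $\nu^2/(2\cdot 2^{2s})$, which depends on $\nu$ as well as $s$.
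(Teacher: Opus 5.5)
Janson's inequality is the right tool, and it is what the paper alludes to; it gives no proof. Your estimate $\mu^2/\Delta_j\gtrsim \nu^2 n^j p^{\binom j2}\gtrsim \nu^2 n^2p$ for $2\le j\le s-1$ is also correct. There are two slips in it:
\begin{itemize}
\item The exponent $j(s-j+1)/s$ is symmetric about $(s+1)/2$, so at $j=s-1$ it equals $2(s-1)/s$, not $s$.
\item The factor $n^{j-2}p^{\binom j2-1}$ is not bounded below at $j=s$. This is harmless, since only $j\le s-1$ occurs.
\end{itemize}

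\textbf{The gap is in the conclusion.} Every form of Janson's inequality has exponent at most $\mu$: the second form needs $\Delta\ge\mu$, and then $\mu^2/(2\Delta)\le\mu/2$. For $s\ge 3$ you have $\mu=\Theta(n)$ while $n^2p=Cn^{2-2/s}\gg n$. Your own bound $\Delta\lesssim\mu^2/(n^2p)=o(\mu)$ therefore shows that you are always in the case $\Delta<\mu$. The sentence deducing $\Delta\ge\mu$ from an upper bound on $\Delta$ is a non sequitur. In that case you only get $e^{-\mu/2}=e^{-\Theta(n)}$, which you observe yourself but leave unresolved.

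It cannot be resolved. The events ``$S$ does not span $K_s$'' are decreasing, so Harris's inequality gives
\[
\pr{X=0}\;\ge\;\prod_{S\in\cC}\bigl(1-p^{\binom s2}\bigr)\;\ge\;e^{-2\mu}\;\ge\;e^{-2C^{\binom s2}n}.
\]
For $s\ge3$ and large $n$ this exceeds $e^{-cn^2p}=e^{-cCn^{2-2/s}}$ for every fixed $c>0$. So the statement with exponent $n^2p$ is false for $s\ge 3$, and no proof of it exists. Your argument is complete only for $s=2$, where $\Delta=0$ and $\pr{X=0}=(1-p)^{|\cC|}\le e^{-\nu n^2p}$.

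\textbf{What is true.} The proof of Lemma~\ref{l:5.5chr-stronger} invokes this lemma only as a failure bound $e^{-cn}$, and that bound is what holds: $\pr{X=0}\le e^{-cn}$. Your computation gives it directly from the first form. For fixed $S\in\cC$ there are at most $\binom sj n^{s-j}$ sets $S'$ with $|S\cap S'|=j$, hence
\[
\Delta_j\;\le\;\mu\binom sj n^{s-j}p^{\binom s2-\binom j2}\;=\;\mu\cdot O\bigl(n^{-(j-1)(s-j)/s}\bigr)\;=\;o(\mu)\qquad(2\le j\le s-1).
\]
Therefore $\pr{X=0}\le e^{-\mu+\Delta/2}\le e^{-\mu/2}\le e^{-\nu C^{\binom s2}n/2}$. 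You should state and prove this corrected version rather than try to force the exponent $n^2p$.
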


    The following lemma is a generalisation of the previous one, in which a slightly more complex subgraph is sought in $\Gnp$.
    
    \begin{lemma}
        \label{l:many_ks_and_one_ks-ks}
        Let $s,q\ge 2$ be integers, and let $\nu>0$.  
Define $F_q$ to be the graph on vertex set $[qs-1]$ in which the sets
$\{1,\dots,s\}$ and $\{s,\dots,2s-1\}$ induce copies of $K_s$, and for each
$i=3,\dots,q$, the set $\{(i-1)s,\dots,is-1\}$ induces a copy of $K_s$,
with no other edges. (Thus $F_q$ consists of $q$ copies of $K_s$, exactly two
sharing exactly one vertex, and all others pairwise vertex-disjoint.)

Let $V$ be an $n$-vertex set and let $\cC\subset V^{qs-1}$ satisfy
$|\cC|\ge \nu n^{qs-1}$. For $p=Cn^{-2/s}$ and $C$ sufficiently large, the probability that no member of $\cC$ induces a copy of $F_q$ in $G(V,p)$ is at most $e^{-c n^2 p}$, where $c=c(s,q)>0$ is a constant.
    \end{lemma}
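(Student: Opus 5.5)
We will prove Lemma~\ref{l:many_ks_and_one_ks-ks} with Janson's inequality, following the standard proof of Lemma~\ref{l:akr:ks-s-everywhere}. The first step is a reduction. The graph $F_q$ has $qs-1$ vertices and $e(F_q)=q\binom{s}{2}$ edges. For $p=Cn^{-2/s}$ we get
\[
n^{qs-1}p^{q\binom s2}=C^{q\binom s2}\,n^{qs-1-q(s-1)}=C^{q\binom s2}n^{q-1},
\]
which is much smaller than $n^2p=Cn^{2-2/s}$ once $q$ is large. So we cannot apply Janson to all of $\cC$ and hope the expected count exceeds $n^2p$. Instead we thin $\cC$ down to a simpler structure.

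The plan is to pass to the components. The two $K_s$-copies sharing a vertex form a ``bowtie'' $B$ on $2s-1$ vertices, and the remaining $q-2$ copies of $K_s$ are disjoint from it. By averaging, find a set $\cB\subset V^{2s-1}$ of bowtie placements, and for each $b\in\cB$ a family $\cC_b$ of completions, such that the following hold:
\begin{itemize}
\item[(a)] $|\cB|\ge \nu' n^{2s-1}$;
\item[(b)] for each $b\in\cB$, at least $\nu' n^{(q-2)s}$ tuples extend $b$ to a member of $\cC$.
\end{itemize}
Then iterate. First reveal the edges of $G(V,p)$ in independent rounds, splitting $p$ into $q$ pieces $p_1,\dots,p_q$ with each $p_j\ge (C/q)n^{-2/s}$ and $1-p=\prod_j(1-p_j)$. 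In round one, use Janson to find a bowtie $b\in\cB$ appearing in $G(V,p_1)$. Its expected count is $\mu\approx n^{2s-1}p^{2\binom s2}=C^{s(s-1)}n^{2s-1-2(s-1)}=C^{s(s-1)}n$.

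This is the main obstacle: $\mu=\Theta(n)$ is far smaller than $n^2p$, so a naive Janson bound only gives failure probability $e^{-\Theta(n)}$. To fix this, we use instead the version of Janson in which $\mu^2/\bar\Delta$ governs the exponent, and compute $\bar\Delta$. The dominant overlaps are pairs of bowties sharing exactly one edge, which contribute $\mu^2/(n^2p)$. All other overlaps (sharing a subclique, or the centre) contribute less, because $K_s$ is strictly balanced at density $p=Cn^{-2/s}$ with $C$ large. Hence $\mu^2/\bar\Delta\ge c\,n^2p$, and Janson gives failure probability $e^{-cn^2p}$, provided that $\bar\Delta\ge\mu$ in the relevant range. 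The bulk of the technical checking lies in verifying that the one-edge overlap term dominates for every subgraph of the bowtie and with every exponent.

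The other rounds are easier. Once a specific $b$ is fixed, extend by one $K_s$ per round using Lemma~\ref{l:akr:ks-s-everywhere} applied to the $s$-sets projected from the remaining completions. After fixing $b$ and the cliques found so far, the surviving completions still number $\ge\nu'' n^{js}$; we ensure this by choosing the thinned family hereditarily via a supersaturation/averaging cleanup beforehand. Each round then fails with probability at most $e^{-cn^2p}$. Since the rounds use independent edge sets and the final union is contained in $G(V,p)$, a union bound over the $q$ rounds gives the overall bound $q e^{-cn^2p}\le e^{-c'n^2p}$.

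The one subtlety in conditioning is that the later family depends on the earlier outcome. We handle it by requiring the density property for every $b\in\cB$, and similarly for every prefix, so the bound holds uniformly regardless of which $b$ appeared.
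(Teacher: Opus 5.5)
Your architecture is the paper's induction run in the opposite order: you find the bowtie by Janson and then peel off the disjoint $K_s$'s, while the paper peels off a $K_s$ first (via Lemma~\ref{l:akr:ks-s-everywhere} and pruning) and quotes the bowtie base case from \cite{akr24}. The ordering is immaterial. \textbf{The genuine gap is your round-one Janson estimate, which cannot hold.}

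Why it cannot hold:
\begin{itemize}
\item By Harris's inequality, $\Pr[\text{no bowtie from }\cB]\ge\prod(1-\Pr[A_b])=e^{-(1+o(1))\mu}$, and $\mu=\Theta(n)$. Equivalently, $\bar\Delta\ge\mu$ (the diagonal terms, exactly the condition you impose) forces $\mu^2/\bar\Delta\le\mu=\Theta(n)$. For $s\ge3$ this is $o(n^2p)=o(Cn^{2-2/s})$. So $\mu^2/\bar\Delta\ge cn^2p$ would need $\bar\Delta=O(n^{2/s})\ll\mu$, which is impossible.
\item Your overlap bookkeeping is reversed. The exponent is governed by $\min_H n^{v_H}p^{e_H}$ over subgraphs $H$ of the bowtie with an edge. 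At $p=Cn^{-2/s}$ we have $n^{v_H}p^{e_H}=C^{e_H}n^{v_H-2e_H/s}$ with $v_H-2e_H/s\ge1$. Equality holds for $H=K_s$ and for the whole bowtie, while $H=K_2$ gives the much larger $Cn^{2-2/s}$.
\item Hence pairs of bowties sharing a whole $K_s$ contribute of order $\mu^2/n$ to $\bar\Delta$, and they dominate the one-edge overlaps, which contribute only $\mu^2/(n^2p)$.
\item Strict balancedness of $K_s$ does not help. Here $p=Cn^{-1/m_1(K_s)}$, which is precisely the regime where the full clique is the bottleneck.
\item Your opening comparison is also inverted: $C^{q\binom s2}n^{q-1}\gg n^2p$ for $q\ge3$. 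In any case the size of $\mu$ is not the relevant quantity.
\end{itemize}

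No refinement can rescue the target bound when $s\ge3$. If $G(V,p)$ is $K_s$-free, then no member of $\cC$ spans $F_q$. Harris's inequality then gives
\[
\Pr[\text{no member of }\cC\text{ spans }F_q]\ \ge\ \bigl(1-p^{\binom s2}\bigr)^{\binom ns}\ \ge\ e^{-2C^{\binom s2}n/s!},
\]
which exceeds $e^{-cn^2p}=e^{-cCn^{2-2/s}}$ for large $n$.

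So the exponent $n^2p$ is attainable only for $s=2$, where $n^2p=Cn$. The same example shows that the $e^{-cn^2p}$ bound in Lemma~\ref{l:akr:ks-s-everywhere} is likewise valid only for $s=2$. That lemma is what you, and the paper's sketch, use for the remaining rounds.

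What your scheme does prove, once the overlaps are handled correctly, is weaker but sufficient. Each round then has $\mu^2/\bar\Delta=\Omega(\nu'^2\min_H n^{v_H}p^{e_H})=\Omega(\nu'^2C^{\binom s2}n)$, where $\nu'$ is the pruned density. This yields failure probability $e^{-c'n}$ with $c'$ growing with $C$. That is the correct form of the statement for $s\ge3$, and it suffices for the union bound over $2^{O(n)}$ choices in Claim~\ref{c:p1p2}.
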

    \begin{proof}
        [Proof sketch.] We apply induction on $q$, noting that $F_{q+1}$ is isomorphic to the disjoint union of $F_q$ and $K_s$.
        
        Suppose first that $q=2$. For this case, the lemma was essentially proved in~\cite[Lemma 9.2]{akr24}. It is an application of Janson's Inequality, and even though the setup in~\cite{akr24} is slightly more specific, the calculation extends straightforwardly to the present statement.

        Now assume that the Lemma holds for $q$, and let $\cC \subset V^{(q+1)s-1}$ be as in the statement. Using double exposure, we sample $\Gnp$ as $G_1\cup G_2$, where $G_1, G_2 \sim G_{n,p_1}$ are independent and $1-p = (1-p_1)^2$. Applying a standard pruning argument and Lemma~\ref{l:akr:ks-s-everywhere}, w.h.p.~$G_1$ contains an $s$-tuple $\textbf{v}$ 
        which induces a $K_s$-copy and extends to at least $\nu'n^{qs-1}$ members of $\cC$. Then we can apply the induction hypothesis to this subcollection of $\cC$ extending $\textbf{v}$ and the graph $G_2$, finding a copy of $F_{q+1}$ in $G_1 \cup G_2$, as desired.
    \end{proof}

    The next lemma, proved in \cite{akr24}, is the key ingredient in the main proof and turns out to be the bottleneck for the threshold probability $p_s$ when $s\geq 3$.
	
     \begin{lemma}[\cite{akr24},        Lemma~4.3]\label{l:5.5chr}
        For all $s\geq 2$, there are constants $\gamma, C>0$ such that the following holds. Let $G$ be an $n$-vertex graph with minimum degree $h\leq \gamma n$. W.h.p.\,the graph $G^p = G \cup \Gnp$, with $p = Cp_s(n)$, contains a collection of $h$ vertex-disjoint copies of $K_{s+1}$. 
    \end{lemma}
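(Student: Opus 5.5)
Lemma~\ref{l:5.5chr} is quoted from \cite{akr24}, so the plan below only reconstructs how I would prove it. I would split by how $h$ compares with $n p_s$, and find the $h$ copies of $K_{s+1}$ greedily, each time using at least one deterministic edge of $G$. The point is that a copy of $K_{s+1}$ consisting of a vertex $v$, $s$ of its $G$-neighbours forming a $K_s$ in $\Gnp$, and the $s$ random edges... is too expensive. A cheaper configuration uses more $G$-edges. Since $\delta(G)\ge h$, the set $E(G)$ has at least $hn/2$ edges. Consider a $K_{s+1}$ on vertices $u,v,w_1,\dots,w_{s-1}$ in which $uv\in E(G)$ and the remaining $\binom{s+1}{2}-1$ edges come from $\Gnp$. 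The expected number of such copies is about $hn\cdot n^{s-1}p^{\binom{s+1}{2}-1}$.

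\textbf{Calibrating $\varphi(s)$.} We need at least $h$ disjoint copies while $h$ ranges down to $1$. The worst case is small $h$, where $G$ can be a sparse, structured graph, for instance a disjoint union of copies of $K_{h,h}$ or a random-like $h$-regular graph. For $s\ge3$, the exponent $\varphi(s)=\frac{2s}{(s-1)(s+2)}$ satisfies $\varphi(s)\bigl(\binom{s+1}{2}-1\bigr)=\varphi(s)\frac{(s-1)(s+2)}{2}=s$. So $n^{s-1}p^{\binom{s+1}{2}-1}=C'n^{-1}$, and the expected number of copies through a fixed $G$-edge is $\Theta(1/n)$. The expected total number of copies is therefore $\Theta(C'h)$, which is exactly the right order. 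This confirms the intended structure: one $G$-edge plus random edges, with $C$ large making the count a large multiple of $h$.

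\textbf{Step 1: the second moment.} I would show that w.h.p.\ there are $\Omega(C'h)$ such copies, with few overlapping pairs, via Janson or the second moment. Overlaps through shared random vertices are controlled because $\varphi(s)$ is chosen so that $K_{s+1}$ minus an edge is balanced at this density. Sharing $G$-edges costs a factor of at most $h/n\le\gamma$, which is why $h\le\gamma n$ is needed.

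\textbf{Step 2: disjointness.} Copies found this way could concentrate on a few vertices, for example on a vertex of huge $G$-degree. I would therefore proceed greedily. Remove a found copy, noting that at most $s+1$ vertices are deleted and the minimum degree drops by at most $s+1$. Then restrict to a fresh random layer via multiple exposure with $O(1)$ rounds, each handling a constant fraction of the $h$ copies. To make this work with a constant number of rounds, I would instead pick a subgraph $G'\subseteq G$ that is $h'$-regular-ish with $h'\ge h/2$ and has $\Theta(hn)$ edges spread out. Within it, the standard deletion method gives a maximum collection of disjoint copies of size at least (number of copies) minus (number of intersecting pairs), which is $\ge h$.

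\textbf{Case $s=2$.} Here $p=C\log n/n$, and we need $h$ disjoint triangles each using a $G$-edge $uv$ with a common random neighbour $w$. Matching arguments work: a $G$-matching of size $\Omega(h)$ exists since $\delta\ge h$. Each edge independently gets a common random neighbour with probability about $np^2\approx C^2\log^2n/n$, which is too small for $h=1$. For $h=1$, though, one needs just one triangle among $\ge n/2$ edges, and some vertex has $\ge1$ random neighbour pair in its $G$-neighbourhood. The $\log n$ handles the union over vertices.

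\textbf{Main obstacle.} I expect the main difficulty to be small $h$ (constant or polylogarithmic) combined with adversarial $G$, where concentration is weak. There I would use Janson's lower-tail inequality directly on the family of $(s+1)$-sets containing a $G$-edge, with $\Delta$ bounded using $h\le\gamma n$.
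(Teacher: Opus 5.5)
Your plan misidentifies where the difficulty lies, and it has two genuine gaps. Write $k=\binom{s+1}{2}-1$, so that $k\varphi(s)=s$.

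\emph{Small $h$.} An argument that only counts cliques containing a $G$-edge cannot succeed w.h.p.\ when $h$ is bounded. Take $h=1$ and $G$ a perfect matching: there are only $\Theta(C^{k})$ such cliques in expectation. These are increasing events in $\Gnp$, so Harris's inequality shows that with probability at least $e^{-\Theta(C^{k})}$ there is none, and this is a positive constant independent of $n$. Janson's inequality on this family, your proposed remedy for the ``main obstacle'', therefore leaves a constant failure probability, and no argument confined to this family can do better. The missing ingredient is easy. For $s\ge3$ the random graph alone w.h.p.\ contains $\Theta(n^{s+1}p^{\binom{s+1}{2}})=\Theta(C^{k}np)\rightarrow\infty$ copies of $K_{s+1}$, almost all pairwise vertex-disjoint since $K_{s+1}$ is strictly balanced. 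For $s=2$ there are $\Theta(\log^3 n)$ nearly disjoint triangles. This settles all $h\le c\,np$ at once. In fact, for $s\ge3$ we have $1-\varphi(s)=\frac{(s+1)(s-2)}{(s-1)(s+2)}$. Hence for an $h$-regular $G$ the purely random count, the one-$G$-edge count $\Theta(C^{k}h)$ and the two-$G$-edge count $\Theta(C^{k}h^{2}/(np))$ all balance at $h\asymp np$; that is where the lemma is tight, not at small $h$. Your $s=2$ paragraph also fails. A matching of size $\Theta(h)$ yields only about $hC^{2}\log^{2}n/n\ll h$ triangles, and for $h=1$ (e.g.\ a perfect matching) the $G$-neighbourhoods contain no pairs.

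\emph{Disjointness.} Your Step~1 estimate (sharing $G$-edges costs a factor $h/n$) and your Step~2 both tacitly assume that all degrees are $O(h)$, which is false in general. Take $G=K_{h,n-h}$ with $h$-vertex class $B$, and $C^{k}np\ll h\le\gamma n$. Every $G$-edge meets $B$, so among any $h$ disjoint copies all but $O(np)$ must use distinct vertices of $B$. But each $b\in B$ lies in only $\Theta(C^{k})$ of your one-$G$-edge configurations in expectation. So typically an $e^{-\Theta(C^{k})}$-fraction of $B$ lies in none of them, and these configurations do not even contain $h$ disjoint members. Moreover, for whatever copies you count, the pairs sharing a vertex of $B$ outnumber the copies, so ``copies minus intersecting pairs'' yields nothing. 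The regular-ish subgraph with $\Theta(hn)$ edges does not exist either, since a subgraph of $K_{h,n-h}$ of maximum degree $O(h)$ has only $O(h^{2})$ edges. Your greedy variant guarantees only about $h/(s+1)$ copies.

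What is missing is a separate treatment of the set $U$ of vertices of degree at least $\eps n$. Such a $v$ is covered by $v$ together with a $K_s$ of $\Gnp$ inside $N_G(v)\setminus U$, which is exactly the configuration you discarded as too expensive. These copies can be found greedily, because w.h.p.\ every set of $\eps n/2$ vertices spans a $K_s$ in $\Gnp$ (cf.\ Lemma~\ref{l:ks-in-lin-sized}). If $|U|<h$, first find $h-|U|$ copies in $G-U$, whose minimum degree is at least $h-|U|$, and then add the $U$-copies greedily with $\gamma\ll\eps$, just as in the proof of Lemma~\ref{l:5.5chr-stronger}. Only in $G-U$, of maximum degree below $\eps n$ with (for $s\ge3$) $\eps C^{k}\ll1$, are one-edge configurations spread out enough for deletion. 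Even there, Markov's inequality gives only constant success probability. You need a genuine concentration bound for the maximum number of disjoint copies (e.g.\ Talagrand's inequality), and that is useful only when $h-|U|\rightarrow\infty$. So, once more, bounded $h-|U|$ must be handled by purely random copies.
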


    In the current paper we will need the following $Z$-conforming strengthening of Lemma~\ref{l:5.5chr}.
    
    \begin{lemma}\label{l:5.5chr-stronger}
        For all $s\geq 2$ and $\delta\ll\beta\ll 1/s$, there are constants $\gamma, C>0$ such that the following holds. Let $G$ be an $n$-vertex graph with minimum degree $h\leq \gamma n$. Let $Z\subset V(G)$ be such that\footnote{In the application, these will be vertices with degree less than $|A_i|-cn$ into some part $A_i$, where $c$ is some small constant like $c <\beta r^{-3}$. $\beta$ is the constant from the partitioning lemma.} $|Z| \leq \delta n$, and suppose that for $v \in Z$, $\deg_G(v) \geq \beta n$. W.h.p.\,the graph $G^p = G \cup \Gnp$, with $p = Cp_s(n)$, contains a collection of $h$ vertex-disjoint copies of $K_{s+1}$ such that each of those copies contains at most one vertex of $Z$.
    \end{lemma}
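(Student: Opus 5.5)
We reduce the statement to Lemma~\ref{l:5.5chr} by exploiting the small size of $Z$ and the large degrees of its vertices. The plan has three steps: treat $Z$ first, clean up the graph, then apply the black-box lemma on what remains.

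\textbf{Step 1: covering $Z$.} For each $v\in Z$ we have $\deg_G(v)\ge \beta n$, while $|Z|\le\delta n$ with $\delta\ll\beta$. So $v$ has at least $(\beta-\delta)n$ neighbours outside $Z$. We process the vertices of $Z$ greedily, but only up to $\min(h,|Z|)$ of them. For the current $v$, let $S_v = N_G(v)\setminus Z$ minus all vertices already used. At most $(s+1)\delta n$ vertices have been used, so $|S_v|\ge (\beta/2)n$.

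We then look for a copy of $K_s$ inside $\Gnp[S_v]$. Together with $v$ this gives a $K_{s+1}$ in $G^p$ containing exactly one vertex of $Z$.

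\textbf{Step 2: finding the copies of $K_s$.} Since $p_s\ge n^{-2/s+\eps}$ for a suitable small $\eps>0$ (note $\varphi(s)<2/s$ for $s\ge3$, and $\log n/n\gg n^{-1}$ for $s=2$), we can invoke Lemma~\ref{l:ks-in-lin-sized}. W.h.p.\ every set of size at least $n/\log^2 n$ contains a $K_s$ in $\Gnp$, and this holds simultaneously for all sets. Hence it applies to the adaptively chosen sets $S_v$, and the greedy procedure never fails.

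If $h\le |Z|$, we are already done after Step~1.

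\textbf{Step 3: the remaining copies.} Otherwise, let $U$ be the set of vertices used so far, with $|U|\le (s+1)\delta n$. We pass to $G' = G - (Z\cup U)$, which has $n'\ge (1-(s+2)\delta)n$ vertices. We need to find $h' = h-|Z|$ further copies of $K_{s+1}$ avoiding $Z$; these automatically contain no vertex of $Z$.

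The difficulty is that deleting $Z\cup U$ may destroy the minimum degree condition in $G'$.

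\textbf{Main obstacle.} The key point is that Lemma~\ref{l:5.5chr} needs minimum degree $h'$ in the host graph, not $h$. We would deal with this as follows.

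A vertex $x$ of $G'$ loses at most $|Z\cup U|\le (s+2)\delta n$ neighbours. There are two cases.

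If $h\ge 2(s+2)\delta n$, then $\delta(G')\ge h-(s+2)\delta n\ge h/2$. We apply Lemma~\ref{l:5.5chr} to $G'$ with $\Gnp[V(G')]$, which gives $\ge h/2$ disjoint copies. This is not quite enough, since we need $h - |Z|$. To fix this, we repeat the procedure iteratively: at each round remove the found copies and recompute. Alternatively, and more cleanly, we apply the lemma to $G'$ with a parameter $\tilde h = \delta(G')$ and note that $\tilde h\ge h-(s+2)\delta n$.

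Then the total count is $|Z| + \tilde h$, which may still fall short by $O(\delta n)$. To close this gap, we first apply Lemma~\ref{l:5.5chr} to $G-Z$, which has minimum degree at least $h-|Z|$. This yields $h-|Z|$ disjoint copies avoiding $Z$. We then use Step~1 only to cover $Z$-vertices with $K_{s+1}$'s whose $S_v$ avoids these copies.

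This requires $|S_v|\ge \beta n - |Z| - (s+1)h$ to be linear. That holds provided $\gamma\ll\beta$, which we may choose. So the order of steps is:
\begin{enumerate}
\item take $h-|Z|$ copies in $(G-Z)\cup\Gnp$ via Lemma~\ref{l:5.5chr} (with $\gamma$ small relative to $\beta$);
\item extend each of $\min(h,|Z|)$ vertices of $Z$ by a $K_s$ in its remaining neighbourhood using Lemma~\ref{l:ks-in-lin-sized}.
\end{enumerate}

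If $h< |Z|$, only the second step is needed.

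In this version the main obstacle disappears. What remains to check is the independence of the random graph between the two steps. This is fine, because Lemma~\ref{l:ks-in-lin-sized} is a uniform statement over all large sets, so both steps can use the same $\Gnp$.
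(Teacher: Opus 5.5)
Your final plan is exactly the paper's proof. When $|Z|<h$, apply Lemma~\ref{l:5.5chr} to $G-Z$, whose minimum degree is at least $h-|Z|$, and then greedily attach to each $v\in Z$ a $K_s$ inside $N_G(v)\setminus Z$ avoiding used vertices; when $|Z|\ge h$, do only the greedy step.

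The one slip is in justifying the greedy step for $s=2$. There $p=C\log n/n$ is not of the form $n^{-1+\eps}$, so Lemma~\ref{l:ks-in-lin-sized} does not literally apply. However, you only need every set of size $\beta n/2$ to contain an edge, and this holds w.h.p.\ by a first-moment bound on the independence number, or via Lemma~\ref{l:akr:ks-s-everywhere} and a union bound, as the paper does.
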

    
    \begin{proof}
        Assume first that $|Z|\geq h$. Then we may greedily find a collection of vertex-disjoint copies of $K_{s+1}$, each using exactly one vertex from $Z$. Indeed, we find copies of $K_{s+1}$ one-by-one. At the point of processing a vertex $v\in Z$, its number of neighbours in $G$ distinct from vertices in all previous $K_{s+1}$-copies is at least 
        \begin{equation}
                \beta n-|Z|(s+1)\geq n\left(\beta-(s+1)\delta\right)\geq \frac{\beta n}{2}
            \label{eq:Z-min-degree}
        \end{equation}
        for sufficiently small $\delta$. Thus, by Lemma~\ref{l:akr:ks-s-everywhere}, the probability that there is no $K_s$-copy in $N_G(v)\setminus Z$, vertex-disjoint from all previous copies, is at most $e^{-cn}$ for a sufficiently small constant $c>0$. Taking the union bound, the probability that we will fail at any stage of this process is $o(1)$.
        
        Suppose now that $|Z|<h$. Note that $G-Z$ has minimum degree at least $h-|Z|\leq\gamma n$, so we may apply Lemma~\ref{l:5.5chr} to $G-Z$, which yields a collection $\cC$ of $h-|Z|$ vertex-disjoint $K_{s+1}$-copies that are disjoint from $Z$. Now, as in the above argument (noting that $|V(\cC)|\leq \gamma (s+1)n\leq \beta n/4$, so a bound slightly weaker than \eqref{eq:Z-min-degree} holds), this collection can be greedily extended to a collection of size $h$.
    \end{proof}

    \begin{lemma}\label{l:Z-conf.-Kt-factor}
        For all $s\geq t\geq 2$ and $\delta \ll \beta \ll 1/s$, there is a constant $C>0$ such that the following holds. Let $G$ be an $n$-vertex graph, with $n$ divisible by $t$, and $Z\subset V(G)$ be a~subset with $|Z| \leq \delta n$ such that for any vertex $v\in Z$, $\deg_G(v) \geq \beta n$. W.h.p.~the graph $G^p=G\cup \Gnp$, with $p=Cp_s(n)$, has a $K_t$-factor in which each clique contains at most one vertex from $Z$.
    \end{lemma}

    \begin{proof}
        Using the same approach as in the proof of Lemma~\ref{l:5.5chr-stronger}, we first greedily find a~collection $\cC$ of vertex-disjoint $K_t$-copies, each using exactly one vertex from $Z$. We then cover the remaining vertices in $V(G)\setminus V(\cC)$ by a $K_t$-factor using only the edges in $\Gnp$. We can do this since $p$ is strictly above the threshold for a $K_t$-factor (see \cite{jkv08}, Theorem~2.1).
    \end{proof}	
	

    \subsection{Regularity}\label{sec:prelim-reg}
	

    In this section, we recall some standard terminology and results regarding regularity.  
    Let $G$ be a graph, and let $V_1, V_2 \subset V(G)$ be disjoint subsets of the vertices of $G$. For non-empty sets $X_1\subseteq V_1$, $X_2 \subseteq V_2$, we define the \emph{density of $G[X_1,X_2]$} as $d_G(X_1, X_2)\coloneq \tfrac{e_G(X_1,X_2)}{|X_1||X_2|}$.
    Given $\eps, d > 0$, we say that a pair $(V_1,V_2)$ is \emph{$(\eps,d)$-regular} in $G$ if for $i = 1,2$ and for all sets $X_i \subseteq V_i$  with $|X_i|\geq \eps |V_i|$, we have $| d_G(X_1,X_2)-d| < \eps$. We say that a pair $(V_1,V_2)$ is $(\eps,d^+)$-regular if it is $(\eps,d')$-regular for some $d'\geq d$. We often omit the subscript $G$ if it is clear from the context.

    We will use the following standard lemmas, which can be found, for example, in~\cite{abcd22,ks95}. We begin by recalling the Slicing Lemma.
	
    \begin{lemma}[\cite{abcd22}, Lemma~2.7]\label{l:slicing}
        Let $0 < \eps < \beta$, $d \leq 1$ and let $(V_1,V_2)$ be an $(\eps,d)$-regular pair. Then for any pair $(U_1, U_2)$ such that for $i=1,2$, $U_i\subset V_i$ with $|U_i| \geq \beta |V_i|$, the pair $(U_1,U_2)$ is $(\eps', d')$-regular with $\eps' = \max\{\eps/\beta, 2\eps \}$ and some $d'>0$ satisfying $|d' -d|\leq \eps$.
    \end{lemma}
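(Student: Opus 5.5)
The Slicing Lemma follows directly from the definition of regularity; no earlier result is needed. Let $(V_1,V_2)$ be $(\eps,d)$-regular and take $U_i\subseteq V_i$ with $|U_i|\geq \beta|V_i|$. The plan has two steps. First, set $d' := d_G(U_1,U_2)$ and check that $|d'-d|\leq\eps$. Second, check that every pair of large subsets of $U_1,U_2$ has density within $\eps'$ of $d'$.

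For the first step, note that $|U_i|\geq \beta|V_i|>\eps|V_i|$ because $\beta>\eps$. The definition of $(\eps,d)$-regularity therefore applies to $X_i=U_i$ and gives $|d'-d|<\eps$.

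For the second step, take $X_i\subseteq U_i$ with $|X_i|\geq \eps'|U_i|$. Since $\eps'\geq \eps/\beta$, we get
\[
|X_i|\geq \frac{\eps}{\beta}|U_i|\geq \frac{\eps}{\beta}\,\beta|V_i|=\eps|V_i|,
\]
so regularity of $(V_1,V_2)$ applies again and gives $|d_G(X_1,X_2)-d|<\eps$. The triangle inequality then yields
\[
|d_G(X_1,X_2)-d'|\leq |d_G(X_1,X_2)-d|+|d-d'|<2\eps\leq \eps'.
\]
Hence $(U_1,U_2)$ is $(\eps',d')$-regular. The term $\eps/\beta$ in $\eps'$ makes subsets large relative to $U_i$ also large relative to $V_i$, and the term $2\eps$ absorbs the shift from $d$ to $d'$. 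This explains the form $\eps'=\max\{\eps/\beta,2\eps\}$.

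There is no real obstacle. The only points needing care are the strict versus non-strict inequalities. Applying regularity to $X_i=U_i$ gives $|d'-d|<\eps$ strictly, so $|d_G(X_1,X_2)-d'|<2\eps\leq\eps'$ is strict as the definition requires. The hypothesis $\eps<\beta$ is used exactly to make $U_i$ itself an admissible test set.
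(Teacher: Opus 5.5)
Your argument is correct and is the standard proof of the slicing lemma. The paper gives no proof of its own and only cites Lemma~2.7 of its source, whose argument is the same: take $d'$ to be the density of $(U_1,U_2)$ and apply the triangle inequality.

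One small point is missing. The statement requires $d'>0$, but $d_G(U_1,U_2)$ can be $0$. For example, if there are no edges between $V_1$ and $V_2$ and $d=\eps/2$, the pair is $(\eps,d)$-regular under the paper's definition, and your choice gives $d'=0$. In that case any $d'\in(0,\eps)$ works, because every density inside $(U_1,U_2)$ is $0<d'<\eps\le\eps'$ and $|d'-d|<\eps$.

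There is also a simpler fix. The paper's definition treats $d$ as a prescribed target value with $d>0$, not as the actual density, so you can just take $d'=d$. Your second step already gives $|d_G(X_1,X_2)-d|<\eps\le\eps'$ for all admissible $X_i$. With this choice you need neither the triangle inequality nor the hypothesis $\eps<\beta$.
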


    The following lemma is similar. Although the proof is standard, we have not found a~self-contained statement in the literature, so we prove it here.

    \begin{lemma}\label{l:slicing-adding}
        Let $0<\xi<\eps \ll 1$, $d\leq 1$ and let $(U_1, U_2)$ be an $(\eps,d)$-regular pair. Then for any pair $(V_1,V_2)$ such that for $i=1,2$, $U_i\subset V_i$ with $|V_i\setminus U_i|\leq \xi |U_i|$, the pair $(V_1,V_2)$ is $(\eps', d')$-regular with $\eps'= \max\{\xi/\eps, 6\eps\}$ and some $d'>0$ satisfying $|d'-d|<3\eps $.
    \end{lemma}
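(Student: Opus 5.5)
We will take $d'=d$ and show directly that $(V_1,V_2)$ is $(\eps',d)$-regular; the bound $|d'-d|<3\eps$ is then trivial. The strategy is to compare the density of a pair of subsets $X_i\subseteq V_i$ with the density of the traces $Y_i=X_i\cap U_i$. The regularity of $(U_1,U_2)$ controls the traces, and the hypothesis $|V_i\setminus U_i|\le\xi|U_i|$ says that passing from $Y_i$ back to $X_i$ adds only a small proportion of vertices.

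\textbf{Step 1: the traces are large and make up most of $X_i$.} Fix $X_i\subseteq V_i$ with $|X_i|\ge\eps'|V_i|\ge\eps'|U_i|$ for $i=1,2$, and put $Y_i=X_i\cap U_i$. Then
\[
|X_i\setminus Y_i|\le|V_i\setminus U_i|\le\xi|U_i|.
\]
Since $\eps'\ge\xi/\eps$, we have $|X_i|\ge(\xi/\eps)|U_i|$, and hence $|X_i\setminus Y_i|\le\eps|X_i|$. It follows that $|Y_i|\ge(1-\eps)|X_i|$. Since also $\eps'\ge 6\eps$, this gives $|Y_i|\ge(1-\eps)6\eps|U_i|\ge\eps|U_i|$. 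Therefore the $(\eps,d)$-regularity of $(U_1,U_2)$ applies to $(Y_1,Y_2)$ and gives $|d(Y_1,Y_2)-d|<\eps$.

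\textbf{Step 2: compare the two densities.} Every edge counted in $e(X_1,X_2)$ but not in $e(Y_1,Y_2)$ has an endpoint in $X_1\setminus Y_1$ or in $X_2\setminus Y_2$. Hence
\[
0\le e(X_1,X_2)-e(Y_1,Y_2)\le|X_1\setminus Y_1||X_2|+|X_1||X_2\setminus Y_2|\le 2\eps|X_1||X_2|.
\]
Moreover $|Y_1||Y_2|/(|X_1||X_2|)\in[(1-\eps)^2,1]$, so writing
\[
d(X_1,X_2)=\frac{e(Y_1,Y_2)}{|X_1||X_2|}+\frac{e(X_1,X_2)-e(Y_1,Y_2)}{|X_1||X_2|},
\]
the first term differs from $d(Y_1,Y_2)$ by at most $1-(1-\eps)^2\le 2\eps$ (densities are at most $1$), and the second term lies in $[0,2\eps]$. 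Combining the two terms with Step 1 gives
\[
|d(X_1,X_2)-d|<\eps+2\eps+2\eps=5\eps\le 6\eps\le\eps'.
\]
This is exactly the $(\eps',d)$-regularity of $(V_1,V_2)$, with $d'=d$.

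\textbf{Main obstacle.} The only delicate point is Step 1: checking that the lower bound $|X_i|\ge\eps'|V_i|$ is strong enough both to make $Y_i$ eligible for the regularity of $(U_1,U_2)$ and to make $X_i\setminus Y_i$ a relatively small part of $X_i$. This is precisely why $\eps'$ is chosen as the maximum of $\xi/\eps$ and $6\eps$: the first term gives the second property, the second term gives the first.
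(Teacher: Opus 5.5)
Your proof is correct and follows essentially the paper's route: intersect $X_i$ with $U_i$, use $\eps'\ge 6\eps$ so the traces are large enough for the regularity of $(U_1,U_2)$ to apply, use $\eps'\ge\xi/\eps$ so the traces miss at most an $\eps$-fraction of $X_i$, and then bound the edges meeting $V_i\setminus U_i$. The one difference is that you take $d'=d$, which the definition of regularity permits when $d>0$, whereas the paper takes $d'$ to be the actual density of $(V_1,V_2)$ and checks separately that it is close to $d$; your choice makes that extra computation unnecessary.
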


    \begin{proof}
        Let $A_i\subseteq V_i$ with $|A_i|\geq \eps' |V_i|$. It is easy to see that $|A_i\cap U_i|\geq (\eps'-\xi)|U_i|\geq \eps |U_i|$ and $|A_i\cap U_i| \geq |A_i|-|V_i\setminus U_i|\geq |A_i|-\xi|U_i|\geq|A_i|-\xi|V_i|\geq |A_i|(1-\xi/\eps')\geq |A_i|(1-\eps).$ 
        We have 
        \begin{align*}
            e_G(A_1,A_2)&\geq e_G(A_1\cap U_1, A_2 \cap U_2) \geq d(1-\eps)|A_1\cap U_1||A_2\cap U_2|\\
            &\geq d (1-\eps)^3|A_1||A_2| \geq (d-3\eps)|A_1||A_2|
        \end{align*}
        and, since $|A_i\setminus U_i|\leq \xi|V_i|\leq \frac{\xi}{\eps'}|A_i|$,
        \begin{align*}
            e_G(A_1,A_2) &\leq e_G(A_1\cap U_1, A_2 \cap U_2)+\frac{2\xi}{\eps'}|A_1||A_2| \\
            &\leq \left(d+\eps+\frac{2\xi}{\eps'}\right)|A_1||A_2| \leq (d+3\eps)|A_1||A_2|.
        \end{align*}
        
        As for the density bound, we have 
        \[d'=\frac{e(V_1,V_2)}{|V_1||V_2|} \geq \frac{e(U_1,U_2)}{(1+\xi)^2|U_1||U_2|} \geq (1-3\xi)d \geq d - 3\xi\]
        and 
        \[d'\leq \frac{e(V_1,V_2)}{|V_1||V_2|}\leq  \frac{e(U_1,U_2)+2\xi|U_1||U_2|+\xi^2|U_1||U_2|}{|U_1||U_2|}\leq d+3\xi.\]
    \end{proof}
	
    To state the version of the Regularity Lemma which we will apply, we recall the notion of regular partitions and the reduced graph.
    We say that the partition $V_0\dcup V_1 \dcup \ldots \dcup V_\ell$ of $V(G)$ is an \emph{$\eps$-regular partition} if $|V_0| \leq \eps |V(G)|$, $|V_1| = \ldots = |V_{\ell}|$, and for all but at most $\eps \ell^2$ pairs $(i,j) \in [\ell] \times [\ell]$, $i\neq j$, the pair $(V_i, V_j)$ is $\eps$-regular. The \emph{$(\eps, d)$-reduced graph} $R$ with respect to the above partition is a graph on vertex set $[\ell]$ such that the edges $ij$ correspond precisely to the $(\eps, d')$-regular pairs $(V_i,V_j)$ with density $d' \geq d$.  
	
    \begin{lemma}[\cite{abcd22}, Lemma~2.6]\label{l:regularity}
        For all $0 < \eps \leq 1$ and $\ell_0 \in \N$, there exists $L, n_0 \in \N$ such that for every $0<d<\alpha<1$, every graph $G$ on $n > n_0$ vertices with minimum degree $\delta(G)\geq \alpha n$ has an $\eps$-regular partition $V_0 \dcup V_1 \dcup \ldots \dcup V_\ell$ with $(\eps,d)$-reduced graph $R$ on $\ell$ vertices such that $\ell_0 \leq \ell \leq L$ and $\delta(R)\geq (\alpha - d - 2\eps)\ell$.
    \end{lemma}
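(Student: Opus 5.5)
The plan is the standard proof of the degree form of the Regularity Lemma. First apply Szemer\'edi's Regularity Lemma to $G$ with a smaller parameter $\eps' \ll \eps$ (and $\ell_0$ enlarged if needed). This gives a partition $V_0'\dcup V_1\dcup\dots\dcup V_\ell$ with $\ell_0\le \ell\le L$, with $|V_0'|\le \eps' n$ and equal-sized clusters, in which all but $\eps'\ell^2$ pairs are $\eps'$-regular.

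We then clean the partition in three steps.

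\textbf{Removing irregular pairs.} Call a cluster \emph{bad} if it lies in more than $\sqrt{\eps'}\,\ell$ irregular pairs. By double counting there are at most $2\sqrt{\eps'}\,\ell$ bad clusters, and we move them all into $V_0$.

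\textbf{Counting lost degree.} Fix a good cluster $V_i$ and a vertex $v\in V_i$. Edges from $v$ can fail to be represented in $R$ for only three reasons:
\begin{itemize}
\item they go into the exceptional set, which has size $O(\sqrt{\eps'}\,n)$;
\item they go into clusters $V_j$ with $(V_i,V_j)$ irregular, which gives at most $\sqrt{\eps'}\,\ell\cdot n/\ell$ edges;
\item they go into clusters $V_j$ with $d(V_i,V_j)<d$.
\end{itemize}
For the last type we cannot bound $\deg(v)$ into $V_j$ vertex by vertex. Instead we average over $v\in V_i$. The average degree from $V_i$ into the union of its $R$-neighbours is at least $(\alpha - d - O(\sqrt{\eps'}))n$. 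Each $R$-neighbour contributes at most $n/\ell$ to this, so $V_i$ has at least $(\alpha-d-O(\sqrt{\eps'}))\ell$ neighbours in $R$.

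\textbf{Fixing the minimum degree of $R$.} Since the bound above holds for every good cluster, it is in fact uniform, so no cluster falls short. If a more careful version is needed, we can additionally discard clusters whose $R$-degree is too small; the averaging argument over edges of $G$ shows there are only $O(\sqrt{\eps'})\ell$ such clusters. We then re-index, set $\ell$ to be the number of surviving clusters, and choose $\eps'$ so that $O(\sqrt{\eps'})\le \eps$. This gives $|V_0|\le\eps n$ and $\delta(R)\ge(\alpha-d-2\eps)\ell$.

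The main obstacle is bookkeeping the constants. Removing clusters reduces $\ell$ and changes $\ell_0$, so we must first take $\ell_0$ somewhat larger. The regularity parameter of the remaining pairs stays $\eps'\le\eps$, which is fine. Since this is a citation of \cite{abcd22}, in the paper one would simply invoke that source rather than reprove it.
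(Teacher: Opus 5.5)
The paper gives no proof of this lemma; it is simply quoted from \cite{abcd22}. Your sketch is a correct, self-contained derivation from the plain Regularity Lemma, and it is more economical than the textbook route.

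The usual argument passes through the degree form of the Regularity Lemma. One deletes the edges in irregular, sparse and internal pairs and moves every individual vertex that lost too much degree into $V_0$. One then rebalances the clusters and obtains $G'\subseteq G$ in which every vertex keeps degree at least $\deg_G(v)-(d+\eps)n$. Any single $v\in V_i$ then certifies $\deg_R(V_i)\ge(\alpha-d-\eps)\ell$.

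Your approach instead sums degrees over a whole cluster, using that each $R$-neighbour $V_j$ absorbs at most $|V_i||V_j|$ of the edges leaving $V_i$. This way you only ever discard whole clusters, so sizes stay equal and no rebalancing is needed. What you give up is per-vertex degree control, which this statement does not claim.

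Two small fixes are needed.
\begin{itemize}
\item Your list of unrepresented edges omits those inside $V_i$. These cost up to $|V_i|\le n/\ell$ per vertex, which is not negligible when $\ell_0$ is small. So the lower bound you feed into the Regularity Lemma must also be at least $1/\eps'$, not only large enough to survive the removal of bad clusters. With that, the average loss per vertex of $V_i$ is at most $(d+5\sqrt{\eps'})n$. Taking $\eps'\le\eps^2/9$ then handles $\delta(R)$, gives $|V_0|\le 3\sqrt{\eps'}\,n\le\eps n$, and keeps the number of irregular pairs among the surviving clusters at most $\eps\ell^2$.
\item Drop the hedge about discarding clusters of small $R$-degree. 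Every surviving cluster is already good, and such a deletion would lower the other clusters' $R$-degrees.
\end{itemize}
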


    Next, to find spanning subgraphs, it is convenient to strengthen the notion of regular pairs to super-regular pairs.   
    A pair of sets $(V_1, V_2)$ in a graph $G$ is \emph{$(\eps, d, \vartheta)$-super-regular} if it is $(\eps, d)$-regular, and for $i \neq j$, every vertex $v \in V_i$ has at least $\vartheta|V_j|$ neighbours in $V_j$. As usual, we refer to an $(\eps, d, d-\eps)$-super-regular pair as \emph{$(\eps, d)$-super-regular}. We will say that a $k$-tuple $(V_1,\dots, V_k)$ is $(\eps,d)$-regular (resp.~$(\eps,d,\vartheta)$-super-regular) if each pair in this tuple is $(\eps,d)$-regular (resp.~$(\eps,d,\vartheta)$-super-regular).

    The following lemma allows us to pass from regular $k$-tuples to super-regular $k$-tuples with appropriate parameters by discarding some vertices.

    \begin{lemma}[\cite{abcd22}, Lemma~2.9]\label{l:super-reg-k-tuple}
        Let $k\in\mathbb{N}$ and $0<\eps<d<1$ with $\eps\leq\frac{1}{2k}$. If $(V_1,\dots, V_k)$ is an $(\eps,d^+)$-regular $k$-tuple of vertex-disjoint sets of size $n$, then there are subsets $\tilde V_i \subseteq V_i$, $i\in[k]$ with $|\tilde V_i|=\lceil (1-k\eps)n\rceil$ for all $i\in[k]$ so that the $k$-tuple $(\tilde V_1,\dots,\tilde V_k)$ is $(2\eps, (d-\eps)^+, d-k\eps)$-super-regular.
    \end{lemma}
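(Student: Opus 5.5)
The statement just before this point is Lemma~\ref{l:super-reg-k-tuple}, which is cited from \cite{abcd22}. The standard argument is to discard the few vertices of atypically low degree, one part at a time. First I would handle each ordered pair $(i,j)$, $i\neq j$. Let $B_{i,j}\subseteq V_i$ be the set of vertices with fewer than $(d'-\eps)n$ neighbours in $V_j$, where $d'\geq d$ is the density of $(V_i,V_j)$. If $|B_{i,j}|\geq \eps n$, then $d(B_{i,j},V_j)<d'-\eps$, which contradicts $\eps$-regularity. Hence $|B_{i,j}|<\eps n$. Set $B_i=\bigcup_{j\neq i}B_{i,j}$, so $|B_i|<(k-1)\eps n$. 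Next, remove from $V_i$ all of $B_i$, together with arbitrary further vertices, to obtain $\tilde V_i$ of size exactly $\lceil (1-k\eps)n\rceil$. This is possible because $|V_i\setminus B_i|> (1-(k-1)\eps)n\geq \lceil(1-k\eps)n\rceil$ for $n$ large, and for small $n$ the rounding can be absorbed by the slack. Since $\eps\le 1/(2k)$, we have $|\tilde V_i|\geq n/2$.

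Regularity of the trimmed pairs follows from Lemma~\ref{l:slicing} with $\beta=1-k\eps\geq 1/2$. It gives that $(\tilde V_i,\tilde V_j)$ is $(\max\{\eps/\beta,2\eps\},d'')$-regular, which is $(2\eps,d'')$-regular, with $|d''-d'|\le\eps$. In particular $d''\geq d-\eps$, so the pair is $(2\eps,(d-\eps)^+)$-regular.

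For the degree condition, take $v\in\tilde V_i$. Since $v\notin B_{i,j}$, it has at least $(d'-\eps)n$ neighbours in $V_j$. At most $|V_j\setminus\tilde V_j|\le k\eps n$ of these are removed, roughly, so $v$ keeps at least $(d-\eps-k\eps)n$ neighbours in $\tilde V_j$. The target bound is $(d-k\eps)|\tilde V_j|$. Because $|\tilde V_j|\le n$, it suffices to have $(d'-\eps)n-|V_j\setminus \tilde V_j|\geq (d-k\eps)|\tilde V_j|$.

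The main obstacle is this final constant bookkeeping, where the naive count loses an extra $\eps n$. I would first try to use the slack in $(d-k\eps)|\tilde V_j|=(d-k\eps)(1-k\eps)n$ versus the $n$-scale bound: the right-hand side is at most $(d-k\eps)n - k\eps(d-k\eps)n$. If that is still insufficient, I would define $B_{i,j}$ with a slightly weaker threshold, which needs only $|B_{i,j}|\le\eps n$. Alternatively, I would note that the deficit $|V_j\setminus\tilde V_j|\le k\eps n$ can be combined with $d'\ge d$ to recover exactly the stated $d-k\eps$ after rounding. I would then check these numerics against \cite{abcd22}.
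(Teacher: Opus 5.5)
\textbf{Assessment: the proposal has a genuine gap at the minimum-degree step; the regularity part is correct.}

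The paper gives no proof of this lemma (it is quoted from \cite{abcd22}), so your argument has to stand on its own. Your bound $|B_{i,j}|<\eps n$ is fine, and so is the regularity part via Lemma~\ref{l:slicing} with $\beta=1-k\eps\ge 1/2$.

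The step you flag is not mere bookkeeping. If the padding vertices are chosen \emph{arbitrarily}, the conclusion can fail. Suppose $d_{ij}=d$. A vertex $v\in\tilde V_i$ may have just over $(d-\eps)n$ neighbours in $V_j$, and all $\lfloor k\eps n\rfloor$ deleted vertices of $V_j$ may lie in $N(v)$. Then $v$ keeps only about $(d-(k+1)\eps)n$ neighbours. You need $(d-k\eps)\lceil(1-k\eps)n\rceil\approx\bigl(d-k\eps-k\eps(d-k\eps)\bigr)n$, which is larger whenever $k(d-k\eps)<1$. For example, $k=2$, $\eps=0.01$, $d=0.3$ gives $0.27n$ against $0.2744n$.

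None of your three patches closes this:
\begin{enumerate}
\item The first reduces to exactly the same failing inequality.
\item The bound $d'\ge d$ gives nothing when $d'=d$.
\item The threshold $(d'-\eps)n$ defining $B_{i,j}$ cannot be raised, because regularity only controls vertices below it. Lowering it makes the final count worse.
\end{enumerate}

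What is missing is control over \emph{which} extra vertices are deleted.
\begin{enumerate}
\item Remove only $B_j$ first. For $v\in V_i\setminus B_i$ and $b=|B_j|<(k-1)\eps n$ we have $\deg(v,V_j\setminus B_j)\ge (d_{ij}-\eps)n-b$.
\item The map $b\mapsto\frac{(d_{ij}-\eps)n-b}{n-b}$ is decreasing, since $d_{ij}-\eps<1$. Hence the neighbourhood density of $v$ in $V_j\setminus B_j$ exceeds $\frac{d_{ij}-k\eps}{1-(k-1)\eps}\ge (d-k\eps)\bigl(1+(k-1)\eps\bigr)$ when $d>k\eps$. For $d\le k\eps$ the degree condition is vacuous.
\item Let $\tilde V_j$ be a uniformly random subset of $V_j\setminus B_j$ of size $\lceil(1-k\eps)n\rceil$, chosen independently for each $j$. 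Each $\deg(v,\tilde V_j)$ is hypergeometric with mean at least $(d-k\eps)\bigl(1+(k-1)\eps\bigr)|\tilde V_j|$. By hypergeometric concentration, it falls below $(d-k\eps)|\tilde V_j|$ with probability $e^{-\Omega(n)}$.
\item A union bound over the at most $k^2n$ pairs $(v,j)$ gives the minimum-degree condition. Your slicing argument then applies unchanged to these $\tilde V_i$.
\end{enumerate}
This needs $n$ large in terms of $k,\eps,d$, which holds in every application in the paper.
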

	
    The next lemma allows us to discard some edges in a super-regular pair and maintain super-regularity with appropriate parameters.
	
    \begin{lemma}[\cite{abcd22}, Lemma~2.12]\label{l:super-reg-min}
        Let $0<\eps<1$ and let $G$ be a bipartite graph with parts $V_1, V_2$ of size $n$, for sufficiently large $n$. Let $\vartheta \in [4 \eps, 1]$. If $(V_1, V_2)$ is $(\eps^2,d', \vartheta-\eps^2)$-super-regular, then there is a spanning subgraph $G' \subset G$ such that $(V_1,V_2)$ is~$(4\eps,\vartheta, \vartheta - 4\eps)$-super-regular in~$G'$.
	\end{lemma}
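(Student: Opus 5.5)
We prove the lemma by random sparsification: keep each edge with probability $q=\theta/d'$, then repair the few vertices of low degree. Throughout, write $n=|V_1|=|V_2|$.

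\emph{Preliminaries.} The minimum degree condition forces $d'\ge \theta-\eps^2-o(1)$. Indeed, $e(V_1,V_2)\ge(\theta-\eps^2)n^2$, and regularity applied to $(V_1,V_2)$ itself gives $|d(V_1,V_2)-d'|<\eps^2$. So $d'\ge \theta-2\eps^2$.

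\emph{Trivial case $d'\le\theta$.} Then $|d'-\theta|\le 2\eps^2$. So $G$ itself is $(\eps^2+2\eps^2,\theta)$-regular, hence $(4\eps,\theta)$-regular. Its minimum degree is $(\theta-\eps^2)n\ge(\theta-4\eps)n$. We take $G'=G$.

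\emph{Main case $d'>\theta$.} Set $q=\theta/d'\in(0,1)$.

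First we isolate the low-degree vertices. For $i=1,2$, let $B_i\subseteq V_i$ be the vertices with fewer than $(d'-\eps^2)n$ neighbours on the other side. By $\eps^2$-regularity, $|B_i|<\eps^2 n$. Otherwise $B_i$ together with the opposite part would have density below $d'-\eps^2$, a contradiction.

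Next we define a random spanning subgraph $G'$.
\begin{itemize}
\item Every edge with an endpoint in $B_1\cup B_2$ is kept deterministically.
\item Every other edge is kept independently with probability $q$.
\end{itemize}

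\emph{Degree check.} A vertex in $B_i$ keeps all of its edges to $V_{3-i}\setminus B_{3-i}$ and to $B_{3-i}$. So its degree stays at least $(\theta-\eps^2)n\ge(\theta-4\eps)n$.

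A vertex $v\notin B_i$ has at least $(d'-\eps^2)n-\eps^2 n$ neighbours that are sampled. Its expected degree in $G'$ is therefore at least
\[
q(d'-2\eps^2)n=\theta n-\frac{2\theta\eps^2}{d'}n\ge(\theta-3\eps^2)n,
\]
using $d'>\theta$. Chernoff's bound plus a union bound over the $2n$ vertices then give degree at least $(\theta-4\eps)n$ w.h.p.

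\emph{Regularity check.} Fix $X\subseteq V_1$ and $Y\subseteq V_2$ with $|X|,|Y|\ge 4\eps n$. In $G$, $e(X,Y)=(d'\pm\eps^2)|X||Y|$.

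The edges touching $B_1\cup B_2$ number at most $2\eps^2 n^2\le \frac{\eps^2}{8\eps^2}\cdot\eps|X||Y|\cdot$ (a constant), i.e.\ at most an $O(\eps)$-fraction of $|X||Y|$, and in fact at most $\eps|X||Y|/8$ since $|X||Y|\ge16\eps^2n^2$. So they change the density by at most $\eps/8$ whether or not they are sampled.

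The remaining edges are sampled independently. The expected sampled count is $q(d'\pm\eps^2)|X||Y|\pm\eps|X||Y|/8$, which equals $(\theta\pm\eps/4)|X||Y|$. Chernoff shows the probability of deviating by more than $\eps|X||Y|/4$ is at most $\exp(-\Omega(\eps^2\cdot\eps^2 n^2))$. This beats the union bound over the at most $4^n$ choices of $(X,Y)$ for $n$ large.

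Hence w.h.p.\ every such pair has $|d_{G'}(X,Y)-\theta|<4\eps$. Such an outcome exists, which gives the desired $G'$.

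\emph{Main obstacle.} The delicate point is the degree condition for vertices whose degree lies near the minimum $(\theta-\eps^2)n$. Uniform sampling would scale their degree down by the factor $q$, which can be small. This is exactly why the exceptional sets $B_i$ are kept intact. Regularity survives that exception only because $|B_i|\le\eps^2n$ is tiny compared with the $4\eps n$ threshold for test sets.
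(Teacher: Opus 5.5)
The paper does not prove this lemma; it imports it verbatim from the cited source. So your argument has to stand on its own. Your strategy works: the case split at $d'\le\vartheta$, the exceptional sets $B_i$ of size $<\eps^2 n$, keeping every edge at $B_1\cup B_2$ while sampling the rest with probability $q=\vartheta/d'$, and the degree check are all correct.

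There is, however, a false step in the regularity check. You bound the edges at $B_1\cup B_2$ by $2\eps^2n^2$ and then claim this is at most $\eps|X||Y|/8$ because $|X||Y|\ge 16\eps^2n^2$. That lower bound only yields $2\eps^2n^2\le |X||Y|/8$, a constant fraction of $|X||Y|$. A constant fraction is useless against the tolerance $4\eps$, as the case $|X|=|Y|=4\eps n$ shows.

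The global count is the wrong quantity. Your own closing remark points to the right one: compare $|B_i|$ with $|X|,|Y|$, not $n^2$ with $|X||Y|$. Let $E_B$ be the set of edges of $G[X,Y]$ touching $B_1\cup B_2$. Then
\[
|E_B|\le |B_1||Y|+|X||B_2|<\eps^2 n(|X|+|Y|)\le \tfrac{\eps}{2}|X||Y|,
\]
using $|X|,|Y|\ge 4\eps n$.

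The expected value of $e_{G'}(X,Y)$ is $q\,e_G(X,Y)+(1-q)|E_B|$. By $\eps^2$-regularity and $q<1$, this lies between $(\vartheta-\eps^2)|X||Y|$ and $(\vartheta+\eps^2+\eps/2)|X||Y|$.

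Now allow a deviation of $\eps|X||Y|$ in the sampled part. By Hoeffding, this fails with probability at most $2\exp(-2\eps^2|X||Y|)\le 2\exp(-32\eps^4n^2)$. Outside the failure event, the density of $G'[X,Y]$ is within $\eps^2+\tfrac32\eps<4\eps$ of $\vartheta$. The union bound over at most $4^n$ pairs $(X,Y)$ still goes through. With this correction your proof is complete.
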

	
    \subsection{Other auxiliary lemmas}
	
    \begin{lemma}[\cite{akr24}, Lemma~7.3]
	\label{l:stronger-regular-pairs-factors}
        For integers $m, s, r$  and constants $ \vartheta \gg \eps$, there exists a constant $C$ such that the following holds. Let $G$ be an $nr$-vertex graph and $(V_{i,j})$ be a collection of $n$-vertex pairwise disjoint subsets of $G$, with $i \in [m]$ and $j \leq s^*(i) \leq s$. Suppose that for $i \neq i'$, each $(V_{i,j}, V_{i',j'})$ is an $(\eps, d,  \vartheta)$-super-regular pair in $G$ with some $d \geq \vartheta$. For $r = s^*(1)+\dots+s^*(m)$ and $p = C{n^{-2/s}}\left(\log n \right)^{2/(s(s-1))}$, w.h.p.~the graph $G \cup G_{nr, p}$ contains a $K_r$-factor in which every copy of $K_r$ uses exactly one vertex of each $V_{i,j}$.
    \end{lemma}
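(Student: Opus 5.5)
The plan is to split the problem into a purely random part and a purely deterministic part. Inside each group $i$ the graph $G$ gives us no edges to rely on. So we will build the cliques $K_{s^*(i)}$ transversal to $V_{i,1},\dots,V_{i,s^*(i)}$ using only $G_{nr,p}$. We will then glue these cliques across different groups using only the super-regular pairs of $G$. Concretely, for each $i\in[m]$ we first find a transversal $K_{s^*(i)}$-factor of the $s^*(i)$-partite random graph on $V_{i,1}\cup\dots\cup V_{i,s^*(i)}$, i.e.\ each clique has exactly one vertex in each $V_{i,j}$. Since $s^*(i)\le s$, the value $p=Cn^{-2/s}(\log n)^{2/(s(s-1))}$ is at least the Johansson--Kahn--Vu threshold for $K_{s^*(i)}$. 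The partite version of \cite{jkv08} (which follows, for instance, from the spread-measure approach and the fractional Kahn--Kalai theorem \cite{fknp21,pp24}) therefore gives such a factor $\cK_i$ w.h.p. For $s^*(i)=1$ this step is vacuous. The groups are disjoint, so the $m$ applications use independent edges.

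The second step makes the factors $\cK_i$ random-like with respect to $G$. The law of $G_{nr,p}$ is invariant under independent uniformly random permutations of each set $V_{i,j}$, and $G$ is fixed. Hence we may compose the outcome with such permutations. The partition of $\bigcup_j V_{i,j}$ into transversal $s^*(i)$-tuples given by $\cK_i$ is then a uniformly random transversal partition, independent of $G$. We contract every clique $K\in\cK_i$ to a single vertex. This gives $m$ classes $W_1,\dots,W_m$, each of size $n$. We build an auxiliary $m$-partite graph $\Gamma$ in which $K\in W_i$ and $K'\in W_{i'}$ are adjacent if all $s^*(i)s^*(i')$ pairs between $K$ and $K'$ are edges of $G$. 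A $K_m$-factor of $\Gamma$ is exactly a $K_r$-factor of $G\cup G_{nr,p}$ using one vertex of each $V_{i,j}$. So it suffices to show that w.h.p.\ over the random transversal partitions, every pair $(W_i,W_{i'})$ is $(\eps',d'',\vartheta')$-super-regular with $\vartheta'$ bounded below in terms of $\vartheta$, $s$ and $m$. The blow-up lemma (or the standard $K_m$-factor lemma for super-regular $m$-partite graphs) then completes the proof.

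The main obstacle is this super-regularity claim, because adjacency in $\Gamma$ is a conjunction of many edges of $G$. For the minimum-degree part, fix $K\in W_i$. The common $G$-neighbourhood of the vertices of $K$ inside each $V_{i',j'}$ has size at least $(\vartheta^{s^*(i)}-O(\eps))n$ for all but a few choices. To see this, expose the vertices of $K$ one at a time and use super-regularity and regularity inheritance. The coordinates of a random clique in $W_{i'}$ are close to independent uniform samples. So Chernoff/McDiarmid bounds for random transversal partitions, together with a union bound over the $n$ cliques $K$, give degree at least $\vartheta'n$ in $\Gamma$. The difficulty is that $K$ itself is random, and the super-regularity degree condition controls only single vertices, not common neighbourhoods. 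I would first try to handle this by deleting a small exceptional set of atypical vertices before forming the transversal partitions, and by redistributing them as in Lemma~\ref{l:slicing-adding}. Regularity of $(W_i,W_{i'})$ is handled similarly. For large $X\subseteq W_i$ and $Y\subseteq W_{i'}$, the number of $\Gamma$-edges counts copies of $K_{s^*(i),s^*(i')}$ between the corresponding transversal tuples. By the counting lemma for regular pairs, this count is concentrated around the product of the pair densities. The random sampling of tuples, via a sampling lemma for regular pairs, transfers this concentration to all large $X,Y$ simultaneously.
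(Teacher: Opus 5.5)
The paper gives no proof of this lemma; it is quoted from \cite{akr24}, whose spread-embedding machinery the introduction mentions. Judged on its own, your plan has a genuine gap at exactly the step you flag, and the proposed remedy does not close it. The cause is the feature you deliberately engineer. After the random relabelling, the within-group factors are independent of $G$. But whether a transversal tuple can lie in a valid $K_r$ is a property of the tuple, not of its individual vertices. Regularity only bounds, for each vertex, the number of its ``bad partners''.

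Here is a concrete example. Split $V_{i,1}$ and $V_{i,2}$ into $T$ equal type classes. Choose halves $X_1,\dots,X_T$ of some $V_{i',1}$ at random so that every $y\in V_{i',1}$ lies in exactly $T/2$ of them. Join the type-$a$ vertices of $V_{i,1}$ to exactly $X_a$, and those of $V_{i,2}$ to exactly $V_{i',1}\setminus X_a$.
\begin{itemize}
\item For $T$ a large power of $\eps^{-1}$ and suitable such choices, both pairs are $(\eps,\frac12,\frac13)$-super-regular.
\item Yet two vertices of the same type have no common neighbour in $V_{i',1}$.
\item A uniformly random transversal partition contains about $n/T$ same-type tuples w.h.p. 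Their contracted vertices have no $\Gamma$-neighbours in $W_{i'}$, so $\Gamma$ has no $K_m$-factor.
\item Every vertex has exactly $n/T$ bad partners, so there is no small exceptional vertex set to delete.
\item There is also nowhere to ``redistribute'' vertices, since each $V_{i,j}$ must be covered exactly.
\end{itemize}

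These tuples cannot be repaired afterwards either. Inside a group all edges come from $G_{nr,p}$, and for $s^*(i)=s$ each vertex lies in only $O(\log n)$ transversal copies of $K_s$, which leaves no room for switchings. So the within-group factors must be chosen inside the random graph subject to $G$-dependent admissibility constraints, while remaining random-like enough for the gluing step. That is a robust, spread-type threshold statement, and it is the real content of the lemma. It does not follow from Johansson--Kahn--Vu plus symmetrisation.

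A smaller point concerns your justification of the partite JKV step. The spread approach with the fractional Kahn--Kalai theorem \cite{fknp21,pp24} yields transversal $K_k$-factors only at $p=O(n^{-2/k}\log n)$. With the sharp $p$ in the statement, the case $s^*(i)=s\ge 3$ therefore needs a genuinely sharp partite result, not \cite{fknp21,pp24}.
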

	
    We will need the following $Z$-conforming strengthening of the above lemma.
	
    \begin{lemma}\label{l:st-reg-pairs-factors}
        Let $r=ms+t$ with $t \in [s]$. For all $\vartheta \gg \eps $, there exists a constant $C$ such that the following holds. Let $G$ be an $(m+1)$-partite $n$-vertex graph with vertex classes $V_1 \dcup \dots \dcup V_{m+1} = V$ satisfying 
		$$|V_i| = \frac{s|V|}{r} \ \text{ for } i \in [m] \text{ \quad and \quad } |V_{m+1}| = \frac{t|V|}{r}.$$ Suppose that each pair $(V_i, V_j)$, $i \neq j$, is $( \eps, d, \vartheta)$-super-regular with $d \geq \vartheta$. Let $Z\subset V$ be given, with $|Z\cap V_i| \leq \eps |V_i|$ for $i \in [m+1]$. For $p = C{n^{-2/s}}\left(\log n \right)^{2/(s(s-1))}$, if $n=|V|$ is divisible by $r$, then w.h.p.\,the graph $G \cup \Gnp$ contains a $Z$-conforming $K_r$-factor.
    \end{lemma}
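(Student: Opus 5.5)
We would deduce Lemma~\ref{l:st-reg-pairs-factors} from Lemma~\ref{l:stronger-regular-pairs-factors} by first handling the vertices of $Z$ separately and then repairing the regular structure. Concretely, we would subdivide each part into blocks of equal size: write $N = |V|/r$, and split $V_i$ for $i\in[m]$ into $s$ sets $V_{i,1},\dots,V_{i,s}$, and $V_{m+1}$ into $t$ sets $V_{m+1,1},\dots,V_{m+1,t}$, each of size $N$. We choose the split uniformly at random, so that with positive probability (in fact w.h.p.) every vertex keeps a $(\vartheta - \eps)$-fraction of its neighbourhood into every block of every other part. We also require that $Z$ is spread evenly, so that $|Z\cap V_{i,j}|\le 2\eps N$. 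By the Slicing Lemma~\ref{l:slicing}, pairs $(V_{i,j},V_{i',j'})$ with $i\neq i'$ remain $(s\eps, d)$-regular, and hence super-regular with degree parameter about $\vartheta/2$. This is the setting of Lemma~\ref{l:stronger-regular-pairs-factors} with $s^*(i)=s$ for $i\le m$ and $s^*(m+1)=t$, which sums to $r$.

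The key observation is that Lemma~\ref{l:stronger-regular-pairs-factors} produces cliques using exactly one vertex from each block $V_{i,j}$. A clique can therefore contain two $Z$-vertices only by drawing them from two different blocks. To exclude this, we would arrange that $Z$ lies in a single block per part, or better, in one block overall. Since $|Z\cap V_i|\le \eps|V_i| = \eps s N$, we place all of $Z\cap V_i$ into the block $V_{i,1}$ (and $Z\cap V_{m+1}$ into $V_{m+1,1}$); this is possible because $\eps s N\le N$.

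This still allows a clique to meet $Z\cap V_{1,1}$ and $Z\cap V_{2,1}$ simultaneously. We would fix this by relabelling: we group all the $Z$-vertices into one designated "slot". Across the $m+1$ parts, $Z$ totals at most $\eps|V|$ vertices. Before applying the lemma, we pre-embed cliques for them greedily. Each $z\in Z$, taken in turn, is covered by a $K_r$ whose remaining vertices come one from each other block and avoid $Z$ and all previously used vertices. These vertices are chosen inside $G$-neighbourhoods using super-regularity together with the random edges, via a Janson-type argument in the spirit of Lemma~\ref{l:akr:ks-s-everywhere}. Within a part the vertices are joined only by random edges, so we need a $K_s$ among $s$ common-neighbourhood sets of linear size. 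This requires only $p\gg n^{-2/s}$, and the union bound over $\eps n$ steps succeeds.

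After removing these at most $r\eps|V|$ vertices, the remaining vertices in each block are balanced: every pre-embedded clique used exactly one vertex per block, including $z$ itself if $z$ is placed as its block's representative. The remaining tuple is then $(2\eps', d)$-super-regular by Lemma~\ref{l:slicing}, after discarding and rebalancing via Lemma~\ref{l:super-reg-k-tuple}. The main obstacle is exactly this rebalancing: Lemma~\ref{l:super-reg-k-tuple} discards vertices, and these discarded vertices must themselves be absorbed by further greedy $Z$-free cliques before we apply Lemma~\ref{l:stronger-regular-pairs-factors} to the cleaned $Z$-free remainder. That final application yields a $K_r$-factor avoiding $Z$ entirely, so the whole factor is $Z$-conforming.
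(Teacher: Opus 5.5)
\textbf{The gap is in the greedy pre-embedding of $Z$.} Since $G$ is $(m+1)$-partite, $G[V_i]$ has no edges. So for $z\in Z\cap V_i$ with $i\le m$, the other $s-1$ vertices of its clique inside $V_i$ must be $\Gnp$-neighbours of $z$ spanning a $K_{s-1}$ in $\Gnp$. In other words, you need a copy of $K_s$ in $\Gnp$ \emph{rooted at the prescribed vertex} $z$ (likewise a rooted $K_t$ in $V_{m+1}$ when $t=s$).

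This is not ``a $K_s$ among $s$ common-neighbourhood sets of linear size''. The random neighbourhood of $z$ has size about $np=n^{1-2/s+o(1)}$. Even at $p=Cn^{-2/s}(\log n)^{2/(s(s-1))}$, the expected number of copies of $K_s$ through $z$ is only $\Theta(C^{\binom s2}\log n)$, which is precisely the Johansson--Kahn--Vu bottleneck. Consequences:
\begin{itemize}
\item A Lemma~\ref{l:akr:ks-s-everywhere}-type density $p=Cn^{-2/s}$ does not suffice. A fixed $Z$ of size $\eps n$ then w.h.p.\ contains linearly many vertices lying in no copy of $K_s$ at all.
\item With the logarithmic factor, the failure probability for a single $z$ is only polynomially small in $n$, not $e^{-\Omega(n^2p)}$.
\item The set of vertices consumed by earlier greedy steps has linear size and is chosen adaptively from the same random graph. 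You can neither union-bound over it nor rule out that the $O(\log n)$ rooted copies at $z$ were already used up.
\end{itemize}
Rescuing the step would need a genuinely different argument, for instance exposing the edges at $z$ only when $z$ is processed, together with a rooted Janson estimate with overlap control in the previously exposed graph. The proposal does not contain this.

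The ``rebalancing'' you single out as the main obstacle is, by contrast, harmless. Every block loses $|Z|\le \eps|V|$ vertices, i.e.\ at most an $r\eps$-fraction. Since $\vartheta\gg\eps$, Lemma~\ref{l:slicing} and a trivial degree count keep super-regularity without discarding anything.

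\textbf{Comparison with the paper.} The paper's proof is exactly your first paragraph: $Z\cap V_i$ is placed in $V_{i,1}$, the rest is split at random, and Lemma~\ref{l:slicing} plus a hypergeometric Chernoff bound give super-regularity. It then applies Lemma~\ref{l:stronger-regular-pairs-factors} directly, with no pre-embedding.

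Your remark that ``one vertex per block'' does not by itself stop a clique from meeting both $Z\cap V_{1,1}$ and $Z\cap V_{2,1}$ is correct. The paper's one-line conclusion does not address it. It can, however, be repaired inside the black box without rooted cliques:
\begin{itemize}
\item Split $V$ at random into $m+1$ sub-instances. The $k$-th has parts of sizes $sN_k,\dots,sN_k,tN_k$, where $\sum_k N_k=|V|/r$ and $N_k\approx |V|/(r(m+1))$.
\item Put all of $Z\cap V_k$ into the $k$-th sub-instance, inside a single block. This is possible since $\eps s|V|/r\ll N_k$.
\item Distribute all other vertices uniformly at random.
\end{itemize}
As before, each sub-instance has equal blocks and is w.h.p.\ $(O(r\eps),(d-\eps)^+,\vartheta/2)$-super-regular, so Lemma~\ref{l:stronger-regular-pairs-factors} applies to each. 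Every resulting clique meets the unique $Z$-containing block of its sub-instance in exactly one vertex and contains no other $Z$-vertex. The union of these factors is therefore a $Z$-conforming $K_r$-factor.
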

    
    \begin{proof}
        We partition the sets $V_i$ as follows. For $i\in[m]$, partition the set $V_i$ into sets $V_{i,1},\dots,V_{i,s}$ by first putting all vertices in $Z\cap V_i$ in $V_{i,1}$, and then splitting the remaining vertices of $V_i\setminus Z$ between sets $V_{i,1},\dots,V_{i,s}$, so that each of them has the same size $\frac{|V|}{r}$, uniformly at random. We partition the set $V_{m+1}$ analogously, but into $t$ sets. By Lemma~\ref{l:slicing}, all pairs $(V_{i,j}, V_{i',j'})$, $i\neq i
        '$, are $(s\eps,(d-\eps)^+)$-regular. Moreover, since we split the vertices randomly, w.h.p.~the degree of any vertex in $V_{i,j}$ into $V_{i',j'}$, $i\neq i'$, is at least $\frac{\vartheta}{2}|V_{i',j'}|$. Indeed, consider a vertex $x\in V_{i,j}$. Since the pair $(V_i, V_{i'})$ is $(\eps,d,\vartheta)$-super-regular, $x$ has at least $\vartheta|V_{i'}|$ neighbours in $V_{i'}$. Let $X\subset V_{i'}$ consist of exactly $\vartheta|V_{i'}|$ arbitrarily chosen neighbours of $x$. Then $|X\cap V_{i',j'}|$ is a hypergeometric random variable with expectation 
        \[\mu \geq \frac{(1-\eps)|V_{i',j'}|\vartheta|V_{i'}|}{|V_{i'}|} = (1-\eps)\vartheta|V_{i'j'}| = \frac{(1-\eps)\vartheta}{r}n. \]
        Thus, using Chernoff-type bounds for the hypergeometric distribution (see, e.g.~\cite{skala2013hypergeometric}), the probability that $|X\cap V_{i',j'}|<\frac{\vartheta}2|V_{i'j'}|$ is $e^{-\Omega(n)}$, so the assertion follows by a union bound over all vertices. 
        In particular, the pairs $(V_{i,j}, V_{i',j'})$, $i\neq i
        '$, are $\left(s\eps,(d-\eps)^+,\frac{\vartheta}{2}\right)$-super-regular.

        We can now use Lemma~\ref{l:stronger-regular-pairs-factors} to get the desired $Z$-conforming $K_r$-factor.
    \end{proof}


    \section{Proof of the main theorem}
        \label{sec:pf-of-main}

    In this section, we present a proof of Theorem~\ref{t:main}. Before we proceed, we state the key auxiliary results, the first two of which is the following treatment of the fully non-extremal case. 
    
    \begin{thm}
        \label{t:non-extremal-conforming}
        For any integers $r$ and $s$ with $2 \leq s < r$, let $r^{-1} \gg \gamma \gg \delta\gg \zeta \gg C^{-1} $. Let $G$ be an $n$-vertex graph with minimum degree at least $\left(1 - \frac{s}{r} - \delta\right)n$. Suppose that every subset of $V(G)$ of order $\frac{sn}{r}$ contains at least $\gamma n^2 $ edges. Let $Z\subset V(G)$ with $|Z|\leq \zeta n$ be given.  For $n$ divisible by $r$ and $p = Cp_s(n)$, w.h.p.\,the graph $G^p = G \cup \Gnp$ has a~$Z$-conforming~$K_r$-factor.
    \end{thm}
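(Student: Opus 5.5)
We follow the regularity-plus-absorption scheme, with Theorem~\ref{t:sz-stability} supplying the tiling in the reduced graph. Let $H$ be the complete $(m+1)$-partite graph with $m$ parts of size $s$ and one part of size $t$. Then $\chi_{cr}(H)=r/s$, and $1-\frac{1}{\chi_{cr}(H)}=1-\frac sr$. In the singular case $t=s$ we may take $H=K_{s,\dots,s}$ with $m+1$ parts.

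\textbf{Step 1 (absorbing structure).} Using Definition~\ref{def:absorber}, we will first set aside a small absorbing set $A^*$ of size $\ll \zeta' n$. It will have the property that for every small set $L$ of leftover vertices with $|A^*\cup L|$ divisible by $r$, the graph $G^p[A^*\cup L]$ has a $Z$-conforming $K_r$-factor. Absorbers are built from gadgets hosted by common neighbourhoods, which have size $\Omega(n)$ since $\delta(G)>n/2$. The random cliques needed inside them are supplied by Lemma~\ref{l:akr:ks-s-everywhere} and Lemma~\ref{l:many_ks_and_one_ks-ks}, using $p\ge Cn^{-2/s}$. The gadgets are chosen greedily to avoid $Z$ except in designated single vertices, and this works because $|Z|\le\zeta n$ with $\zeta\ll\delta$. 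We also cover $Z$ itself: each $z\in Z$ is greedily placed into its own $K_r$-copy, using $s$-cliques of $\Gnp$ inside $G$-neighbourhoods, in the spirit of the proof of Lemma~\ref{l:5.5chr-stronger}. After this, $Z$ plays no further role.

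\textbf{Step 2 (regularity and fractional tiling).} On the remainder $G'$ we apply Lemma~\ref{l:regularity} with $\eps\ll d\ll\delta$, obtaining a reduced graph $R$ with $\delta(R)\ge(1-\frac sr-2\delta)\ell$. Every $\frac{s\ell}{r}$-set in $R$ spans $\Omega(\gamma\ell^2)$ edges, since otherwise we would get a sparse $\frac{sn}{r}$-set in $G$. Theorem~\ref{t:sz-stability} then gives a $(b^{-1}\ltimes H)$-packing in $R$ missing only $O(1)$ clusters. Splitting each cluster into $b$ equal slices according to the fractional weights turns each copy of $H$ into an $(m+1)$-tuple of cluster-slices with sizes in ratio $s:\dots:s:t$. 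By Lemma~\ref{l:slicing} each tuple is regular, and by Lemma~\ref{l:super-reg-k-tuple} we make it super-regular. Discarded vertices, the $O(1)$ uncovered clusters and $V_0$ together form a set $W$ of size $O(\eps n + n/\ell)$.

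\textbf{Step 3 (covering leftovers and fixing sizes).} This is the main obstacle. The vertices of $W$ must be covered by $K_r$-copies that are distributed so that afterwards each super-regular tuple has exactly the proportions required by Lemma~\ref{l:st-reg-pairs-factors}, with total size divisible by $r$.

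I would first cover $W$ greedily. Each $w\in W$ has $\ge(1-\frac sr-\delta)n$ neighbours, so it has many neighbours in some tuple. We build a $K_r$ through $w$ using $G$-edges from regular pairs together with one random $K_s$ (or $K_{s+1}$, where the degree is deficient) found inside a cluster via Lemma~\ref{l:akr:ks-s-everywhere} or Lemma~\ref{l:5.5chr}. This is where $p_s$ enters: deficits of size $h\le\gamma n$ are repaired by $h$ disjoint $K_{s+1}$'s via Lemma~\ref{l:5.5chr}.

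Next, to balance part sizes across tuples, I would exploit connectivity of $R$, which holds since $\delta(R)>\ell/2$. For each imbalance we remove a $K_r$ taking one extra vertex from an overfull part and one fewer from a neighbouring underfull part. Such shifts propagate along paths in $R$, and they are again realised with random $K_s$'s inside clusters. Remaining divisibility is handled by the absorber $A^*$ from Step 1.

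Finally, Lemma~\ref{l:st-reg-pairs-factors} is applied to each balanced tuple; it works since $p\gg n^{-2/s}\log n$ when $s\ge3$, and a separate check is needed for $s=2$. Together with the absorber cover this gives the $Z$-conforming factor. The hardest part is making the balancing in Step 3 consistent with the $s$-to-$t$ proportions, especially in the singular case $t=s$.
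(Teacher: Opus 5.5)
Your Step~3, which you yourself flag as the main obstacle, is not resolved, and the mechanisms you propose for it do not work. A move taking ``one extra vertex from an overfull part'' puts $s+1$ vertices of a $K_r$ into a single cluster. Since $G$ may have no edges inside clusters, these vertices must span a $K_{s+1}$ of $\Gnp$. At $p=Cp_s$ there are only $O\bigl(n^{(s+1)(s-2)/((s-1)(s+2))}\bigr)=o(n)$ such copies in expectation (polylogarithmically many for $s=2$). The imbalance you must repair is linear, because every greedy $K_r$ through some $w\in W$ removes $r-1$ vertices from a tuple in the wrong proportions. Lemma~\ref{l:5.5chr} does not help: it yields $h$ copies of $K_{s+1}$ only when the deterministic graph has minimum degree $h$, and clusters need not have this. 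Inside an $(s,\dots,s,t)$-tuple only a surplus in the $t$-part can be shed (the $P_k$-moves of Lemma~\ref{l:equalize-(m+1)-tuples-to-s/t}). In the singular case $t=s$, no $K_r$ inside an $(m+1)$-tuple changes the proportions at all.

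``Propagating along paths in $R$'' does not rescue this. An edge of $R$ does not support a $K_r$ meeting two tuples; you need a cluster regular to all but at most two parts of the other tuple. The pushed surplus must also end in something able to absorb it in arbitrary proportions. In addition, $\delta(G)>n/2$ and $\delta(R)>\ell/2$, which you use for common neighbourhoods in Step~1 and for connectivity of $R$, fail when $s\ge r/2$. The statement allows this case, and it genuinely occurs when the theorem is applied to $A_{h+1}$ with $r'=(m-h)s+t$ in the proof of Theorem~\ref{t:main}. There $R$ may even be disconnected (e.g.\ $r=3$, $s=2$, and $G$ two disjoint cliques of order $n/2$). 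Finally, the vertex-level absorber $A^*$ is only asserted; no gadget is specified.

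The missing idea is an absorber at the level of the \emph{reduced graph}. Before tiling $R$ via Lemma~\ref{l:6.2chr} (or Corollary~\ref{cor:6.2chr}) and Corollary~\ref{cor:T_packing}, Lemma~\ref{l:covering-with-absorber} sets aside a small family of cliques $K_{m+1+g}$ of $R$. Every pair of clusters has at least $m-1+g$ $R$-neighbours in $\Omega(\ell)$ of these cliques (Lemma~\ref{l:absorber}). For $t=s$ they are $K_{m+2}$'s, which exist precisely because of non-extremality.

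Leftover vertices and $Z$ are then inserted into parts of the $(s,\dots,s,t)$-tuples where they have density at least $d$ to all other parts. The excess is removed by $K_r$'s with one vertex in that part and $r-1$ vertices in an absorber tuple. Property (P1), via Lemma~\ref{l:many_ks_and_one_ks-ks}, supplies these cliques even when the vertex misses two absorber parts (two random $K_s$'s sharing it). All imbalance thus lands in absorber tuples with equal parts.

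These tuples are finally re-split (Lemmas~\ref{l:absorber-divisibility-by-r}, \ref{l:equalize-(m+1)-tuples-to-s/t}, \ref{l:sing-absorber-divisibility-by-s} and \ref{l:sing-absorber-partitioning}) so that every surplus sits in a $t$-part. In the singular case, each $(m+2)$-tuple is instead re-split into balanced $(m+1)$-tuples. Either way, the surplus can be removed using only random cliques of order at most $s$. Without an object playing this role, your argument has no way to restore the exact proportions needed for Lemma~\ref{l:st-reg-pairs-factors}.
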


    \begin{lemma}
        \label{l:annoying-non-extremal}
        For any integers $r$ and $s$ with $2 \leq s < r$, let $r^{-1} \gg \gamma\gg \delta \gg \eta \gg C^{-1} $. Let $G$ be an $n$-vertex graph with minimum degree at least $\left(1 - \frac{s}{r} - \delta\right)n$. Suppose that every subset of $V(G)$ of order $\frac{sn}{r}$ contains at least $\gamma n^2 $ edges. For $p = Cp_s(n)$, w.h.p.\,the graph $G^p = G \cup \Gnp$ contains a collection $\cC$ of $\eta n$ vertex-disjoint copies of $K_{r+1}$.
    \end{lemma}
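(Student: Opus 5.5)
Our plan is to find the $\eta n$ copies of $K_{r+1}$ inside a regular partition of $G$, using the random edges only to supply small cliques within clusters. We apply Lemma~\ref{l:regularity} with $\eps \ll d \ll \delta$ and obtain a reduced graph $R$ on $\ell$ vertices with $\delta(R)\geq (1-\frac sr - 2\delta)\ell$. The non-extremality assumption transfers to $R$: every set of $\frac{s\ell}{r}$ clusters spans at least $\frac{\gamma}{2}\ell^2$ edges of $R$, since otherwise the union of these clusters would be a vertex set of order about $\frac{sn}{r}$ spanning fewer than $\gamma n^2$ edges of $G$ (after adjusting its size by $O(\eps n)$ vertices and discounting irregular and sparse pairs, whose contribution is at most $(d+\eps)n^2 \ll \gamma n^2$).

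Next we locate a suitable configuration in $R$. Write $r=ms+t$ with $t\in[s]$. Note that $\delta(R)$ almost meets the Hajnal--Szemer\'edi threshold for $K_{\lceil r/s\rceil}$ and the threshold of Theorem~\ref{t:sz-stability} for the complete $(m+1)$-partite graph with $m$ parts of size $s$ and one of size $t$. This graph has $\chi_{cr}=r/s$. Using the stability assumption, we aim to find in $R$ a clique on $m+1$ clusters $U_1,\dots,U_{m+1}$ that is ``heavy'' in one of two ways.
\begin{enumerate}
\item One more cluster $U_{m+2}$ is adjacent to all of $U_1,\dots,U_{m+1}$, giving a $K_{m+2}$ in $R$. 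This is possible when $t<s$, or when the density condition forces it.
\item Some edge of $R$ can be used so that one part hosts $s+1$ vertices.
\end{enumerate}
Concretely, we plan to invoke Theorem~\ref{t:sz-stability} for $H=K_{m+1}$ or $H=K_{m+2}$, together with a counting argument. Since $\delta(R)\ge(1-\frac sr-2\delta)\ell > (1-\frac{1}{m+1})\ell$ once $r>ms$, the graph $R$ contains $\Omega(\ell^{m+1})$ copies of $K_{m+1}$. The non-extremal condition then lets us extend a positive proportion of them by an extra edge $U_iU'$ with $U'$ adjacent to all the other $U_j$. In that case $U_i$ and $U'$ jointly play the role of a part of size $s+1$, spread over two clusters.

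Finally we embed. Within the regular $(m+1)$- or $(m+2)$-tuple we pick the vertices of each copy of $K_{r+1}$ so that the parts of size $s$ (and the one part of size $s+1$, or $t$) induce cliques in $\Gnp$, while all cross-part edges come from $G$ via regularity. By Lemma~\ref{l:many_ks_and_one_ks-ks}, with $\cC$ the family of tuples whose cross edges are present in $G$ (of size $\nu n^{qs-1}$ by the counting lemma), $\Gnp$ supplies the required $F_q$-structure, namely $s$-cliques with two of them sharing a vertex. This creates a part of size $2s-1\ge s+1$, which in the regular setting we trim to a $K_{s+1}$ living across the edge $U_iU'$. The cost is probability at most $e^{-cn^2p}$, which is small enough for a union bound over all choices. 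We then repeat greedily $\eta n$ times, deleting used vertices, since $\eta \ll \eps$ keeps all pairs regular.

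The main obstacle is the middle step: showing that the non-extremal hypothesis alone, with the slack $\delta$ in the degree, forces enough $K_{m+2}$-like or ``$(s+1)$-part'' configurations in $R$. This is especially delicate in the singular case $t=s$. There we expect to need Theorem~\ref{t:sz-stability}, or a direct supersaturation argument saying that a graph of minimum degree near $(1-\frac{1}{m+1})\ell$ with no dense $\frac{\ell}{m+1}$-set contains $\Omega(\ell^{m+2})$ copies of $K_{m+2}$.
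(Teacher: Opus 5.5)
\textbf{There is a genuine gap.} Your overall architecture matches the paper's: regularity, a suitable clique in the reduced graph $R$, random cliques inside clusters via the counting lemma and Janson, then $\eta n$ greedy repetitions. (The paper takes a regular $(m+1+g)$-tuple from Lemma~\ref{l:partition-reduced-graph} and applies (P1) $\eta n$ times.) But the one step that actually uses the hypotheses, producing the right clique in $R$, is left open; you call it the main obstacle. The tools you suggest for it do not deliver it. Theorem~\ref{t:sz-stability} with $H=K_{m+2}$ needs minimum degree about $(1-\frac{1}{m+2})\ell$, which you do not have. With $H=K_{m+1}$ it only gives a near-perfect fractional $K_{m+1}$-packing, not a $K_{m+2}$.

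The missing idea is a simple case split.
\begin{itemize}
\item If $t<s$, no extra structure is needed. Since $r+1=ms+(t+1)$ with $t+1\le s$, a single $K_{m+1}$ in $R$ hosts $K_{r+1}$ with part sizes $(s,\dots,s,t+1)$. This $K_{m+1}$ exists by Tur\'an, because $1-\frac sr-2\delta>1-\frac1m$ (as $\frac1m-\frac sr=\frac{t}{mr}$).
\item If $t=s$, i.e.\ $r=(m+1)s$, you need $K_{m+2}\subseteq R$, and it follows from a short common-neighbourhood argument. Any $K_m$ in $R$ has at least $(\frac{1}{m+1}-2m\delta)\ell=(\frac sr-2m\delta)\ell$ common neighbours. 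Since $2m\delta\ll\gamma$, your transferred non-extremality says this set is not independent, so it contains an edge, giving $K_{m+2}$.
\end{itemize}
This is exactly how the paper obtains its $K_{m+2}$'s, inside Lemma~\ref{l:absorber}\ref{l:absorber(ii)}. No supersaturation or $\Omega(\ell^{m+2})$ count is needed: one regular tuple already suffices for $\eta n$ disjoint copies, since $\eta$ is chosen after $\eps$.

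\textbf{Incorrect intermediate claims.} Several of these are wrong, although none is needed once the split is made.
\begin{itemize}
\item The inequality $\delta(R)>(1-\frac{1}{m+1})\ell$ is false: for $t<s$ one has $1-\frac sr<\frac{m}{m+1}$. The correct comparison is with $1-\frac1m$.
\item $K_{m+2}$ is not available in general when $t<s$. For $r=3$, $s=2$, a balanced blow-up of $C_5$ satisfies all the hypotheses and is triangle-free.
\item Your option (2) gives nothing new. If the $(s+1)$-part is split over an $R$-edge $U_iU'$ and $U'$ must be adjacent to all other $U_j$, then $\{U_1,\dots,U_{m+1},U'\}$ is already a $K_{m+2}$ in $R$.
\item The $F_q$-based step, a ``part of size $2s-1$ trimmed to $K_{s+1}$'', does not work as written. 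Two $\Gnp$-copies of $K_s$ sharing a vertex span a clique only if the remaining cross edges come from a regular pair of $G$, which again forces a $K_{m+2}$ in $R$. Moreover, $\Gnp$ with $p=Cp_s$ has only $o(n)$ copies of $K_{s+1}$ (the expected number is of order $n^{1-\varphi(s)}$ for $s\ge3$, and polylogarithmic for $s=2$). So an $(s+1)$-part cannot be built from random edges alone.
\end{itemize}
In the correct argument the random graph only has to supply vertex-disjoint cliques of size at most $s$, one per cluster. That is a disjoint union, handled by the same double-exposure induction as in Lemma~\ref{l:many_ks_and_one_ks-ks} starting from Lemma~\ref{l:akr:ks-s-everywhere}, or directly by (P1). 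The shared-vertex structure $F_q$ is used in the paper only for the size-adjusting $K_r$-moves, not here.
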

    
    The proofs of Theorem~\ref{t:non-extremal-conforming} and Lemma~\ref{l:annoying-non-extremal} can be found in section~\ref{sec:non-extremal}. We will also need the following lemma which extracts the structure of our initial deterministic graph $G$. To keep this section focused, we defer its proof to section \ref{sec:partition-lemma}.

        \begin{lemma}
            \label{l:4.3chr} Let $r = ms+t$ be as before. There is a constant $\beta_0>0$ such that for all $\beta \leq \beta_0$ there is a $\gamma>0$ for which the following holds. If $\gamma_1 \ll \gamma_2 \ll \ldots \ll \gamma_m \ll \gamma$ and $G$ is an $n$-vertex graph with minimum degree at least $\left( 1-\frac sr \right)n$, then for some $h \in \{0\}\cup[m]$ there exists a partition $V(G) = A_1 \dcup A_2 \dcup \dots \dcup A_{h+1}$ satisfying the following conditions.
            \begin{enumerate}
                \item\label{partition(i)} For $i \in [h]$, $|A_i| = \left(\frac{s}{r} \pm \gamma_h\right)n$.
                \item\label{partition(ii)} Every vertex $x \in V(G) \setminus A_{h+1}$ has at most $4 \beta n$ non-neighbours in $A_{h+1}$.
                \item\label{partition(iii)} Every vertex $x \in A_{h+1}$ has at least $\beta n$ neighbours in $A_i$, for any $i \in [h]$.
                \item\label{partition(iv)} If $h<m$ and $X \subset A_{h+1}$ with $|X| =sn/r$, then $e(X) > \gamma_{h+1}^2 n^2$. 
                \item\label{partition(v)} For every distinct $i, j \in [h]$, every vertex $x$ from $A_i$ has at least $\frac 13 |A_j|$ neighbours in $A_j$.
                \item\label{partition(vi)} For every distinct $i, j \in [h+1]$, there are at most $\gamma_h|A_i| |A_j|$ missing edges between $A_i$ and $A_j$.
            \end{enumerate}
        \end{lemma}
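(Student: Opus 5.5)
We will build the partition by an iterative peeling procedure, extracting one almost-independent set of size about $\frac{sn}{r}$ at a time and moving down the hierarchy of constants $\gamma_m \gg \gamma_{m-1}\gg\dots\gg\gamma_1$.

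\emph{Starting the iteration.} Set $B_0=V(G)$. If every $X\subset B_0$ with $|X|=sn/r$ has $e(X)>\gamma_1^2n^2$, we stop with $h=0$ and $A_1=V(G)$. In this case (i), (iii), (v), (vi) are vacuous, (ii) is vacuous since $A_{h+1}=V(G)$, and (iv) holds by assumption.

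\emph{The iterative step.} Suppose we have already found sets $A_1,\dots,A_{k}$ and a remainder $B_k$. We stop with $h=k$ if $B_k$ satisfies (iv) at level $\gamma_{k+1}^2$, or if $k=m$. Otherwise there is an $X\subset B_k$ with $|X|=sn/r$ and $e(X)\le\gamma_{k+1}^2n^2$, and we set $A_{k+1}'=X$. Running over the indices in reverse order of the hierarchy is what allows later errors to be swamped: each new set is found at a coarser scale $\gamma_{k+1}\gg\gamma_k$. The fact that $m$ steps suffice comes from a volume count. After $m$ extractions, the leftover has size about $n-msn/r=tn/r\le sn/r$, so the process terminates.

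\emph{Cleaning.} After extracting each set we clean, using the minimum degree $(1-\frac sr)n$.
\begin{itemize}
\item A vertex of a sparse set $A'_i$ of size $sn/r$ has typically at most $\sqrt{\gamma}\,n$ neighbours inside it. Since its degree is at least $n-sn/r$, it is adjacent to almost everything outside $A_i'$.
\item Hence all but a $\gamma$-fraction of the vertices of $A_i'$ miss only $o(n)$ vertices outside $A_i'$. Standard double counting then gives (vi) with error $\gamma_h$ after adjusting.
\item Vertices with many non-neighbours into some $A_j$ are reassigned. A vertex having fewer than $\frac13|A_j|$ neighbours in $A_j$ must have degree at least about $\frac23|A_j|$ into every other $A_i$, by the degree count. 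It can therefore be moved into $A_j$, which gives (v).
\end{itemize}
Reassignment changes the sizes by at most $\gamma_h n$, which gives (i). For the last part $A_{h+1}$ we move out every vertex with fewer than $\beta n$ neighbours in some $A_i$, $i\le h$. Such a vertex has almost all its neighbours elsewhere, so it behaves like a member of $A_i$, and we move it there; this gives (iii). Symmetrically, vertices of $A_i$ with more than $4\beta n$ non-neighbours in $A_{h+1}$ number only $O(\gamma_h n/\beta)$. Using (v) and the degree condition, we reassign them to $A_{h+1}$ or to another $A_j$, which gives (ii). The choice $\beta\le\beta_0$ and $\gamma_h\ll\beta$ ensures these reassignments are consistent and do not cascade.

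\emph{Main obstacle.} Verifying that (iv) survives the cleaning step is the main difficulty. The stopping condition is checked on the uncleaned $B_h$, but the final $A_{h+1}$ differs from it by $O(\gamma_h n)$ vertices. Since $\gamma_h^2\ll\gamma_{h+1}^2$ is false direction-wise, we would instead test the stopping condition at level $2\gamma_{h+1}^2$. A set $X$ of size $sn/r$ in the final $A_{h+1}$ differs from a set in $B_h$ by $O(\gamma_h n)$ vertices. Such a swap changes $e(X)$ by only $O(\gamma_h n^2)\ll\gamma_{h+1}^2n^2$, so the property transfers. The same ordering of constants is what makes the whole bookkeeping work.
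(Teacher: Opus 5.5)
Your outline is viable and rests on the same mechanism as the paper's proof: peel off sets of size $\frac{sn}{r}$ that are sparse at successively coarser levels $\gamma_1\ll\gamma_2\ll\cdots$, and use $\delta(G)\ge(1-\frac sr)n$ to show that every misplaced vertex has an obvious new home. The bookkeeping is organized differently, though.

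The paper chooses $h$ maximal such that some partition exists with sparse, nearly correctly sized parts $A_1^*,\dots,A_h^*$, together with slack invariants: every vertex outside $A_{h+1}$ has at most $4\beta n$ non-neighbours in $A_{h+1}$, and every vertex of $A_{h+1}$ has at least $2\beta n$ neighbours in each $A_i^*$. The cleaning (the sets $R$, $S$) happens locally at each extraction. Since these invariants only improve when $A_{h+1}$ shrinks, just the two new parts need checking. Property (iv) is then immediate from maximality, and $A_{h+1}$ is never modified again: (v) is obtained by reshuffling only $A_1^*,\dots,A_h^*$ under the maximal-$t$ edge potential, which bounds the total movement by $2t\le 2m\beta\gamma_h n$.

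Your greedy-then-clean scheme alters $A_{h+1}$ after the stopping test, so it genuinely needs your transfer argument for (iv). That argument works, but your remark that $\gamma_h^2\ll\gamma_{h+1}^2$ is ``false'' is backwards. What you actually use is $\gamma_h\ll\gamma_{h+1}^2$, which the hierarchy provides, together with the factor-$2$ slack in the stopping level.

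In exchange, you have to supply by hand three things the paper's packaging gives for free:
\begin{enumerate}
\item A gap between cleaning thresholds and final thresholds, e.g.\ move vertices with fewer than $2\beta n$ neighbours (resp.\ more than $3\beta n$ non-neighbours). Then the $O(\gamma_h n)\ll\beta n$ reassignments cannot push other vertices across a threshold.
\item A termination bound for the (v)-moves. Each such move lowers $\sum_i e(A_i)$ by roughly $\frac{s}{3r}n$, while this sum is $O(\gamma_h^2n^2)$.
\item A vertex violating (ii) must be sent to $A_{h+1}$, not to ``another $A_j$'', since the latter leaves its non-neighbours in $A_{h+1}$ unchanged. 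It is admissible in $A_{h+1}$ because having more than $4\beta n$ non-neighbours there leaves it at least $3\beta n$ neighbours in every $A_j$. This is precisely the paper's argument for the set $R$.
\end{enumerate}
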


    Now we can prove the main result, that is Theorem~\ref{t:main}.

   \begin{proof}[Proof of Theorem~\ref{t:main}]
    
    Let $\beta$ be the constant given by Lemma~\ref{l:4.3chr}.
    Apply Lemma~\ref{l:4.3chr} with constants 
    \[0\ll \gamma_1 \ll  \gamma_2  \ll \ldots \ll \gamma_{m+1} \ll \beta \ll 1,\] to get a partition 
    \[V(G) = A_1 \dcup \dots \dcup A_{h+1},\]
    for some $h \in \{0\} \cup [m]$. To be more specific, $\gamma_{m}$ should be small enough for Lemma~\ref{l:st-reg-pairs-factors} to hold for $\left((2r\gamma_m)^{1/3}, \frac12, \frac{\beta}3\right)$-super-regular pairs. 
    Moreover, for any $h\in[m]$, the constants $\gamma_h$ and $\gamma_{h+1}$ should satisfy the following conditions
      \begin{equation}
        400r^4\gamma_h\ll \beta^2 \quad \text{ and } \quad
          20r^3 \gamma_h \ll \beta\gamma_{h+1}^2.
          \label{eq:gamma-h}
      \end{equation}
    Finally, $\gamma_1$ should also be sufficiently small for Theorem~\ref{t:non-extremal-conforming} to hold and the constant $C$ should be sufficiently large for all the invoked lemmas to hold. 

    In case $h=0$ (the fully non-extremal case), we are done using Theorem~\ref{t:non-extremal-conforming} with $\gamma_{\ref{t:non-extremal-conforming}}=\gamma_1$, $\zeta_{\ref{t:non-extremal-conforming}}=0$, and $Z_{\ref{t:non-extremal-conforming}}=\emptyset$.
    
    Hence, assume $h>0$. Instead of sampling $\Gnp$ with $p = Cp_s(n)$ at once, we will expose four independent graphs $G_i \sim G(n, p/5)$ for $i \in [4]$. The union $G_1 \cup G_2 \cup G_3\cup G_4$ can be viewed as a subgraph of $\Gnp$.
    Furthermore, we assume that
    \begin{itemize}
        \item[($\ast$)] any subset of $G_1$ of size at least $\frac{n}{\sqrt {\log n}}$ contains a copy of $K_s$,
    \end{itemize}
     which occurs w.h.p.~by Lemma~\ref{l:ks-in-lin-sized}.

    In order to control the vertices that have too many non-neighbours in other parts, for $i\in[h+1]$, we define the following sets 
    \[Z_i=\left\{v\in A_i : |A_j\setminus N_G(v)| > \beta |A_j| \text{ for some } j \in [h]\setminus\{i\}\right\}.\] 
    Note that by property \ref{partition(vi)} in Lemma~\ref{l:4.3chr}, we have 
     \begin{equation}
        \label{eq:zi-bound}
         |Z_i|\leq m \cdot \frac {\gamma_h}{\beta}|A_i|. 
     \end{equation}
    Let $Z=\bigcup_{i\in [h+1]}Z_i$. The following standard claim will be used throughout the proof.
        
    \begin{claim}
        \label{c:extend-by-Ks}
        Suppose that $K\subset V(G)$ is a set of vertices containing at most one vertex from $Z$, and such that $|K|\leq r$. Suppose that $K\cap A_j=\emptyset$ for some $j\in [h+1]$. Let $A_j'\subseteq A_j$ be such that $|A_j'|\geq \frac 56 |A_{j}|$. Then there is a $K_s$-copy in $G_1[(N_G(K)\cap A_j')\setminus Z_j]$. 
    \end{claim}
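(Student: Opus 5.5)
The plan is to show that $(N_G(K)\cap A_j')\setminus Z_j$ has size at least $n/\sqrt{\log n}$; in fact it will be linear in $n$. Assumption $(\ast)$ then immediately supplies a copy of $K_s$ in $G_1$ on this set. So the whole argument reduces to a lower bound on the common neighbourhood of $K$ inside $A_j$. We split $K$ into its vertices in $Z$ (at most one) and the rest, and bound the non-neighbours in $A_j$ of each vertex $x\in K$ separately.

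First take a vertex $x\in K\setminus Z$. Say $x\in A_i$ with $i\neq j$, which is possible since $K\cap A_j=\emptyset$.
\begin{itemize}
\item If $j\in[h]$, then $x\notin Z_i$ means, by definition, that $x$ has at most $\beta|A_j|$ non-neighbours in $A_j$.
\item If $j=h+1$, then property \ref{partition(ii)} of Lemma~\ref{l:4.3chr} says that $x$ misses at most $4\beta n$ vertices of $A_{h+1}$.
\end{itemize}
In the second case we need $|A_{h+1}|$ to be linear, so that $4\beta n$ is a small fraction of it. This holds because $|A_{h+1}|=n-\sum_{i\le h}|A_i|\ge (1-\frac{hs}{r}-h\gamma_h)n\ge (\frac{t}{r}-m\gamma_h)n$, using property \ref{partition(i)} and $h\le m$. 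Since $\beta\ll 1/r$, we get $4\beta n\le c|A_{h+1}|$ for a small constant $c$, say $c\le 1/(20r)$. Summing over at most $r$ such vertices, they exclude at most $r\max(\beta,c)|A_j|\le \frac1{20}|A_j|$ vertices of $A_j$. This needs $\beta\ll 1/r$, which is guaranteed by the choice $\beta\le\beta_0$ with $\beta_0$ small.

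Next take the possible vertex $z\in K\cap Z$, and here we must be more careful. If $z\in A_i$ with $i,j\in[h]$, property \ref{partition(v)} only gives $\deg(z,A_j)\ge \frac13|A_j|$; the same bound holds even if $i=h+1$ (then property \ref{partition(iii)} gives $\beta n$, and \ref{partition(v)} does not apply). If $j=h+1$, property \ref{partition(ii)} still gives at most $4\beta n$ non-neighbours. So in the worst case $z$ has at least $\min(\frac13|A_j|,\beta n)$ neighbours in $A_j$.

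This is the main obstacle: a single low-degree vertex of $Z$ has only a linear but possibly small neighbourhood, so it cannot be combined with $A_j'$ by simply counting complements. To handle it, let $N=N_G(z)\cap A_j$. The other vertices of $K$ exclude at most $\frac1{20}|A_j|$ vertices, $A_j\setminus A_j'$ has at most $\frac16|A_j|$ vertices, and $Z_j$ is tiny by \eqref{eq:zi-bound}. When $\deg(z,A_j)\ge\frac13|A_j|$ we get $|N\cap A_j'|\ge(\frac13-\frac16)|A_j|$, and after the remaining deletions the common neighbourhood has size at least $(\frac16-\frac1{20}-m\gamma_h/\beta)|A_j|$, which is linear. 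When the bound is only $\beta n$, the possible cases are:
\begin{itemize}
\item $j=h+1$: the missing set has size at most $4\beta n$, which is already tiny, so no problem arises.
\item $z\in A_{h+1}$ and $j\le h$: then $z\in Z_{h+1}$, and property \ref{partition(iii)} gives only $\beta n$ neighbours. This is the delicate case. Here I would use that $\frac56$ still leaves room only if $\beta n>\frac16|A_j|$, which fails in general.
\end{itemize}

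So the plan in the delicate case is to argue that the intended applications only use $A_j'$ that are not adversarial with respect to $z$. If the claim must hold verbatim, I would instead enlarge the count using property \ref{partition(v)}-type bounds, or interpret $Z$-vertices in $A_{h+1}$ as having degree $\frac13|A_j|$ by the construction in Lemma~\ref{l:4.3chr}. I expect this accounting of the $Z$-vertex against the deletion of up to $\frac16|A_j|$ vertices to be the key point to verify. In every case, once the common neighbourhood is linear in $n$, $(\ast)$ finishes the proof.
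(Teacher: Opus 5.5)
Where you actually close the argument ($K\cap Z=\emptyset$, or $K\cap Z\subseteq Z_i$ with $i\in[h]$, or $j=h+1$), your count is the paper's. You bound each vertex's non-neighbours in $A_j$ via the definition of $Z_i$, property~\ref{partition(v)} or property~\ref{partition(ii)}, subtract $\frac16|A_j|$ and $|Z_j|$, and apply $(\ast)$.

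The case you leave open, $z\in K\cap Z_{h+1}$ with $j\in[h]$, is a genuine gap, and the paper's proof has it too. The paper simply asserts that a vertex of $K\cap Z$ has at most $\frac23|A_j|$ non-neighbours in $A_j$. That is property~\ref{partition(v)}, which only applies when $z\in A_i$ with $i\in[h]$. For $z\in A_{h+1}$ only property~\ref{partition(iii)} is available, and then the claim is false as stated. Suppose some $z\in A_{h+1}$ has at most $\frac16|A_j|$ neighbours in some $A_j$, $j\in[h]$; Lemma~\ref{l:4.3chr} does not exclude this, since $\beta n<\frac16|A_j|$. Then $z\in Z_{h+1}$, and $K=\{z\}$ with $A_j'=A_j\setminus N_G(z)$ has empty common neighbourhood in $A_j'$.

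A concrete instance: take $r=5$, $s=2$, $|A_1|=|A_2|=\frac{2n}{5}$, $|A_3|=\frac n5$. Put in all edges between distinct parts and a perfect matching inside $A_1$. Then join one vertex $z\in A_3$ to all other vertices of $A_3$, but to only $2\beta n$ vertices of $A_1$. This graph has $\delta(G)\ge\frac{3n}{5}$, and the partition satisfies \ref{partition(i)}--\ref{partition(vi)} with $h=2$.

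The bad case is also actually used. In Case~2 of Claim~\ref{c:size-adj}, the cliques of $\cC_{m+1}$ may contain a vertex of $Z_{m+1}$ and are then extended into $A_1,\dots,A_m$.

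So no proof of the verbatim claim exists; what your proposal lacks is a correct replacement statement. Your two fallbacks fare differently.

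\begin{itemize}
\item Extracting a $\frac13|A_j|$ bound for $Z_{h+1}$-vertices from the construction in Lemma~\ref{l:4.3chr} is unfounded. Property (iii*) there only provides $2\beta n$ neighbours in $A_i^*$.
\item Restricting to non-adversarial $A_j'$ is the right direction but not enough by itself. With threshold $\beta|A_j|$ in the definition of $Z_i$, the available bounds let the up to $r-1$ other vertices of $K$ miss $(r-1)\beta|A_j|>\beta n$ vertices of $A_j$ in total. That can be all of the neighbours that \ref{partition(iii)} guarantees for $z$.
\end{itemize}

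A version that is true and covers every application needs two changes:
\begin{itemize}
\item Define $Z_i$ with threshold $\frac{\beta}{10r}|A_j|$, as the paper itself does for $Z'$ in Step~2. Then \ref{partition(vi)} gives $|Z_i|\le 10mr\gamma_h\beta^{-1}|A_i|$; the other uses of $Z_i$ only need their constants adjusted.
\item Replace $|A_j'|\ge\frac56|A_j|$ by $|A_j\setminus A_j'|\le\frac{\beta}{4}n$. This holds wherever the claim is invoked, because at most $(r+1)g\le(r+1)h\gamma_h n\ll\beta n$ vertices are ever removed.
\end{itemize}
In the problematic case, the common neighbourhood of $K$ in $A_j'\setminus Z_j$ then has size at least $\beta n-\frac{\beta}{10}n-\frac{\beta}{4}n-|Z_j|\ge\frac{\beta}{2}n$, so $(\ast)$ applies. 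The other cases go through exactly as in your count.
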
 
         
    \begin{proof}
        First, suppose that $j\in [h]$. Since $K$ is disjoint from $A_j$, each vertex in $K\setminus Z$ has at most $\beta|A_j|$ non-neighbours in $|A_j|$. If there is a vertex in $K\cap Z$, then it has at most $\frac 23 |A_j|$ non-neighbours in $|A_j|$. Therefore $|(N_G(K)\cap A_j')\setminus Z_j|\geq |A_j|\left(\frac 56-\frac 23-r\beta-\frac{m\cdot \gamma_m}{\beta}\right)\geq \frac 17 |A_j|$. 
            
        If $j=h+1$, using property \ref{partition(ii)} from Lemma~\ref{l:4.3chr}, we similarly get that $|(N_G(K)\cap A_{h+1}')\setminus Z_{h+1}|\geq \frac 12 |A_{h+1}|$. The claim now follows from ($\ast$).
    \end{proof}

    The rest of the proof is divided into three steps. We first remove a small number of copies of $K_r$ in order to balance the sizes of the partition sets $A_i$, $i\in[h+1]$. Let $A_i' \subset A_i$ be the remaining parts. In the second step, we factorize the part induced by $A_{h+1}'$ with appropriate cliques of size $r-sh$. Finally, in the third step we extend this $K_{r-sh}$-factor to a~$K_r$-factor covering the remaining parts and such that each clique in this factor uses exactly $s$ vertices from any set $A_i'$, $i\in[h]$.

    \smallskip

    {\bf Step 1.~Balancing the sizes of the partition sets}

    \smallskip
        
    Let $g_1, g_2, \ldots, g_{h+1}$ be integers such that 
    \[|A_i|=\frac{sn}{r}+g_i, \text{ for } i\in[h] \quad \text{ and } \quad |A_{h+1}|=\frac{(m-h)s+t}{r}\cdot n+g_{h+1}.\] 
    Notice that $\sum_{i \in [h+1]}g_i = 0$ and, by property \ref{partition(i)} in Lemma~\ref{l:4.3chr}, 
    \begin{equation}\label{eq:g_i-sizes}
        |g_i| \leq \gamma_h n, \text{ for } i\in[h] \quad \text{ and } \quad |g_{h+1}|\leq h\gamma_hn.
    \end{equation}
    Let $\sigma_i=g_i/|g_i|$ be the sign of $g_i$, and set \[g=\sum_{i=1}^{h+1}|g_i|/2 \quad \text{ and } \quad I_1 = \{i\in[m+1]: g_i>0\}.\]
    Let $I_2 = [m+1]\setminus I_1$.

    \smallskip
    Consider first the case $h=m$, which is slightly easier to handle. 
    
    \begin{claim}
        \label{c:size-adj}
        If $h=m$, then w.h.p.~there is a collection $\cC$ of $g$ vertex-disjoint $Z$-conforming copies of $K_r$ in $G\cup G_1$ such that 
        \begin{itemize}
            \item for each $i\in [m]$, there are exactly $|g_i|$ cliques $K\in \cC$ with $|K\cap A_i|=s+\sigma_i$, while all other cliques $\tilde K\in \cC$ satisfy $|\tilde K\cap A_i|= s$,
            \item there are exactly $|g_{m+1}|$ cliques $K\in \cC$ with $|K\cap A_{m+1}|=t+\sigma_{m+1}$, while all other cliques $\tilde K\in\cC$ satisfy $|\tilde K\cap A_{m+1}|=t$.
        \end{itemize}
    \end{claim}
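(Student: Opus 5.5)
We pair up surplus and deficit parts and, for each unit of imbalance, build one $Z$-conforming copy of $K_r$ that takes one vertex too many from an overfull part and one too few from an underfull part. Since $h=m$, we have $r=ms+t$ and every part $A_i$ with $i\in[m]$ ideally contributes $s$ vertices, while $A_{m+1}$ contributes $t$.

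First we fix the pairing. Since $\sum_i g_i=0$, we can write the multiset of indices as a list of $g$ ordered pairs $(i^+,i^-)$ with $i^+\in I_1$ and $i^-\in I_2$. Each $i\in I_1$ appears as a first coordinate exactly $g_i$ times, and each $i\in I_2$ with $g_i\neq 0$ appears as a second coordinate exactly $|g_i|$ times. For a pair $(i^+,i^-)$ we want a clique $K$ with
\[|K\cap A_{i^+}| = s_{i^+}+1,\qquad |K\cap A_{i^-}| = s_{i^-}-1,\qquad |K\cap A_j|=s_j \text{ otherwise},\]
where $s_j=s$ for $j\le m$ and $s_{m+1}=t$. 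Taking the cliques vertex-disjoint gives exactly the counts required in the claim. A clique whose index pair has $i^-=m+1$ and $t=1$ simply avoids $A_{m+1}$. Since $g\le \sum_i|g_i|\le 2m\gamma_m n$ by~\eqref{eq:g_i-sizes}, the total number of vertices used is at most $2mr\gamma_m n$. Hence every part $A_j$ keeps at least $\frac56|A_j|$ of its vertices throughout, because $|A_j|\ge \beta n$ roughly and $\gamma_m\ll\beta$.

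We construct the cliques greedily, one at a time, avoiding all previously used vertices. The main obstacle is the step that puts $s_{i^+}+1$ vertices inside $A_{i^+}$. The part $A_{i^+}$ may be nearly independent in $G$, so these vertices must come mostly from $G_1$, and we are only at density about $p_s$, where copies of $K_{s+1}$ in $G_1$ alone are scarce. We avoid needing $K_{s+1}$ in $G_1$ by using a \emph{single} edge of $G$ inside $A_{i^+}$. The part $A_{i^+}$ does contain edges of $G$: every $x\in A_{i^+}$ has degree at least $(1-\frac sr)n$, while $|V\setminus A_{i^+}|\le (1-\frac sr+\gamma_m)n$. So $x$ has at least $\delta_0:=\deg(x)-|V\setminus A_{i^+}|$ neighbours inside $A_{i^+}$, and this is at least $-\gamma_m n$, which is useless when $g_{i^+}$ is small. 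A direct count works better. The number of vertices of $A_{i^+}$ with at least one neighbour inside $A_{i^+}$ that are still unused must exceed the number of cliques we still need with surplus in $A_{i^+}$. Summing degrees over $A_{i^+}$ gives $e(G[A_{i^+}])\ge \frac12|A_{i^+}|\,(|A_{i^+}|-\frac{sn}{r})=\frac12|A_{i^+}|g_{i^+}$. A maximum-degree/matching argument, with Lemma~\ref{l:5.5chr-stronger} as a fallback when degrees are small, then gives $g_{i^+}$ disjoint copies of $K_{s+1}$ in $G\cup G_1$ inside $A_{i^+}$ that are $Z$-conforming. Concretely, we apply Lemma~\ref{l:5.5chr-stronger} to $G[A_{i^+}]$, or to a subgraph of minimum degree about $g_{i^+}$ obtained by deleting low-degree vertices. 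This is the step I expect to be delicate: passing from the edge count to a minimum-degree hypothesis $h'\ge g_{i^+}$ and ensuring $h'\le\gamma|A_{i^+}|$. We first extract, for each $i\in I_1$, these $g_i$ disjoint $Z$-conforming $K_{s+1}$-copies inside $A_i$, one for each pair with $i^+=i$.

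Next we extend each of these $K_{s+1}$-copies into a full clique. Fix a pair $(i^+,i^-)$ together with its $K_{s+1}$-copy $K_0\subset A_{i^+}$. We go through the parts $j\neq i^+$ in turn and at each step add a $K_{s_j}$, or a $K_{s_{i^-}-1}$ when $j=i^-$, lying in $G_1$ inside the common $G$-neighbourhood of the current set and avoiding $Z_j$ and all used vertices. Claim~\ref{c:extend-by-Ks} supplies a $K_s$-copy there. A $K_t$- or $K_{s-1}$-copy is obtained as a sub-clique of that $K_s$ because $t\le s$. The current set always has size at most $r$ and at most one vertex in $Z$, since all added vertices avoid $Z$. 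Edges between different parts come from $G$ via the common-neighbourhood choice, and edges within a part come from $G_1$ (or $G$ inside $A_{i^+}$). The result is therefore a clique in $G\cup G_1$. Every set we search in has linear size, so ($\ast$) applies deterministically once it holds. The only randomness is in ($\ast$) and Lemma~\ref{l:5.5chr-stronger}, both of which hold w.h.p., so the claim follows.
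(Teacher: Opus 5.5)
Your overall route is the paper's. You find $g_i$ disjoint $Z_i$-conforming $K_{s+1}$'s inside each surplus part via Lemma~\ref{l:5.5chr-stronger}, then grow them greedily with Claim~\ref{c:extend-by-Ks}. Taking a $K_{s-1}$ in the deficit part instead of a $K_s$ is equivalent to the paper's extending to $K_{r+1}$ and then deleting one vertex of $A_{j_K}$.

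However, the step you flag as delicate is left open, and the workaround you sketch does not close it. From $e(G[A_{i^+}])\ge\frac12|A_{i^+}|g_{i^+}$ alone, deleting low-degree vertices only gives a subgraph of minimum degree about $g_{i^+}/2$. Lemma~\ref{l:5.5chr-stronger} would then yield only about $g_{i^+}/2$ cliques. A matching of $G$-edges does not help either, since an edge inside a nearly independent $A_{i^+}$ need not have common neighbours there. The missing observation is that no averaging is needed. By definition $|A_i|=\frac{sn}{r}+g_i$ exactly, so for $i\in I_1\cap[m]$ every $x\in A_i$ satisfies $\deg_{G[A_i]}(x)\ge(1-\frac sr)n-(n-|A_i|)=g_i$. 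Your estimate $-\gamma_m n$ came from replacing $|V\setminus A_{i^+}|$ by the crude bound from property~\ref{partition(i)}. Your own degree sum in fact uses exactly this per-vertex bound. Hence $\delta(G[A_i])\ge g_i$ with $g_i\le\gamma_m n$, and Lemma~\ref{l:5.5chr-stronger} applies directly to $G[A_i]$ with $Z=Z_i$. You must also check its hypothesis on $Z_i$, which you omitted. If $v\in Z_i$ has more than $\beta|A_j|$ non-neighbours in some $A_j$, the same computation gives $|N_G(v)\cap A_i|> g_i+\beta|A_j|$, which is roughly $\beta|A_i|$. Moreover $|Z_i|\le m\gamma_m|A_i|/\beta$ is small compared with $\beta|A_i|$.

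Second, your uniform treatment breaks for $i^+=m+1$ when $t<s$. There $|A_{m+1}|=\frac tr n+g_{m+1}$, so your bound $|V\setminus A_{i^+}|\le(1-\frac sr+\gamma_m)n$ is false. Vertices of $A_{m+1}$ may have no $G$-neighbours inside $A_{m+1}$, so Lemma~\ref{l:5.5chr-stronger} is unavailable. But no $K_{s+1}$ is needed here: you only need $K_{t+1}$'s with $t+1\le s$. Property $(\ast)$ supplies these greedily in $G_1[A_{m+1}\setminus Z_{m+1}]$, avoiding $Z_{m+1}$ altogether. In the singular case $t=s$, $|A_{m+1}|=\frac{sn}{r}+g_{m+1}$ and $A_{m+1}$ is handled like the other parts. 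With these two fixes your argument becomes the paper's proof.
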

    
    \begin{proof}[Proof of Claim~\ref{c:size-adj}] 
        For any $i\in I_1\setminus\{m+1\}$ we have $\delta (G[A_i])\geq g_i$. Moreover, for any $v\in Z_i$, since there is an index $j\in [m]\setminus\{i\}$ such that $|A_j\cap N_G(v)|< (1-\beta) |A_j|$, then by the minimum-degree assumption on $G$, we have $|A_i\cap N_G(v)|\geq \beta |A_i|$. Hence, by Lemma~\ref{l:5.5chr-stronger} with $\delta_{\ref{l:5.5chr-stronger}}=m\gamma_m/\beta$, $\beta_{\ref{l:5.5chr-stronger}}=\beta$, and $\gamma_{\ref{l:5.5chr-stronger}}=\gamma_m$, w.h.p.~there is a collection $\cC_i$ of $g_i$ vertex-disjoint $Z_i$-conforming $K_{s+1}$-copies in $(G \cup G_1)[A_i]$. 
        
        If $m+1\in I_1$, we have two cases.
        
        \textbf{Case 1.} If $t<s$, let $\cC_{m+1}$ be a collection of $g_{m+1}$ vertex-disjoint $Z_i$-conforming $K_{t+1}$-copies in $(G\cup G_1)[A_{m+1}]$, which can be found greedily using ($\ast$), since $t+1 \leq s$. Here we can actually make the copies in $\cC_{m+1}$ to be vertex-disjoint from $Z_i$.

        \textbf{Case 2.} If $t=s$, by \eqref{eq:g_i-sizes} we have $|A_{m+1}| = \frac{sn}{r}+g_{m+1} \leq \frac {sn}r + m\gamma_mn$ and $\delta(G[A_{m+1}])\geq g_{m+1}$, so we can proceed to find a collection $\cC_{m+1}$ of $g_{m+1}$ vertex-disjoint $Z_{m+1}$-conforming $K_{s+1}$-copies using Lemma~\ref{l:5.5chr-stronger}, as in the case $i\in [m]$, but with $\gamma_{\ref{l:5.5chr-stronger}}=m\gamma_m$.

        Using Claim~\ref{c:extend-by-Ks}, we can now extend the cliques in $\cC = \bigcup_{i\in I_1}\cC_i$ to vertex-disjoint $K_{r+1}$-copies. To this end, we perform the following process. For each $K\in \cC_i$, go through the indices $j\in [m]\setminus \{i\}$ in order, and for each $j$, find a $K_s$-copy in $(A_j \cap N_G(K))\setminus Z_j$ (also avoiding the previously used vertices) and add it to $K$. Similarly, if $i\neq m+1$, find a $K_t$-copy in $(A_{m+1}\cap N_G(K))\setminus Z_{m+1}$ and use it to extend $K$.  This will turn $\cC$ into a collection of vertex-disjoint $Z$-conforming copies of $K_{r+1}$. Since $r|\cC|< \frac 16|A_j|$ for each $j\in [m+1]$,  Claim~\ref{c:extend-by-Ks} is applicable in each step. 

        
         Finally, since $\sum_{j\in I_2}|g_j|=g=|\cC|$, to every $K\in \cC$ we can assign an index $j_K\in I_2$ so that each index $j\in I_2$ is assigned to exactly $g_j$ cliques in $\cC$. Note that if $K\in\cC_i$, then $j_K\neq i$. Now, for each $K\in \cC$, remove from $K$ a single vertex belonging to $A_{j_K}$, to reduce a~$K_{r+1}$-copy to a $K_r$-copy. Then the resulting collection $\cC$ satisfies the conditions stated in Claim~\ref{c:size-adj}.
    \end{proof}

    Suppose now that we are in the intermediate case, i.e.~$0<h<m$. Let 
    \begin{equation}\label{eq:r'}
        r'=r-hs = (m-h)s+t \quad \text{ and } \quad \tilde{n} = \frac{r'}{r}n,
    \end{equation}
    so that $|A_{h+1}| = \tilde{n}+g_{h+1}$.
    The following auxiliary claim ensures that the graph $G[A_{h+1}]$ is fully non-extremal for appropriate parameters even after removing $O(\gamma_{h+1}^2n)$ vertices. 

    \begin{claim} \label{c:min-degree-in-A_h+1} Let $A \subset A_{h+1}$ with $||A|-\tilde{n}| \leq \frac 19 \gamma_{h+1}^2 n$. Then 
        \begin{equation*} 
            \delta(G[A])\geq\left(1-\frac{s}{r'}\right)|A| - ||A|-\tilde{n}|.
        \end{equation*}
        Secondly, if $S \subset A$ with $|S| \geq \frac{s}{r'}|A|$, then $e_G(S) \geq \frac 12 \gamma_{h+1}^2 n^2$.
    \end{claim}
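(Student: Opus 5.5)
For the minimum-degree bound, I would fix $x\in A$ and count its non-neighbours in $A$ by comparing with the global minimum-degree condition. In $G$, $x$ has at most $\frac{s}{r}n$ non-neighbours, counting $x$ itself as one of its non-neighbours (so that $|V\setminus N(x)|\le n-\delta(G)\le \frac{sn}{r}$). I would first show that $x$ is adjacent to almost all of $A_1\cup\dots\cup A_h$, and that its non-neighbours in $A$ therefore number at most about $\frac{s}{r}n$ minus something close to zero. Doing this directly via property \ref{partition(vi)} is not enough, because that property only controls edges on average and individual vertices of $A_{h+1}$ may have many non-neighbours in some $A_i$. So I would use a cruder bound instead. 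The number of non-neighbours of $x$ in $A$ is at most $\frac{s}{r}n$. Since $|A|\ge \tilde n-\bigl||A|-\tilde n\bigr|$ and $\tilde n=\frac{r'}{r}n$, we have $\frac{s}{r}n=\frac{s}{r'}\tilde n$. Hence
\[\deg_A(x)\ge |A|-\frac{s}{r'}\tilde n \ge |A|-\frac{s}{r'}|A|-\frac{s}{r'}\bigl||A|-\tilde n\bigr| \ge \Bigl(1-\frac{s}{r'}\Bigr)|A|-\bigl||A|-\tilde n\bigr|,\]
using $s\le r'$. In short, the first statement should follow simply from the fact that the total number of non-neighbours is at most $\frac{sn}{r}$ and the rescaling identity $\frac sr n=\frac{s}{r'}\tilde n$. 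I expect no difficulty here, and would only double-check the convention of whether $x$ counts as its own non-neighbour.

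For the edge count, I would reduce to property \ref{partition(iv)}, which applies since $h<m$. Given $S\subseteq A$ with $|S|\ge \frac{s}{r'}|A|$, I would note that $\frac{s}{r'}|A|\ge \frac{s}{r'}\tilde n-\frac{s}{r'}\cdot\frac19\gamma_{h+1}^2n\ge \frac{sn}{r}-\frac19\gamma_{h+1}^2 n$. So $S$ is at most $\frac19\gamma_{h+1}^2n$ short of size $sn/r$, and may also be larger. If $|S|\ge sn/r$, I take a subset $X\subseteq S$ of size exactly $sn/r$; this lies in $A_{h+1}$, so $e(S)\ge e(X)>\gamma_{h+1}^2n^2$. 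Otherwise, I pad $S$ with an arbitrary set $Y\subseteq A_{h+1}\setminus S$ of at most $\frac19\gamma_{h+1}^2 n$ vertices to obtain $X$ of size $sn/r$. This is possible because $|A_{h+1}|\ge \tilde n-h\gamma_hn\ge sn/r$ when $h<m$. Then $e(X)>\gamma_{h+1}^2n^2$, and the added vertices account for at most $|Y|\cdot n\le\frac19\gamma_{h+1}^2n^2$ of those edges. This gives $e(S)\ge \frac89\gamma_{h+1}^2n^2\ge\frac12\gamma_{h+1}^2n^2$.

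The only step needing care is the padding, where I must verify that $A_{h+1}$ has enough room. This relies on $r'\ge s+t>s$ together with \eqref{eq:g_i-sizes} and $\gamma_h\ll\gamma_{h+1}$ from \eqref{eq:gamma-h}. I also need to track rounding when $sn/r$ is not an integer, although the statement of \ref{partition(iv)} already presupposes integrality.
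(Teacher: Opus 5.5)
Your proof is correct and is essentially the paper's argument. The degree bound comes from the crude count $\deg_{G[A]}(x)\ge |A|-\frac{s}{r}n$ together with $\frac{s}{r}n=\frac{s}{r'}\tilde n$, and the edge bound comes from trimming or padding $S$ to a set of size $sn/r$ inside $A_{h+1}$, applying property~\ref{partition(iv)} of Lemma~\ref{l:4.3chr}, and losing at most $\frac19\gamma_{h+1}^2n^2$ edges. The only difference is cosmetic: you split cases on $|S|$ rather than on $|A|$, which also covers explicitly the case $|A|<\tilde n$ with $|S|>sn/r$ that the paper's phrasing passes over.
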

    
    \begin{proof}[Proof of Claim~\ref{c:min-degree-in-A_h+1}]
        For any $x\in A$, we have
        \begin{align*}
            \deg_{G[A]}(x) & \geq \left(1-\frac{s}{r}\right)n-(n-|A|) = |A| - \frac{s}{r}n = |A| - \frac{s}{r'} \tilde{n} \\
            & \geq \left(1 - \frac{s}{r'}\right)|A| - ||A|-\tilde{n}|,
        \end{align*}
        so the minimum degree condition for $G[A]$ holds as required.

        The second part of the claim follows directly from property \ref{partition(iv)} of Lemma~\ref{l:4.3chr} if $|A| \geq \tilde{n}$, since in this case $|S|\geq\frac{s}{r'}|A| \geq \frac{s}{r}n$. 
        So assume that $|A|<\tilde{n}$.
        Let $S \subset A$ with $|S| \geq \frac{s}{r'}|A|$. Let $X$ be any subset of $A$ containing $S$ and such that $|X|= \frac{s n}{r} = \frac{s\tilde{n}}{r'}$. Note that
        $$|X \setminus S| \leq \frac{s}{r'}\left(\tilde{n}-|A|\right)\leq \frac 19 \gamma_{h+1}^2 n.$$
        Again, by property \ref{partition(iv)} of Lemma~\ref{l:4.3chr}, $X$ induces at least $\gamma_{h+1}^2 n^2$ edges in $G$. At most $n|X \setminus S| $ of those edges have an endpoint outside of $S$, so we conclude that
        $$e_G(S) \geq \frac 12 \gamma_{h+1}^2 n^2.$$
    \end{proof}
    
    Our aim now is to find a collection of $K_r$-copies analogous to that in Claim~\ref{c:size-adj}. To do this, we will expose the graphs $G_1$ and $G_2$.

    \begin{claim}
        \label{c:size-adj_h}
        If $0<h<m$, then w.h.p.~there is a collection $\cC$ of $g$ vertex-disjoint $Z$-conforming copies of $K_r$ in $G\cup G_1\cup G_2$ such that 
        \begin{itemize}
            \item for each $i\in [h]$, there are exactly $|g_i|$ cliques $K\in \cC$ with $|K\cap A_i|=s+\sigma_i$, while all other cliques $\tilde K\in \cC$ satisfy $|\tilde K\cap A_i|= s$,
            \item there are exactly $|g_{h+1}|$ cliques $K\in \cC$ with $|K\cap A_{h+1}|=r'+\sigma_{m+1}$, while all other cliques $\tilde K\in\cC$ satisfy $|\tilde K\cap A_{h+1}|=r'$.
        \end{itemize}
    \end{claim}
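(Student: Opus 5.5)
We mirror the proof of Claim~\ref{c:size-adj}, treating $A_{h+1}$ as a single ``part'' that contributes $r'$ vertices per clique instead of $s$ or $t$. The plan has three steps: build small cliques inside the overfull parts, extend them greedily through all other parts to copies of $K_{r+1}$, and finally delete one vertex from each in a prescribed underfull part.

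\textbf{Step 1: small cliques inside each overfull part.} For every $i\in I_1\cap[h]$ we have $\delta(G[A_i])\ge g_i$, and every $v\in Z_i$ has at least $\beta|A_i|$ neighbours in $A_i$. Lemma~\ref{l:5.5chr-stronger}, applied in $G\cup G_1$ exactly as in Claim~\ref{c:size-adj}, therefore yields $g_i$ vertex-disjoint $Z_i$-conforming copies of $K_{s+1}$ in $A_i$.

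If $h+1\in I_1$, we instead need $g_{h+1}$ vertex-disjoint $Z$-conforming copies of $K_{r'+1}$ inside $A_{h+1}$. This is where $G_2$ and the non-extremality of $A_{h+1}$ come in. Claim~\ref{c:min-degree-in-A_h+1} shows that $G[A_{h+1}]$ has minimum degree at least $(1-\frac{s}{r'}-\delta')|A_{h+1}|$ with $\delta'\approx g_{h+1}/n\le m\gamma_h$. It also shows that every $\frac{s}{r'}|A_{h+1}|$-subset spans at least $\frac12\gamma_{h+1}^2n^2$ edges. Since $\gamma_h\ll\gamma_{h+1}^2$ by \eqref{eq:gamma-h}, Lemma~\ref{l:annoying-non-extremal} applies to $G[A_{h+1}]\cup G_2$ with $r'$ in place of $r$. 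This gives $\eta n\ge m\gamma_h n\ge g_{h+1}$ vertex-disjoint copies of $K_{r'+1}$, and we keep $g_{h+1}$ of them.

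To make these copies $Z$-conforming, first discard the vertices of $Z_{h+1}$, of which there are at most $m\gamma_h|A_{h+1}|/\beta$ by \eqref{eq:zi-bound}. The second part of Claim~\ref{c:min-degree-in-A_h+1} tolerates removing this many vertices, because $20r^3\gamma_h\ll\beta\gamma_{h+1}^2$. We then apply the lemma to $A_{h+1}\setminus Z_{h+1}$.

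\textbf{Step 2: greedy extension to $K_{r+1}$.} Let $\cC=\bigcup_{i\in I_1}\cC_i$. For each $K\in\cC_i$, go through $j\in[h]\setminus\{i\}$ and append a $K_s$ from $(A_j\cap N_G(K))\setminus Z_j$ via Claim~\ref{c:extend-by-Ks}.

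If $i\ne h+1$, we must also append a copy of $K_{r'}$ inside $A_{h+1}\cap N_G(K)$. We do this by repeatedly applying Claim~\ref{c:extend-by-Ks} with $j=h+1$, which produces $K_s$-copies of $G_1$ inside the common neighbourhood. Each vertex outside $A_{h+1}$ has at most $4\beta n$ non-neighbours in $A_{h+1}$ by property~\ref{partition(ii)}, so a $Z$-vertex of $K$ loses at most a $\frac23$ fraction. However, these $K_s$-copies must also be joined to each other, since they all sit inside $A_{h+1}$.

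This is the main obstacle. I would write $r'=qs+t'$ with $1\le t'\le s$ and build the $K_{r'}$ step by step inside the current common neighbourhood $W\subseteq A_{h+1}$ of $K$ together with the vertices already chosen. At each step we add a $G_1$-copy of $K_s$ (or of $K_{t'}$ at the last step) chosen from $W$. This is possible by ($\ast$) provided $|W|\ge n/\sqrt{\log n}$.

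The difficulty is that the common $G$-neighbourhood shrinks by a factor of roughly $\frac{s}{r'}$-defect each time. The lower bound $\delta(G[A_{h+1}])\ge(1-\frac{s}{r'}-o(1))|A_{h+1}|$ gives each new $s$-set a common neighbourhood of size about $(1-\frac{js}{r'})|A_{h+1}|$ after $j$ steps. This stays positive until $j=q$, and after $q$ steps we can still fit a $K_{t'}$ with $t'\le s$ because the remaining neighbourhood has size about $\frac{t'}{r'}|A_{h+1}|>0$.

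This is only an average-type bound, so I would rather pick each $K_s$ inside the vertices of $W$ with typical degree, losing only $O(\beta)$ at each step. If this greedy argument fails, the fallback is to take the $K_{r'}$-copies from the $K_{r'}$-factor structure provided by the non-extremal Theorem~\ref{t:non-extremal-conforming} applied in $G_2$ to $A_{h+1}$ minus the used vertices. From that structure we would select copies lying inside $N_G(K)$.

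Throughout, only $O(r\gamma_h n)$ vertices are used in total, so all applications of Claim~\ref{c:extend-by-Ks} stay valid.

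\textbf{Step 3: deleting one vertex per clique.} Each element of $\cC$ is now a $Z$-conforming $K_{r+1}$. Since $\sum_{j\in I_2}|g_j|=g=|\cC|$, we can assign to each $K\in\cC$ an index $j_K\in I_2$ with $j_K\ne i$ whenever $K\in\cC_i$, such that each $j\in I_2$ receives exactly $|g_j|$ cliques. Deleting from each $K$ one vertex in $A_{j_K}$ produces a collection of copies of $K_r$ with exactly the intersection pattern required by the claim.
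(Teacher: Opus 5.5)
Your Steps 1 and 3 agree with the paper. The main line of Step 2, however, has a genuine gap: building the $K_{r'}$ inside $A_{h+1}\cap N_G(K)$ block by block from ($\ast$) and the minimum degree does not work.

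The estimate that after $j$ blocks the common $G$-neighbourhood still has size about $(1-\frac{js}{r'})|A_{h+1}|$ is false. The minimum degree only says that each \emph{single} vertex misses at most about $\frac{s}{r'}|A_{h+1}|$ vertices of $A_{h+1}$. The $s$ vertices of a $G_1$-copy of $K_s$ supplied by ($\ast$) can miss pairwise different sets. So the only general bound after $j$ blocks is $(1-\frac{js^2}{r'})|A_{h+1}|$, which is already negative after one block when, say, $s=3$ and $r'=4$.

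Your estimate implicitly assumes that the vertices within a block have nearly identical non-neighbourhoods. That holds in the complete multipartite picture but not in general: if $G[A_{h+1}]$ is random-like of density $1-\frac{s}{r'}$, an $s$-set has only about $(1-\frac{s}{r'})^s|A_{h+1}|$ common neighbours. Restricting to vertices of ``typical degree'' does not help, because the issue is the common neighbourhoods of $s$-sets, not degrees, and ($\ast$) gives no control over which $K_s$ you get. A block-by-block construction could only be rescued with heavier tools, not with ($\ast$) alone. You would need regularity inside $A_{h+1}$ together with a Janson-type statement such as Lemma~\ref{l:akr:ks-s-everywhere}, applied to a dense family of $s$-tuples with large common neighbourhood, in the spirit of (P1).

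Your fallback is exactly what the paper does, and it should be the main argument. Apply Theorem~\ref{t:non-extremal-conforming} once, in $G\cup G_2$, to the fixed set $A=A_{h+1}\setminus Z_{h+1}$, after deleting at most $r'-1$ vertices for divisibility. This gives a family $\cF$ of vertex-disjoint copies of $K_{r'}$ covering all but at most $r'-1$ vertices of $A$.

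Do not re-apply the theorem to ``$A_{h+1}$ minus the used vertices''. That set depends on the random graphs, whereas the theorem is a w.h.p.\ statement for a fixed host graph. Overlaps with $V(\cC_{h+1})$ and with previously chosen cliques should instead be handled by counting, as follows.
\begin{itemize}
\item Take $K\in\cC_i$ with $i\in[h]$, already extended to a $K_{hs+1}$ via Claim~\ref{c:extend-by-Ks}. Property~\ref{partition(ii)} gives $|A_{h+1}\setminus N_G(K)|\le 4\beta(hs+1)n$, so at most that many members of $\cF$ contain a non-neighbour of $K$.
\item Only $O(rg)=O(rh\gamma_h n)$ further members of $\cF$ are blocked, either by earlier choices or by meeting $V(\cC_{h+1})$.
\item Since $|\cF|\approx n/r$, some free $K'\in\cF$ lies inside $N_G(K)$.
\item Then $V(K)\cup V(K')$ spans a $Z$-conforming $K_{r+1}$, because $V(K')\cap Z_{h+1}=\emptyset$.
\end{itemize}
With this replacement, the rest of your argument goes through.
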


    \begin{proof}[Proof of Claim~\ref{c:size-adj_h}]
    Let $A = A_{h+1}\setminus Z_{h+1}$ and consider first the graph $G[A]$. Recall that by \eqref{eq:zi-bound} we have
    \[|Z_{h+1}|\leq m\cdot\frac{\gamma_h}{\beta}|A_{h+1}|.\] 
    Therefore, by Claim~\ref{c:min-degree-in-A_h+1}, we have
    $$\delta(G[A]) \geq \left(1-\frac{s}{r'} -\frac{2m \gamma_h}{\beta}\right)|A|,$$
    and any set $S \subset A$ with $|S|\geq \frac{s|A|}{r'}$ induces in $G[A]$ at least $\frac 12 \gamma_{h+1}^2 n^2$ edges. In other words, one can think of $G[A]$ as a fully non-extremal graph.
    
    We now expose the random graph $G_2$. By the properties above and using \eqref{eq:gamma-h}, we may apply to $G[A]$ Theorem~\ref{t:non-extremal-conforming} with $r_{\ref{t:non-extremal-conforming}}=r'$, $\delta_{\ref{t:non-extremal-conforming}}=2m\gamma_h/\beta$, $\gamma_{\ref{t:non-extremal-conforming}}=\frac12\gamma_{h+1}^2$, and $Z_{\ref{t:non-extremal-conforming}}=\emptyset$, after perhaps removing at most $r'-1$ vertices for divisibility issues. W.h.p., we then get in $(G\cup G_2)[A]$:
    \begin{itemize}
        \item a collection $\mathcal{F}$ of vertex-disjoint copies of $K_{r'}$ covering all but at most $r'-1$ vertices of $A$.
    \end{itemize}
    Similarly, in case $g_{h+1}>0$, applying to $G[A]$ Lemma~\ref{l:annoying-non-extremal} with $r_{\ref{l:annoying-non-extremal}}=r'$, $\gamma_{\ref{l:annoying-non-extremal}}=\frac12\gamma_{h+1}^2$, $\delta_{\ref{l:annoying-non-extremal}}=2m\gamma_h/\beta$ and $\eta_{\ref{l:annoying-non-extremal}}=h\gamma_h$, so that by \eqref{eq:g_i-sizes} we have $g_{h+1}\leq \eta_{\ref{l:annoying-non-extremal}}n$, w.h.p.~we get in $(G\cup G_2)[A]$:
    \begin{itemize}
        \item a collection $\cC_{h+1}$ of $g_{h+1}$ vertex-disjoint copies of $K_{r'+1}$.
    \end{itemize}
    
    The first collection will be used to extend some cliques $K$ spanned by vertices in $A_1\dcup \dots \dcup A_h$ to copies of $K_{r+1}$. The latter one will only be used in case $g_{h+1}>0$ to deal with surplus vertices in $A_{h+1}$. 

    Now we expose the graph $G_1$. Suppose first that $g_{h+1} > 0$. Using the same strategy as in the proof of Claim~\ref{c:size-adj}, we can extend cliques in $\cC_{h+1}$ to copies of $K_{r+1}$ in $G\cup G_1\cup G_2$, each having exactly $s$ vertices in $A_i$, for $i\in[h]$. Let $\cC_{h+1}^+$ denote this collection of $K_{r+1}$-copies.
    
    Next, using again the same strategy as in the proof of Claim~\ref{c:size-adj}, we can find a collection $\cC = \bigcup_{i\in I_1 \setminus \{h+1\}}\cC_i$ of vertex-disjoint $Z$-conforming $K_{hs + 1}$-copies in $(G\cup G_1)[A_1\dcup \dots \dcup A_h]$, such that any clique $K\in \cC_i$ contains $s+1$ vertices from $A_i$ and $s$ vertices from the other sets $A_j$, $j\in[h]\setminus\{i\}$.
    We will now use the partial $K_{r'}$-factor $\cF$ to turn those $K_{hs+1}$-copies into copies of $K_{r+1}$, one by one. 
    In particular, for each clique $K\in \cC$, we can find a clique $K'\in \cF$ such that the vertices in $V(K)\cup V(K')$ span a copy of $K_{r+1}$ in $G \cup G_1 \cup G_2$. Indeed, by removing $K'$ in each step, by \eqref{eq:g_i-sizes} we will have removed at most $|\cC|=g\leq h\gamma_h n$ cliques from $\cF$. On the other hand, by property \ref{partition(ii)} of Lemma~\ref{l:4.3chr}, the number of non-neighbours of $K$ in $A_{h+1}$ satisfies $|A_{h+1}\setminus N_G(K)|\leq 4\beta (hs+1)n$, and this is an upper bound for the number of cliques in $\cF$ that cannot be used as an extension for $K$. Therefore, after forbidding those cliques, as well as those cliques we have already used in the process, we still have at least 
    \[ |\cF| -(4\beta (hs+1)+h\gamma_h)n \geq \lfloor n'/r'\rfloor-(\gamma_{h+1}+4\beta (hs+1)+h\gamma_h)n\geq 1\] 
    other cliques in $\cF$, so we can find an appropriate extension $K'\in \cF$ for $K$. Let $\cC'$ be the new collection of those extended copies and set $\cC^+ =\cC'\cup \cC_{h+1}^+$.

    Finally, using again the same strategy as in the proof of Claim~\ref{c:size-adj}, we can turn the collection $\cC^+$ into the desired collection of copies of $K_r$ by carefully removing one vertex from each clique in $\cC^+$.
    \end{proof}

    Let $\cC$ be the collection of copies of $K_r$ given by Claim~\ref{c:size-adj} in case $h=m$ or by Claim~\ref{c:size-adj_h} in case $0<h<m$. We remove from $A_1\dcup\dots\dcup A_{h+1}$ the vertices belonging to cliques in $\cC$. Let $A_i' \subset A_i$, $i\in[h+1]$, be the vertex sets obtained after this removal. Let $n' = n-gr$, and note that due to the properties of the collection $\cC$ we have
    \begin{equation}\label{eq:A_{h+1}'}
        |A_i'| = \frac{s n'}{r} \text{ for } i \in [h] \quad \text{ and } \quad |A_{h+1}'| = \frac{(r-hs)n'}{r}=\frac{r'n'}{r}.
    \end{equation}
    Note also that the sets $A_1', \ldots, A_{h+1}'$ still satisfy properties \ref{partition(ii)}--\ref{partition(vi)} from Lemma~\ref{l:4.3chr}, with slightly modified constants, as detailed below. Set \[G'=G[A_1'\dcup\dots\dcup A_{h+1}'],\]  and proceed to the next step.

    \smallskip

    {\bf Step 2.~Covering $A_{h+1}'$ with a $K_{r-hs}$-factor $\cP$}

    \smallskip
    
    We now expose the random graph $G_3$ in order to find an appropriate $K_{r-hs}$-factor in $(G'\cup G_3)[A_{h+1}']$. Moreover, since we later want to extend this factor to a $K_r$-factor, we need to carefully treat the vertices with many non-neighbours outside $A_{h+1}'$. Thus, let 
    \[Z' = \{v \in A_{h+1}': |A_j' \setminus N_G(v)| \geq \frac{\beta}{10r}|A_j'| \text{ for some } j\in[h]\}.\] 
    Using property \ref{partition(vi)} from Lemma~\ref{l:4.3chr}, we have 
    \[ |Z'| \leq 20hr \beta^{-1} \gamma_h |A_{h+1}'| .\] 

    Consider first the case $h=m$. Note that $r-hs=t \leq s$. Moreover, for any vertex $v\in Z'$, there exists $j\in[h]$, such that 
    \[ \text{deg}_{G'[A_{h+1}']}(v) \geq \frac{\beta}{20r}|A_j'| \geq \frac{\beta}{20rs}|A_{h+1}'|.\] 
    Hence, using Lemma~\ref{l:Z-conf.-Kt-factor} applied to the graph $G'[A_{h+1}']$ with $\beta_{\ref{l:Z-conf.-Kt-factor}}=\frac{\beta}{20rs}$ and $\delta_{\ref{l:Z-conf.-Kt-factor}}=20mr\beta^{-1}\gamma_h$, by~\eqref{eq:gamma-h} w.h.p.~we can find in $(G'\cup G_3)[A_{h+1}']$ a $Z'$-conforming $K_t$-factor $\cP$.

    \smallskip
    
    Now consider the case $0<h<m$ and recall by \eqref{eq:r'} that $r-hs = r'$ and $|A_{h+1}|=\tilde{n}+g_{h+1}$ with $\tilde{n}=\frac{r'}{r}n$. Then by \eqref{eq:A_{h+1}'} and \eqref{eq:g_i-sizes}, we have
    \begin{align*} 
        ||A_{h+1}'|-\tilde{n}| = |A_{h+1}'|\left|1- \frac{n}{n'}\right| \leq \frac{2rg}{n}|A_{h+1}'| \leq 2rh\gamma_h |A_{h+1}'|.
    \end{align*}
     Using Claim~\ref{c:min-degree-in-A_h+1} applied with $A=A_{h+1}'$, we get
    \[\delta(G'[A_{h+1}'] \geq \left(1-\frac{s}{r'}-2rh\gamma_h\right)|A_{h+1}'|\] 
    and any set $X\subset A_{h+1}'$ with $|X|=\frac{s|A_{h+1}'|}{r'}$ satisfies 
    \[e_{G'}(X) \geq \frac12\gamma_{h+1}^2n^2 > \gamma_{h+1}^2|A_{h+1}'|.\] 
    Hence, using Theorem~\ref{t:non-extremal-conforming} with $r_{\ref{t:non-extremal-conforming}}=r'$, $Z_{\ref{t:non-extremal-conforming}}=Z'$, $\zeta_{\ref{t:non-extremal-conforming}}=20hr\beta^{-1}\gamma_h$ and $\gamma_{\ref{t:non-extremal-conforming}} = \gamma_{h+1}^2$, we conclude that w.h.p.~the graph $(G \cup G_3)[A_{h+1}']$ contains a $Z'$-conforming $K_{r'}$-factor $\mathcal{P}$.

   \smallskip

    {\bf Step 3.~Extending $\cP$ to a $K_{r}$-factor}

    \smallskip

    Now form the graph $\tilde G = G'/\mathcal{P}$ by \textit{collapsing} each clique $B \in \mathcal{P}$ into a single vertex; that is, the vertex set $A_{h+1}'$ is replaced by the set $\tilde A_{h+1}$ which is in one-to-one correspondence with the collection $\mathcal{P}$, and a vertex $v_B \in \tilde A_{h+1}$ representing the clique $B\in\cP$ is adjacent in $\tilde{G}$ to $v \in A_1'\dcup\dots\dcup A_h'$ if and only if $V(B) \subset N_{G'}(v)$. We will apply Lemma~\ref{l:st-reg-pairs-factors} to the graph $\tilde G$ after verifying the super-regularity conditions. Before we proceed, set
    \[ \tilde r = sh+1 \quad \text{ and } \quad \tilde{n}=|V(\tilde G)|,\]
    and note that
    \[ \tilde n = \frac{sh+1}{r}n' = \frac{\tilde r}{r}n'. \]
    In particular,
    \[ |A_{i}'| = \frac{sn'}{r}=\frac{s\tilde n}{\tilde r} \text{ for } i\in[h] \quad \text{ and } \quad |\tilde A_{h+1}|=\frac{n'}{r}=\frac{\tilde{n}}{\tilde r}.\]

    To verify the super-regularity conditions, notice first that by properties \ref{partition(v)} and \ref{partition(vi)} of Lemma~\ref{l:4.3chr}, each pair $(A_i,A_j)$, $i,j\in[h]$, is $(\gamma_h^{1/3} , d, 1/3)$-super-regular for some $d\geq 1-\gamma_h > 1/2$. Passing to subsets $A_i'\subset A_i$, and using Lemma~\ref{l:slicing}, we get that the pairs $(A_i', A_j')$, $i,j\in[h]$, are $(2\gamma_h^{1/3} , d', 1/4)$-super-regular for some $d'\geq 1/2$. 
    
    Consider now the pair $(A_i', \tilde A_{h+1})$, $i\in[h]$. We first check the degree condition for super-regularity. Take any vertex $v_B\in \tilde A_{h+1}$. The clique $B$ contains at most one vertex from $Z'$, and by property \ref{partition(iii)} of Lemma~\ref{l:4.3chr} such a vertex has at least $\frac12\beta n$ neighbours in $A_i'$. On the other hand, by the definition of $Z'$, the remaining vertices in $B$ have each at most $\frac{\beta}{10r}|A_i'|$ non-neighbours in $A_i'$. Thus, $v_B$ has at least \[\frac12\beta n - \frac{\beta(r-sh)}{10r}|A_i'| \geq \frac{\beta}{3}|A_i'|\]
    neighbours in $A_i'$. On the other hand, property \ref{partition(ii)} of Lemma~\ref{l:4.3chr} yields that any vertex $x\in A_i'$ has at most $4\beta n$ non-neighbours in $A_{h+1}'$ and the same is true for $\tilde A_{h+1}$.

    As for the regularity, by property \ref{partition(vi)} of Lemma~\ref{l:4.3chr} there are at most \[2\gamma_h|A_i'||A_{h+1}'|=2(r-sh)\gamma_h|A_i'||\tilde A_{h+1}|\] missing edges between $A_i'$ and $A_{h+1}'$, and this is also an upper bound for the number of missing edges between $A_i'$ and $\tilde A_{h+1}$. We conclude that the pair $(A_i', \tilde A_{h+1})$ is $((2r\gamma_h)^{1/3}, d'', \beta/3)$-super-regular for some $d'' \geq 1-2r\gamma_h \geq 1/2$.

    Finally, we expose the random graph $G_4$. By Lemma~\ref{l:st-reg-pairs-factors}, the graph $\tilde G\cup G_4[A_1'\dcup\dots\dcup A_h'\dcup \tilde A_{h+1}]$ has a $K_{hs+1}$-factor in which each clique uses exactly $s$ vertices from any set $A_i'$, $i\in[h]$, and one vertex from $\tilde A_{h+1}$.  This factor corresponds in turn to a $K_r$-factor in $G \cup G_3 \cup G_4\left[\bigcup_{i \in [h+1]} A_{i}'\right]$. Together with the $K_r$-cliques in the collection $\cC$ constructed in Step 1.~of the proof, we obtain the desired $K_r$-factor in $G\cup G_1\cup \dots \cup G_4 \subset G\cup \Gnp$.

    \end{proof}

    \section{The main extremal ingredient}

     In this section, we introduce a family of auxiliary graphs which, together with a suitable weighted graph packing, will play crucial role in splitting the parts of the regular partition in the non-extremal case into smaller parts of appropriate sizes and preserving important properties of the regular pairs. To get the desired sub-partition, we will also need to use some carefully constructed absorbers. 
	
    \subsection{The auxiliary graph $Q_h^{(m)}$}

    \begin{definition}
        For every $h\in [m]$ define the graph $Q_h^{(m)}$ that consists of disjoint 
        \begin{itemize}
            \item $h$-cliques $L_1,\dots,L_{s-t},N_1,\dots N_{(m-h)s+t}$ and
            \item $(m-h+1)$-cliques $M_1,\dots,M_{(m-h)s+t}$
        \end{itemize}
        together with all edges from $L_i$ to $M_j$ and all edges from $M_j$ to $N_j$, for any $i\in [s-t]$, $j\in[(m-h)s+t]$. 
    \end{definition}

    \begin{remark}\label{rem:Q_h}
        Note that $|V(Q_h)|=(m+1-h)r$. Moreover, in case $t=0$, $Q_m^{(m)}$ is just a~union of $s$ vertex-disjoint copies of $K_m$.
    \end{remark}

    When $m$ is clear from the context, we will use the notation $Q_h\coloneq Q_h^{(m)}$. We will also refer to the cliques $L_1,\dots, L_{s-t}$ as the $L$-sets, or to the $L$-part of $Q_h$. Similarly, we will refer to the $M$- and $N$-sets, or $M$- and $N$-part.

   \begin{figure}
    \begin{tikzpicture}[scale=0.8]
        \centering

        \coordinate (v1) at (0,2);
        \coordinate (v2) at (0,-2);
        \coordinate (v3) at (4,3);
        \coordinate (v4) at (4,1);
        \coordinate (v5) at (4,-3);
        \coordinate (v6) at (8,3);
        \coordinate (v7) at (8,1);
        \coordinate (v8) at (8,-3);
        
        \draw[line width=3] (v3) -- (v6);
        \draw[line width=3] (v4) -- (v7);
        \draw[line width=3] (v5) -- (v8);
        \draw[line width=3] (v1) -- (v3);
        \draw[line width=3] (v1) -- (v4);
        \draw[line width=3] (v1) -- (v5);
        \draw[line width=3] (v2) -- (v3);
        \draw[line width=3] (v2) -- (v4);
        \draw[line width=3] (v2) -- (v5);
        
        \draw[fill=white] (0,2) circle [radius=0.5];
        \draw[fill=white] (0,-2) circle [radius=0.5];

        \draw[fill=white] (4,3) circle [radius=0.7];
        \draw[fill=white] (4,1) circle [radius=0.7];
        \draw[fill=white] (4,-3) circle [radius=0.7];
        
        \draw[fill=white] (8,3) circle [radius=0.5];
        \draw[fill=white] (8,1) circle [radius=0.5];
        \draw[fill=white] (8,-3) circle [radius=0.5];
        
        \node at (0,0) {$\vdots$};
        \node at (4,-1) {$\vdots$};
        \node at (8,-1) {$\vdots$};

        \node at (0,4) [above] {$K_h$-copies};
        \node at (4,4) [above] {$K_{m-h+1}$-copies};
        \node at (8,4) [above] {$K_h$-copies};
           
        \node at (0,-4) [below] {$L$-sets};
        \node at (4,-4) [below] {$M$-sets};
        \node at (8,-4) [below] {$N$-sets};

        \draw [decorate,decoration={brace,amplitude=7pt,mirror,raise=4ex}]
  (-0.5,2.5) -- (-0.5,-2.5) node[midway,xshift=-4em]{$s-t$};
        \draw [decorate,decoration={brace,amplitude=7pt,mirror,raise=4ex}]
  (8.5,-3.5) -- (8.5,3.5) node[midway,xshift=6em]{$(m-h)s+t$};

    \end{tikzpicture}
\caption{The graph $Q_h^{(m)}$. Thick edges represent complete bipartite graphs.}
\end{figure}
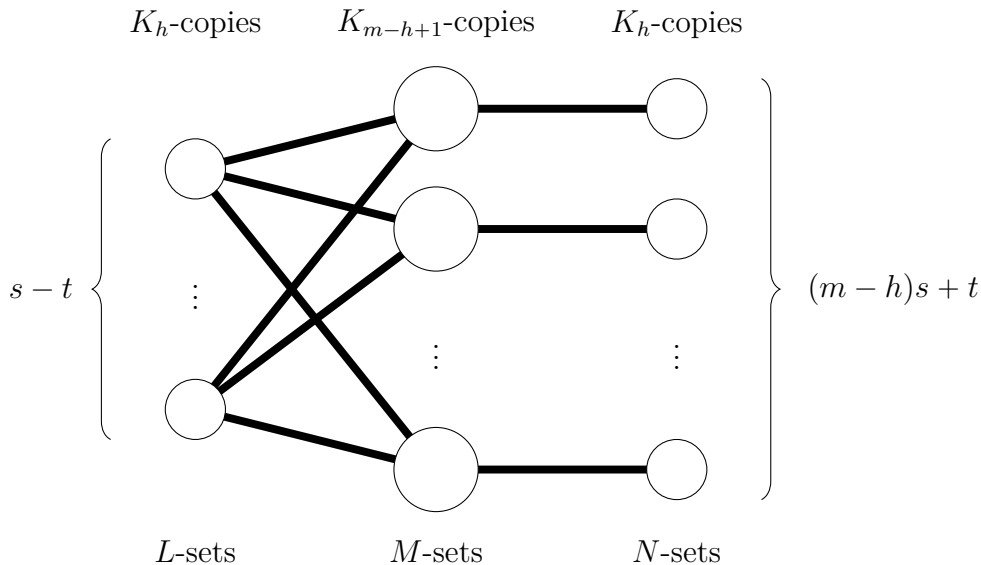

    \begin{lemma}\label{l:6.2chr}
    Let $r = ms+t$ with $0 \leq t <s$. There exists a constant $C>0$ such that for all $\gamma>0$ the following holds. There exists a constant $\delta >0$ and $n_0\in\mathbb{N}$ such that every graph $G$ on $n$ vertices, $n\geq n_0$, with $\delta(G)\geq \left(1-\frac{s}{r}-\delta\right)n$, satisfies one of the following two conditions.
    \begin{enumerate}[label=(\roman*)]
        \item There are vertex-disjoint copies of $K_{m+1},Q_1^{(m)},\dots, Q_m^{(m)}$ in $G$ covering all but at most $C$ vertices, or
        \label{l:6.2chr(i)}
        \item $G$ has an independent set of size $\left(\frac{s}{r}-\gamma\right)n$. \label{l:6.2chr(ii)}
        \end{enumerate}
    \end{lemma}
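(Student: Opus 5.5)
The plan is to derive Lemma~\ref{l:6.2chr} from the fractional stability result, Theorem~\ref{t:sz-stability}, applied to the complete $(m+1)$-partite graph $H=K_{s,\dots,s,t}$ with $m$ parts of size $s$ and one part of size $t$. This graph has $|V(H)|=r$, chromatic number $m+1$ and smallest colour class $t$, so
\[
\chi_{cr}(H)=\frac{m\,r}{r-t}=\frac{r}{s},
\]
and the minimum-degree threshold in Theorem~\ref{t:sz-stability} is exactly $\left(1-\frac sr-\delta\right)n$, as in the lemma. The proof then has three steps: a preliminary dichotomy turning the edge hypothesis of Theorem~\ref{t:sz-stability} into the independent-set alternative \ref{l:6.2chr(ii)}, an application of Theorem~\ref{t:sz-stability}, and a rounding of the resulting fractional packing into genuine vertex-disjoint copies of $K_{m+1},Q_1^{(m)},\dots,Q_m^{(m)}$.

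\emph{Step 1 (dichotomy).} Given $\gamma$, choose $\gamma'\ll\gamma$ and run Theorem~\ref{t:sz-stability} with $\gamma'$, which yields $\delta$. Suppose some $X$ with $|X|=sn/r$ has $e(X)<\gamma' n^2$. Then all but $O(\sqrt{\gamma'}\,n)$ vertices of $X$ have degree at most $\sqrt{\gamma'}\,n$ inside $X$. By the minimum-degree condition, such a vertex is adjacent to almost all of $V\setminus X$, since $|V\setminus X|=(1-\frac sr)n$. Two cases remain.
\begin{itemize}
\item If $G[X]$ contains few disjoint edges, say fewer than $\sqrt{\gamma'}\,n$, then deleting the endpoints of a maximal matching leaves an independent set of size $(\frac sr-\gamma)n$, which is alternative \ref{l:6.2chr(ii)}.
\item Otherwise $X$ contains a linear-size matching, and every matching edge is almost completely joined to $V\setminus X$.
\end{itemize}
I would fold this matching case into the main argument, using it as a source of the few ``extra'' edges that the rounding needs. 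If that proves awkward, I would replace it by a cleaner statement. That statement would be: either alternative \ref{l:6.2chr(ii)} holds, or every $sn/r$-set spans at least $\gamma' n^2$ edges, after a stability adjustment of $\delta$ that absorbs a small sparse part into an exactly independent set. So in the main case Theorem~\ref{t:sz-stability} applies and gives a $(b^{-1}\ltimes H)$-packing leaving total uncovered weight at most $C_0$.

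\emph{Step 2 (rounding).} This is where the specific graphs come from. $H$ itself splits into $t$ copies of $K_{m+1}$ (one vertex from each class) together with a complete $m$-partite graph with parts of size $s-t$, which contains $s-t$ copies of $K_m$. A fractional packing, however, can overlap at a vertex $x$ through several $H$-copies that use $x$ in different colour classes.

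To handle this, I would group the image vertices by the colour-class profile of the weight they receive. Each vertex $x$ carries $b$ units of weight $b^{-1}$, and I record how many of these units come from size-$s$ classes and how many from the size-$t$ class. I would then show that the bipartite incidence structure between these profiles can be resolved into integral pieces. The $L$-cliques, $M$-cliques and $N$-cliques of $Q_h^{(m)}$, with the complete joins $L$--$M$ and $M$--$N$, are meant to be exactly the configurations in which $h$ of the $m$ ``$s$-classes'' are shared by the $L$ and $N$ sides while the remaining $m-h+1$ classes form the $M$-cliques. The vertex count $(m+1-h)r$ and the numbers $s-t$ and $(m-h)s+t$ of $L$- and $N$-sets match the weight that $H$-copies deposit on these classes.

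Concretely, I would take an extreme-point (basic) optimal fractional packing, whose support has bounded size per "type", and greedily peel off integral copies of $K_{m+1}$ and $Q_h^{(m)}$. The loss is bounded by a constant depending only on $b$ and $r$, which gives the constant $C$.

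\emph{Main obstacle.} I expect Step 2 to be the crux: proving that every local configuration of an optimal weighted packing decomposes exactly into the graphs $K_{m+1},Q_1^{(m)},\dots,Q_m^{(m)}$ with only $O(1)$ waste. My first attempt would be an exchange argument. Among all $(b^{-1}\ltimes H)$-packings with residue at most $C_0$, pick one minimising the number of vertices whose weight is split between a size-$s$ class and a size-$t$ class. I would then argue, using the degree condition to find swaps, that the remaining splits occur only in the rigid patterns encoded by the $Q_h$'s.
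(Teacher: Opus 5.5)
Your plan has a genuine gap; in fact there are two. First, it is circular within the paper. Theorem~\ref{t:sz-stability} has no independent proof here: it is obtained \emph{from} conclusion (i) of Lemma~\ref{l:6.2chr} together with Corollary~\ref{cor:T_packing}. Even if you treat Theorem~\ref{t:sz-stability} as an external black box, its hypothesis does not cover the case you need. Your fallback dichotomy (either (ii) holds, or every $\frac{sn}{r}$-set spans at least $\gamma' n^2$ edges) is false for every $\gamma'>0$. Take $|X|=\frac{sn}{r}$, let $G[X]$ be a typical random graph of density $\gamma'$, make $V\setminus X$ a clique, and join it completely to $X$. Then $\delta(G)\ge(1-\frac sr)n$ and $e(X)<\gamma' n^2$, but every independent set has size $O(\log n/\gamma')$. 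So (ii) fails, and (i) must be produced while Theorem~\ref{t:sz-stability} is unavailable. Your ``linear matching'' case is exactly this situation, and ``folding it into the main argument'' is not yet an argument.

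Second, Step~2 is the whole content of the lemma, and it is not supplied. A $(b^{-1}\ltimes H)$-packing with residue $C_0$ is a fractional object whose support consists of linearly many copies of $H$ overlapping arbitrarily. Nothing forces an extreme point to be integral, or to decompose locally into copies of $K_{m+1}$ and $Q_h^{(m)}$. So ``greedily peeling off'' integral copies with $O(1)$ loss has no basis. Note that Lemma~\ref{l:factoring-Qh} only goes the other way, from integral $Q_h^{(m)}$ to fractional $T$.

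The paper never passes through fractional packings. It takes an integral $\{K_1,\dots,K_{m+1},Q_1,\dots,Q_m\}$-factor whose index $(\phi_{m+1},\phi_m,q_m,\dots,\phi_1,q_1)$ is lexicographically maximal, and lets $j$ be minimal with $|A_j|\ge C/m$. Local exchanges plus K\H{o}v\'ari--S\'os--Tur\'an then show that $A_j$ sends density at most $1-\frac sr-\beta$ (up to $O(C)$ terms) into every part except the copies of $Q_h$ with $h\ge j$, and at most $1-\frac sr$ into those. Degree counting gives that all but $\eta n$ vertices lie in copies of $Q_h$ with $h\ge j$. The non-edges from $A_j$ into the $L$- and $N$-sets of these copies yield a $K_{j+1}$-free set $Z$ with $|Z|\ge j(1-\xi)\frac sr n$ and $\delta(G[Z])\ge\frac{j-1-2\xi}{j}|Z|$. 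Andr\'asfai--Erd\H{o}s--S\'os then makes $G[Z]$ $j$-partite, which gives the independent set. Alternative (ii) thus emerges from the maximality argument itself rather than from a preliminary dichotomy. That is what lets the lemma handle sparse-but-not-independent sets like the example above.
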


    Note that by a simple reparametrization of $m$, we immediately get the following corollary which will be useful in the singular case $t=s$.
    
    \begin{cor}\label{cor:6.2chr}
    Let $r = (m+1)s$. There exists a constant $C>0$ such that for all $\gamma>0$ the following holds. There exists a constant $\delta >0$ and $n_0\in\mathbb{N}$ such that every graph $G$ on $n$ vertices, $n\geq n_0$, with $\delta(G)\geq \left(1-\frac{1}{m+1}-\delta\right)n$, satisfies one of the following two conditions.
    
    \begin{enumerate}[label=(\roman*)]
        \item There are vertex-disjoint copies of $K_{m+2},Q_1^{(m+1)},\dots, Q_{m+1}^{(m+1)}$ in $G$ covering all but at most $C$ vertices, or
        \item $G$ has an independent set of size $\left(\frac{1}{m+1}-\gamma\right)n$.
        \end{enumerate}
    \end{cor}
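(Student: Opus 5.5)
The plan is to obtain Corollary~\ref{cor:6.2chr} as a direct instance of Lemma~\ref{l:6.2chr}, with no new combinatorics. In the singular case $r=(m+1)s$ we simply rewrite the decomposition of $r$ in the form allowed by the lemma. Lemma~\ref{l:6.2chr} is stated for $r=m's+t'$ with $0\le t'<s$, so the value $t'=0$ is admitted. We therefore apply it with the same $s$ and with
\[
m' = m+1, \qquad t' = 0, \qquad\text{so that}\qquad r = m's + t' = (m+1)s .
\]

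The first step is to check that the hypotheses match. Under this choice,
\[
1-\frac{s}{r} = 1-\frac{s}{(m+1)s} = 1-\frac{1}{m+1}.
\]
So the minimum degree condition $\delta(G)\ge \bigl(1-\frac1{m+1}-\delta\bigr)n$ of the corollary is exactly the hypothesis of Lemma~\ref{l:6.2chr} for the parameters $(r,s,t',m')$. We take $C$ to be the constant given by the lemma for these parameters. Then, for each $\gamma>0$, we take $\delta$ and $n_0$ to be the constants the lemma provides for that $\gamma$.

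The second step is to translate the two alternatives of the lemma.
\begin{enumerate}
\item Alternative~(i) of the lemma gives vertex-disjoint copies of $K_{m'+1}=K_{m+2}$ and of $Q_1^{(m')},\dots,Q_{m'}^{(m')}$, i.e.\ of $Q_1^{(m+1)},\dots,Q_{m+1}^{(m+1)}$, covering all but at most $C$ vertices. This is alternative~(i) of the corollary.
\item Alternative~(ii) of the lemma gives an independent set of size $\bigl(\frac{s}{r}-\gamma\bigr)n=\bigl(\frac1{m+1}-\gamma\bigr)n$. This is alternative~(ii) of the corollary.
\end{enumerate}

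The only point requiring care is that the graphs $Q_h^{(m+1)}$ in the corollary must be read with the parameters $s$ and $t'=0$. That is, $Q_h^{(m+1)}$ has $s$ $L$-cliques of size $h$, $(m+1-h)s$ $M$-cliques of size $m+2-h$, and the same number of $N$-cliques of size $h$. These are exactly the graphs produced by the definition of $Q_h^{(m')}$ with $t'=0$. As a sanity check, Remark~\ref{rem:Q_h} gives $|V(Q_h^{(m+1)})|=(m+2-h)r$. Also, for $h=m+1$ the $M$-cliques are single vertices, and $Q_{m+1}^{(m+1)}$ consists of $s$ disjoint copies of $K_{m+1}$ together with $s$ vertices joined to them. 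This is consistent with its intended use in the singular case.

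I expect no real obstacle beyond keeping this bookkeeping of $(m',t')$ straight. The actual work lies in Lemma~\ref{l:6.2chr} itself, whose proof must already handle $t'=0$, since the lemma explicitly allows $0\le t<s$.
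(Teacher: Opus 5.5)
Your argument is correct and matches the paper's own one-line justification: the corollary is Lemma~\ref{l:6.2chr} under the reparametrization $m'=m+1$, $t'=0$, where $1-\frac{s}{r}=1-\frac{1}{m+1}$ and both alternatives translate verbatim. One slip in your side remark, which does not affect the proof: for $h=m+1$ there are $(m+1-h)s=0$ $M$- and $N$-cliques, so $Q_{m+1}^{(m+1)}$ is just $s$ vertex-disjoint copies of $K_{m+1}$ (as in Remark~\ref{rem:Q_h}, and consistent with your own count $|V(Q_{m+1}^{(m+1)})|=r$), not $s$ copies of $K_{m+1}$ plus $s$ extra vertices.
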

    
    \begin{proof}[Proof of Lemma~\ref{l:6.2chr}]
         Let $C>r^4$ be sufficiently large for the proofs of Claim~\ref{c:p-A-edges-in-m+1} and Claim~\ref{c:p-B-edges} (more specifically, the contained applications of the K\H{o}vari--S\'os--Tur\'an Theorem) to hold, and let $\gamma >0$. For the remaining constants ($\beta, \xi, \eta, \delta$), we have explicit restrictions 
          \[ \beta<r^{-(r+1)}, \quad \xi < \min \{ r^{-3}, \gamma \},\quad 2\eta \leq \xi^2 \beta, \quad \delta \leq \min\left\{\frac{\xi-2\xi^2}{m+1}, \eta \right\}.\]
        Finally, let $n_0$ be sufficiently large.
        Let $G$ be as in the statement of the Lemma and set \[\alpha=1-\frac{s}{r}=\frac{(m-1)s+t}{ms+t},\] so that $\delta(G)\geq (\alpha-\delta)n$. Let \[\Pc=\{K_1,\dots,K_{m+1},Q_1,\dots, Q_m\}.\]
        Note that evidently $G$ has at least one $\Pc$-factor (for instance, each vertex can be covered by its own copy of $K_1$). Given a $\Pc$-factor $\cF$ of $G$, we let $k_i^\cF$ be the number of copies of $K_i$ in $\cF$ (for $i\in [m+1]$) and $q_h^\cF$ be the number of copies of $Q_h$ in $\cF$ (for $h\in [m]$). Now, set
        $$\phi_h^\cF=k_h^\cF+(s-t)q_h^\cF \quad \text
            {for all }\ h\in [m],$$
        $$\phi_{m+1}^\cF=k_{m+1}^\cF+\sum_{h=1}^m((m-h)s+t)q_h^\cF.$$
        We call the vector
    \[(\phi_{m+1}^\cF,\phi_m^\cF,q_m^\cF,\dots, \phi_1^\cF,q_1^\cF)\]
        the \emph{index} of the factor $\cF$. Fix a $\cP$-factor $\cF$ whose index is lexicographically maximal. 
        For $i \in [m+1]$, let $\fA_i$ be the set of $K_i$-copies in $\cF$, and let $A_i$ be the set of vertices defined by them. Define $\fB_h$ and $B_h$ similarly with respect to $Q_h$, for $h\in [m]$. 
        
        Now, if $\sum_{i=1}^m|A_i| \leq C$, then the conclusion \ref{l:6.2chr(i)} holds. So suppose that this is not the case and let $j\in [m]$ be minimal such that 
        \begin{equation}\label{eq:A_j}
            |A_j|\geq \frac{C}{m}.
        \end{equation}
        Our task is to show that $G$ has an independent set of size roughly $\frac{s}{r}n$. We will show that such a set exists by first proving a number of auxiliary claims concerning the properties of factor $\cF$.

        \begin{claim}
            \label{c:singular-A_m}
            If $t=0$, then $|A_m|< sm$ and hence $j<m$.
        \end{claim}
        
        \begin{proof}
            Assume the opposite, so $\cF$ contains at least $s$ copies of $K_m$. By Remark~\ref{rem:Q_h}, we can replace those $s$ copies of $K_m$ by a single copy of $Q_m$. This increases $q_m^\cF$ without changing $\varphi_{m+1}^{\cF}$ and $\varphi_m^{\cF}$.  Hence, we get a new $\Pc$-factor $\cF'$ with an index larger than that of $\cF$. This contradicts the maximality of $\cF$.  
        \end{proof}

        For vertex sets $X,Y\subset V(G)$, let $e(X,Y)$ denote the number of ordered pairs $(u,v)$ with $u\in X$ and $v\in Y$. In particular, if $X\cap Y=\emptyset$, then $e(X,Y)$ is just the number of edges in $G$ with one endpoint in $X$ and the other in $Y$. On the other hand, $e(X, X) = 2e(X)$, which will be used in the following claim and throughout the proof.
        
        \begin{claim}
            \label{c:p-A-edges-in-[p,m]}
            For any $h\in [j,m]$, if $h<m$ or $t>0$, then 
            $$e(A_j,A_h)\leq (\alpha-\beta)|A_j||A_h|.$$
        \end{claim}
       
        \begin{proof}
            Assume that the edge density between $A_j$ and $A_h$ is greater than $\alpha-\beta$.
            Then there exist $\tilde K_j\in \fA_j$ and $\tilde K_h\in \fA_h$ such that $$e(V(\tilde K_j),V(\tilde K_h))>(\alpha-\beta)jh.$$ 
            Note that
            $\alpha jh - j(h-1) = j\left(h-\frac{sh}{r}-h+1 \right)=j\left(\frac{(m-h)s+t}{ms+t}\right)>0$, since we assumed that $h<m$ or $t>0$. Hence,  $(\alpha-\beta) jh > j(h-1)$ for sufficiently small $\beta$. It follows that $$e(V(\tilde K_j),V(\tilde K_h))\geq j(h-1)+1.$$ By the pigeonhole principle, there is a vertex $x\in \tilde K_j$ such that $x$ is a neighbour of all vertices in $\tilde K_h$. Replace $\tilde K_j$ and $\tilde K_h$ with two cliques $ K_{j-1}'$ and $ K_{h+1}'$, by shifting the vertex $x$ from $\tilde K_j$ to $\tilde K_h$. This increases $\phi_{h+1}^\cF$ while the preceding terms stay intact, so again we get a contradiction to the maximality of $\cF$. 
        \end{proof}

        \begin{claim}
            \label{c:p-A-edges-in-m+1}
            $$e(A_j,A_{m+1})\leq |A_j|\left((\alpha-\beta)|A_{m+1}|+C\right).$$
        \end{claim}

        \begin{proof}
            Assume that this is not the case. Then 
            \begin{equation}\label{eq:A_{m+1}}
                |A_{m+1}|\geq C,
            \end{equation}
            as otherwise $e(A_j,A_{m+1})\leq|A_j|C$, and
            \begin{equation}
            \label{eq:edges-a}
                e(A_j,A_{m+1})> (\alpha-\beta) |A_j||A_{m+1}|.
            \end{equation}
            Construct an auxiliary bipartite graph $H$ with vertex classes $\fA_j$, $\fA_{m+1}$ by joining $\tilde K_j$ to $\tilde K_{m+1}$ if the edge density between those cliques is greater than $\alpha-2\beta$, that is if
            \[e(V(\tilde K_j), V(\tilde K_{m+1}))> (\alpha-2\beta)j(m+1).\]
            Owing to \eqref{eq:edges-a}, by standard double counting, we have 
            \[e(H)\geq \beta |\fA_j||\fA_{m+1}|.\]
            Consider an edge $f=\tilde K_j \tilde K_{m+1}$ of $H$. We claim that 
            \begin{equation}
            \label{eq:edges-K_j_m+1}
            e(V(\tilde K_j), V(\tilde K_{m+1}))\geq mj.
            \end{equation} 
            Indeed, consider the following two cases.
            
            \textbf{Case 1.} If $t>0$, then recalling that $\alpha=1-\frac{s}{r}>1-\frac1m$, we get
            \[\alpha (m+1)j > \frac{(m-1)(m+1)}{m}j=mj-\frac{j}{m}\geq mj-1.\]
            
            \textbf{Case 2.} If $t=0$, then $\alpha=1-\frac1m$, and by Claim~\ref{c:singular-A_m} we have $j<m$. Then, similarly, \[\alpha (m+1)j = \frac{(m-1)(m+1)}{m}j=mj-\frac{j}{m}> mj-1.\]
            In both cases, for sufficiently small $\beta$, we have $(\alpha-2\beta)j(m+1) > mj-1$, which implies~\eqref{eq:edges-K_j_m+1}.
            In other words, there are at most $j$ missing edges between $\tilde K_j$ and $\tilde K_{m+1}$. Hence, there is a partition $V(\tilde K_{m+1})=M_f\cup N_f$ such that $|M_f|=m+1-j$, $|N_f|=j$ and $V(\tilde K_j)\cup M_f$ span a copy of $K_{m+1}$ in $G$. 
            
            Next, notice that by our assumption on $\beta$, we have $\beta<\binom{m+1}{j}^{-1}$. Then since the number of possible partitions is $\binom{m+1}{j}$, for each $\tilde K_{m+1}\in\fA_{m+1}$ we can choose a partition $V(\tilde K_{m+1})=\tilde M \cup \tilde N$ such that there are at least $\beta \deg_H(\tilde K_{m+1})$ edges $f$ in graph $H$ with $M_f=\tilde M$ and $N_f=\tilde N$. Let $H'$ be the subgraph of $H$ consisting of such edges. We have $$e(H')\geq \beta e(H)\geq \beta^2 |\fA_j||\fA_{m+1}|.$$
            Since, by \eqref{eq:A_j}, $|\fA_j|\geq \frac{C}{m^2}$ and, by \eqref{eq:A_{m+1}},  $|\fA_{m+1}|\geq \frac{C}{m+1}$, the theorem of K\H{o}vari, S\'os and Tur\'an~\cite{kst54}\footnote{This application of the K\H{o}vari, S\'os and Tur\'an Theorem to a graph of density $\beta^2$, as well as the subsequent one, are our main restrictions on $C$.} yields a copy of $K_{s-t,(m-j)s+t}$ in $H'$. We can see that the vertices of this $K_{s-t,(m-j)s+t}$-copy form, in turn, a copy of $Q_j$ in $G$ (the $s-t$ copies of $K_j$ form the $L$-part of $Q_j$, while for each copy $\tilde K_{m+1}$ we take the aforementioned partition $\tilde M \cup\tilde N$ which corresponds to the $M$-part and $N$-part). By replacing the $s-t$ copies of $K_j$ and $(m-j)s+t$ copies of $K_{m+1}$ with $Q_j$ in $\cF$, we can increase $q_j^\cF$ without changing $\phi_{m+1}^\cF$ or $\phi_j^\cF$, contradicting the maximality of $\cF$. 
        \end{proof}

        \begin{claim}
            \label{c:p-B-edges}
            If $h\in[j-1]$, then 
            $$e(A_j,B_h)\leq |A_j|((\alpha-\beta)|B_h|+C).$$
        \end{claim}
        
        \begin{proof}
            The proof is similar to the proof of the previous claim. Again we assume for the sake of contradiction that 
            \begin{equation}\label{eq:B_h}
                |B_h|\geq C \quad \text{ and } \quad e(A_j,B_h)> (\alpha-\beta)|A_j||B_h|.
            \end{equation}
            Like in the proof of Claim~\ref{c:p-A-edges-in-m+1}, we construct an auxiliary bipartite graph $H$ between $\fA_j$ and $\fB_h$ by joining $\tilde K_j$ and $\tilde Q_h$ if the edge density between them is greater than $\alpha-2\beta$, that is, if 
            \[e(V(\tilde K_j), V(\tilde Q_h))> (\alpha-2\beta)j|V(\tilde Q_h)|.\]
            Notice that by the same reasoning as before, we have
            \[ e(H) \geq \beta|\fA_j||\fB_h|.\]
            Recall that $|V(Q_h)| = (m+1-h)r$. Fix an edge $f=\tilde K_j \tilde Q_h$ of $H$. 
            Let 
            \[
            X_f=\{x\in V(\tilde Q_h)\mid V(\tilde K_j) \subseteq N(x)\}.
            \]
            Note that 
            \[|X_f|  \geq e(V(\tilde K_j), V(\tilde Q_h))-(j-1)|V(\tilde Q_h)|\geq ((\alpha-2\beta)j-(j-1))(m+1-h)r.\] 
            We claim that, for sufficiently small $\beta$, the right-hand side in the second inequality above is larger than $(m-j)((m-h)s+t).$ Indeed, we have
            $$(\alpha j -j+1)(m+1-h)(ms+t)-(m-j)(ms-hs+t)= s(m-j)+t(j+1-h)>0,$$
            since we assumed that $h \leq j-1$, and since by Claim~\ref{c:singular-A_m} $j<m$ in case $t=0$.
            We conclude that
            \begin{equation}
             \label{eq:size-Xf}
            \begin{split}
            |X_f| 
            & > (m-j)((m-h)s+t).
            \end{split}
            \end{equation}

            If some vertex $x\in X_f$ is contained in the $L$- or $N$-set of $\tilde Q_h$, we can replace $\tilde K_j$ and $\tilde Q_h$ with a copy of $K_{j+1}$ (by shifting the vertex $x$ to $\tilde K_j$), $(m-h)s+t$ copies of $K_{m+1}$, one copy of $K_{h-1}$, and $s-t-1$ additional copies of $K_h$. This increases $\phi_{j+1}^\cF$ without decreasing $\phi_{m+1}^\cF$, which contradicts the maximality of $\cF$. For this reason $X_f$ is contained in the union of the $M$-sets of $\tilde Q_h$. 
            
            Now, by \eqref{eq:size-Xf} there is some index $i(f)\in [(m-h)s+t]$ such that $\tilde M_{i(f)}$ intersects $X_f$ in at least $m-j+1$ vertices. This means that there is a partition 
            \[\tilde M_{i(f)}=C_f\cup D_f\]
            with $|C_f|=m-j+1$ and $|D_f|=j-h$, and such that $C_f\subset X_f$. Since for each $\tilde Q_h$ the number of possible pairs $(i(f), C_f)$ as above is at most $((m-h)s+t){m+1-h\choose j-h} < \beta^{-1}$, we can choose one, say $(i(\tilde Q_h), C(\tilde Q_h))$, for which there are at least $\beta \deg_H(\tilde Q_h)$ edges $f$ with $(i(f), C_f) = (i(\tilde Q_h),C(\tilde Q_h))$.
            Thus, like in the proof of the previous claim, there is a~subgraph $H'$ of $H$ satisfying
            \begin{itemize}
                \item $e(H')\geq \beta e(H) \geq  \beta ^2 |\fA_j| |\fB_h|$,
                \item for any $\tilde Q_h$, there is a "special pair" $(\tilde M_i, \tilde N_i)$ in $\tilde Q_h$ for which there is a unique $\tilde C=C(\tilde Q_h)\subseteq \tilde M_i$, such that $(i,\tilde C)=(i(f),C_f)$ for any edge $f=\tilde K_j \tilde Q_h$ in $H'$. 
            \end{itemize}

            By \eqref{eq:A_j} we have $|\fA_j|\geq \frac{C}{m^2}$, and by \eqref{eq:B_h}, $|\fB_h|\geq \frac{|B_h|}{|Q_h|}\geq\frac{C}{(m-h)s+t}$. The K\H{o}vari-S\'os-Tur\'an theorem implies now that $H'$ contains a subgraph isomorphic to $K_{s-t,(m-j)s+t}$. 
            We can take this subgraph, which consists of $s-t$ copies of $K_j$ and $(m-j)s+t$ copies of $Q_h$, and replace them with one copy of $Q_j$, $((m-j)s+t)((m-h)s+t-1)$ copies of $K_{m+1}$ and some additional copies of $K_h$ as follows:
            \begin{itemize}
                \item the $s-t$ copies of $K_j$ become the $L$-part of $Q_j$,
                \item we take the $(m-j)s+t$ special pairs $(\tilde M_i, \tilde N_i)$, each from a different copy of $\tilde Q_h$, and split $\tilde M_i$ into $\tilde M_i = \tilde C \cup \tilde D$, where $\tilde C = C(\tilde Q_h)$; the sets $\tilde C$ become the $M$-part of $Q_j$ and the sets $\tilde D \cup \tilde N$ become the $N$-part of $Q_j$, 
                
                \item the remaining pairs $(\tilde M_k, \tilde N_k)$ are just copies of $K_{m+1}$, 
                \item finally, the $L$-part of $\tilde Q_h$ gives the additional copies of $K_h$.
            \end{itemize} 
            
            This increases $q_j^\cF$, while $\phi_{m+1}^\cF$ and $\phi_{j}^\cF$ remain unchanged, which is again a contradiction to the maximality of $\cF$.
        \end{proof}

        \begin{claim}
            \label{c:p-B-non-neighbours-in-L-N}
            If $x\in A_j$, $h\in[j,m]$, and $\tilde Q_h\in \fB_h$, then $x$ has at least one non-neighbour in each $L$- and $N$-set of $\tilde Q_h$. In particular,
            $$|N_G(x)\cap V(\tilde Q_h)|\leq \alpha|V(\tilde Q_h)|.$$
        \end{claim}
        
        \begin{proof}
            Let $\tilde K_j\in \fA_j$ be the copy of $K_j$ that contains $x$. If $x$ is completely joined to any $L$- or $N$-set of $\tilde Q_h$, we can replace $\tilde Q_h$ and $\tilde K_j$ in $\cF$ with one copy of $K_{h+1}$ (containing $x$), $(m-h)s+t$ copies of $K_{m+1}$, one copy of $K_{j-1}$ ($\tilde K_j - x$) and some additional copies of $K_h$. This increases $\phi_{h+1}^\cF$ without changing $\phi_{m+1}^\cF$, contradicting the maximality of $\cF$.
            Additionally, recalling that $|V(\tilde Q_h)|=(m+1-h)r$, we get
            \begin{align*}|N_G(x)\cap V(\tilde Q_h)|&\leq |V(\tilde Q_h)|-((m-h)s+t)-(s-t) = \alpha |V(\tilde Q_h)|.
            \end{align*}
        \end{proof}

        Next, we define a partition $V(G)=X\cup Y$ by taking 
        \begin{align*}X =A_1\cup\dots\cup A_{m+1}\cup B_1\cup \dots \cup B_{j-1} \quad \text{ and } \quad 
        Y = B_j\cup \dots \cup B_m.\end{align*}
        
        \begin{claim}\label{c:X_is_not_too_large} We have
            $$|X|\leq \eta n.$$
        \end{claim}
        \begin{proof}
            Owing to the minimality of $j$, recall that $|A_h| \leq \frac{C}{m}$ for $h<j$. Then, using all the previous claims, we have
            \begin{align*}
                (\alpha-\delta)|A_j|n & \leq  e(A_j,V(G))\\
                &= \sum_{h=1}^{j-1}e(A_j,A_h)+\sum_{h=j}^{m+1}e(A_j,A_h)+\sum_{h=1}^{j-1} e(A_j, B_h)+\sum_{h=j}^m e(A_j, B_h) \\
                & \leq |A_j|\left(\frac{(j-1)C}{m} + (\alpha-\beta)|X|+jC +sm+\alpha|Y|\right)\\
                & \leq |A_j|\left((\alpha-\beta) |X|+\alpha (n-|X|)+(m+2)C\right),
            \end{align*}
            where the term $|A_j|sm$ is needed for case $t=0$ and comes from Claim~\ref{c:singular-A_m}.
            In particular,
            \[|X|\leq \frac{\delta n +(m+2)C}{\beta}\leq \frac{2\delta n}{\beta}\leq \eta n.\]
        \end{proof}

        Next, let $W$ be the union of the $L$- and $N$-sets of the copies of $Q_h$ in $\fB_j\cup\dots\cup\fB_m$. We construct an auxiliary bipartite graph $H$ between $A_j$ and $W$ with edges defined as follows. For a vertex $x\in A_j$ and a copy $\tilde Q_h\in \fB_h$, where $h\in [j,m]$:
        \begin{itemize}
            \item if $|N_G(x)\cap V(\tilde Q_h)|<\alpha |V(\tilde Q_h)|$, then $H$ has no edges from $x$ to $V(\tilde Q_h) \cap W$;
            \item otherwise, by Claim~\ref{c:p-B-non-neighbours-in-L-N} we have $|N_G(x)\cap V(\tilde Q_h)|=\alpha |V(\tilde Q_h)|=(m-h+1)s$ and there are exactly this many non-edges between $x$ and $V(\tilde Q_h)\cap W$, one for each $L$- and $N$-set. These $(m-h+1)s$ non-edges will become the edges in $H$ from $x$ to $V(\tilde Q_h)\cap W$. 
        \end{itemize}

        \begin{claim}\label{c:no-two-H-neighbours}
            No vertex $y\in W$ has two $H$-neighbours belonging to the same clique $\tilde K_j \in \fA_j$.
        \end{claim}
        \begin{proof}
            Assume such a vertex $y$ exists, and let $\tilde K_j\in \fA_j$ and $u,v \in V(\tilde K_j)$ be such that $uy,vy\in E(H)$. Let $\tilde Q_h \in \fB_h$ be the copy of $Q_h$ that contains $y$ and $\tilde Y$ be the corresponding $L$- or $N$-set of $\tilde Q_h$ with $y\in \tilde Y$. By the definition of $H$, both $u$ and $v$ are connected to every vertex in $\tilde Y \setminus \{y\}$ in $G$. Let us remove $y$ from $\tilde Y$ and add $u$ and $v$ instead. By doing so, we replace $\tilde K_j$ and $\tilde Q_h$ by a copy of $K_{h+1}$ (spanned by vertices in $\tilde Y\cup\{u,v\}\setminus \{y\}$), $(m-h)s+t$ copies of $K_{m+1}$, along with a copy of $K_{j-2}$ (i.e.~$\tilde K_j - \{u,v\}$) and $K_1$ (i.e.~vertex $y$), and additional copies of $K_h$. This increases $\phi_{h+1}^\cF$, while not decreasing $\phi_{m+1}^\cF$, thus contradicting the maximality of $\cF$.
        \end{proof}
        
        Finally, set 
        \[ Z=\{z\in W\colon d_H(z)\geq j+1\}. \]

        \begin{claim}
            \label{c:Z-is-large}
            We have $|Z|\geq j(1-\xi)(1-\alpha)n$.
        \end{claim}
        \begin{proof}
            Recall $|A_j|\geq \frac{C}{m}$, which implies $|\fA_j|\geq \frac{C}{jm}\geq \frac{C}{m^2}\geq j(m+1-j)+1$, for an appropriate choice of $C$. Pick a subset $\fA_j'\subseteq \fA_j$ of size $|\fA_j'|=j(m+1-j)+1$ and let $A_j'$ be the union of the vertex sets of the cliques in $\fA_j'$. Look at any vertex $x\in A_j'$. Let $\fD_x$ be the set of all $\tilde Q_h\in \fB_h$ such that $h\in [j,m]$ and $|N_G(x)\cap V(\tilde Q_h)|=\alpha |V(\tilde Q_h)|$.
            By Claim~\ref{c:p-B-non-neighbours-in-L-N}, all other $\tilde Q_h\in \fB_j\cup \dots \cup \fB_m$ satisfy $|N_G(x)\cap V(\tilde Q_h)|<\alpha|V(\tilde Q_h)|\leq (\alpha- \beta)|V(\tilde Q_h)|$ for $\beta$ sufficiently small. Let $D_x$ be the union of the vertex sets of the cliques in $\fD_x$. We have 
            \begin{align*}(\alpha-\delta)n\leq |N_G(x)| &\leq |X|+|N_G(x)\cap D_x|+|N_G(x)\cap Y\setminus D_x|\\
            &\leq \eta n + \alpha |D_x|+(\alpha-\beta)(n-|D_x|),
            \end{align*}
            whence 
            \[|D_x|\geq \frac{\beta-\delta-\eta}{\beta}n\geq (1-\xi^2)n.\]
            Now set 
            \[D=\bigcap_{x\in A_j'}D_x \quad \text{ and } \quad \fD=\bigcap_{x\in A_j'}\fD_x .\]
            Since, for $\xi$ sufficiently large, \[|D|\geq (1-|A_j'|\xi^2)n\geq (1-\xi)n,\]
            it suffices to argue that every $\tilde Q_h\in \fD$ has at least \[j(1-\alpha)|V(\tilde Q_h)|=(m+1-h)sj\] vertices belonging to $Z$. Fix $\tilde Q_h\in \fD$ and let $U$ be one of its $L$- or $N$-sets. We will show that $|U\cap Z|\geq j$, which implies the desired bound on $|V(\tilde Q_h)\cap Z|$. Suppose that $|U\cap Z|\leq j-1$. Since $|U|=h\geq j$, there is a set $U'$ of size $j-1$ such that $U \cap Z\subseteq U'\subseteq U$. Look at any $\tilde K_j \in \fA_j'$. Due to $\tilde Q_h\in \fD\subseteq \bigcap_{x\in V(\tilde K_j)}\fD_x$, every vertex of $\tilde K_j$ has an $H$-neighbour in $U$. Moreover, by Claim~\ref{c:no-two-H-neighbours}, $H$ induces a matching from $V(\tilde K_j)$ to $U$. Therefore, there is an $H$-edge from $\tilde K_j$ to $U\setminus U'$. Applying this argument to every $\tilde K_j \in \fA_j'$ we obtain $j(m+1-j)+1$ edges from $A_j'$ to $U\setminus U'$. But $|U\setminus U'|=h-(j-1)\leq m+1-j$, so there is a vertex $u\in U\setminus U'$ which receives at least $j+1$ of those edges. This implies that $u\in U\cap Z \subseteq U'$, which is a contradiction.
        \end{proof}

        \begin{claim}
            \label{c:Z-has-high-mindegree}
            We have
            \[\delta(G[Z])\geq\frac{j-1-2\xi}{j}|Z|.\]
        \end{claim}
        \begin{proof}
            Using the previous claim and the inequality $\alpha =1-\frac{s}{r}\leq 1-\frac{1}{m+1}$, we can see that
            \begin{align*}
                \delta(G[Z]) - \frac{j-1-2\xi}{j}|Z| &\geq (\alpha-\delta)n-(n-|Z|)-\frac{j-1-2\xi}{j}|Z|\\
                &=\frac{1+2\xi}{j}|Z|-(1-\alpha+\delta)n\\
                &\geq \left((1-\alpha)((1+2\xi)(1-\xi)-1)-\delta\right)n\\
                &\geq \left(\frac{1}{m+1}(\xi-2\xi^2)-\delta\right)n\geq 0.
            \end{align*}
        \end{proof}

        \begin{claim}
            \label{c:no-K_(j+1)-in-Z}
            There are no copies of $K_{j+1}$ in $G[Z]$.
        \end{claim}
        \begin{proof}
        Assume $y_1,\dots, y_{j+1}$ is a clique in $G[Z]$. By the definition of $Z$, there exists a~matching $x_1y_1,\dots , x_{j+1}y_{j+1}$ in $H$. As before, we will perturb the factor $\cF$ to increase its index. For each $i\in [j+1]$, remove the vertex $x_i$ from its copy of $K_j$, and  replace $y_i$ by $x_i$ in the copy of $Q_h$ containing $y_i$. This leaves all the copies of $Q_h$ containing one of the vertices $y_i$, $i\in[j+1]$, intact. We can now add the clique $y_1,\dots, y_{j+1}$ to the factor. This increases $\phi_{j+1}^\cF$,  contrary to the maximality of $\cF$.
        \end{proof}

        We can finally finish the proof of Lemma~\ref{l:6.2chr}.
        If $j=1$, then Claim~\ref{c:Z-is-large} and Claim~\ref{c:no-K_(j+1)-in-Z} show that $Z$ is an independent set of size at least $(1-\xi)(1-\alpha)n=\frac{s}{r}(1-\xi)n>(\frac{s}{r}-\gamma)n$. If $j\geq 2$, then the Andr\'asfari-Erd\H{o}s-S\'os~\cite{aes74} theorem together with Claim~\ref{c:Z-has-high-mindegree} and Claim~\ref{c:no-K_(j+1)-in-Z} show that $G[Z]$ is $j$-partite. This in turn means that there is an independent set whose size, due to Claim~\ref{c:Z-is-large}, is at least $\frac{|Z|}{j}\geq (1-\xi)(1-\alpha)n$. In both cases, we see that \ref{l:6.2chr(ii)} holds.
    \end{proof}  
    
    \subsection{Weighted graphs and packing}

    Throughout this subsection, let us assume that $r=ms+t$ with $0\leq t<s$.

    Recall the definition of a weighted graph and an $\cF$-packing from the Introduction.
    Any graph $F$ can be viewed as a~weighted graph with $w_F \equiv 1$. 
    
    The \textit{residue} of a packing is defined as $|V(G)|- \sum_{i \in I}\sum_{v \in V(F_i)}w_{F_i}(v)$. An \emph{$\mathcal{F}$-factor} is an~$\mathcal{F}$-packing with residue 0. If $\cF$ consists of a single weighted graph $F$, we will write $F$-factor or $F$-packing for short.
    
    Let $T$ denote the weighted complete graph on vertices $\sigma_1,\dots,\sigma_m$ and $\tau$, with vertices $\sigma_i$ having each weight $\frac{s}{r}$ and vertex $\tau$ having weight $\frac{t}{r}$. In case $t=0$, one can think of $T$ as the clique $K_m$ with each vertex bearing weight $1/m$ (formally, we could also exclude $\tau$ from the definition of $T$ in this case). 
    
    We can easily see that the graph $K_{m+1}$ has a~$T$-factor consisting of $m+1$ copies of $T$. Indeed, it suffices to take $f_i(\tau)$ to be a~different vertex for each $F_i$, $i\in[m+1]$. Recall also that, in case $t=0$, the graph $Q_m^{(m)}$ consists of $s$ vertex-disjoint copies of $K_m$. This graph naturally has a $T$-factor, since we can map $\tau$ anywhere without enhancing the total weight of a vertex. It turns out that we can also factorize all other graphs $Q_h^{(m)}$, possibly first rescaling the weights of $T$ by an appropriate rational number.
    Recall that for a weighted graph $F$ and $\phi > 0 $, by $\phi\ltimes F$ we denote the weighted graph obtained by multiplying all the vertex weights of $F$ by $\phi$. The weight function is therefore given by $w_{\phi\ltimes F}(x)=\phi \cdot w_F(x)$ for every $x\in V(F)$. 

    \begin{lemma}\label{l:factoring-Qh}
        The clique $K_{m+1}$ has a $T$-factor. Furthermore, for every $h\in [m]$, there is a rational number $q_h\in \QQ_{>0}$ such that $Q_h^{(m)}$ has a $\{q_h\ltimes T\}$-factor.
    \end{lemma}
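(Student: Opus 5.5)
The first claim was already observed above: take $m+1$ copies of $T$ in $K_{m+1}$ and map $\tau$ to a different vertex each time. Then every vertex receives total weight $m\cdot\frac{s}{r}+\frac{t}{r}=1$. For $Q_h=Q_h^{(m)}$ I will again use only the cliques $L_i\cup M_j$ and $M_j\cup N_j$, which are copies of $K_{m+1}$, and choose the position of $\tau$ carefully. Write $K=(m-h)s+t$ for the number of $M$- and $N$-sets, and recall that $|L_i|=|N_j|=h$ and $|M_j|=m-h+1$.

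The plan is to use every embedding of $T$ with equal multiplicity, always placing $\tau$ on a vertex of an $M$-set. For each pair $(i,j)\in[s-t]\times[K]$, take $m-h+1$ copies of $T$ mapped bijectively onto $L_i\cup M_j$, with $\tau$ running once over each vertex of $M_j$. For each $j\in[K]$, take $a$ groups of $m-h+1$ copies mapped onto $M_j\cup N_j$, again with $\tau$ running over each vertex of $M_j$; here $a$ is a positive rational to be fixed. (When $t=0$, $\tau$ has weight $0$ and its placement is irrelevant.) Each copy embeds injectively into a clique, so these are valid embeddings.

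I then compute the total weight each vertex receives, in units of $1/r$, with $d=(s-t)/(m-h+1)$ denoting the average loss per $M$-vertex in one group.
\begin{itemize}
\item A vertex of $L_i$ lies in $K$ cliques of the first kind and is always hit by a $\sigma$. It gets $K(m-h+1)s$.
\item A vertex of $N_j$ is always hit by a $\sigma$. It gets $a(m-h+1)s$.
\item A vertex of $M_j$, in each group of $m-h+1$ copies, receives $\sigma$-weight $s$ in $m-h$ copies and $\tau$-weight $t$ in one. That is $(m-h+1)s-(s-t)=(m-h+1)(s-d)$ per group. It lies in $s-t$ groups of the first kind and $a$ of the second, so it gets $(s-t+a)\bigl((m-h+1)s-(s-t)\bigr)$.
\end{itemize}

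Setting the $L$- and $N$-totals equal forces $a=K$. The $M$-total then becomes $\bigl((m-h+1)s\bigr)\cdot K$, because $s-t+K=(m-h+1)s$ and $(m-h+1)s-(s-t)=K$. This coincides with the $L$-total $K(m-h+1)s$, so the key identity checks out.

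Thus every vertex of $Q_h$ receives the same total weight $W=K(m-h+1)s/r>0$. Taking the resulting integer multiset of embeddings and replacing $T$ by $q_h\ltimes T$ with $q_h=1/W\in\QQ_{>0}$ gives total weight exactly $1$ at every vertex, i.e.\ a $\{q_h\ltimes T\}$-factor. If one insists on integer multiplicities with $a=K$, this is already integral, so no further clearing of denominators is needed.

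The main obstacle is finding the right placement of $\tau$. Spreading the copies uniformly over each clique, or putting $\tau$ on the $N$-side in the $M_j\cup N_j$ cliques, overloads the $M$-vertices, as a quick computation shows. The step to verify carefully is therefore the balance identity $(s-t+K)\bigl((m-h+1)s-(s-t)\bigr)=K(m-h+1)s$, together with the separate sanity check of the degenerate case $t=0$, where $\tau$ carries zero weight.
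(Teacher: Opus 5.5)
Apart from one degenerate case your construction is correct, and it is the paper's argument with the intermediate layer removed. The paper first splits $Q_h$ into $xy$ copies of a weighted graph $F$ on $L_i\cup M_j\cup N_j$, where $x=s-t$ and $y=K$. It then covers $F$ by copies of $T$ on $L_i\cup M_j$ and on $M_j\cup N_j$, always with $\tau$ on the $M$-side, scaled by $q_1=xq$ and $q_2=yq$ respectively. Summed over $i$, the $M_j\cup N_j$-groups receive $y$ times the weight of each $L_i\cup M_j$-group, which is exactly your choice $a=K$. Your single scaling $1/W=r/(K(m-h+1)s)$ equals the paper's $q_1$, so you avoid the two scalings and the final passage to their common refinement $q$. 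Your balance computation is right.

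\textbf{The gap is the case $t=0$, $h=m$.} This case is within the scope of the lemma, since the subsection allows $0\le t<s$. It is exactly the case used in Claim~\ref{c:singular-A_m}, and, after reparametrisation, in Corollary~\ref{cor:6.2chr}.

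Here $K=(m-h)s+t=0$, so $Q_m^{(m)}$ has no $M$- or $N$-sets. There are then no cliques $L_i\cup M_j$ or $M_j\cup N_j$, your multiset of embeddings is empty, and every vertex receives weight $0$. Your assertion $W=K(m-h+1)s/r>0$ therefore fails, and $q_h=1/W$ is undefined.

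Your remark about the degenerate case $t=0$ does not cover this. For $t=0$ and $h<m$ your argument goes through unchanged; the failure comes specifically from $K=0$.

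The fix is the one the paper uses. When $K=0$, $Q_m^{(m)}$ is a disjoint union of $s$ copies of $K_m$. Since $\tau$ has weight $0$ and $s/r=1/m$, each $K_m$ is factored by $m$ copies of $T$. In each copy, map $\sigma_1,\dots,\sigma_m$ bijectively onto the $K_m$ and discard $\tau$, as the paper permits when $t=0$. So $q_m=1$ works in this case.
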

    
    \begin{proof}
    The first part has been already justified, and we have also shown that in the case $t=0$, the graph $Q_m^{(m)}$ has a $T$-factor.
    So consider the graph $Q_h=Q_h^{(m)}$ and assume that $t>0$ or $h<m$. Set \begin{equation*}
        x=s-t, \quad y=(m-h)s+t,
    \end{equation*} 
    \begin{equation*}
        q=q_h=\frac{r}{s(m-h+1)xy}, \quad q_1=x\cdot q,\quad q_2=y\cdot q.    
    \end{equation*} 
    Since $q_1\ltimes T$ and $q_2\ltimes T$ have a $q\ltimes T$-factor, it suffices to find a $\{q_1\ltimes T, q_2\ltimes T\}$-factor.
        Consider first the weighted graph $F$ with vertices
        \begin{itemize}
            \item $l_1,\dots,l_h$ of weights $\frac 1y$,
            \item $m_1,\dots, m_{m-h+1}$ and $n_1,\dots,n_h$ of weights $\frac 1x$,
        \end{itemize}
        and all edges except $l_in_j$, $i,j \in [h]$.
        We can factor $Q_h$ with $xy$ copies of $F$ by assigning one copy of $F$ to $L_i\cup M_j \cup N_j$ for each $i\in [s-t]$ and $j\in [(m-h)s+t]$. 
        Therefore, it suffices to factor $F$ with copies of $q_1\ltimes T$ and $q_2\ltimes T$. 
        
        For $i\in[m-h+1]$, let $T_i$ be the copy of $T$ together with a map $f_i$ mapping the vertices of $T_i$ to $l_1,\dots, l_h,m_1,\dots ,m_{m-h+1}$, in such a way that vertex corresponding to $\tau$ in $T_i$ is mapped to $m_i$. Similarly, for $i\in[m-h+1]$, let $T_i^*$ be the copy of $T$ with a map $f_i^*$ mapping the vertices of $T_i^*$  to $m_1,\dots ,m_{m-h+1},n_1,\dots ,n_h$ in such a way that vertex corresponding to $\tau$ in $T_i^*$ is again mapped to $m_i$. 
        Then $q_1\ltimes T_i$ and $q_2\ltimes T_i^*$, $i\in[m-h+1]$, factorise $F$. Indeed, each vertex $l_i$ gets the total weight
        \[ q_1(m-h+1)\frac{s}{r} = \frac1y,\]
        similarly, each vertex $n_i$ gets the total weight
        \[ q_2(m-h+1)\frac{s}{r} = \frac1x,\]
        and each vertex $m_i$ gets the total weight
        \begin{align*} 
        (q_1+q_2)\left(\frac{t}{r} + (m-h)\frac{s}{r}\right) & = \left(\frac{1}{x}+\frac{1}{y}\right) \frac{(m-h)s+t}{s(m-h+1)} \\
        & = \frac{1}{x}\left(1+ \frac{s-t}{(m-h)s+t}\right)\frac{(m-h)s+t}{s(m-h+1)} = \frac1x.
        \end{align*}
    \end{proof}
    
    \begin{cor}\label{cor:T_packing}
    For any $m\in\NN$, there exists $b\in \NN$ such that any collection of vertex-disjoint copies of $K_{m+1}, Q_1^{(m)}, \dots, Q_m^{(m)}$ has a $\left\{\frac 1b\ltimes T\right\}$-factor.
    \end{cor}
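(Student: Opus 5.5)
The plan is to combine the factorisations from Lemma~\ref{l:factoring-Qh} over a common denominator. By that lemma, $K_{m+1}$ has a $T$-factor, and for each $h\in[m]$ there is a positive rational $q_h$ such that $Q_h^{(m)}$ has a $\{q_h\ltimes T\}$-factor. Writing $q_0=1$ (for $K_{m+1}$), I would choose $b\in\NN$ so that $b q_h$ is a positive integer for every $h\in\{0\}\cup[m]$, for instance $b$ equal to a common multiple of the denominators of $q_1,\dots,q_m$. Note that $b$ depends only on $m$ (and the fixed $s,t$), since the $q_h$ do.

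The key observation is the splitting rule, already used in the proof of Lemma~\ref{l:factoring-Qh}: if $k$ is a positive integer and $\phi>0$, then a single copy of $\phi\ltimes T$ embedded via a map $f$ can be replaced by $k$ copies of $\frac{\phi}{k}\ltimes T$, all embedded via the same map $f$. For every vertex $x$, the total weight landing on $x$ is unchanged, because $k\cdot\frac{\phi}{k}\,w_T(v)=\phi\,w_T(v)$. Applying this with $\phi=q_h$ and $k=bq_h$ turns each $\{q_h\ltimes T\}$-factor of $Q_h^{(m)}$ into a $\{\frac1b\ltimes T\}$-factor of $Q_h^{(m)}$. Applying it with $\phi=1$ and $k=b$ does the same for $K_{m+1}$.

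Finally, given a collection of vertex-disjoint copies of $K_{m+1},Q_1^{(m)},\dots,Q_m^{(m)}$ in some graph, I would take the union of these factors, one for each member of the collection, embedded in its own vertex set. Since the members are vertex-disjoint, the weights placed on each vertex come from exactly one member's factor. So each covered vertex receives total weight exactly $1$, and the union is a $\{\frac1b\ltimes T\}$-factor of the collection (that is, of the graph spanned by its vertices).

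There is no real obstacle here. The only points to check are that the splitting preserves the residue, which it does exactly, and that the same $b$ works simultaneously for all $h$, which is ensured by taking a common multiple of the denominators.
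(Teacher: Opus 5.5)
Your proposal is correct and matches the paper's proof. Both take $b$ to be a common denominator of the rationals $q_h$ from Lemma~\ref{l:factoring-Qh} and replace each copy of $q_h\ltimes T$ (and of $T$ itself) by $bq_h$ (respectively $b$) copies of $\frac1b\ltimes T$ under the same embedding. You additionally spell out the $K_{m+1}$ case and the union over the vertex-disjoint members, which the paper leaves implicit.
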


    \begin{proof}
        By Lemma~\ref{l:factoring-Qh}, any collection of vertex-disjoint copies of $K_{m+1},Q_1^{(m)},\dots,Q_m^{(m)}$ has a $\left\{T,q_1\ltimes T, \dots , q_m\ltimes T\right\}$-factor, for some $q_1,\dots, q_m\in \QQ_{>0}$. Let $b$ be the common denominator of $q_1,\dots , q_m$. For every $h\in [m]$, $q_h\ltimes T$ can be factorized using $b\cdot q_h$ copies of $\frac 1b\ltimes T$, so we can find a $\frac 1b \ltimes T$-factor for our collection.
    \end{proof}

    \subsection{Stability for Shokoufandeh--Zhao Theorem}
        \label{sec:sz-stability}
        In this section, we prove Theorem~\ref{t:sz-stability}. Let us first introduce the \emph{critical chromatic number} $\chi_{cr}$. Let $H$ be an $r$-vertex graph with chromatic number $m+1$, and let $t$ be the smallest class size in any proper $(m+1)$-colouring of $H$. Let $s$ be defined by $r = ms+t$, and note that this is not necessarily an integer. Then the critical chromatic number of $H$ is given by $\chi_{cr}(H) = \frac{r}{s}$.
        
        We will only use the following fact which follows directly from the definition and can be found in~\cite{sz03}. They refer to the graph $B(m, r, t)$ below as a \textit{bottle} graph.

        \begin{fact}
        Let $H$ be as above, and let $B(m,r,t)$ be the complete $(m+1)$-partite graph with $m$ classes each of size $ms$ and one class of size $mt$. Then $B(m,r,t)$ has an $H$-factor, and $\chi_{cr}(B(m,r,t)) = \frac rs$.
        \end{fact}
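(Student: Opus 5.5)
We construct the $H$-factor of $B(m,r,t)$ directly from an optimal colouring of $H$. We then compute $\chi_{cr}(B(m,r,t))$ from the fact that a complete multipartite graph has an essentially unique proper colouring with the minimum number of colours.

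\emph{The $H$-factor.} Fix a proper $(m+1)$-colouring of $H$ whose smallest class has size $t$. Write its classes as $C_1,\dots,C_m,C_{m+1}$ with $|C_{m+1}|=t$ and $c_i=|C_i|$. Then $\sum_{i\le m}c_i=r-t=ms$, which is an integer even if $s$ is not. Since $\chi(H)=m+1$, every class is nonempty, so $t\ge 1$. Since $t$ is the smallest class size, $t\le c_i$ for all $i$, hence $t\le s$.

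Let $P_1,\dots,P_m$ be the parts of $B(m,r,t)$ of size $ms$, and $P_{m+1}$ the part of size $mt$. Then $|V(B(m,r,t))|=m\cdot ms+mt=mr$, so we need exactly $m$ copies of $H$. For $k\in\{0,\dots,m-1\}$, embed the $k$-th copy as follows:
\begin{itemize}
\item its class $C_{m+1}$ goes into $P_{m+1}$;
\item for $i\in[m]$, its class $C_i$ goes into $P_{\pi_k(i)}$, where $\pi_k(i)\equiv i+k \pmod m$.
\end{itemize}
Different colour classes of the same copy land in different parts of a complete multipartite graph. Hence every edge of $H$ is mapped to an edge, and each copy is a genuine embedding.

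It remains to check that the parts are filled exactly.
\begin{itemize}
\item $P_{m+1}$ receives $mt$ vertices, one class of size $t$ from each copy.
\item Each $P_j$ with $j\le m$ receives, over the $m$ cyclic shifts, exactly one copy of each $C_i$, $i\in[m]$. This gives $\sum_{i\le m}c_i=ms$ vertices.
\end{itemize}
Choosing disjoint subsets of the parts accordingly yields vertex-disjoint copies of $H$ covering $B(m,r,t)$.

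\emph{The critical chromatic number.} $B=B(m,r,t)$ is complete $(m+1)$-partite with nonempty parts, so it contains $K_{m+1}$ and $\chi(B)=m+1$.

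Every independent set of $B$ lies inside a single part. So in a proper $(m+1)$-colouring, each colour class is contained in one part. The $m+1$ classes cover all $m+1$ parts, so each part is exactly one colour class. The colouring is therefore unique up to permuting colours.

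Its smallest class has size $\min(mt,ms)=mt$, using $t\le s$. Writing $|V(B)|=m\,s'+mt$ with $|V(B)|=mr$ gives $s'=ms$. Hence
\[
\chi_{cr}(B)=\frac{mr}{ms}=\frac rs .
\]

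The only step needing care is the uniqueness of the colouring of $B$ together with the inequality $t\le s$. The rest is bookkeeping.
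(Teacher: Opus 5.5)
Your proof is correct: rotating the $m$ larger colour classes cyclically over the $m$ parts of size $ms$ fills every part exactly. The uniqueness of the minimum colouring of a complete multipartite graph, together with $t\le s$, then gives $\chi_{cr}(B(m,r,t))=\frac{mr}{ms}=\frac rs$. The paper does not prove this fact itself; it only says it follows directly from the definition and cites Shokoufandeh and Zhao, and your argument is exactly that direct verification, the standard one for Koml\'os's bottle graphs.
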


        We can now proceed with the proof.

        \begin{proof}
            [Proof of Theorem~\ref{t:sz-stability}]
            Let $H$ be the given graph, and let $r, m, t$ and $s$ be as above. As in the previous subsection, let $T$ denote the weighted complete graph on vertices $\sigma_1,\dots,\sigma_m$ and $\tau$, with $m$ vertices of weight $\frac{s}{r}$ and one vertex of weight $\frac{t}{r}$. Since $\chi_{cr}(H) = \frac{r}{s}$, the graph $G$ satisfies the hypothesis of Lemma~\ref{l:6.2chr}. Using Lemma~\ref{l:6.2chr} (conclusion (i)) and Corollary~\ref{cor:T_packing}, there is an integer $b \in \NN$ such that $G$ has a $\left\{ b^{-1} \ltimes T\right\}$-packing with remainder at most $C$. Furthermore, each copy of $T$ has a $\left\{b_1^{-1} \ltimes B(m,s,t)\right\}$-factor, for some integer $b_1$, which in turn yields a $\left\{b_2^{-1} \ltimes H\right\}$-factor for some integer $b_2$, as required.
        \end{proof}

    \subsection{Constructing the absorber}
        
    In this subsection, let $r=ms+t$ with $1\leq t \leq s$. This is a small change of parametrization compared to the previous subsections, as the case $t=s$ will now be parametrised as $r=(m+1)s$ instead of $r=ms$. 

    \begin{definition} 
        \label{def:absorber}
        A clique $K_g$ in $G$ is called \textit{good} for a vertex $v$ if $v$ has at least $g-2$ neighbours in $K_g$. A \textit{$(k, m, \delta)$-absorber} in $G$ is a collection $\absorber$ of vertex-disjoint copies of  $K_{m+k}$ in $G$ covering at most $10(k+m)\delta n$ vertices and such that for each pair of vertices $u,v\in V(G)$, $\absorber$ contains at least $\delta^2 n$ cliques which are good for both $u$ and $v$. 
    \end{definition}
    
    \begin{lemma}\label{l:absorber}
        Let $0<\delta<(4mr)^{-1}$ and suppose that $\delta(G)\geq \left(1-\frac sr -\delta\right)n$.
        \begin{enumerate}[label=(\roman*)]
            \item\label{l:absorber(i)} If $t < s$, then $G$ contains a~$(1, m, \delta_2)$-absorber for some constant $\delta_2=\delta_2(r)>0$. 
            
            \item\label{l:absorber(ii)} Suppose $t = s$, and that every subset $X\subset V(G)$ with $|X|\geq\left(\frac1{m+1}-m\delta\right)n$ satisfies $e_G(X)\geq\delta_1 n^2$ for some $\delta_1>0$. Then $G$ contains a~$(2, m, \delta_2)$-absorber for some constant $\delta_2=\delta_2(r,\delta_1)>0$.
            
        \end{enumerate}
    \end{lemma}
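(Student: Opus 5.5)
We prove the lemma by a counting step followed by a random selection step. Write $g=m+k$ for the clique size, with $k=1$ in case \ref{l:absorber(i)} and $k=2$ in case \ref{l:absorber(ii)}. The first goal is a supersaturation statement: there is $c=c(r)>0$ (respectively $c=c(r,\delta_1)>0$) such that for every pair $u,v\in V(G)$ there are at least $cn^{g}$ copies of $K_g$ in $G$ that are good for both $u$ and $v$. The second step extracts a sparse vertex-disjoint subfamily by random sampling and deletion.

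\textbf{Counting in case \ref{l:absorber(i)}.} Here $r=ms+t>ms$, so $\frac sr<\frac1m$. Any $i+2$ vertices, say $u,v$ and an $i$-clique, have at least $\bigl(1-(i+2)(\frac sr+\delta)\bigr)n$ common neighbours. We build a $K_{m+1}$ vertex by vertex.
\begin{enumerate}
\item Choose the first $m-1$ vertices one at a time, each inside $N(u)\cap N(v)$ and inside the common neighbourhood of the vertices already chosen. At each step at most $m$ vertices are involved, so the number of choices is at least $\bigl(1-m(\frac sr+\delta)\bigr)n=(\frac tr-m\delta)n$. This is linear in $n$ because $\delta<(4mr)^{-1}$.
\item Choose the last two vertices in the common neighbourhood of the clique so far, ignoring $u$ and $v$. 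Again there are linearly many choices at each step.
\end{enumerate}
The resulting $K_{m+1}$ has $m-1=g-2$ vertices adjacent to both $u$ and $v$, so it is good for both. The number of such cliques is $\Omega(n^{m+1})$.

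\textbf{Counting in case \ref{l:absorber(ii)}.} Here $\frac sr=\frac1{m+1}$, and greedy choice alone runs out of room exactly at the last step. This is where the hypothesis on $e_G(X)$ is needed, and I expect it to be the delicate point.
\begin{enumerate}
\item Choose $m-2$ vertices greedily in $N(u)\cap N(v)$, each adjacent to all previously chosen ones. The margin is about $\frac1{m+1}-m\delta$ at each step.
\item Let $X$ be the common neighbourhood of $u$, $v$ and these $m-2$ vertices. Then $|X|\ge \bigl(1-m(\frac1{m+1}+\delta)\bigr)n=(\frac1{m+1}-m\delta)n$, so by hypothesis $X$ spans at least $\delta_1n^2$ edges. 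Pick one such edge; this gives an $m$-clique all of whose vertices are adjacent to $u$ and $v$.
\item The common neighbourhood of this $m$-clique again has size at least $(\frac1{m+1}-m\delta)n$, so it contains at least $\delta_1 n^2$ edges. Picking one completes a $K_{m+2}$ with $m=g-2$ vertices adjacent to both $u$ and $v$.
\end{enumerate}
Multiplying the choices gives at least $c\,n^{m+2}$ such cliques with $c$ depending on $r$ and $\delta_1$. Each clique is counted at most $g!$ times, which only affects constants.

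\textbf{Random selection.} Include each copy of $K_g$ in $G$ independently with probability $p=\eps n^{1-g}$, where $\eps\ll c$.
\begin{enumerate}
\item The expected number of selected cliques is at most $\eps n$. By Chernoff, w.h.p.\ at most $2\eps n$ are selected.
\item The expected number of pairs of selected cliques sharing a vertex is $O(g^2n^{2g-1}p^2)=O(g^2\eps^2n)$. By Markov, with probability at least $1/2$ there are at most $C_g\eps^2 n$ such pairs.
\item For each pair $u,v$, the number of selected cliques good for both has mean at least $c\eps n$. By Chernoff and a union bound over the $n^2$ pairs, w.h.p.\ every pair has at least $c\eps n/2$ of them.
\end{enumerate}
Fix an outcome in which all three events hold, and delete every clique involved in an overlapping pair. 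The remaining cliques are vertex-disjoint and cover at most $2g\eps n$ vertices. Every pair $u,v$ still has at least $c\eps n/2-2C_g\eps^2n\ge c\eps n/4$ good cliques, since $\eps\ll c$. Taking $\delta_2=\eps$ with $\eps\le c/4$ gives $\delta_2^2\le c\eps/4$ and $2g\eps n\le 10(k+m)\delta_2 n$, so the family is a $(k,m,\delta_2)$-absorber.
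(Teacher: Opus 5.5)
Your proposal is correct and matches the paper's proof essentially step for step. Both build the cliques greedily inside common neighbourhoods, invoking the edge-density hypothesis twice in case (ii) to add the last two pairs of vertices, and then select cliques at random with rate $\Theta(n^{1-g})$, using Chernoff, Markov on intersecting pairs and deletion. As in the paper, case (ii) tacitly needs $m\ge 2$: for $m=1$ your ``$m-2$ vertices'' step is meaningless, and indeed two disjoint copies of $K_{n/2}$ satisfy the hypotheses while containing no triangle good for a pair $u,v$ taken from different copies.
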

        
    \begin{proof}
        For both cases we will need two auxiliary claims.
        \begin{claim}\label{c:absorber_abstract_claim}
            Let $\ell = n^{O(1)}$, and let $\cK_1, \ldots, \cK_\ell$, be collections of $K_{g}$-cliques in a graph $G$ satisfying $|\cK_i| \geq \xi n^{g}$. Then there is a collection $\absorber$ of vertex-disjoint copies of $K_g$ covering at most $\xi g n$ vertices such that 
            $$|\cA \cap \cK_i| \geq \xi^2 n/50$$
            for $i \in [\ell]$.
        \end{claim}

        \begin{proof}            
            Suppose without loss of generality that $|\cK_i|=\xi n^{g}$ for each $i\in[\ell]$. Let $\cK=\bigcup_i \cK_i$, and note that $|\cK|\leq n^g$ since $\cK$ consists only of copies of $K_g$. Let $\cA'$ be a random subfamily of $\cK$ containing each element of $\cK$ independently with probability $\frac{\xi}{10}n^{1-g}$. We have \[\E[|\cA'|]=\frac{\xi}{10}n^{1-g}|\cK|\leq \frac{\xi}{10}n, \quad \E[|\cA'\cap \cK_i|]=\frac{\xi}{10}n^{1-g}\xi n^g=\frac{\xi^2}{10}n.\] By Chernoff bounds we also have $\pr {|\cA'| >\frac{\xi}{5}n}\leq e^{-\frac{\xi}{20}n}$ and $\pr{|\cA\cap \cK_i|< \frac{\xi^2}{20}n}\leq e^{-\frac{\xi^2}{80}n}$ for each $i\in [\ell]$, so by the union bound, the probability of any of these events occurring is at most $(\ell+1) e^{-\frac{\xi^2}{80}n}=o(1)$. Next, we bound the number of intersecting pairs of copies of $K_g$ in $\cA'$. We have 
            \[\E[|\{(K_1,K_2)\in \cA' \mid K_1\cap K_2\neq \emptyset\}|]\leq n^{2g-1}\left(\frac{\xi}{10}n^{1-g}\right)^2=\frac{\xi^2}{100}n.\]
            By Markov's inequality, with probability at least $1/2$ the number of such pairs is~at most $\left(\xi^2/50\right)n$. Let $\cA$ be the collection of $K_g$ copies achieved by removing one of the cliques in each intersecting pair. Notice that $|\absorber|\leq |\absorber'|< \xi n$, so $\absorber$ covers at most $\xi g n$ vertices. Moreover, with probability at least $1/2-o(1)$, the collection $\cA$ satisfies \[|\cA \cap \cK_i|\geq \frac{\xi^2}{20}n-\frac{\xi^2}{50}n\geq \frac{\xi^2}{50}n\] for each $i\in[\ell]$. Hence, there exists a family $\cA$ which satisfies the statement of the Lemma.
        \end{proof}
        \begin{claim}\label{c:many_common_neighbours_(i)}
            In both case (i) and (ii), if $\delta(G)\geq \left(1-\frac sr -\delta\right)n$, any set $S$ of at most $m$ vertices of $G$ has at least $\left(\frac{t}{r}-m\delta\right)n$ common neighbours.
        \end{claim}
            
        \begin{proof}
            It is enough to upper-bound the number of vertices that are a~non-neighbour of at least one vertex in $S$. This number is at most
        $$mn \left(\frac{s}{r}+\delta \right)= \frac{n(ms+mr\delta)}{r} = n\left(1-\frac{t-mr\delta}{r} \right).$$
        \end{proof}

        Suppose now that we are in case \ref{l:absorber(i)}. We construct the collections $\cK_i$ as follows. 
        Let $\xi_1=\frac{t}{2r}$ and $\xi=\frac{\xi_1^{m+1}}{(m+1)!}$. Note that $\left(\frac{t}{r}-m\delta\right)\geq \xi_1$. Order the pairs of vertices of $G$ arbitrarily and suppose $\{u,v\}$ is the $i$-th pair of vertices. Using Claim~\ref{c:many_common_neighbours_(i)} repeatedly, we can find vertices $w_1,\dots,w_{m-1}$ such that $w_k\in N_G(\{u,v,w_1,\dots,w_{k-1}\})$, for all $k\in[m-1]$. Notice that the set $\{w_1,\dots,w_{m-1}\}$ induces in $G$ a copy of $K_{m-1}$. By a simple counting argument, there are at least $\frac{\xi_1^{m-1}}{(m-1)!}n^{m-1}$ different choices for the set $\{w_1,\dots,w_{m-1}\}$, i.e.~we can find a collection of $\frac{\xi_1^{m-1}}{(m-1)!}n^{m-1}$ different copies of $K_{m-1}$ that are contained in $N_G(u)\cap N_G(v)$. 
        Next, extend those copies of $K_{m-1}$, again using Claim~\ref{c:many_common_neighbours_(i)} twice, to get copies of $K_{m+1}$. Each of these has at least $m-1$ vertices from $N_G(u)\cap N_G(v)$, and there are at least $\frac{\xi_1^{m+1}}{(m+1)!}n^{m+1}$ of them, by the same counting argument as before. This gives a collection $\cK_i$ of $K_{m+1}$-cliques with $|\cK_i|\geq \xi n^{m+1}$, for each $i\in [\binom n2]$. Applying Claim~\ref{c:absorber_abstract_claim} with $\ell={n\choose 2}$ and $g=m+1$, we get a collection $\cA$ which is a $(1,m,\delta_2)$-absorber for $\delta_2\leq \xi/10$.

        Next, we prove \ref{l:absorber(ii)}. Let $\xi_1=\min\left\{\delta_1,\frac{t}{2r}\right\}$ and $\xi=\frac{\xi_1^{m}}{(m+2)!}$. Notice that by Claim~\ref{c:many_common_neighbours_(i)} and the assumptions of \ref{l:absorber(ii)}, we have the following property
        \begin{itemize}
            \item[($\ast$)] for any set $S$ of $m$ vertices, $N_G(S)$ spans at least $\delta_1n$ edges in $G$.
        \end{itemize} 
        As before, using Claim~\ref{c:many_common_neighbours_(i)}, we can find $\frac{\xi_1^{m-2}}{(m-2)!}n^{m-2}$ copies of $K_{m-2}$ contained in $N_G(u)\cap N_G(v)$. Using ($\ast$), these $K_{m-2}$-copies can be extended to $K_{m}$-copies in $N_G(u)\cap N_G(v)$, and, once again using ($\ast$), these $K_m$-copies can in turn be extended to $K_{m+2}$-copies which are good for both $u$ and $v$. Hence, we get a collection $\cK_i$ of $K_{m+2}$-copies with $|\cK_i|\geq \xi n^{m+2}$. Applying Claim~\ref{c:absorber_abstract_claim} with $\ell= \binom n2$ and $g=m+2$, we get a collection $\cA$ which is a $(2,m,\delta_2)$-absorber for $\delta_2\leq \xi/10$.
    \end{proof}
    
    Next, we partition our graph with an absorber.

    \begin{lemma}\label{l:covering-with-absorber}
        Suppose $r=ms+t$ with $1<t\leq s$. Let $g = 0$ if $t<s$ and $g=1$ if $t=s$. There exists a constant $C>0$ such that for all $\gamma>0$ the following holds. There exist constants $\delta , \delta_2>0$ and $n_0\in\mathbb{N}$ such that every graph $G$ on $n$ vertices, $n\geq n_0$, with $\delta(G)\geq \left(1-\frac{s}{r}-\delta\right)n$, satisfies one of the following two conditions.
        \begin{enumerate}[label= (\roman*)]
            \item\label{l:covering-with-absorber(i)} There is a collection $\fA$ of vertex-disjoint copies of $K_{m+1+g}, Q_1^{(m+g)}, \dots , Q_{m}^{(m+g)}$ covering all but at most $C$ vertices such that $\fA$ includes a $(1+g,m,\delta_2)$-absorber, or
            \item\label{l:covering-with-absorber(ii)} There is a subset $A\subset V(G)$ with $|A|=\frac{sn}{r}$ such that $e_G(A) < \gamma n^2$. 
        \end{enumerate}
    \end{lemma}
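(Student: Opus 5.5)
We combine Lemma~\ref{l:absorber} with Lemma~\ref{l:6.2chr} (or Corollary~\ref{cor:6.2chr} when $t=s$): first set aside a small absorber, then tile the rest. Fix $C$ from Lemma~\ref{l:6.2chr} (resp.\ Corollary~\ref{cor:6.2chr}). Given $\gamma$, choose constants
\[
\delta \ll \delta_2 \ll \delta' \ll \delta_1 \ll \gamma,
\]
where $\delta'$ is the $\delta$-value returned by Lemma~\ref{l:6.2chr} for the input $\gamma/2$.

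\textbf{Step 1: reduce to a non-extremal $G$.} Suppose \ref{l:covering-with-absorber(ii)} fails, so every $A$ with $|A|=sn/r$ has $e_G(A)\ge\gamma n^2$. In the singular case $t=s$ we have $s/r=1/(m+1)$. Any $X$ with $|X|\ge(\frac1{m+1}-m\delta)n$ differs from a set of size $sn/r$ by at most $m\delta n$ vertices, so $e_G(X)\ge(\gamma-m\delta)n^2\ge\delta_1 n^2$. This is exactly the hypothesis of Lemma~\ref{l:absorber}\ref{l:absorber(ii)}.

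\textbf{Step 2: the absorber.} Apply Lemma~\ref{l:absorber} to obtain a $(1+g,m,\delta_2)$-absorber $\cA$: vertex-disjoint copies of $K_{m+1+g}$ covering at most $10(m+1+g)\delta_2 n$ vertices. Each $K_{m+1+g}$ is itself one of the allowed pieces in \ref{l:covering-with-absorber(i)}. The subtlety is that Lemma~\ref{l:absorber} is stated with its own $\delta_2$ depending on $r$ (and $\delta_1$). I would track that this $\delta_2$ can be taken small relative to the tolerance $\delta'$ of Lemma~\ref{l:6.2chr}. If the lemma only gives one fixed $\delta_2(r)$, I would first apply Claim~\ref{c:absorber_abstract_claim} with a smaller $\xi$; the absorber then covers at most $\xi g n$ vertices and still has $\Omega(\xi^2 n)$ good cliques per pair. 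I would also make $\delta$ small compared with the resulting $\delta_2$ so that the degree loss in Step~3 is absorbed.

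\textbf{Step 3: tile the remainder.} Let $G'=G-V(\cA)$ with $n'=n-|V(\cA)|$ vertices. Then
\[
\delta(G')\ge\Bigl(1-\tfrac sr-\delta-O(\delta_2)\Bigr)n\ge\Bigl(1-\tfrac sr-\delta'\Bigr)n'.
\]
Apply Lemma~\ref{l:6.2chr} to $G'$ if $t<s$, or Corollary~\ref{cor:6.2chr} with $m$ replaced by $m+g$ if $t=s$, with parameter $\gamma/2$. Outcome (i) gives vertex-disjoint copies of $K_{m+1+g},Q_1^{(m+g)},\dots,Q_{m+g}^{(m+g)}$ covering all but $C$ vertices of $G'$. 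Together with $\cA$ this yields \ref{l:covering-with-absorber(i)}. (The statement lists $Q$'s only up to index $m$; in the singular case I would allow $Q_{m+1}^{(m+1)}$, which is a union of $s$ disjoint copies of $K_{m+1}$, and adjust the indexing accordingly.)

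Outcome (ii) gives an independent set $I\subset V(G')$ with
\[
|I|\ge\Bigl(\tfrac sr-\tfrac\gamma2\Bigr)n'\ge\Bigl(\tfrac sr-\gamma\Bigr)n.
\]
In the singular case Corollary~\ref{cor:6.2chr} gives $(\frac1{m+1}-\frac\gamma2)n'$, which is the same quantity. Pad $I$ with arbitrary vertices, or trim it, to a set $A$ of size exactly $sn/r$. At most $\gamma n$ vertices are added and each touches at most $n$ edges, so this only gives $e_G(A)\le\gamma n^2$. To get the strict bound $e_G(A)<\gamma n^2$ claimed in \ref{l:covering-with-absorber(ii)}, I would run Lemma~\ref{l:6.2chr} with parameter $\gamma/4$ instead, so the padding contributes fewer than $\gamma n^2/2$ edges. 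Then \ref{l:covering-with-absorber(ii)} holds.

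\textbf{Main obstacle.} The delicate point is the quantifier order: the absorber must be removed before knowing which outcome occurs, yet it must be small enough that $G'$ still meets the minimum-degree tolerance of Lemma~\ref{l:6.2chr}. That tolerance depends on $\gamma$, while the absorber density depends on $\delta_1\approx\gamma$. I expect this to be resolved by shrinking the absorber via Claim~\ref{c:absorber_abstract_claim} as described in Step~2.
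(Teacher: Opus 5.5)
Your proposal is correct and follows the paper's proof: set aside the absorber from Lemma~\ref{l:absorber}, apply Lemma~\ref{l:6.2chr} to the remaining graph (or Corollary~\ref{cor:6.2chr}, with the same $m$, in the singular case), and then either merge the tiling with the absorber or pad the independent set to size $sn/r$. Your extra care is warranted on two points. The paper tacitly takes $\delta_2$ ``sufficiently small'' even though Lemma~\ref{l:absorber} only provides one fixed $\delta_2$, and rerunning Claim~\ref{c:absorber_abstract_claim} with a smaller $\xi$ is exactly what justifies this; your remark about the extra pieces $Q_{m+1}^{(m+1)}$ in the singular case is also correct.
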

    
    \begin{proof}
        Suppose first that $t<s$, which implies $g=0$.
        Let $C\gg r$ and $n_0^{-1}\ll \delta_{\ref{l:6.2chr}} \ll \gamma, r^{-1}$, where $\delta_{\ref{l:6.2chr}}$ is the constant obtained by applying Lemma~\ref{l:6.2chr} with $\gamma_{\ref{l:6.2chr}}=\gamma/2$. Assume that $2\delta \leq \delta_{\ref{l:6.2chr} }$. Moreover, let $\delta_2$ be sufficiently small so that $30m\delta_2 \leq \delta_{\ref{l:6.2chr} }$ and the conclusion of Lemma~\ref{l:absorber}~\ref{l:absorber(i)} holds. Due to Lemma~\ref{l:absorber}, there is a $(1,m,\delta_2)$-absorber $\fU$ for $G$. Let $G'=G-\fU$ be the graph obtained by removing vertices from the copies of $K_{m+1}$ in $\fU$ and set $n'=|V(G')|$. Since $n'\geq (1-10(m+1) \delta_2)n$ we have $\delta(G')\geq \delta(G) - 10(m+1) \delta_2 n\geq \left(1- \frac sr - \delta_{\ref{l:6.2chr} } \right)n'$. Now apply Lemma~\ref{l:6.2chr} to $G'$. We either get a collection of vertex disjoint copies of $K_{m+1},Q_1^{(m)},\dots ,Q_m^{(m)}$ covering all but at most $C$ vertices of $G'$, which implies \ref{l:covering-with-absorber(i)}, or we get an independent set of size $\left(\frac sr - \frac \gamma 2\right)n'>\left(\frac sr - \gamma\right)n$, which implies \ref{l:covering-with-absorber(ii)}, since we can extend this subset by adding arbitrary $\gamma n$ vertices.
        
        Suppose now that $t=s$, so that $g=1$.
        Similarly to the previous case, take $C\gg r$ and $n_0^{-1} \ll \delta,\delta_2 \ll \gamma, r^{-1}$. Assume that every set of size $\frac{sn}{r}=\frac{n}{m+1}$ contains at least $\gamma n^2$ edges. Applying Lemma~\ref{l:absorber}~\ref{l:absorber(ii)} to $G$ we get a  $(2,m,\delta_2)$-absorber $\fU$ for $G$. Let $G'=G-\fA$ be the graph obtained by removing vertices from the copies of $K_{m+2}$ in $\fU$ and set $n'=|V(G')|$. Since $n'\geq (1-10(m+2)\delta_2)n$ we have $\delta(G')\geq \delta(G) - 10(m+2)\delta_2 n\geq \left(1- \frac {1}{m+1} - \delta-10(m+2)\delta_2\right)n'$. Now apply Corollary~\ref{cor:6.2chr} to $G'$. We either get a collection of vertex disjoint copies of $K_{m+2},Q_1^{(m+1)},\dots ,Q_{m+1}^{(m+1)}$ covering all but at most $C$ vertices of $G'$, which implies \ref{l:covering-with-absorber(i)}, or we get an independent set of size $\left(\frac sr - \frac \gamma 2\right)n'>\left(\frac sr - \gamma\right)n$, which would imply \ref{l:covering-with-absorber(ii)}, leading to a contradiction.
    \end{proof}

    
    \section{The fully non-extremal case}\label{sec:non-extremal} 
        
    In this section, we prove Theorem~\ref{t:non-extremal-conforming}. To avoid the proof getting too long, we break it down into two arguments. The first part of the argument are standard manipulations in the reduced graph after applying Lemma~\ref{l:covering-with-absorber}. The second part, Section~\ref{sec:balancing}, is more delicate and requires new ideas. 
        
    \begin{lemma}\label{l:partition-reduced-graph}
        Let $\gamma, \zeta$ and $r \in \mathbf{N}$ be given with $0 < \zeta \ll \gamma$. Take any constants $\delta=\delta(\gamma)$, $d=d(\gamma)$, $\eps=\eps(d)$ satisfying the following hierarchy \begin{equation}\label{eq:partition-reduced-graph-constants}
            \zeta \ll \eps \ll d \ll \delta,\delta_2 \ll \gamma, r^{-1}. 
        \end{equation}
         Let $n_0=n_0(\eps)$ be sufficiently large. Let $r=ms+t$ with $m\geq 1$, $0< t\leq s$. Let $g = 0$ if $t<s$ and $g=1$ if $t=s$.
            
        Assume that $G$ is an $n$-vertex graph, $n\geq n_0$, with $\delta(G)\geq \left(1-\frac{s}{r}-\frac \delta 2\right)n$, and that every subset of $V(G)$ of size $\frac{sn}{r}$ contains at least $\gamma n^2$ edges. 
        Then $G$ has a vertex partition into sets $S_{i,j}$, $U_{k, \ell}$ and $X$, where $i \in [L_1], j \in [m+1]$, $k\in [L-L_1]$ and $\ell \in [m+1+g]$, $L_1\geq (1-10r\delta_2)L$, with the following properties. Let $R$ be the graph whose vertices are the sets $S_{i,j}$ and $U_{k,\ell}$, where two vertices $A$ and $B$ are connected if $(A, B)$ is an $(\eps, d^+)$-regular pair. Then
	\begin{enumerate}
            
            \item\label{l:part-w-r-f_(i)} For each $i \in [L_1]$ and $j < j' \in [m+1]$, the edges $S_{i,j}S_{i,j'}$ are in $R$. Similarly, for each $k\in [L-L_1]$ and $\ell < \ell' \in [m+1+g]$, the edges $U_{k,\ell}U_{k,\ell'}$ are in $R$. 
			
            \item\label{l:part-w-r-f_(ii)} 
            For each $i \in [L_1]$,
			$$|S_{i,1}|= |S_{i,2} |= \ldots = |S_{i,m}| = \frac {s}{t} \cdot |S_{i,m+1}|,$$
            $$|S_{i,1}\cup S_{i,2}\cup \dots \cup S_{i,m+1}| = \frac{n}{L}(1 \pm \eps),$$
            and $(S_{i,1},\dots, S_{i,m+1})$ is a $(\eps,d^+,d)$-super-regular tuple.

            \noindent
            For each $k \in [L-L_1]$,
			$$|U_{k,1}|= |U_{k,2} |= \ldots = |U_{k,m+1+g}|,$$
            $$|U_{k,1}\cup U_{k,2}\cup\dots\cup U_{k,m+1+g}|= \frac{n}{L}(1 \pm \eps ),$$
            and $(U_{k,1},\dots, U_{k,m+1+g})$ is a $(\eps,d^+,d)$-super-regular tuple.
			
            \item\label{l:part-w-r-f_(iii)} For $A, B \in V(R)$, there are at least $d L$ indices $k \in [L-L_1]$ such that both $A$ and $B$ have at least $m-1+g$ $R$-neighbours in $\{U_{k,1},U_{k,2},\ldots, U_{k,m+1+g} \}$.
			
            \item\label{l:part-w-r-f_(iv)} $|X|< \eps n/2$.
            
	\end{enumerate}
    \end{lemma}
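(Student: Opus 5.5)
We apply the Regularity Lemma (Lemma~\ref{l:regularity}) to $G$ with parameters $\eps'\ll\eps$ and $d'$ (with $\eps'\ll d'\ll d$) to obtain an $\eps'$-regular partition $V_0\dcup V_1\dcup\dots\dcup V_{L'}$ with reduced graph $R'$ satisfying $\delta(R')\ge (1-\frac sr-\delta)L'$. We then transfer the non-extremality hypothesis to $R'$. If some set $I$ of $\frac{s}{r}L'$ clusters spanned at most $\gamma L'^2/4$ edges of $R'$, then the union of the corresponding clusters would span fewer than $\gamma n^2$ edges of $G$, after accounting for irregular pairs, sparse pairs, edges inside clusters and $V_0$. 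This contradicts the hypothesis. Hence $R'$ satisfies the assumptions of Lemma~\ref{l:covering-with-absorber} with $\gamma/4$ in place of $\gamma$ and $\delta$ comfortably above the loss $d'+2\eps'$. Conclusion~\ref{l:covering-with-absorber(ii)} is excluded, so we obtain in $R'$ a collection $\fA$ of vertex-disjoint copies of $K_{m+1+g}$ and $Q_h^{(m+g)}$. This collection covers all but $C$ clusters and contains a $(1+g,m,\delta_2)$-absorber $\fU$.

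Next we convert $\fA$ into the tuples. Each clique $K_{m+1+g}\in\fU$ yields a tuple $(U_{k,1},\dots,U_{k,m+1+g})$ of equal-size clusters. These tuples are regular in every pair, and Lemma~\ref{l:super-reg-k-tuple} makes them super-regular after discarding an $\eps'$-fraction of each cluster. The remaining members of $\fA$ are cut into tuples $(S_{i,1},\dots,S_{i,m+1})$ with the prescribed ratio $s:\dots:s:t$. By Lemma~\ref{l:factoring-Qh} and Corollary~\ref{cor:T_packing}, each $K_{m+1}$ and $Q_h$ has a $\frac1b\ltimes T$-factor. Accordingly, we split every cluster into $b$ (or a bounded number of) equal pieces, and for each copy of $\frac1b\ltimes T$ in the factor we take pieces of the clusters to which $\sigma_1,\dots,\sigma_m,\tau$ are mapped. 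The pieces have sizes proportional to $\frac sr,\dots,\frac sr,\frac tr$ times $|V_i|/b$, so that the weights used at each cluster sum to at most one. Since $T$ is complete and the image of each copy is a clique in $R'$, all pairs within a tuple are regular pairs of $R'$. The slicing lemma (Lemma~\ref{l:slicing}) with $\beta\sim 1/(br)$ keeps them $(\eps,d^+)$-regular. Lemma~\ref{l:super-reg-k-tuple} then gives super-regularity, and the rational sizes are rounded. In the singular case $g=1$, the graph $T$ has $m+1$ vertices of weight $\frac1{m+1}$ while the $S$-tuples have $m+1$ parts, so the same construction applies. All discarded vertices, the $C$ uncovered clusters, $V_0$ and rounding losses go into $X$. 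This yields $|X|\le (\eps'+C/L'+b r\eps')n<\eps n/2$. We relabel so that there are $L$ tuples, roughly equal in size. Each total has size $\frac nL(1\pm\eps)$ because the pieces can be chosen with equal total weight per copy of $T$. The absorber occupies at most a $10r\delta_2$ fraction of the tuples, which gives $L_1\ge(1-10r\delta_2)L$.

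Property~\ref{l:part-w-r-f_(iii)} comes from the absorber. For clusters $A,B$ of $R$, which are pieces of clusters $V_a,V_b$ of $R'$, the absorber contains at least $\delta_2^2L'$ cliques that are good for both $a$ and $b$. Such a clique has at least $m-1+g$ vertices adjacent in $R'$ to each of $a$ and $b$. The corresponding pairs are regular with density at least $d'$, and by slicing they remain $(\eps,d^+)$-regular between the pieces, provided $d\le d'-\eps'$. We therefore choose $d$ accordingly, with $d\ll\delta_2^2$, so that $\delta_2^2L'\ge dL$. The main obstacle is the bookkeeping: the constant hierarchy $\eps\ll d\ll\delta_2$ must be compatible with the regularity parameters $\eps'\ll d'$ of $R'$, and the pieces must simultaneously have the exact ratios, equal tuple totals and super-regularity. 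I would handle this by first fixing the fractional $T$-packing, then choosing piece sizes as integer multiples of $|V_i|/(bN)$ for a suitable constant $N$, and discarding rounding remainders into $X$.
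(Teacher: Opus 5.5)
Your proposal is correct and follows essentially the same route as the paper: the regularity lemma, transferring the non-extremality to the reduced graph, Lemma~\ref{l:covering-with-absorber}, cutting the clusters of $\fA\setminus\fU$ according to the $\frac1b\ltimes T$-factor of Corollary~\ref{cor:T_packing}, slicing plus Lemma~\ref{l:super-reg-k-tuple}, and the absorber for \ref{l:part-w-r-f_(iii)}. The only real difference is that you cut the clusters deterministically into pieces of the prescribed sizes, whereas the paper lets each vertex choose its piece at random and uses Chernoff; yours is if anything cleaner, since the slicing lemma applies to arbitrary subsets.

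Two small slips need repair:
\begin{enumerate}
\item The reduced-graph density must satisfy $d<d'\ll\delta$ (e.g.\ $d'=16d$ as in the paper), not $d'\ll d$; your later requirement $d\le d'-\eps'$ already forces this.
\item Each absorber clique must itself be cut into $b(m+1+g)$ sub-tuples of total size about $|V_i|/b$, as the paper does. Otherwise the $U$-tuples are $b(m+1+g)$ times larger than the $S$-tuples, and no common $L$ works in \ref{l:part-w-r-f_(ii)}. This only multiplies your count in \ref{l:part-w-r-f_(iii)} by $b(m+1+g)$ and leaves $L_1\ge(1-10r\delta_2)L$ intact.
\end{enumerate}
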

    
    \begin{proof}
        Let $C=C(r)$ be the constant given by Lemma~\ref{l:covering-with-absorber}. 
        Let $d_1 = 16d$, $\eps_1=\eps^2$ and $\ell_0=\lceil 2C/\eps\rceil$. Let $n_0=n_0(\eps)$ 
        be given by Lemma~\ref{l:regularity}.
        First, apply the Regularity Lemma (Lemma~\ref{l:regularity}) with $\ell_0$,  $\eps_{\ref{l:regularity}}=\eps_1$, $d_{\ref{l:regularity}}=d_1$ and $\alpha_{\ref{l:regularity}}=1-\frac{s}{r}-\frac{\delta}2$ to $G$, to obtain an $\eps_1$-regular partition $V_0 \cup V_1 \cup \dots \cup V_{\ell'}$ with \begin{equation}\label{eq:ell'}
            \ell' \geq \ell_0 \geq \frac{2C}{\eps_1},
        \end{equation} and let $R$ be the $(\eps_1,d_1)$-reduced graph corresponding to that partition. Note that $\delta(R) \geq \left(1-\frac{s}{r}-\delta\right)\ell'$. We claim that every set of size at least $\frac sr \ell'$ in $R$ spans at least $\frac \gamma 4 \ell'^2$ edges in $R$. Indeed, it is enough to consider sets of size exactly $\frac{s}{r}\ell'$, so suppose that $I$ is such a set, i.e.~$|I| = \frac sr \ell'$, and $I$ spans in $R$ less than $\frac \gamma 4\ell'^2$ edges. Let $V_I = \bigcup_{i\in I} V_i$ and note that for each $i\in I$ we have $(1-\eps_1)\frac n{\ell'}\leq|V_i|\leq\frac{n}{\ell'}$. Then
        \begin{align*} e(G[V_I]) & \leq {|I|\choose 2}|V_1|^2(d_1+\eps_1)+\frac{\gamma}{4}\ell'^2|V_I|^2 + |I|{|V_1|\choose 2} \\ &\leq \frac{1}{2}|I||V_1|^2\left(|I|(d_1+\eps_1)+1 \right)+\frac\gamma4 n^2 \leq \frac34\gamma n^2.
        \end{align*}
        Moreover,
          \[ |V_I| \geq \frac{s}{r} (1-\eps_1)n \geq \left(\frac{s}{r} - \frac{\gamma}8\right)n.\]
          If we now take an arbitrary set of vertices $W \supset V_I$ of size $\frac{sn}{r}$, then 
          \[ e(G[W]) \leq e(G[V_I]) + \frac{\gamma}8n^2 < \gamma n^2,\]
          which contradicts the assumption on $G$.
          Note that this also implies that $R$ has no independent set of size $\left(\frac sr-\frac \gamma4\right)\ell'$.

          Recall that $g = 0$ if $t<s$ and $g=1$ if $t=s$. We can apply Lemma~\ref{l:covering-with-absorber} to $R$ to get a collection $\fA$ consisting of vertex disjoint copies of $K_{m+1+g}, Q_1^{(m+g)}, \ldots, Q_m^{(m+g)}$ and a~$(1+g, m, \delta_2)$-absorber (where $\delta_2 = \delta_2(\gamma) >0$). Let $\fU\subset \fA$ be the subcollection consisting of copies of $K_{m+1+g}$ in the absorber, and let $\fS = \fA \setminus \fU$ be the remaining part. Note that by the definition of the absorber, $|V(\fU)|\leq 10(m+1+g)\delta_2|V(R)|=10(m+1+g)\delta_2\ell'$. Moreover, let $Y$ be the set of vertices of $R$ not covered by $\fA$ and note that $|Y|\leq C$. The next step is to subdivide the sets in $V(\fS)$. To this end, we use the $\left\{\frac 1b\ltimes T\right\}$-factor given by Corollary~\ref{cor:T_packing}.

          The $\left\{\frac 1b\ltimes T\right\}$-factor of $\fS$ is given by a collection of vertex sets
          \[ \left(\{\sigma_1^i,\dots,\sigma_{m+1}^i\}\right)_{i\in[L_1]}\] 
          for a specific $L_1$ to be determined in a moment, and a function
          \[ f: \bigcup_{j\in[m+1],\,i\in[L_1]} \{\sigma_{j}^{i}\} \to V({\fS})\]
          such that $f(\sigma_j^i)f(\sigma_{j'}^i)$ is an edge of $R$ for each $i\in[L_1]$ and $j,j'\in[m+1]$, $j\neq j'$. 
          Moreover, since the total weight of $\frac 1b\ltimes T$ is $\frac 1b$, and a $\left\{\frac 1b\ltimes T\right\}$-factor has residue $0$, we have $L_1 = b|V(\fS)|$. We now define a family of sets $S'_{i,j}\subseteq V(G)$ corresponding to vertices $\sigma^i_j$, for $i\in[L_1]$ and $j\in[m+1]$. For a set $S\in V(\fS)$, each vertex $x\in S$ randomly chooses one of the sets $S'_{i,j}$ if and only if $f(\sigma^i_j)= S$. Moreover, if this is the case, $x$ chooses $S'_{i,j}$ with probability 
          \[\frac 1b\cdot w(\sigma^i_j) = \begin{cases}
              \frac{s}{br}, & j\in[m] \\
              \frac{t}{br}, & j = m+1.
          \end{cases}\]
        Since $f$ is a $\left\{\frac 1b\ltimes T\right\}$-factor, this selection gives us a probability measure for each vertex $x \in \bigcup_{S\in V(\fS)} S$. Hence, using Chernoff bounds, we get that w.h.p.
        \[ |S'_{i,j}| = \begin{cases}
              \frac{s}{br}|V_1|(1\pm\eps_1), & j\in[m] \\
              \frac{t}{br}|V_1|(1\pm\eps), & j = m+1,
          \end{cases}\]
          and
          \[ |S'_{i,1}\cup S'_{i,2} \cup \dots \cup S'_{i,m+1}| = \frac 1b|V_1|(1\pm\eps_1).\]
         Next, we split the sets in $V(\fU)$ into sets $U'_{k,\ell}$, for $k\in[L_2]$ and $\ell\in[m+1+g]$, using a~similar approach. This time each set $U\in V(\fU)$ is split randomly into $b(m+1+g)$ sets so that we get $L_2 = b(m+1+g)|\fU| = b|V(\fU)|\leq 10b(m+1+g)\delta_2\ell'$ and
         \[ |U'_{k,\ell}| = \frac{1}{b(m+1+g)}|V_1|(1\pm\eps_1),\]
         and
         \[ |U'_{k,1}\cup U'_{k,2} \cup \dots \cup U'_{k,m+1+g}| = \frac 1b|V_1|(1\pm\eps_1).\]
         Note that 
         \[ L := L_1+L_2 = b|V(\fS)| + b|V(\fU)| = b(\ell' - C),\]
         hence
         \[ L_1 \geq \left(1-10r\delta_2\right)L\]
         and
         \[ \frac 1b|V_1|(1\pm\eps_1) = \frac{n}{L}(1\pm 3\eps_1).\]
         
        Next, by Lemma~\ref{l:slicing}, all pairs stemming from the edges of $R$ are $\left(\eps_1/\beta, \left(d_1/4\right)^+\right)$-regular with $\beta = \min\left\{\frac{t}{2br}, \frac{1}{2b(m+1+g)}\right\}\leq\frac12$. This includes all pairs $S'_{i,j}S'_{i,j'}$ and $U'_{k,\ell}U'_{k,\ell'}$, but also some pairs $S'_{i,j}U'_{k,\ell}$. 

        Now to get super-regularity consider first a tuple $(S'_{i,1},\dots,S'_{i,m+1})$. Using Lemma~\ref{l:super-reg-k-tuple}, we can discard at most $(m+1)\frac{\eps_1}{\beta}$ vertices from each part and pass to subsets $S''_{i,j}\subset S'_{i,j}$. In a similar way we can discard at most $(m+1+g)\frac{\eps_1}{\beta}$ vertices from each part $U'_{k,\ell}$ and pass to subsets $U''_{k,\ell}\subseteq U'_{k,\ell}$. After this procedure the pairs in corresponding tuples are $\left(2\eps_1/\beta, \left(d_1/8\right)^+, d_1/8\right)$-super-regular.
          
        Finally, we again discard at most $\eps_1^{3/4}$-fraction of vertices from each part to get the correct sizes, i.e.~we pass to sets $S_{i,j}\subseteq S''_{i,j}$ and $U_{k,\ell}\subseteq U''_{k,\ell}$ satisfying \[|S_{i,1}|= \ldots = |S_{i,m}| = \frac {s}{t} \cdot |S_{i,m+1}|\] 
        for each $i\in[L_1]$ and 
        \[|U_{k,1}|= \ldots = |U_{k,m+1+g}|\] 
        for each $k\in[L_2]$. After this step all super-regular pairs remain super-regular, but with different parameters, namely they are $\left(\sqrt{\eps_1}, \left(d_1/16\right)^+,d_1/16\right)=\left(\eps,d^+,d\right)$-super-regular. Similarly, all other regular pairs remain regular with parameters $\left(\sqrt{\eps_1}, \left(d_1/16\right)^+\right)=\left(\eps,d^+\right)$.
         
         Let $\cS = \bigcup_{i,j}\{S_{i,j}\}$ and $\cU = \bigcup_{k,\ell}\{U_{k,\ell}\}$. We define a new graph $R'$ on vertex set $\cS\cup\cU$ where edges correspond to $\left(\eps, d^+\right)$-regular pairs. Note that by our construction, $R'$ satisfies properties \ref{l:part-w-r-f_(i)} and \ref{l:part-w-r-f_(ii)} from the lemma statement. Intuitively, one can think of $R'$ as a blow-up of $R$.

         We now show property \ref{l:part-w-r-f_(iii)}. Take any $A, B\in \cS\cup\cU$. Let $V_a$ and $V_b$ be the original parts of the initial partition such that $A\subset V_a$ and $B \subset V_b$. Since $\fU$ is the $(1+g,m,\delta_2)$-absorber in $R$, there are at least $\delta_2^2\ell'$ copies $\tilde{K}$ of $K_{m+1+g}$ in $\fU$ which are good for both $V_a$ and $V_b$, that is both $V_a$ and $V_b$ have at least $m-1+g$ $R$-neighbours in $\tilde{K}$. Let $\tilde{U}_\ell$, $\ell\in[m+1+g]$, be the vertices of $\tilde{K}$ and $\tilde{U}_{k,\ell}$, $k\in\left[b(m+1+g) \right]$, $\ell\in[m+1+g]$ be the sets in $\cU$ with $\tilde{U}_{k,\ell} \subset \tilde{U}_\ell$. Then if $V_a\tilde{U}_
        \ell$ is an edge in $R$, that is $(V_a,\tilde{U}_
        \ell)$ is an $(\eps_1, d_1)$-regular pair, then $(A, \tilde{U}_{k,\ell})$ is a $\left(\eps, d^+\right)$-regular pair, which means that $A\tilde{U}_{k,\ell}$ is an edge in $R'$. The same is true for $B$. Hence, taking $d<\frac 12 \delta_2^2,$ there are at least $\delta_2^2\ell'b(m+1+g) \geq dL$ indices $k\in[L_2]$, such that both $A$ and $B$ have at least $m-1+g$ $R'$-neighbours in $\{U_{k,1}, U_{k,2},\dots,U_{k,m+1+g}\}$. 

        Finally, to prove \ref{l:part-w-r-f_(iv)}, let $X$ be the set of all vertices of $G$ that do not belong to any of the sets $S_{i,j}$ nor $U_{k,\ell}$. By our construction, $X$ consists of:
         \begin{itemize}
             \item vertices in $V_0$ (the initial left-over set from the Regularity Lemma),
             \item vertices discarded to get super-regularity,
             \item $\bigcup_{i\in Y}V_i$ (vertices not covered by the collection $\fA$),
             \item vertices discarded to get correct sizes of the parts for condition \ref{l:part-w-r-f_(ii)}.
         \end{itemize}
         Hence, using \eqref{eq:ell'},
         \[ |X|< \eps_1 n + \frac{(m+1+g)\eps_1}{\beta}n + C\frac{n}{\ell'}(1\pm\eps_1) + \eps_1^{3/4}n \leq \sqrt{\eps_1} n/2 = \eps n/2.\]
        \end{proof}

    \subsection{Size adjustment lemmas}

    The following lemmas are all elementary and will be used to adjust the sizes of the partition sets by removing a constant or small fraction of vertices to obtain appropriate size ratios. 
    
    \begin{lemma}\label{l:equalize-(m+1)-tuples-to-s/t}
        Let $S_1,\dots , S_m , T$ be disjoint sets of vertices and let $M\coloneq |S_1\cup\dots\cup S_m\cup T|$. Let $0<\epsilon\ll r^{-1}$. Suppose that $M\gg r$, $M\equiv 0 \pmod{r}$ and $\left(\frac{s}{r}-\epsilon\right) M\leq|S_i|\leq \frac{s}{r}M$ for all $i\in [m]$.
        Consider the following move:
        \begin{itemize}
            \item $P_k$ move: for an index $k\in [m]$ remove $s-1$ vertices from $S_k$, $s$ vertices from each $S_i$, $i\neq k$, and $t+1$ vertices from $T$.
        \end{itemize}   
        Using the above type of move, we can get new sets $S_1'\subseteq S_1,\dots,S_m'\subseteq S_m,T'\subseteq T$ such that $|S_1|=\dots =|S_m|=\frac{s}{t}|T|$ and $|S_1'\cup\dots\cup S_m'\cup T'|\geq (1-mr\epsilon)M.$
    \end{lemma}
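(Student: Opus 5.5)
The plan is to solve directly for the number of moves of each type; a short bookkeeping computation shows the choice is forced and feasible. Set $N_0 := M/r$, which is an integer because $M\equiv 0 \pmod r$. For $i\in[m]$, define the deficits
\[ a_i := \frac{s}{r}M - |S_i| = sN_0 - |S_i|. \]
By hypothesis $0\le a_i\le \epsilon M$, and each $a_i$ is an integer. Let $A:=\sum_{i\in[m]} a_i$. Since $|T| = M - \sum_i |S_i|$ and $r=ms+t$, we get
\[ |T| = tN_0 + A . \]
So $T$ carries exactly the surplus that the sets $S_i$ lack.

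Now suppose we perform the move $P_k$ exactly $x_k$ times for each $k\in[m]$, and write $X:=\sum_k x_k$. The resulting sizes are
\[ |S_k'| = sN_0 - a_k - sX + x_k, \qquad |T'| = tN_0 + A - (t+1)X . \]
We aim for $|S_k'| = sN$ for every $k$ and $|T'| = tN$, with $N:=N_0 - X$. The condition on $S_k'$ becomes $x_k = a_k$. With this choice $X=A$, and the condition on $T'$ reads $tN_0 + A - (t+1)A = t(N_0 - A)$, which holds identically. So the plan is to take $x_k := a_k$ for every $k$. This yields $|S_1'|=\dots=|S_m'| = sN = \frac{s}{t}\,|T'|$.

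It remains to check that the moves can actually be carried out, which is the only potential obstacle, although here it is mild. We perform the moves one at a time, removing arbitrary remaining vertices. It suffices to check that the total demand on each set does not exceed its size. From $S_k$ we remove $sA - a_k \le sm\epsilon M$ vertices in total, while $|S_k|\ge (\frac sr-\epsilon)M$; this is enough since $\epsilon \ll r^{-1}$. From $T$ we remove $(t+1)A$ vertices, while $|T| = tN_0 + A$. So we need $tA \le tN_0$, i.e. $A\le M/r$, which holds because $A\le m\epsilon M$ and $\epsilon\ll r^{-1}$. (If $t=0$ the $T$-condition is trivially satisfied.)

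Finally, each move deletes exactly $(s-1)+(m-1)s+(t+1) = r$ vertices. Hence the total number of deleted vertices is $rA \le rm\epsilon M$, and therefore
\[ |S_1'\cup\dots\cup S_m'\cup T'| \ge (1-mr\epsilon)M, \]
as required. The condition $M\gg r$ is only used so that all the quantities above are meaningful and nonnegative.
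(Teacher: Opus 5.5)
Your proposal is correct and follows the paper's proof. Both perform $x_k = \frac{s}{r}M - |S_k|$ moves of type $P_k$, note that each move removes exactly $r$ vertices, and bound the total loss by $mr\epsilon M$. You additionally check two points the paper leaves implicit: that the $T'$-equation holds identically, and that no set is exhausted (which reduces to $\sum_k a_k \le M/r$).
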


    \begin{proof}
        Suppose we can obtain the required sets $S_i'$, $i\in[m]$, and $T'$ by performing $x_k$ moves $P_k$ for $k\in[m]$. This leads to a system of linear equations
        \begin{align}\label{eq:system_x_k}
            |S_1'|&=\frac{s}{t}|T'| \nonumber\\
            &\,\,\,\,\vdots \\
            |S_m'|&=\frac{s}{t}|T'|, \nonumber
        \end{align}
        where $|S_k'|=|S_k|-s\sum_{i=1}^m x_i+x_k$ for $k\in[m]$, and $|T'|=|T|-(t+1)\sum_{i=1}^m x_i$. So it is enough to check if \eqref{eq:system_x_k} has a solution in non-negative integers. Indeed, there is such a~solution given by
        \[x_k=\frac{s}{r}M-|S_k|, \quad \text{ for }k\in[m].\] Since $\frac{s}{r}M-|S_k|\leq \epsilon M$, and in a single move we remove exactly $r$ vertices, the total number of removed vertices is at most $mr\epsilon M$ vertices.
    \end{proof}

     \begin{lemma}\label{l:absorber-divisibility-by-r}
         Let $U_1,\dots  , U_{m+1}$ be disjoint sets of vertices and let $M\coloneq |U_1\cup\dots\cup  U_{m+1}|$. Let $0<\epsilon \ll 1 $. Suppose that $M\gg r$,  $M \equiv 0 \pmod{r}$ and $|U_i|=\frac{M}{m+1}(1\pm\epsilon)$ for all $i\in [m]$.
        Consider the following two types of moves:
        \begin{itemize}
            \item $P_j$ move: for an index $j\in [m]$ remove $t$ vertices from $U_j$ and $s$ vertices from each $U_i$, $i\in[m+1]\setminus\{j\}$.
            \item $P_{j,k}$ move: for indices $j,k \in [m]$ remove $t+1$ vertices from $U_j$, $s-1$ vertices from $U_k$ and $s$ vertices from each $U_i$, $i\in[m+1]\setminus\{j,k\}$.
            \end{itemize} 
        Using the above moves, we can get new sets $U_1'\subseteq U_1,\dots,U_{m+1}'\subseteq U_{m+1}$ such that each $|U_i'|$ is divisible by $r$ and $|U_1'\cup\dots\cup U_{m+1}'|\geq M -(m+1)^2r^2$.
     \end{lemma}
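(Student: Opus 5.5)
The plan is to reduce everything to residues modulo $r$ and solve for the number of moves of each type, keeping every count below $r$. Every move removes exactly $ms+t=r$ vertices, so $M$ stays divisible by $r$ throughout. Write $a_i\equiv |U_i| \pmod r$ for $i\in[m+1]$; then $\sum_i a_i\equiv 0$. Since each $|U_i|\ge \frac{M}{m+1}(1-\epsilon)\gg r^2$, we can afford any bounded number of moves, so only the residues matter. The target is a family of at most $(m+1)^2 r$ moves making every $|U_i'|\equiv 0\pmod r$. The removal bound $(m+1)^2r^2$ then follows because each move deletes $r$ vertices.

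\emph{The two moves as vectors.} I record the number of vertices each move removes from $(U_1,\dots,U_{m+1})$. A $P_j$ move removes $s\mathbf 1-(s-t)e_j$. A $P_{j,k}$ move removes $s\mathbf 1-(s-t-1)e_j-e_k$. In particular,
\[P_{j,k}-P_j \text{ (removal vectors)} = e_j-e_k,\]
so trading one $P_j$ for one $P_{j,k}$ moves one unit of removal from $U_k$ to $U_j$, and leaves both the total and $U_{m+1}$ unchanged. With $N$ moves in total, of which $y_j$ have first index $j$ and $z_k$ have second index $k$, the amounts removed are:
\begin{itemize}
\item $sN$ from $U_{m+1}$;
\item $sN-(s-t)y_i+(\#P_{i,\cdot})-z_i$ from $U_i$, for $i\in[m]$.
\end{itemize}

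\emph{Order of steps.}
\begin{enumerate}
\item Choose $N\in[0,r)$ with $sN\equiv a_{m+1}\pmod r$. This fixes the residue of $U_{m+1}$, since no move can affect $U_{m+1}$ other than through $N$.
\item Start from $N$ moves of type $P_1$. This fixes the total removed and the residue of $U_{m+1}$.
\item Fix the residues of $U_1,\dots,U_m$ one at a time using unit transfers $e_j-e_k$ between indices in $[m]$. Each transfer replaces some $P_j$ by $P_{j,k}$, possibly after first re-labelling an existing $P_1$ move as a $P_j$ move.
\item Because the residues sum to $0\pmod r$ and $U_{m+1}$ is already correct, the residues on $U_1,\dots,U_m$ also sum to $0$. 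So fixing $U_1,\dots,U_{m-1}$ by transfers into and out of $U_m$ forces $U_m$ to be correct.
\item Each correction needs at most $r-1$ transfers. If a transfer would require more $P_j$ moves than are available, I append extra blocks of $r$ moves, i.e.\ increase $N$ by $r$. This keeps $sN \pmod r$ unchanged and gives enough moves to re-label. The total stays within $(m+1)^2 r$ moves.
\end{enumerate}

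\emph{Expected obstacle.} The delicate point is step 1, solving $sN\equiv a_{m+1}\pmod r$. This is only solvable when $\gcd(s,r)=\gcd(s,t)$ divides $a_{m+1}$, because $U_{m+1}$ always loses exactly $s$ vertices per move. The first thing I would try is to check, in the context where the lemma is applied, that $|U_{m+1}|$ is congruent to a multiple of $\gcd(s,t)$ modulo $r$. Failing that, I would check whether, after the re-parametrisation of the absorber section, the $m+1$ sets play symmetric roles, so that $U_{m+1}$ may also serve as an index $j$ or $k$; the transfer argument then handles it directly. Apart from this divisibility issue, the argument is a bounded integer bookkeeping computation.
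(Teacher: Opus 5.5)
Your step~1 is a genuine gap. You located it exactly but did not close it.

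With the moves restricted to $j,k\in[m]$, as the statement literally reads, every move removes exactly $s$ vertices from $U_{m+1}$, and the lemma is then false. Take $m=2$, $s=4$, $t=2$, $r=10$ and $|U_1|=1000$, $|U_2|=1001$, $|U_3|=999$. The parity of $|U_3|$ never changes, so $|U_3'|$ is never divisible by $10$.

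Your first fallback cannot help. In Step~4 the lemma is applied to the sets $U^\ddagger_{k,1},\dots,U^\ddagger_{k,m+1}$, whose residues modulo $r$ are not controlled at all.

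Your second fallback is the right one, and it is what the paper's proof actually does. It orders the residues as $y_1\ge\dots\ge y_{m+1}$, a reordering that already treats all $m+1$ indices alike. It then applies $P_{1,m+1},P_2,\dots,P_m,P_{m+1}$, which are moves with index $m+1$. This is legitimate in the application: the sets of an absorbing tuple are pairwise regular and interchangeable, so Remark~\ref{rem:(P1)} can be used with any two of them in the roles of $A_1$ and $A_{m+1}$. So the statement has to be read with $j,k$ ranging over distinct elements of $[m+1]$, and your proof should commit to that reading.

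Once you do, step~1 should be dropped rather than repaired. Two ways to finish:

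\begin{itemize}
\item \emph{The paper's route.} One round $P_{1,m+1},P_2,\dots,P_{m+1}$ removes $r+1$ vertices from $U_1$, $r-1$ from $U_{m+1}$ and $r$ from every other set. Modulo $r$ this is a unit transfer. Since the residues sum to $0$, at most $\frac12(m+1)(r-1)$ rounds suffice, each removing $(m+1)r$ vertices.
\item \emph{In your own bookkeeping.} Take $N=mr$ copies of $P_1$; these remove $mrt\equiv 0$ or $mrs\equiv 0$ vertices from each set. Replace $c_k\in[0,r)$ of them by $P_{1,k}$ for $k=2,\dots,m+1$, where $c_k\equiv -|U_k|\pmod r$. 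This is possible since $\sum_k c_k\le m(r-1)<N$. Then $|U_k'|\equiv 0$ for every $k\ge 2$. The congruence $|U_1'|\equiv 0$ follows because the total $M\equiv 0\pmod r$ and each move removes exactly $r$ vertices. Only $mr^2$ vertices are removed.
\end{itemize}

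The second version needs only the extension $k=m+1$. It also avoids your relabelling of $P_1$ as $P_j$, which shifts $s-t$ units between $U_1$ and $U_j$ rather than one, and would otherwise have to be compensated.
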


    \begin{proof}
        We will perform the following algorithm using the allowed moves. Suppose that we have reached an $(m+1)$-tuple $W_1,\dots,W_{m+1}$. Associate an $(m+1$)-tuple $(y_1,\dots,y_{m+1})$ of remainders in $\{-(r-1),\dots,r-1\}$ to $W_1,\dots, W_{m+1}$ so that $|W_i|\equiv y_i \pmod r$ and $\sum y_i=0$. Furthermore, assume that this choice is minimal in terms of $\sum |y_i|$. If $\sum|y_i|=0$ then we stop the algorithm. Otherwise, without loss of generality, assume that $y_1\geq \dots \geq y_{m+1}$ and note that $y_1>0>y_{m+1}$. Apply the moves $P_{1,m+1}$, $P_2,\dots ,P_m,P_{m+1}$. With this sequence of moves, we have removed $r+1$ vertices from $W_1$, $r-1$ vertices from $W_{m+1}$, and $r$ vertices from each $W_i$, $i=2,\dots ,m$. Thus, we have reduced $|y_1|$ and $|y_{m+1}|$ each by $1$, and the sum $\sum |y_i|$ by $2$. By iterating this process, we can reach $\sum |y_i|=0$ in a finite number of steps and finish the algorithm.

        Note that at the end of the algorithm we reach an $(m+1)$-tuple with each set of size divisible by $r$.
        Moreover, since in each iteration of the algorithm we remove $(m+1)r$ vertices, and the number of iterations is at most $\frac12(r-1)(m+1)<r(m+1)$, we have removed in total at most $(m+1)^2r^2$ vertices. 
    \end{proof} 

    \begin{lemma}
    \label{l:sing-absorber-divisibility-by-s}
        Suppose that $r=(m+1)s$. Let $U_1,\dots, U_{m+2}$ be disjoint sets of vertices and let $M:=|U_1\cup\dots\cup U_{m+2}|$. Let $0<\epsilon \ll 1$. Suppose that $M\gg r$, $M \equiv 0 \pmod{r}$ and $|U_i|= \frac{M}{m+2}(1\pm \epsilon)$ for all $i\in [m+2]$. Consider the following type of move:
        \begin{itemize}
            \item $P_{j,k}$ move: for indices $j,k\in [m+2]$ remove $1$ vertex from $U_j$, $s-1$ vertices from $U_k$ and $s$ vertices from each $U_i$, $i\in[m+2]\setminus\{j,k\}$.
        \end{itemize}
        Using the above moves, we can get new sets $U_1'\subseteq U_1,\dots , U_{m+2}'\subseteq U_{m+2}$ such that each $|U_i'|$ is divisible by $s$ and $|U_1'\cup \dots \cup U_{m+2}'|\geq M-(m+2)s^2$.
    \end{lemma}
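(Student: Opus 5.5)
The plan is to reduce the problem to bookkeeping of residues modulo $s$, using the single move $P_{j,k}$ exactly as in the proof of Lemma~\ref{l:absorber-divisibility-by-r}. First I record the effect of one move. A $P_{j,k}$ move removes $1+(s-1)+ms=(m+1)s=r$ vertices. Modulo $s$ it lowers the residue of $|U_j|$ by $1$ and raises the residue of $|U_k|$ by $1$, because $s-1\equiv -1 \pmod s$. All other residues are unchanged.

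So if $y_i\in\{0,\dots,s-1\}$ denotes $|U_i| \bmod s$, a move acts on the residue vector $(y_1,\dots,y_{m+2})\in\mathbb{Z}_s^{m+2}$ by adding $e_k-e_j$. Since $M\equiv 0\pmod r$ and $s\mid r$, we have $\sum_i y_i\equiv 0\pmod s$, and this invariant is preserved by every move. The sizes $|U_i|\ge \frac{M}{m+2}(1-\epsilon)$ together with $M\gg r$ ensure that every move we perform is legal, i.e.\ no set ever runs out of vertices.

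For the construction itself I fix the reservoir index $k=m+2$. For each $i\in[m+1]$ in turn, I apply $y_i$ moves $P_{i,m+2}$; this makes $|U_i'|$ divisible by $s$. Later moves $P_{i',m+2}$ with $i'\neq i$ change $|U_i'|$ by exactly $s$, so they do not spoil this divisibility. After all $m+1$ rounds, $U_{m+2}'$ is automatically divisible by $s$, because the total is $M$ minus a multiple of $r$ and hence still divisible by $s$.

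To keep the number of moves small, I would instead choose integer lifts $z_i\equiv y_i \pmod s$ with $\sum_i z_i=0$ and with $\sum_i |z_i|$ minimal. I then pair each unit of positive $z_j$ with a unit of negative $z_k$ via one move $P_{j,k}$. The number of moves is then $\frac12\sum_i|z_i|$, and the number of removed vertices is $r$ times this.

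The main obstacle is matching the stated bound $(m+2)s^2$, which requires roughly at most $(m+2)s/(m+1)$ moves. The naive reservoir count gives up to $(m+1)(s-1)$ moves, and even the optimal pairing can require about $(m+2)s/4$ moves. For example, with $s=2$ and all residues equal to $1$, one needs $(m+2)/2$ moves, costing $(m+1)(m+2)$ vertices. I would therefore first re-examine what the application in the absorber step actually requires, possibly only a bound of the form $O_r(1)$ removed vertices. If a strict count is needed, I would check whether the intended accounting is per set: each set loses at most $s$ vertices per move, and I would then bound the number of moves affecting each set. I expect this reconciliation of the constant, not the existence of the moves, to be the delicate point.
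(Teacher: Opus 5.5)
Your approach is the paper's approach. The paper omits this proof, saying it mirrors Lemma~\ref{l:absorber-divisibility-by-r}: take lifts $y_i\in\{-(s-1),\dots,s-1\}$ of the residues with $\sum_i y_i=0$ and $\sum_i|y_i|$ minimal, then repeatedly apply $P_{j,k}$ with $y_j>0>y_k$. That is exactly your optimal pairing, and your reservoir variant also works. The existence part of your proposal is complete and correct.

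The obstacle you found is a defect of the statement, not of your proof, and you should say so outright rather than hedge. Your example is a rigorous counterexample to the bound on the total number of removed vertices:
\begin{itemize}
\item Take $s=2$ and $m=4$, so $r=10$ and there are six sets, all of odd size. This is compatible with $M\equiv 0 \pmod{10}$ and near-equal sizes, e.g.\ sizes $999,1001,999,1001,999,1001$.
\item Every move changes the parity of exactly two sets, so at least $3$ moves are needed.
\item These moves remove at least $30>24=(m+2)s^2$ vertices.
\end{itemize}
More generally, for even $m\ge 4$ this forces $(m+1)(m+2)>4(m+2)$ removed vertices.

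What the pairing argument actually proves is the following:
\begin{itemize}
\item At most $\frac12(m+2)(s-1)$ moves suffice.
\item Hence at most $\frac12(m+2)(s-1)r$ vertices are removed in total.
\item Each move removes at most $s$ vertices from each set, so each individual $U_i$ loses at most $\frac12(m+2)(s-1)s<(m+2)s^2$ vertices.
\end{itemize}
So the paper's constant is correct only under the per-set reading you suggested. Compare Lemma~\ref{l:absorber-divisibility-by-r}, whose total bound $(m+1)^2r^2$ does include the factor $r$ per move.

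For the application in Step~4 of the singular case, any $O_r(1)$ bound is enough. Each move removes exactly $r$ vertices, so $M\equiv 0\pmod r$ is preserved. The condition $|U_i|=\frac{M}{m+2}(1\pm\epsilon)$ survives the removal of a constant number of vertices. These are all that Lemma~\ref{l:sing-absorber-partitioning} needs. So the right write-up is to prove the lemma with the corrected bound (total loss at most $\frac12(m+2)(s-1)r$, or per-set loss at most $(m+2)s^2$) and flag the slip in the stated constant.
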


        The proof is essentially the same as the proof of Lemma~\ref{l:absorber-divisibility-by-r} so we omit it.

    \begin{lemma}\label{l:sing-absorber-partitioning}
        Suppose that $r=(m+1)s$. Let $U_1,\dots, U_{m+2}$ be disjoint sets of vertices and $M:=|U_1\cup\dots\cup U_{m+2}|$. Let $0<\epsilon \ll 1$. Suppose that $M\gg r$, $|U_i|\equiv 0 \pmod{s}$, $M\equiv 0 \pmod{r}$ and $|U_i|= \frac{M}{m+2}(1\pm \epsilon)$ for all $i\in [m+2]$. We can partition each set $U_i$ into $m+1$ parts $P_{i,1},\dots, P_{i,i-1}, P_{i,i+1},\dots, P_{i,m+2}$ so that the size of each part is divisible by $s$, and for each $j\in [m+2]$, the sizes $|P_{i,j}|$ are equal for all $i\neq j$.
    \end{lemma}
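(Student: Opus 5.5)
Write $|U_i| = s u_i$, where each $u_i$ is a nonnegative integer. We look for part sizes of the form $|P_{i,j}| = s\,x_j$ for $i \neq j$. The partition of $U_i$ into the parts $P_{i,j}$, $j\neq i$, exists exactly when
\[
\sum_{j\neq i} x_j = u_i \qquad\text{for every } i\in[m+2].
\]
Set $X = \sum_{j} x_j$. The system then reads $X - x_i = u_i$, so $x_i = X - u_i$. Summing over $i$ gives $X = (m+2)X - \sum_i u_i$, hence
\[
X = \frac{\sum_i u_i}{m+1} = \frac{M}{s(m+1)} = \frac{M}{r}.
\]
This is an integer because $M\equiv 0 \pmod r$. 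It follows that every $x_i = M/r - u_i$ is an integer. Once the $x_j$ are known to be nonnegative integers, we split each $U_i$ arbitrarily into parts of sizes $s x_j$, $j\neq i$. These sizes add up to $s(X - x_i) = s u_i = |U_i|$, so the split is possible, and every part has size divisible by $s$ with $|P_{i,j}|$ independent of $i$, as required.

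The remaining point is nonnegativity, in fact strict positivity, of the $x_i$. We have
\[
s x_i = \frac{sM}{r} - |U_i| = \frac{M}{m+1} - |U_i|.
\]
The hypothesis $|U_i| = \frac{M}{m+2}(1\pm\epsilon)$ gives
\[
s x_i \ \ge\ M\left(\frac{1}{m+1} - \frac{1+\epsilon}{m+2}\right) = M\,\frac{1 - (m+1)\epsilon}{(m+1)(m+2)}.
\]
This is positive, and indeed linear in $M$, once $\epsilon < \frac{1}{m+1}$, which holds under $\epsilon \ll 1$. Hence every $x_i$ is a positive integer and the construction goes through.

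There is no real obstacle. The only points needing care are the integrality of $X$, which is exactly where $M\equiv 0 \pmod r$ enters (with the divisibility of $|U_i|$ by $s$ making each $u_i$ an integer), and the positivity estimate, which uses the near-equal sizes of the $U_i$.
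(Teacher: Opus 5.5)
Your proposal is correct and follows the paper's proof: the paper sets the common size of the parts $P_{i,j}$ ($i\neq j$) to $\frac{M}{m+1}-|U_j|$ and splits each $U_i$ arbitrarily into parts of these sizes. Your explicit check that these sizes are positive, using $|U_i|=\frac{M}{m+2}(1\pm\epsilon)$, fills in a detail the paper leaves implicit.
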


    \begin{proof}
          Set 
          \[x_i=\frac{1}{m+1}\sum_{j=1}^{m+2}|U_j|-|U_i| = \frac{M}{m+1}-|U_i|\] 
          and note that each $x_i$ is divisible by $s$. Now simply partition each $U_i$ into parts $P_{i,j}$ for $j\neq i$ so that $|P_{i,j}|=x_j$. Note that any partition satisfying the size requirements works.
    \end{proof}

    \subsection{From regular partition to clique factor}
        \label{sec:balancing}
        
    We are now ready to prove Theorem~\ref{t:non-extremal-conforming}.

    \begin{proof}[Proof of Theorem~\ref{t:non-extremal-conforming}]
    Consider the graph $\tilde G = G\setminus Z$ and note that since $|Z|=\zeta n$ and $\zeta \ll \delta \ll \gamma$, we have $\delta(\tilde G)\geq\left(1-\frac{s}{r} - 2\delta\right)|V(\tilde G)|$ and every subset of size $\frac{s}{r}|V(\tilde G)|$ induces in $\tilde G$ at least $\frac{\gamma}2 |V(\tilde G)|^2$ edges.
    Hence, we can apply to $\tilde G$ Lemma~\ref{l:partition-reduced-graph} with constants $\delta_{\ref{l:partition-reduced-graph}}=2\delta$ and $\gamma_{\ref{l:partition-reduced-graph}}=\frac{\gamma}2$. We thus obtain sets
    \begin{itemize}
        \item $S_{i,j}$, for $i\in[L_1]$, $j\in[m+1]$,
        \item $U_{k,\ell}$, for $k\in[L-L_1]$, $\ell\in[m+1+g]$,
        \item $X$,
    \end{itemize} 
    and a reduced graph $R$ corresponding to $(\eps,d^+)$-regular pairs. Let $\tilde X = X \cup Z$ and note that $|\tilde X|\leq \eps n$. Instead of sampling $\Gnp$ with $p = Cp_s(n)$ at once, we will expose three independent random graphs $G_i \sim G(n, p/4)$ for $i \in [3]$. The union $G_1 \cup G_2 \cup G_3$ can be viewed as a subgraph of $\Gnp$.
    
    The proof consists of four main steps.
        
    \smallskip
 
    \textit{Step 1. Redistributing $\tilde X$.} We redistribute the elements of $\tilde X$ among the sets $S_{i,j}$ and get new parts $S_{i,j}'\supseteq S_{i,j}$ which are of similar size as the original ones. Here we ensure that we maintain super-regularity between all sets in $\{S_{i,j}'\}_{j\in[m+1]}$, for $i\in[L_1]$.

    \smallskip

    \textit{Step 2. Readjusting the sizes of the sets $S_{i,j}'$.} Using appropriate absorbers (in sets $U_{k,\ell}$) and random edges of $G_1$, we take out some copies of $K_r$, each using at most one vertex from $\tilde X$, to return to exact appropriate size ratios. This yields new parts $S_{i,j}^\dagger \subseteq S_{i,j}'$ and $U_{k,\ell}^\dagger \subseteq U_{k,\ell}'$.

    \smallskip

    \textit{Step 3. Ensuring the divisibility of $|U_{k,1}^\dagger\cup\dots\cup U_{k,m+1+g}^\dagger|$.} We create a move $U-A-U'$, where $U$, $U'$ are parts of the regular partition and $A$ is an absorbing $(m+1+g)$-tuple. This move takes out a copy of $K_r$ which uses random edges in $G_2$ and in turn changes $|U|$ and $|U'|$ by $-1 $ and $+1\pmod{r}$, respectively. We can iterate it a constant number of times to ensure that the new parts $U_{k,\ell}^* \subseteq U_{k,\ell}^\dagger$ satisfy $|U_{k,1}^*\cup\dots\cup U_{k,m+1+g}^*| \equiv 0 \pmod{r}$, for all $k\in[L_2]$ $(L_2:=L-L_1)$.

    \smallskip

    \textit{Step 4. Covering the remainder using $G_3$.} In the last step we cover the remaining super-regular tuples of appropriate sizes using random edges in $G_3$.

    \smallskip

    We now explain each step in detail.

    \smallskip

    \textbf{Step 1.~Redistributing $\tilde X$.} 
    
    \smallskip
    
    Call an index $i \in [L_1]$ \textit{good for} $x$ if there is at most one index $j\in[m+1]$ such that $x$ has fewer than $d|S_{ij}|$ neighbours in $S_{ij}$. We claim that for each $x\in \tilde X$, there are at least ${dL}$ good indices. Indeed, assume that the opposite is true. Any index $i\in[L_1]$ that is not good for $x$ must yield at least two indices $j\in[m+1]$ for which $x$ has more than 
    \[(1-d)|S_{i,j}| = \begin{cases}
        (1-d)\frac sr \cdot \frac nL (1\pm \eps), & j\in[m] \\
        (1-d)\frac tr \cdot \frac nL (1\pm \eps), & j=m+1
    \end{cases}\] 
    non-neighbours in $S_{i,j}$. Then, since $L_1\geq (1-10r\delta_2) L$ and $\delta_2 \ll d$, the number of non-neighbours of $x$ is at least
		$$(1-10r\delta_2-d)L\cdot(1-d)\frac{s+t}{r}\cdot\frac{n}{L}(1-\eps)>\frac{ns}{r}.$$ 
	This contradicts the minimum-degree assumption on $x$.

    In order to assign good indices to vertices in $\tilde X$, we will use the following standard argument which will also be used later in the proof.
    \begin{claim}\label{c:even-distribute-X}
        There is an assignment $f: \tilde X \to L_1$ such that for each $x\in \tilde X$, $f(x)$ is good for $x$, and for each $i \in [L_1]$, $|f^{-1}(i)| \leq \frac{\eps n}{dL}$.
    \end{claim}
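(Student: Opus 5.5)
We will build $f$ greedily, one vertex of $\tilde X$ at a time, so no matching theorem is needed. The two inputs are the bound $|\tilde X|\le \eps n$, from Lemma~\ref{l:partition-reduced-graph}\ref{l:part-w-r-f_(iv)} together with $|Z|\le\zeta n\le \eps n/2$, and the fact just established that every $x\in\tilde X$ has at least $dL$ good indices in $[L_1]$.

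Process the vertices of $\tilde X$ in an arbitrary order $x_1,x_2,\dots$. When $x_k$ is reached, call an index $i\in[L_1]$ \emph{saturated} if it has already been assigned to at least $\frac{\eps n}{dL}$ earlier vertices, i.e.\ $|\{k'<k: f(x_{k'})=i\}|\ge \frac{\eps n}{dL}$. Set $f(x_k)$ to be any good index of $x_k$ that is not saturated. By construction every value of $f$ is good for its vertex. Moreover, an index receives a new vertex only while it is unsaturated, so at every stage each index has been used at most $\lceil \frac{\eps n}{dL}\rceil$ times.

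It remains to check that an unsaturated good index always exists. Each saturated index has absorbed at least $\frac{\eps n}{dL}$ of the previously processed vertices. Since fewer than $|\tilde X|\le \eps n$ vertices have been processed, the number of saturated indices is less than
\[
\frac{\eps n}{\eps n/(dL)} = dL .
\]
The vertex $x_k$ has at least $dL$ good indices, so at least one of them is unsaturated and the greedy step can be carried out.

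The only delicate point is the rounding in the cap, since the construction gives $\lceil\frac{\eps n}{dL}\rceil$ rather than $\frac{\eps n}{dL}$. There are two ways to handle it, and we would try them in this order.

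\emph{First option.} Use the slightly sharper bound $|\tilde X|\le \eps n/2+\zeta n<\eps n$. Replacing $\eps n$ by this value in the counting argument leaves enough slack to cap each index at $\lfloor \frac{\eps n}{dL}\rfloor$, which still stays below $dL$ saturated indices.

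\emph{Second option.} If the first option turns out to be awkward, absorb the additive $+1$ into the stated bound. This is harmless because $n/L\to\infty$ while $L$ is bounded.

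Apart from this rounding issue, the argument is a routine pigeonhole count, and I expect no real obstacle.
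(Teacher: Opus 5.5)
Your greedy argument is correct, and it takes a different route from the paper. The paper assigns each $x\in\tilde X$ a good index uniformly at random and bounds each load $|f^{-1}(i)|$ by a binomial variable with mean about $\frac{\eps n}{2dL}$. It then concludes by Chernoff and a union bound over the boundedly many $i\in[L_1]$. Your version is deterministic: the pigeonhole count (fewer than $dL$ saturated indices at any stage, against at least $dL$ good ones) replaces concentration.

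The random argument is shorter, but it needs the cap $\frac{\eps n}{dL}$ to exceed the expected load by a constant factor. It therefore silently uses $|\tilde X|\le(\eps/2+\zeta)n$ rather than the cruder $|\tilde X|\le\eps n$; with $|\tilde X|$ near $\eps n$ the mean would equal the threshold and Chernoff would give nothing. Your greedy procedure achieves loads essentially equal to the average $|\tilde X|/(dL)$, so it needs slack only for integer rounding.

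Your first option does handle that rounding. With cap $\lfloor\frac{\eps n}{dL}\rfloor$, the number of saturated indices is at most $|\tilde X|/(\frac{\eps n}{dL}-1)$. Since $|\tilde X|\le(\eps/2+\zeta)n<\eps n-dL$ for large $n$ (recall $L$ is bounded), this is less than $dL$. That gives exactly the stated bound. The second option would only prove a slightly weaker statement — enough downstream, but not the claim as written — so commit to the first.

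The same greedy argument also covers the later assignment in Step~2, with cap $\frac{2\eps n}{dL}$, which the paper again justifies by the random-assignment argument.
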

    
    \begin{proof}
        Consider a random assignment which for each vertex $x\in \tilde X$ chooses a good index $i\in[L_1]$ uniformly at random and sets $i = f(x)$. Now fix an index $i$. Since $\pr{f(x) = i} \leq (dL)^{-1}$, the random variable $|f^{-1}(i)|$ is stochastically dominated by $\Bin(\eps n/2, (dL)^{-1})$, and hence by Chernoff
        $$\pr{|f^{-1}(i)| \geq \frac{\eps}{d}\frac{n}{L}} = e^{-\Omega(n)}.$$ 
        Taking the union bound over all $i\in[L_1]$ implies that $f$ satisfies the conclusion of the claim with positive probability.
    \end{proof}

    Now we will redistribute the vertices of $\tilde X$, creating a new partition that satisfies properties similar to \ref{l:part-w-r-f_(i)}--\ref{l:part-w-r-f_(iv)}. For this purpose, we will use Lemma~\ref{l:slicing-adding}. 

    \begin{claim}
        Let $\eps_1= \sqrt \eps / d$. There is a vertex partition of $G$ into sets $S'_{i,j}$ and $U'_{k,l}$ (with the same indexing as before) with the following properties. Let $R'$ be the graph whose vertices are sets $S'_{i,j}$, $U'_{k,l}$ and two vertices $A$ and $B$ are connected if $(A,B)$ is an $\left(\eps_1,\left(d/2\right)^+\right)$-regular pair.
        \begin{enumerate}[label=(\roman*')]
            \item\label{claim_(i)} For each $i \in [L_1]$ and $j < j' \in [m+1]$, the edges $S'_{i,j}S'_{i,j'}$ are in $R'$. Similarly, for each $k\in [L_2]$ and $\ell < \ell' \in [m+1+g]$, the edges $U'_{k,\ell}U'_{k,\ell'}$ are in $R'$.
            
            \item\label{claim_(ii)} For each $i \in [L_1]$, $j\in[m]$ we have 
            $$|S'_{i,j}| = \frac{s}{r}\cdot\frac{n}{L}(1\pm \eps_1),\quad |S'_{i,m+1}| = \frac{t}{r}\cdot\frac{n}{L}(1\pm \eps_1)$$
            and $(S'_{i,1},\dots, S'_{i,m+1})$ is an $\left(\eps_1,\left(d/2\right)^+,d/2\right)$-super-regular tuple.
            
            \noindent
            For each $k \in [L_2]$, $\ell\in[m+1+g]$ we have 
            $$|U'_{k,\ell}|=\frac {1}{m+1+g}\cdot\frac{n}{L}(1 \pm \eps_1)$$
            and $(U'_{k,1},\dots, U'_{k,m+1+g})$ is an $\left(\eps_1,\left(d/2\right)^+,d/2\right)$-super-regular tuple.
            
            \item\label{claim_(iii)}  For $A, B \in V(R')$, there are at least $d L$ indices $k \in [L_2]$ such that both $A$ and $B$ have at least $m-1+g$ $R'$-neighbours in $\{U'_{k,1}, U'_{k,2}, \ldots, U'_{k,m+1+g}\}$.
            
            \item\label{claim_(iv)} Denote $s_{ij}:=|S_{i,j}'|-|S_{i,j}|$. Then, $$\sum_{i,j}s_{ij}\leq \eps n.$$ 
            
            \item\label{claim_(v)} For all $i\in[L_1]$, $j\in [m+1]$, 
            $$|\tilde X\cap S_{i,j}'|\leq \frac{\eps}{d}|S_{i,j}'|.$$
        \end{enumerate}
    \end{claim}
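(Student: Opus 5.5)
The plan is to use the assignment $f$ from Claim~\ref{c:even-distribute-X}. Each $x\in\tilde X$ with $f(x)=i$ goes into one class of the tuple $(S_{i,1},\dots,S_{i,m+1})$; the $U_{k,\ell}$ are left alone, so $U'_{k,\ell}=U_{k,\ell}$. Since $i$ is good for $x$, at most one $j_0\in[m+1]$ has $\deg(x,S_{i,j_0})<d|S_{i,j_0}|$. We put $x$ into $S_{i,j_0}$ if such a $j_0$ exists, and into $S_{i,m+1}$ (or any fixed class) otherwise. Then $x$ has at least $d|S_{i,j}|$ neighbours in every other class $S_{i,j}$, $j\neq j_0$, of its own tuple. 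Property \ref{claim_(iv)} is immediate, since every added vertex comes from $\tilde X$ and $|\tilde X|\le\eps n$. For \ref{claim_(v)}, at most $\eps n/(dL)$ vertices are added to the tuple indexed $i$, while $|S_{i,j}|\ge \frac{t}{r}\frac{n}{L}(1-\eps)$. This gives $|\tilde X\cap S'_{i,j}|\le \frac{\eps}{d}|S'_{i,j}|$, up to the harmless constant $r/t$, which is absorbed by adjusting $\eps$ against $d$ in the hierarchy; otherwise we use the slack in $\eps_1=\sqrt\eps/d$.

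For the size bounds in \ref{claim_(ii)}, the original sizes are $\frac{s}{r}\frac nL(1\pm\eps)$, resp.\ $\frac tr\frac nL(1\pm\eps)$, by Lemma~\ref{l:partition-reduced-graph}\ref{l:part-w-r-f_(ii)}. Adding at most a $\frac{\eps r}{dt}$ fraction keeps them within $(1\pm\eps_1)$, since $\eps_1=\sqrt\eps/d\gg\eps/d$. For regularity we apply Lemma~\ref{l:slicing-adding} to every pair that was $(\eps,d^+)$-regular, with $\xi=\frac{r\eps}{td}$. That lemma needs $\xi<\eps'$ in its notation, where the old regularity parameter is $\eps$; this fails literally, because $\xi>\eps$. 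I would instead apply it with the old parameter $\eps$ replaced by $\sqrt\eps$: an $(\eps,d')$-regular pair is also $(\sqrt\eps,d')$-regular. The output is then $(\max\{\xi/\sqrt\eps,6\sqrt\eps\},d'\pm3\sqrt\eps)$-regularity, and $\xi/\sqrt\eps=O(\sqrt\eps/d)\le\eps_1$ after fixing constants. The densities stay at least $d-3\sqrt\eps\ge d/2$. Hence every edge of $R$ survives in $R'$, and \ref{claim_(i)} and \ref{claim_(iii)} carry over verbatim: the $U$-parts are unchanged and edges are only preserved, so each pair $A',B'$ inherits the $dL$ indices of the original pair $A,B$.

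The super-regular degree condition in \ref{claim_(ii)} is the main obstacle. Old vertices $y\in S_{i,j}$ had at least $d|S_{i,j'}|$ neighbours in $S_{i,j'}$. Since $|S'_{i,j'}|\le(1+\eps_1)|S_{i,j'}|$, they keep at least $\frac d2|S'_{i,j'}|$. A new vertex $x$ placed in $S'_{i,j_0}$ has at least $d|S_{i,j}|\ge\frac d2|S'_{i,j}|$ neighbours in every $S'_{i,j}$ with $j\ne j_0$, by the choice of $j_0$, which is exactly the requirement. The point to check carefully is that no class receives $x$ in a way that requires $x$ to have many neighbours in its own class. Super-regularity only concerns pairs of distinct classes, so this is fine. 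The density lower bound $d'\ge d/2$ together with minimum degree $\frac d2$ yields $(\eps_1,(d/2)^+,d/2)$-super-regularity. If $Z$ vertices are involved, their degree hypothesis plays no role here: goodness was derived for all $x\in\tilde X$ from the minimum-degree bound of $G$.
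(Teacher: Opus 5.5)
Your proposal is correct and is essentially the paper's own argument. As in the paper, each $x\in\tilde X$ goes via $f$ into its unique deficient class (or an arbitrary class) of the tuple $f(x)$, the $U_{k,\ell}$ are left untouched, and (i') and (iii') come from Lemma~\ref{l:slicing-adding} after weakening $(\eps,d)$-regularity to $(\sqrt\eps,d)$-regularity.

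The one point to tidy is the factor $r/t$ in (v'), which the paper also silently drops. It cannot be absorbed by the hierarchy, since both sides scale like $\eps/d$. The fix is that the count in Step~1 actually gives at least $\frac{t}{2(s+t)}L\ge \frac{L}{2r}$ good indices for every $x$. Hence each tuple receives only $O(r\eps n/L)$ new vertices, an $O(r^2\eps)\le \eps/d$ fraction of each class (using $d\ll r^{-2}$). This yields (v') exactly as stated, and also $\xi/\sqrt\eps\le\eps_1$ in Lemma~\ref{l:slicing-adding} without adjusting constants.
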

             
    \begin{proof}
        Recall that 
        \[ |S_{i,j}|=\frac{s}{r}\cdot\frac{n}{L}(1\pm\eps), j\in[m] \quad \text{ and } \quad |S_{i,m+1}|=\frac{t}{r}\cdot\frac{n}{L}(1\pm\eps). \]
        Similarly,
        \[ |U_{k,\ell}|=\frac1{m+1+g}\cdot\frac{n}{L}(1\pm\eps).\]
        Using Claim~\ref{c:even-distribute-X}, we can redistribute the vertices of $\tilde X$ into $S_{i,j}$ to obtain new sets $S_{i,j}'$ as follows. By the definition of good indices, any $x\in \tilde X$ can be added to some part $S_{f(x),j}$, so that it has at least a $d$ proportion of neighbours in each part $S_{f(x),j'}$ for $j' \neq j$. Now taking $\eps_1 = \sqrt{\eps}/d$, Claim~\ref{c:even-distribute-X} asserts that no more than 
        \[\frac{\eps n}{dL} < \eps_1\sqrt{\eps}|S_{i,j}|< \frac{\eps_1}2|S_{i,j}|\] vertices were added to each $S_{i,j}$. Moreover, set $U'_{k,\ell}=U_{k,\ell}$ for all $k\in[L-L_1]$, $\ell\in[m+1+g]$.
        Properties \ref{claim_(ii)} and \ref{claim_(v)} follow immediately from the above observations, where additionally we use the fact that vertices in $\tilde X$ are redistributed in such a way, that the super-regularity is preserved between suitable pairs with parameter $d/2$. 
        
        Next, property \ref{claim_(iii)} follows from property \ref{l:part-w-r-f_(iii)} using Lemma~\ref{l:slicing-adding}, i.e.~adding an $\eps/d$ proportion of vertices to any of the sets $S_{i,j}$ turns $(\sqrt{\eps},d)$-regular pairs into $(\sqrt\eps/d, d')$-regular pairs with $d'\geq d-3\sqrt \eps$. Similarly property \ref{claim_(i)} follows from property \ref{l:part-w-r-f_(i)}. Finally, the sum in \ref{claim_(iv)} is just $|\tilde X|\leq \eps n $.
    \end{proof} 

    \smallskip

    \textbf{Step 2.~Readjusting the sizes of the sets $S_{i,j}'$.}  
    
    \smallskip
    
    In this step, we will remove a small number of vertices from the sets $S_{i,j}'$ and $U'_{k,\ell}$, forming disjoint copies of $K_r$, and obtain a new collection of sets $S_{i,j}^\dagger\subseteq S_{i,j}'$ and $U_{k,\ell}^\dagger\subseteq U_{k,\ell}'$ satisfying
    \begin{equation}\label{eq:correct_proprtion_in_S_dagger}
        |S_{i,1}^\dagger|= |S_{i,2}^\dagger |= \dots = |S_{i,m}^\dagger| = \frac {s}{t} \cdot |S_{i,m+1}^\dagger|
    \end{equation}
    for all $i\in[L_1]$. To this end we define the following procedure. For $i\in [L_1]$, $j\in[m+1]$ and $k\in [L_2]$, in a \textit{basic $(i,j;k)$-move} we remove a copy of $K_r$ in $G \cup G_1$ which contains one vertex from $S_{i,j}'$ and $r-1$ vertices from $U_{k,1}'\cup \dots \cup U_{k,m+1+g}'$. We will slightly abuse the notation by referring to this move as a basic $(i,j;k)$-move even after some vertices have been already removed from the corresponding vertex sets $S_{i,j}'$ or $ U_{k,\ell}'$. 

    Recall that $G_1\sim G(n, Cp_s)$. Before we proceed, we define the following property of the graph $G_1$, which, as we show below, holds w.h.p. Let $d_1$ be a constant with $\eps \ll d_1 \ll d$.
    \begin{description}
	\item[(P1)] For $i=0,1,\ldots,m+3$, let $A_i\in V(R')$ and let $B_i \subset A_i$ with $|B_i| \geq |A_i|/2$. Assume that all pairs $(A_i, A_j)$ are $(\eps^{1/10}, d')$-regular for some $d' \geq d_1$, with (possibly) the exception of $(A_0, A_{1})$ and $(A_0, A_{2})$. Then $G \cup G_1$ contains a clique with exactly one vertex in $B_0$, $s-1$ vertices in each $B_{1}$ and $B_{2}$, and $s$ vertices in $B_i$, $i = 3, \dots, m+3$.
    \end{description}
			
    \begin{claim}\label{c:p1p2}
	$G_1$ satisfies (P1) with probability $1-e^{-\omega(n)}$.
    \end{claim}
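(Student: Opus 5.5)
The plan is to fix the data in (P1), turn the deterministic regularity into a positive density of ``templates'' for the required clique, let Lemma~\ref{l:many_ks_and_one_ks-ks} supply the random edges, and finish with a union bound. Since $|V(R')|$ is bounded, there are $O(1)$ choices of $A_0,\dots,A_{m+3}$. There are at most $2^{(m+4)n}$ choices of the subsets $B_i\subseteq A_i$. So it suffices to show that for each fixed choice the failure probability is at most $e^{-cn^2p}$, with $c>0$ depending only on $r,d_1,L$. Indeed, with $p=Cp_s/4$ we have $n^2p=\Theta(n\log n)$ for $s=2$, and $n^2p=\Theta(n^{2-\varphi(s)})$ with $\varphi(s)<1$ for $s\ge3$. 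In both cases $n^2p=\omega(n)$, so $2^{(m+4)n}e^{-cn^2p}=e^{-\omega(n)}$.

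Fix such a choice. The desired clique has $1+2(s-1)+(m+1)s=(m+3)s-1$ vertices, so set $q=m+3\ge 2$. Identify the vertex set of $F_q$ from Lemma~\ref{l:many_ks_and_one_ks-ks} with the slots as follows:
\begin{itemize}
\item the shared vertex $s$ goes to $B_0$;
\item the remaining $s-1$ vertices of the first $K_s$ go to $B_1$, and those of the second $K_s$ go to $B_2$;
\item the $i$-th further $K_s$ goes to $B_i$ for $i=3,\dots,m+3$.
\end{itemize}
Let $\cC$ be the set of $(qs-1)$-tuples respecting this assignment in which every pair of vertices lying in different classes is an edge of $G$, except the pairs between $B_0$ and $B_1\cup B_2$. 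For a tuple in $\cC$ whose images induce $F_q$ in $G_1$, all edges inside each class are present. So are the edges from the $B_0$-vertex to the $B_1$- and $B_2$-vertices, since these lie in the two $K_s$'s sharing the vertex $s$. Hence such a tuple spans a clique in $G\cup G_1$ of exactly the form required by (P1).

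The main step is the lower bound $|\cC|\ge \nu n^{qs-1}$. Each $|A_i|\ge n/(2rL)$, and each $|B_i|\ge |A_i|/2$. By Lemma~\ref{l:slicing}, every relevant pair $(B_i,B_j)$ is $(2\eps^{1/10},d_1/2)$-regular, using $\eps\ll d_1$. Take a blow-up in which class $B_i$ carries as many vertices as slots assigned to it. The standard counting lemma for regular pairs, applied with only the required pairs constrained, then gives at least
\[
\tfrac12 (d_1/2)^{\binom{qs-1}{2}}\prod_i |B_i|^{k_i}
\]
such tuples, where $k_i$ is the number of slots in $B_i$. Alternatively one can embed vertex by vertex: typical vertices keep a $(d_1/2-\eps^{1/10})$-fraction of common neighbourhoods, which stay large enough for regularity to apply at each step. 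Either way this yields $\nu=\nu(d_1,r,L)>0$. Tuples with repeated vertices number $O(n^{qs-2})$ and are negligible.

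Finally, Lemma~\ref{l:many_ks_and_one_ks-ks} applies at density $p'=Cp_s/4$, because $\varphi(s)\le 2/s$ (as $s^2\le (s-1)(s+2)$) and hence $p'\ge C'n^{-2/s}$ with $C'$ large. It shows that no member of $\cC$ induces $F_q$ in $G_1$ with probability at most $e^{-cn^2p'}$. Combined with the union bound above, this gives the claim. The only delicate point is the counting step: making sure the missing regularity of $(A_0,A_1)$ and $(A_0,A_2)$ does not matter, which holds precisely because those adjacencies are supplied by $G_1$ through the shared-vertex structure of $F_q$.
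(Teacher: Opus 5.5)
Your proposal is correct and is essentially the paper's own argument. The paper likewise takes the complete multipartite template on classes of sizes $1,s-1,s-1,s,\dots,s$ with the pairs $(U_0,U_1)$ and $(U_0,U_2)$ left empty, uses the Counting Lemma to get $\nu n^{(m+3)s-1}$ copies in $G$ with classes inside the $B_i$, observes that the complement of this template is $F_{m+3}$, invokes Lemma~\ref{l:many_ks_and_one_ks-ks}, and union-bounds over the $2^{O(n)}$ choices of the $B_i$ and the $O(1)$ choices of the $A_i$. You add details the paper leaves implicit: the explicit slot assignment, the counting with several slots per class, and the check that $Cp_s/4\ge C'n^{-2/s}$ so that the lemma applies.
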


    \begin{proof}
        Let $H$ be a graph with vertex set $U_0\cup U_1 \cup\dots\cup U_{m+3}$, where $|U_0|=1$, $|U_1|=|U_2|=s-1$, and $|U_3|=\dots=|U_{m+3}|=s$, where all pairs apart from $(U_0,U_1)$ and $(U_0, U_2)$ span a complete bipartite graph.
        Let $\mathcal{H}$ be the family of copies of $H$ in $G$ with the part corresponding to $U_i$ contained in $B_i$, $i=0,\dots,m+3$. Using the Counting Lemma we have $|\mathcal{H}|\geq \nu n^{|V(H)|}$, for a specific constant $\nu$. 
        We claim that after revealing the edges of $G_1$, w.h.p.~at least one copy of $H\in \mathcal{H}$ becomes the desired clique. Indeed, note that the complement $\bar{H}$ of $H$ consists of $m+3$ copies of $K_s$, with exactly two copies sharing exactly one vertex and the remaining copies being vertex disjoint. Then by Lemma~\ref{l:many_ks_and_one_ks-ks}, the failure probability that no element in $\mathcal{H}$ spans a~complete graph in $G\cup G_1$ is at most $e^{-\Theta(n^2p)}=e^{-\omega(n)}$. Taking the union bound over all $2^{O(n)}$ choices of sets $B_i$ and $O(1)$ choices of $A_i\in V(R')$, we get that (P1) holds with probability $1-e^{-\omega(n)}$.
    \end{proof}

    Henceforth we assume that $G_1$ indeed satisfies (P1). Notice that (P1) implies the following.
    
    \begin{remark}\label{rem:(P1)}
    If $g=0$ 
    (the regular case $r = ms+t$ with $t < s$), 
    then the graph $G \cup G_1$ contains:
    \begin{itemize}
        \item a~$K_{r}$-copy in $A_0 \cup A_1 \cup \dots \cup A_{m+1}$ with exactly one vertex in $A_0$,
        \item a~$K_{r}$-copy in $A_1 \cup A_2 \cup \dots \cup A_{m+1}$ with exactly $s-1$ vertices in $A_1$ and $t+1\leq s$ vertices in $A_{m+1}$,
        \item a~$K_{r+1}$-copy in $A_3 \cup A_4 \cup \dots \cup A_{m+3}$ with exactly $t+1$ vertices in $A_{m+3}$ and $s$ vertices in the remaining sets.
    \end{itemize} 
        
    If $g=1$ 
    (the singular case $r = ms+t$ with $t = s$), 
    then the graph $G \cup G_1$ contains:
    \begin{itemize}
        \item a $K_{r}$-copy in $A_0 \cup A_2 \cup A_3 \cup \dots \cup  A_{m+2}$ with exactly one vertex in $A_0$,
        \item a~$K_{r+1}$-copy in $A_0\cup A_3 \cup A_4 \cup \dots \cup A_{m+3}$ with exactly $1$ vertex in $A_0$ and $s$ vertices in the remaining sets.
    \end{itemize} 
    \end{remark}
    
    We will now use this property to perform a series of $(i,j;k)$-moves which will lead us to a~partition satisfying \eqref{eq:correct_proprtion_in_S_dagger}. 
        
    \begin{claim}
        It is possible to perform an $(i,j;k)$-move as long as $S_{i,j}'$ forms a $(2\eps_1, d')$-regular pair, for some $d'\geq d/4$, with at least $m-1+g$ sets $U_{k,\ell}'$.
    \end{claim}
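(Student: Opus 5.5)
We reduce the claim to property (P1), via the first bullet of Remark~\ref{rem:(P1)} (when $g=0$) or its singular analogue (when $g=1$). Fix $i,j,k$. By assumption there are indices $\ell_1,\dots,\ell_{m-1+g}\in[m+1+g]$ such that $(S'_{i,j},U'_{k,\ell_q})$ is $(2\eps_1,d')$-regular with $d'\ge d/4$. We use the following choice of sets:
\begin{itemize}
\item $A_0=S'_{i,j}$;
\item $A_3,\dots,A_{m+1+g}$ are these $m-1+g$ parts $U'_{k,\ell_q}$;
\item $A_1,A_2$ are the two remaining parts of the tuple $(U'_{k,1},\dots,U'_{k,m+1+g})$ (for $g=0$ only one part remains, so one of $A_1,A_2$ is dropped);
\item any further $A_i$ required by (P1) are taken as dummy copies of parts of the same tuple.
\end{itemize}
(P1) permits $A_0$ to be non-adjacent to $A_1$ and $A_2$. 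All pairs among the $U'_{k,\ell}$ are $(\eps_1,(d/2)^+)$-regular by \ref{claim_(i)}. Since $\eps_1=\sqrt\eps/d\le \eps^{1/10}$ and $d/4\ge d_1$, every pair required by (P1) is $(\eps^{1/10},d'')$-regular with $d''\ge d_1$. The only hypotheses (P1) restricts by density are these regularity conditions, so the deleted terms can be handled by reading off a sub-clique of the clique (P1) provides. The resulting $K_r$ has one vertex in $S'_{i,j}$ and its remaining $r-1$ vertices in $U'_{k,1}\cup\dots\cup U'_{k,m+1+g}$. The count is $(s-1)+s(m-1)+t$ when $g=0$, and $(s-1)+sm$ when $g=1$ with $t=s$. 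In each case we choose part sizes so that the subclique uses exactly $r-1$ vertices of the tuple, which is possible because (P1) provides at least $s$ vertices in each part.

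Next we must make sure that (P1) is applied to the current, partially depleted sets, with $B_i$ the set of still-unused vertices. This needs $|B_i|\ge|A_i|/2$ throughout. The total number of moves is bounded by the size changes $\sum s_{ij}\le\eps n$ from \ref{claim_(iv)}. Even so, the $U'_{k,\ell}$ must not be depleted, so we spread the moves over the $\ge dL$ good indices $k$ provided by \ref{claim_(iii)}, in the manner of Claim~\ref{c:even-distribute-X}. Each tuple then receives at most $O(\eps n/(dL))$ moves, which removes $O(r\eps/d)$ of its vertices, far below half. The same argument controls $S'_{i,j}$.

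Finally, regularity survives these deletions. Removing at most half of each part preserves $(2\eps_1,d')$-regularity up to $(4\eps_1,d'-2\eps_1)$ by Lemma~\ref{l:slicing}, and this still lies within the $(\eps^{1/10},d_1)$ threshold. This is exactly why (P1) is phrased in terms of the original $A_i$ and subsets $B_i$, rather than the depleted sets.

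The main obstacle is the index bookkeeping: matching the roles of $A_0,\dots,A_{m+3}$ in (P1) to the actual $m+1+g$ parts, so that the clique has exactly $r-1$ vertices in the tuple and the exceptional non-regular pairs are exactly those between $S'_{i,j}$ and the two (or one) $U'_{k,\ell}$ not known to be adjacent to it. The singular case $g=1$ requires particular care, since there $A_0$ has regular pairs to $m$ parts.
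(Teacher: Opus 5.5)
Your route is the paper's. You apply (P1), in the form of Remark~\ref{rem:(P1)}, with $A_0=S'_{i,j}$, the $m-1+g$ parts regular with $S'_{i,j}$ as $A_3,A_4,\dots$, and leftover parts of the tuple in the exceptional slots, then pass to a sub-clique. The depletion bookkeeping you add is what the paper does in the paragraph after the claim. However, the index bookkeeping, which is the whole content of this claim, is wrong in one place.

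In both cases $(m+1+g)-(m-1+g)=2$ parts of the tuple remain, and for $g=0$ you need \emph{both} of them. As in the paper, take $t\le s-1$ vertices in $A_1$ and $s-1$ in $A_2$, so that $1+t+(s-1)+(m-1)s=r$. This is exactly your own count $(s-1)+s(m-1)+t$. It cannot be realised if one of $A_1,A_2$ is dropped: the $m-1$ regular parts receive only $s$ vertices each, so you end up $t$ vertices short. It is the singular case $g=1$ in which one leftover part goes unused. There the paper sets $A_2=U'_{k,1}$, lets $A_3,\dots,A_{m+2}$ be the $m$ regular parts, and has no $A_1$, giving $1+(s-1)+ms=r$. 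So your parenthetical has the two cases swapped. Relatedly, (P1) does not supply ``at least $s$ vertices in each part''. It supplies $s-1$ in $B_1$ and $B_2$, and what makes the case $g=0$ work is $t\le s-1$.

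The ``dummy copies'' device is also not legitimate. Placing a repeated part in an unused slot of (P1) would require that part to form an $(\eps^{1/10},d')$-regular pair with itself, which nothing provides. The Counting Lemma in the proof of Claim~\ref{c:p1p2} also needs distinct clusters. The correct justification, which the paper uses tacitly in Remark~\ref{rem:(P1)}, is that the proof of Claim~\ref{c:p1p2} goes through verbatim for the smaller configurations $A_0,\dots,A_{m+1}$ (resp.\ $A_0,A_2,\dots,A_{m+2}$). The complement of the pattern is then $m+1$ copies of $K_s$ (two sharing a vertex, resp.\ all disjoint), which is handled by the same Janson-type argument as Lemma~\ref{l:many_ks_and_one_ks-ks}. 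With these corrections your argument coincides with the paper's.
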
 
        
    \begin{proof}
        If $g=0$ (regular case), w.l.o.g.~assume that $S_{i,j}'$ is $(2\eps_1,d')$-regular with the sets $U_{k, 3}',\dots,U_{k,m+1}'$. We apply (P1) with $A_0=S_{i,j}', A_1=U_{k,1}', \ldots, A_{m+1}=U_{k,m+1}'$ to remove a copy of $K_r$ with $t\leq s-1$ vertices in $A_1$ and $s-1$ vertices in $A_2$. If $g=1$, we similarly apply (P1) with $A_0=S_{i,j}', A_2=U_{k,1}', A_3=U_{k,2}', \dots, A_{m+2}=U_{k,m+1}'$.
    \end{proof}

    In order to be able to perform a series of $(i,j;k)$-moves, we need to show that while removing a small number of vertices in the copies of $K_r$, regularity is maintained. Note that by removing at most an $\eps_1$ proportion of vertices from any partition set, using Lemma~\ref{l:slicing}, we can ensure that pairs which were $(\eps_1,(d/2)^+)$-regular are $(2\eps_1,(d/4)^+)$-regular in the new partition, and, analogously, pairs that were $(\eps_1,(d/2)^+,d/2)$-super-regular become $(2\eps_1,(d/4)^+,d/4)$-super-regular. Hence, we need to perform the $(i,j;k)$-moves in such a way that each partition set loses at most an $\eps_1$ fraction of its vertices.

    Recall that $s_{ij} = |S_{i,j}'|-|S_{i,j}| \leq \frac{\eps}{d}|S_{i,j}'| \leq \eps_1|S_{i,j}|/2$. For each part $S_{i,j}'$, we will remove $s_{ij}$ vertices by performing a sequence of basic $(i,j;k)$-moves. However, we don't want to use any of the vertices $k\in[L_2]$ too often, so that the total number of vertices removed from any set $U_{k,\ell}'$ is also at most an $\eps_1$ fraction. Recall that by \ref{claim_(iii)}, for each $S_{i,j}'$, there are at least $dL$ indices $k$ for which an $(i,j;k)$-move can be performed. Hence, by the same argument as in Claim~\ref{c:even-distribute-X}, and using \ref{claim_(iv)}, we can assign $s_{ij}$ indices $k\in[L_2]$, possibly with repetitions, to each $S_{i,j}'$ so that none of the indices is assigned more than $\frac{2\eps n}{dL}$ times. Now for each assigned pair $S_{i,j}'$ and $k$, we apply a basic $(i,j;k)$ move. This removes exactly $s_{ij}\leq \eps_1|S_{i,j}|/2$ vertices from each set $S_{i,j}'$ and at most an $4r\eps/d < \eps_1$ proportion of vertices from any given $U_{k,\ell}'$.
    
    Denote the resulting sets by $S_{i,j}^\dagger$ and $U_{i,j}^\dagger$. Then \eqref{eq:correct_proprtion_in_S_dagger} follows by \ref{l:part-w-r-f_(ii)} and the fact that for each $i\in[L_1]$ and $j\in[m+1]$ we have $|S_{i,j}^\dagger|=|S_{i,j}|$. Note also that each of the removed copies of $K_r$ contains at most one vertex from $\tilde X$.
    
    Denote $\Ub_k^\dagger := U_{k,1}^\dagger \cup \dots \cup U_{k,m+1+g}^\dagger$, for $k\in[L_2]$. Let $R^\dagger$ be the graph with vertex set $U_{k,\ell}^\dagger$ and edges corresponding to $(2\eps_1,(d/4)^+)$-regular pairs. Then by our construction and \ref{claim_(i)}, \ref{claim_(ii)} and \ref{claim_(iii)}, we have
    \begin{enumerate}[label=(\roman*$^\dagger$)]
        \item\label{(i)dagger} for each $k\in [L_2]$ and $\ell < \ell' \in [m+1+g]$, the edges $U^\dagger_{k,\ell}U^\dagger_{k,\ell'}$ are in $R^\dagger$;
        \item\label{(ii)dagger} for each $k \in [L_2]$, $\ell\in[m+1+g]$ we have 
            $$|U^\dagger_{k,\ell}|=\frac {1}{m+1+g}\cdot\frac{n}{L}(1 \pm 2\eps_1)$$
            and $(U^\dagger_{k,1},\dots, U^\dagger_{k,m+1+g})$ is a $\left(2\eps_1,\left(d/4\right)^+,d/4\right)$-super-regular tuple;
        \item\label{(iii)dagger} for any $A,B\in V(R^\dagger)$, there are at least $dL$ indices $k\in[L_2]$ such that both $A$ and $B$ have at least $m-1+g$ $R^\dagger$-neighbours in $\{U_{k,1}^\dagger, U_{k,2}^\dagger,\dots, U_{k,m+1+g}^\dagger\}$.
    \end{enumerate}

    \smallskip

    \textbf{Step 3. Ensuring the divisibility of $|\Ub_k^\dagger|$.} 

    \smallskip

    In this step, we will reveal the edges of $G_2$. For any $k,q\in[L_2]$, $\ell\in[m+1+g]$, we can define a basic $(k,\ell;q)$-move similarly as in Step 2, that is, in such a move we remove a~copy of $K_r$ in $G \cup G_2$ that contains one vertex from $U_{k,\ell}^\dagger$ and $r-1$ vertices from $\Ub_{q}^\dagger$.
    Now fix a pair $(\Ub_k^\dagger, \Ub_{k'}^\dagger)$. We define a \emph{$(k,k')$-transfer} as follows. Choose an auxiliary index $q$ for which $U_{k,1}$ and $U_{k',1}$ have at least $m-1+g$ $R^\dagger$-neighbours in $\Ub_{q}^\dagger$. Perform one basic $(k,1;q)$-move and $r-1$ basic $(k',1;q)$-moves. Modulo $r$, this changes $|\Ub_k^\dagger|$ by -1, $|\Ub_{k'}^\dagger|$ by $+1$, and preserves $|\Ub_{q}^\dagger|$. Note that by \ref{(iii)dagger}, for any pair $k, k' \in [L_2]$, we can always find an index $q$ such that a $(k,k')$-transfer can be performed using $q$ as an auxiliary index. Using the same method as in the proof of Lemma~\ref{l:absorber-divisibility-by-r}, using no more than $rL_2$ $(k,k')$-transfers, we can pass to parts $U_{k,\ell}^\ddagger\subseteq U_{k,\ell}^\dagger$ with $\Ub_k^\ddagger := U_{k,1}^\ddagger \cup \dots \cup U_{k,m+1+g}^\ddagger$ satisfying $|\Ub_k^\ddagger| \equiv 0 \pmod{r}$. Note that the number of removed vertices in this step is at most $r^3L_2$ and none of them is a vertex from $\tilde X$.

    \smallskip
    
    \textbf{Step 4. Covering the remainder using $G_3$.}

    \smallskip

    We will reveal the graph $G_3$ separately for sets $S_{i,j}^\dagger$ and $U_{k,\ell}^\ddagger$.
    Consider first any $i \in [L_1]$. Recall that $(S_{i,1}^\dagger,S_{i,2}^\dagger,\dots, S_{i,m+1}^\dagger)$ is an $(\eps_1,(d/4)^+,d/4)$-super-regular $(m+1)$-tuple satisfying~\eqref{eq:correct_proprtion_in_S_dagger}. Thus, by Lemma~\ref{l:st-reg-pairs-factors}, w.h.p.~there is an $\tilde X$-conforming collection $\cK_i$ of vertex-disjoint $K_r$-cliques in $G \cup G_3$ covering  $(S_{i,1}^\dagger,S_{i,2}^\dagger,\dots, S_{i,m+1}^\dagger)$. Since the total number of such $(m+1)$-tuples is constant, w.h.p.~we can find such a collection $\cK_i$ for any $i\in[L_1]$. 
    
    Now consider $k\in[L_2]$. Since in Step 3 we removed at most $r^3L$ vertices from any set $U_{k,\ell}^\dagger$ to obtain sets $U_{k,\ell}^\ddagger$, the latter satisfy conditions analogous to \ref{(i)dagger}--\ref{(iii)dagger} with slightly perturbed parameters. In particular we can assume that $|U_{k,l}^\ddagger|=\frac{1}{m+1+g}\cdot\frac{n}{L}(1\pm3\eps_1)$ for each $k\in[L_2]$, $\ell\in[m+1+g]$ and that regularity, as well as super-regularity, is maintained.

    \smallskip

    \textit{The regular case ($g=0$).}

    \smallskip
       
    We first use Lemma~\ref{l:absorber-divisibility-by-r} and property (P1) to pass to an $(m+1)$-tuple $\{U_{k,1}^*, \ldots, U_{k,m+1}^*\}$ in which each part is divisible by $r$. To be more precise, Lemma~\ref{l:absorber-divisibility-by-r} gives a sequence of moves that needs to be performed to obtain the divisibility by $r$, while Remark~\ref{rem:(P1)} tells us that we can find a copy of $K_r$ in $G\cup G_3$ corresponding to a specific move. At this point we have removed at most a constant number of vertices from each part.
    
    Take $x$ to be the maximal integer such that all the parts have at least $xr$ vertices, in particular $x\geq \left\lfloor\frac1r\cdot\frac{1}{m+1}\cdot\frac{n}{L}(1-4\eps_1)\right\rfloor$. Let $|U_{k,\ell}^*| = xr+y_\ell$, noting that all the $y_\ell$ are divisible by $r$. We have 
    \[y_\ell\leq |U_{k,\ell}^*|-\frac{1}{m+1}\cdot\frac{n}{L}(1-4\eps_1)\leq 9\eps_1 xr.\] 
    Next, we partition each set $U_{k,\ell}^*$, $\ell\in[m+1]$, randomly into $m$ parts $U_{k,\ell,i}^*$, $i\in[m+1]\setminus\{\ell\}$, of size $xs$, and one part $U_{k,\ell,\ell}^*$ of size $xt+y_\ell$, so that by Lemma~\ref{l:slicing} all pairs $(U_{k,\ell,i}^*,U_{k,\ell',j}^*)$, $\ell\neq\ell'$, $i,j\in[m+1]$, are $(4r\eps_1, d')$-regular with $d' \geq d/(8r)$. Moreover, using an argument similar to that in the proof of Lemma~\ref{l:st-reg-pairs-factors}, w.h.p.~these pairs are $(4r\eps_1, d', d/(8r))$-super-regular with some $d' \geq d/(8r)$.
    
    Now consider an $(m+1)$-tuple $\{U^*_{k,1,\ell},U^*_{k,2,\ell},\ldots,U^*_{k,m+1,\ell}\}$. Using Lemma \ref{l:equalize-(m+1)-tuples-to-s/t} and Remark~\ref{rem:(P1)}, by removing an additional $4r\eps_1|\Ub_k^*|$ copies of $K_r$ in $G \cup G_3$, we can pass to an $(m+1)$-tuple $\{U_{k,1,\ell},U_{k,2,\ell},\ldots,U_{k,m+1,\ell}\}$ which satisfies the assumptions of Lemma~\ref{l:st-reg-pairs-factors}. Therefore, by Lemma~\ref{l:st-reg-pairs-factors}, w.h.p.~for each $k\in[L_2], \ell\in[m+1]$ there is a~collection of vertex-disjoint $K_r$-copies in $G \cup G_3$ covering the $(m+1)$-tuple $\{U_{k,1,\ell},U_{k,2,\ell},\ldots,U_{k,m+1,\ell}\}$.

    \smallskip

    \textit{The singular case $(g=1)$}

    \smallskip

    The proof is analogous to that in case $g=0$. First, using Lemma~\ref{l:sing-absorber-divisibility-by-s} and Remark~\ref{rem:(P1)} we can pass to an $(m+2)$-tuple $\{U_{k,1}^*, \ldots, U_{k,m+2}^*\}$ in which each part is divisible by~$s$. Next, using Lemma~\ref{l:sing-absorber-partitioning}, we can randomly partition each set $U_{k,\ell}^*$ into $(m+1)$-tuple $\{U_{k,\ell,1},\dots,U_{k,\ell,\ell-1},U_{k,\ell,\ell+1},\dots,U_{k,\ell,m+2}\}$, so that for each $\ell\in[m+2]$, all sets in the $(m+1)$-tuple $\{U_{k,1,\ell},\dots,U_{k,\ell-1,\ell},U_{k,\ell+1,\ell},\dots,U_{k,m+2,\ell}\}$ have the same size and all the involved pairs are $(4(m+1)\eps_1,d',d/(8r))$-super-regular with some $d'\geq d/(8r)$. By Lemma~\ref{l:st-reg-pairs-factors}, w.h.p.~there is a partial $K_r$-factor in $G\cup G_3$ covering these $(m+1)$-tuples. 
    \end{proof}

    We finish this section by providing a proof of Lemma~\ref{l:annoying-non-extremal}, which grants at least $\eta n$ vertex-disjoint copies of $K_{r+1}$ in $G \cup \Gnp$ under suitable assumptions. For simplicity, we deduce the lemma from Lemma~\ref{l:partition-reduced-graph}, but the only important ingredient is that in the singular case ($r = (m+1)s$), $G$ contains a regular $(m+2)$-tuple. There are several other ways to prove the lemma, and the constant $\eta$ is far from optimal.

    \begin{proof}[Proof of Lemma~\ref{l:annoying-non-extremal}]
    Let $G$ be the graph satisfying the assumption of the Lemma, with the given constants $r, s, \gamma$ and $\delta$. Suppose also that 
         $$\eta \ll \eps \ll d \ll \delta =\delta_2 \ll \gamma, r^{-1}.$$ 
        Apply Lemma~\ref{l:partition-reduced-graph} to $G$; specifically, $G$ contains an $(\epsilon, d)$-regular $(m+1+g)$-tuple $\{U_{k,1},\dots,U_{k,m+1+g}\}$. Recall that $g=1$ if $r = (m+1)s$, and $g=0$ otherwise. Apply Remark~\ref{rem:(P1)} (that is, property (P1)) $\eta n$ many times to find w.h.p.~the desired collection of vertex-disjoint $K_{r+1}$-copies in $G\cup \Gnp$.
    \end{proof}
    
    
    \section{The partitioning lemma}
	   \label{sec:partition-lemma} 
    
    In this section, we prove the partitioning lemma.
    
    \begin{proof}[Proof of Lemma~\ref{l:4.3chr}] 
    Let $\beta_0 = \frac{1}{10m(m+1)^2}$, and given $\beta\leq \beta_0$ take $\gamma_m \ll \beta$. 
    In particular, we need the following dependencies
    \begin{align*}
        \gamma_m \leq \frac{\beta^2}2, \quad \gamma_m \leq \sqrt{\frac{\beta^2(s/r-3\beta)}{2}}
    \end{align*}
    Let a graph $G$ and a sequence $\gamma_1, \dots, \gamma_m$ as in the statement of the Lemma be given.

    Let $h\in[m]$ be a maximal integer for which there exists a partition
    \begin{equation}\label{eq:partition} 
        V(G) = A_1^* \dcup \dots \dcup A_h^* \dcup A_{h+1}
    \end{equation}
    satisfying the following properties
    \begin{enumerate}[label=(\roman**)]
        \item\label{(i*)} For $i\in[h]$, $|A_i|=(s/r \pm \beta\gamma_i)n$ and $e(A_i^*)\leq \beta^2\gamma_i n^2$.
        \item\label{(ii*)}  Every vertex $x \in V(G) \setminus A_{h+1}$ has at most $4 \beta n$ non-neighbours in $A_{h+1}$.
        \item\label{(iii*)}  Every vertex $x \in A_{h+1}$ has at least $2\beta n$ neighbours in $A_i^*$, for any $i \in [h]$.
    \end{enumerate}
    Note that such $h$ exists, as $h=0$ trivially satisfies the above properties. Properties \ref{(i*)}--\ref{(iii*)} will help us to establish properties \ref{partition(i)}--\ref{partition(iii)} of the lemma. To get the remaining ones, we first deal with property \ref{partition(iv)}.

    \begin{claim}\label{claim:4.4chr}
        If $h<m$ and $X \subset A_{h+1}$ with $|X| =sn/r$, then $e(X) > \gamma_{h+1}^2 n^2$.
    \end{claim}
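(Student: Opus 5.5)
We argue by contradiction against the maximality of $h$. Suppose $h<m$ and some $X\subset A_{h+1}$ with $|X|=sn/r$ has $e(X)\le \gamma_{h+1}^2 n^2$. We will build from $X$ a new part $A_{h+1}^*$ and a new remainder $A_{h+2}$ so that the refined partition $A_1^*\dcup\dots\dcup A_{h+1}^*\dcup A_{h+2}$ satisfies \ref{(i*)}--\ref{(iii*)} with $h+1$ in place of $h$. This contradicts the choice of $h$.

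\textbf{Cleaning $X$.} Since $e(X)\le\gamma_{h+1}^2n^2$, all but at most $\gamma_{h+1}n$ vertices of $X$ have fewer than $2\gamma_{h+1}n$ neighbours inside $X$. By the minimum degree condition $\delta(G)\ge(1-\frac sr)n$ and $|X|=\frac sr n$, such a typical vertex $x\in X$ is adjacent to all but at most $2\gamma_{h+1}n$ vertices outside $X$. Let $X'$ be the set of typical vertices. Consider a vertex $y\notin X$ which has at most $4\beta n$ non-neighbours in $X'$. We set
\[
A_{h+1}^*=\{v\in A_{h+1}: v \text{ has fewer than } 2\beta n \text{ neighbours in } X'\}
\]
and $A_{h+2}=A_{h+1}\setminus A_{h+1}^*$. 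Because $\gamma_{h+1}\ll\beta$, the set $X'\subseteq A_{h+1}^*$ up to negligible error. Double counting edges between $X'$ and $V\setminus X$, which is nearly complete, shows that few vertices of $A_{h+1}\setminus X$ have fewer than $2\beta n$ neighbours in $X'$. Concretely, $|A_{h+1}^*\setminus X|\lesssim \gamma_{h+1}n/\beta$, and hence $|A_{h+1}^*|=(s/r\pm\beta\gamma_{h+1})n$ after appropriate choice of the hierarchy. This step requires $\gamma_{h+1}$ versus $\beta\gamma_{h+1}$ bookkeeping. If the first threshold fails, we use a slightly different truncation, or we compare the parameter $\gamma_{h+1}^2$ against $\beta^2\gamma_{h+1}$, which is exactly why \ref{(iv)} is stated with a square. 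The edges inside $A_{h+1}^*$ number at most $e(X)+|A_{h+1}^*\setminus X|\cdot n+\dots\le\beta^2\gamma_{h+1}n^2$, which uses $\gamma_{h+1}^2\ll\beta^3\gamma_{h+1}$.

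\textbf{Verifying \ref{(ii*)} and \ref{(iii*)}.} Property \ref{(iii*)} for the new index $h+1$ holds by definition, since every vertex of $A_{h+2}$ has at least $2\beta n$ neighbours in $X'\subseteq A_{h+1}^*$. For the old indices $i\le h$ we need each vertex of $A_{h+2}$ to keep $2\beta n$ neighbours in $A_i^*$; this is inherited because $A_{h+2}\subseteq A_{h+1}$. For \ref{(ii*)}, a vertex of $A_{h+1}^*$ has few neighbours in $X'$, and so by the minimum degree it is adjacent to almost all of $V\setminus X$. This gives at most $4\beta n$ non-neighbours in $A_{h+2}$. For $x\in A_i^*$ with $i\le h$, the non-neighbours in $A_{h+2}\subseteq A_{h+1}$ are at most $4\beta n$ by the old property.

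\textbf{Main obstacle.} The delicate point is the size estimate $|A_{h+1}^*|=(s/r\pm\beta\gamma_{h+1})n$. Vertices of $A_{h+1}$ outside $X$ with few neighbours in $X'$ must be shown to be rare. We would bound them as follows. Each such vertex misses at least $|X'|-2\beta n$ vertices of $X'$, while each typical $x\in X'$ misses at most $2\gamma_{h+1}n$ vertices outside $X$. Double counting then gives the count $\le 2\gamma_{h+1}n|X'|/(|X'|-2\beta n)=O(\gamma_{h+1}n)$. If this is too weak against $\beta\gamma_{h+1}$, we will rescale the hierarchy, replacing the threshold $\gamma_{h+1}^2$ by $\gamma_{h+1}^3$, which the statement's "$\gamma_h\ll\gamma_{h+1}$" freedom permits. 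Alternatively, we redefine \ref{(i*)} with slack $\gamma_i$ in place of $\beta\gamma_i$ and absorb the factor later.
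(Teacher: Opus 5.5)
Your strategy matches the paper's: contradict the maximality of $h$ by splitting $A_{h+1}$ around $X$. Your single-threshold definition of $A_{h+1}^*$ is fine. It gives $X'\subseteq A_{h+1}^*$ exactly and makes (iii*) for the new part automatic. Your check of (ii*) also goes through, once you count the at most $\gamma_{h+1}n$ vertices of $X\setminus X'$ that may land in $A_{h+2}$.

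The gap is (i*), at exactly the step you flag, and the fallbacks do not rescue the claim as stated. Bounding each typical $x\in X'$ by $2\gamma_{h+1}n$ missing edges to $V\setminus X$ and summing gives $O(\gamma_{h+1}n^2)$ missing edges. That yields only $|A_{h+1}^*\setminus X|=O(\gamma_{h+1}n)$, whereas (i*) needs error at most $\beta\gamma_{h+1}n$. Both sides are linear in $\gamma_{h+1}$, so no hierarchy choice closes this factor of order $\beta^{-1}$. The edge bound fails in the same way: $e(X)+|A_{h+1}^*\setminus X|\,n$ is only $O(\gamma_{h+1}n^2)$. Even using that these vertices have fewer than $2\beta n$ neighbours in $X'$, you only get $O(\beta\gamma_{h+1}n^2)$, not $\beta^2\gamma_{h+1}n^2$. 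You also never control the vertices of $X\setminus X'$ that fall into $A_{h+2}$; the bound $|X\setminus X'|\le\gamma_{h+1}n$ is again too weak. Switching to a threshold $\gamma_{h+1}^3$ changes the statement you must prove. Loosening (i*) changes the invariant whose maximality defines $h$, which is used later, e.g.\ to bound $t$.

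The missing idea is to count non-edges globally, not per vertex. For every $x\in X$, the minimum degree gives $|(V\setminus X)\setminus N(x)|\le\deg_X(x)$. Summing over all of $X$, at most $2e(X)\le 2\gamma_{h+1}^2n^2$ pairs between $X$ and $V\setminus X$ are non-edges; this quadratic bound is what the square in the threshold is for. Consequently, the vertices of $A_{h+1}\setminus X$ with fewer than $2\beta n$ neighbours in $X'$ number at most $2\gamma_{h+1}^2n^2/(|X'|-2\beta n)=O(\gamma_{h+1}^2n)$. Likewise, the vertices of $X$ with at least $2\beta n$ neighbours in $X$ number at most $\gamma_{h+1}^2n/\beta$. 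Both are at most $\beta\gamma_{h+1}n/2$ once $\gamma_{h+1}\ll\beta^2$, which gives the size condition. Then $e(A_{h+1}^*)\le e(X)+|A_{h+1}^*\setminus X|\,n\le\beta^2\gamma_{h+1}n^2$ follows as well. This is exactly how the paper bounds its exceptional sets $R$ and $S$, using threshold $3\beta n$ and $A_{h+1}^*=(X\setminus R)\cup S$. With this correction your variant would also work.
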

    \begin{proof}
        Assume for a contradiction that $h<m$ and there exists a subset $X \subseteq A_{h+1}$ with $|X| =sn/r$ and $e(X) \leq \gamma_{h+1}^2 n^2$. Note that 
        \begin{equation}\label{eq:size_of_X}
            |X| \geq n - \delta(G).
        \end{equation}
        We will show that we can change the initial partition (\ref{eq:partition}), keeping the conditions \ref{(i*)}--\ref{(iii*)}, but increasing $h$.

        Set $Y = A_{h+1}\setminus X$ and let
        \[ R = \{x\in X: |Y\setminus N(x)| \geq 3\beta n\}. \]
        Note that by (\ref{eq:size_of_X}) and $e(X) \leq \gamma_{h+1}^2n^2$, the number of missing edges between $X$ and $V(G)\setminus X$ is at most $2\gamma_{h+1}^2 n^2$.
        Moreover, since every vertex in $R$ contributes at least $3\beta n$ missing edges between $X$ and $V(G)\setminus X$, we have
        \[ |R| \leq \frac{2\gamma_{h+1}^2}{3\beta}n \leq \beta\gamma_{h+1}n.\]
        Similarly, we let
        \[ S = \{y\in Y: |N(y)\cap X| \leq 3\beta n\}, \]
        and note that
        \[ |S| \leq \frac{2\gamma_{h+1}^2}{s/r-3\beta}n\leq \frac{\beta^2\gamma_{h+1}}2 n.  \]
        Next, set
        \[ A_{h+1}^* = (X\setminus R) \cup S \quad \text{and} \quad A_{h+2} = (Y\setminus S)\cup R,\]
        and consider the partition
        \[ V(G) = A_1^* \dcup \dots \dcup A_{h+1}^* \dcup A_{h+2}. \]
        We claim that this new partition also satisfies \ref{(i*)}--\ref{(iii*)}, thus contradicting the maximality of $h$. 

        To show \ref{(i*)}, we only need to consider $i=h+1$. Since both sets $S$ and $R$ have size at most $\beta\gamma_{h+1}n$, we have $|A_{h+1}^*| \leq (s/r \pm \beta\gamma_{h+1})n$. Moreover,
        \[ e(A_{h+1}^*) \leq e(X) + |S|n \leq \left(\gamma_{h+1}^2 + \frac{\beta^2\gamma_{h+1}}2\right)n^2 \leq \beta^2\gamma_{h+1}n^2.\]

        To show \ref{(ii*)}, consider a vertex $x \in V(G)\setminus A_{h+2}$. If $x\notin A_{h+1}^*$, then since the initial partition (\ref{eq:partition}) satisfied \ref{(ii*)}, we know that $x$ has at most $4\beta n$ non-neighbours in $A_{h+1}$, thus also at most this many non-neighbours in $A_{h+2} \subset A_{h+1}$. Now suppose $x\in A_{h+1}^* = (X\setminus R)\cup S$. Due to $|A_{h+2}\setminus N(x)| \leq |Y \setminus N(x)| + |R|$ and $|R|\leq \beta n$, it suffices to show that $|Y\setminus N(x)| \leq 3\beta n$. If $x\in X\setminus R$ this is clear by the definition of $R$, so suppose that $x\in S$. Using (\ref{eq:size_of_X}), we get that $|X| \geq |V(G) \setminus N(x)| \geq |X \setminus N(x)| + |Y \setminus N(x)|$, hence, by the definition of $S$, $|Y\setminus N(x)| \leq |X\cap N(x)| \leq 3\beta n$.

        Finally, to show \ref{(iii*)}, let $x\in A_{h+2} = (Y\setminus S)\cup R$ and note that it is enough to consider $i=h+1$. Since $|A_{h+1}^* \cap N(x)| \geq |X \cap N(x)| - |R|$ and $|R| \leq \beta n$, it suffices to show that $|X \cap N(x)| \geq 3\beta n$. If $x\in Y\setminus S$ this is clear by the definition of $S$, so suppose that $x \in R$. Then, using again (\ref{eq:size_of_X}) and the definition of $R$, we have 
        \begin{align*} |X \cap N(x)| & = |X| + |N(x)| - |X \cup N(x)| \geq |V(G)| - |X \cup N(x)| \\
        & \geq |Y \setminus N(x)| \geq 3\beta n.
        \end{align*}
    \end{proof}

    We now adjust the partition to also get the remaining properties \ref{partition(v)} and \ref{partition(vi)}. To this end, choose $t\geq 0$ and a partition
    \[ V(G) \setminus A_{h+1} = A_1 \dcup \dots \dcup A_h\]
    such that
    \begin{enumerate}
        \item[(a)] $\sum_{i=1}^h |A_i^* \bigtriangleup A_i| \leq 2t$ 
        \item[(b)] $\sum_{i=1}^h e(A_i) \leq \sum_{i=1}^h e(A_i^*) - \beta tn$,
    \end{enumerate}
    and, subject to the above conditions, $t$ is maximal. Note that we can always find such a~partition and a parameter $t$, as $t=0$ and $A_i = A_i^*$ satisfies conditions (a) and (b). Moreover, conditions (b) and \ref{(i*)} imply that
    \[ 0 \leq m\beta^2\gamma_h n^2 - \beta t n,\]
    hence
    \begin{equation}\label{eq:t} 
        t \leq m\beta\gamma_h n \leq \frac{\beta}2 n.
    \end{equation}
    We claim that the partition
    \[ V(G) = A_1 \dcup \dots \dcup A_{h+1}\]
    has all desired properties \ref{partition(i)}--\ref{partition(vi)} of the lemma.

    To show \ref{partition(i)}, let $i\in[h]$. Then, using (a), \ref{(i*)} and (\ref{eq:t}), we get
    \begin{align}\label{eq:size_of_A_i}
        |A_i| & = |A_i^*| \pm 2t = \left(\frac{s}{r} \pm \beta\gamma_i\right)n \pm 2m\beta \gamma_hn \nonumber \\
        & = \left(\frac{s}{r} + 3\beta m \gamma_h\right) n = \left(\frac{s}{r} \pm \gamma_h\right)n.
    \end{align}

    Property \ref{partition(ii)} is equivalent to property \ref{(ii*)}.

    To show \ref{partition(iii)}, note that $|A_i^*\setminus A_i| \leq 2t \leq \beta n$, hence due to \ref{(iii*)}, every vertex $x\in A_{h+1}$ has at least $\beta n$ neighbours in any $A_i$.

    Property \ref{partition(iv)} is given by Claim~\ref{claim:4.4chr}.

    To show \ref{partition(v)}, take distinct indices $i,j\in[h]$. If there exists a vertex $x\in A_i$ with $|N(x) \cap A_j| < \frac13|A_j|$, then we can replace sets $A_i, A_j$ by sets $A_i'=A_i\setminus\{x\}, A_j'=A_j\cup\{x\}$, and reach a contradiction to the maximality of $t$. Indeed, for $k\in\{i, j\}$ we have $|A_k^* \bigtriangleup A_k'| \leq |A_k^* \bigtriangleup A_k| + 1$, and if $|N(x)\cap A_j| < \frac13|A_j|$, then $|N(x)\cap A_i| > \frac13|A_i|$, which in turn implies that $e(A_i')+e(A_j') < e(A_i)+e(A_j)$, so we can increase $t$.

    Finally, to show \ref{partition(vi)}, let $i,j\in[h+1]$ be distinct indices with $i < j$. Note that by \ref{(i*)}, (a) and (\ref{eq:t}) we have
    \begin{align*}
        e(A_i) \leq e(A_i^*) + |A_i\setminus A_i^*|n \leq (\beta^2\gamma_h+2\beta m\gamma_h)n^2 \leq 3\beta m \gamma_h n^2.
    \end{align*}
    Let $M$ be the number of missing edges between $A_i$ and $A_j$. Then
    \[ |A_i|\left(1-\frac{s}{r}\right)n \leq \sum_{x\in A_i} d(x) = 2e(A_i) + |A_i|(n-|A_i|) - M,\]
    hence, using (\ref{eq:size_of_A_i}), we can upper bound $M$ by
    \begin{align*}
        M & \leq |A_i| \left( \frac{s}{r}n - |A_i|\right) + 6\beta m\gamma_h n^2 \leq 3\beta m \gamma_h n (|A_i| + 2n) \leq \gamma_h|A_i||A_j|.
    \end{align*}

    \end{proof}
	

	\bibliographystyle{plain}

\end{document}